\documentclass[10pt]{amsart}

\usepackage{courier}
\usepackage{amsmath,amsthm,amssymb,mathrsfs,amscd}
\usepackage[cmtip,all]{xy}
\usepackage{exscale}

\newtheorem{theorem}{Theorem}[section]
\newtheorem*{theorem*}{Theorem}
\newtheorem{lemma}[theorem]{Lemma}
\newtheorem*{lemma*}{Lemma}
\newtheorem{proposition}[theorem]{Proposition}
\newtheorem*{proposition*}{Proposition}

\newtheorem{corollary}[theorem]{Corollary}

\theoremstyle{definition}
\newtheorem{definition}[theorem]{Definition}
\newtheorem{example}[theorem]{Example}

\theoremstyle{remark}
\newtheorem{remark}[theorem]{Remark}

\numberwithin{equation}{section}

%
%

%
%

\newcommand{\set}{\mathbf{Set}}
\newcommand{\tops}{\mathbf{Top}}

\newcommand{\ktop}{\mathbf{kTop}}

%
%

\newcommand{\fktop}{\mathrm{k}}
\newcommand{\tot}{\mathrm{Tot}}

\DeclareMathOperator{\colim}{colim}
\DeclareMathOperator{\res}{Res}

%
%
%

%
%
%

\begin{document}
\title{A Spectral Sequence Connecting Continuous With Locally Continuous Group Cohomology}
\author[M. Fuchssteiner]{Martin Fuchssteiner}
\email{martin@fuchssteiner.net}
\date{May 01, 2011}
\keywords{Transformation Group, Topological Group, Continuous Group
  Cohomology, Alexander-Spanier Cohomology, Equivariant Cohomology}

\begin{abstract}
We present a spectral sequence connecting the continuous and 
'locally continuous' group cohomologies for topological groups. 
As an application it is shown that for contractible 
topological groups these cohomology concepts coincide. 
\end{abstract}

\maketitle

\section*{Introduction}

There exist various cohomology concepts for topological groups $G$ and
topological coefficient groups $V$ which take the topologies of the group and 
that of the coefficients into account. 
One is obtained by restricting oneself to the complex $C_c^* (G;V)$
continuous group cochains only whose cohomology is called the 
\emph{continuous group cohomology} $H_c (G;V)$. 
For abstract groups $G$ and $G$-modules $V$ the first cohomology group
$H^1 (G;V)$ classifies crossed morphisms modulo principal derivations, 
the second cohomology group $H^2 (G;V)$ classifies equivalence classes of
group extensions $V \hookrightarrow \hat{G} \twoheadrightarrow G$ 
and the third cohomology group $H^3 (G;V)$ classifies equivalence classes 
crossed modules with kernel $V$ and cokernel $G$ 
(cf. \cite[Theorem 6.4.5, Theorem 6.6.3 and Theorem 6.6.13]{WeHA}. 
Analogous considerations show that for topological groups $G$ and $G$-modules 
$V$ the first cohomology group $H_c^1 (G;V)$ classifies continuous 
crossed morphisms modulo principal derivations, 
the second cohomology group $H_{c}^2 (G;V)$ classifies equivalence classes
of topological group extensions 
$V \hookrightarrow \hat{G} \twoheadrightarrow G$ which admit a global section
(i.e. $\hat{G} \twoheadrightarrow G$ is a  trivial $V$-principal bundle) 
and the third cohomology group $H_{c}^3 (G;V)$ classifies equivalence
classes of topologically split crossed modules. The continuous group
cohomology has the drawback that for even the compact Hausdorff group 
$G=\mathbb{R}/\mathbb{Z}$ the short exact sequence 
\begin{equation*}
 0 \rightarrow \mathbb{Z} \hookrightarrow \mathbb{R}
  \twoheadrightarrow \mathbb{R}/\mathbb{Z} \rightarrow 0
\end{equation*}
of coefficients does not induce a long exact sequence of cohomology groups. 
(The group $H_{c}^1 (G;\mathbb{R})$ is trivial because the projection 
$\mathbb{R} \twoheadrightarrow \mathbb{R}/\mathbb{Z}$ does not admit global 
sections, 
$H_{c}^n (G;\mathbb{Z})=0$ because all continuous group cochains on $G$ are 
constant whereas the group of 
$H_{c}^1 (G;G)$ all continuous endomorphisms of $G$ is non-trivial.)

This drawback is relieved by a second more general cohomology concept, which 
is obtained by considering the complex $C_{cg}^* (G;V)$ of group 
cochains which are continuous on some identity neighbourhood in $G$. 
By abuse of language some people call the
corresponding cohomology groups $H_{cg} (G;V)$ the 
`locally continuous group cohomology´. 
The first cohomology group $H_{cg}^1 (G;V)$ classifies continuous 
crossed morphisms modulo principal derivations, 
the second cohomology group $H_{cg}^2 (G;V)$ classifies equivalence classes
of topological group extensions 
$V \hookrightarrow \hat{G} \twoheadrightarrow G$ which admit local sections 
(i.e. $\hat{G} \twoheadrightarrow G$ is a locally trivial $V$-principal bundle) 
and the third cohomology group $H_{cg}^3 (G;V)$ classifies equivalence
classes of crossed modules in which all homomorphisms admit local sections.

The inclusion $C_{cg}^* (G;V) \hookrightarrow C_c^* (G;V)$ of cochain
complexes induces a morphism $H_{cg}^* (G;V) \rightarrow H_c^* (G;V)$ of 
cohomology groups, which is used to compare the two cohomology concepts.
As the above example shows, these cohomology concepts do not even coincide for 
connected compact Hausdorff groups and real coefficients. In the following we
will show that the contractibility of a topological group $G$ forces the two
cohomologies to coincide (cf. Corollary \ref{gcontriso}):

\begin{theorem*}
For contractible groups $G$ the inclusion 
$C_c^* (G;V) \hookrightarrow C_{cg}^* (X;V)$ induces an isomorphism in
cohomology. 
\end{theorem*}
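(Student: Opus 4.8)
The plan is to deduce the statement from the spectral sequence that the paper constructs, by showing that for contractible $G$ it collapses onto the line carrying the continuous cochains. That spectral sequence comes from a double complex whose total complex computes the locally continuous cohomology $H_{cg}^*(G;V)$, the complementary differential being the (continuous) Alexander--Spanier differential; its $E_1$-term in the Alexander--Spanier direction is accordingly built from the Alexander--Spanier cohomology groups $\bar H^q_{AS}(G;V)$, one copy in each slot of the group-cohomology bar complex. Since $\bar H^0_{AS}(G;V)\cong V$ whenever $G$ is connected, the $q=0$ line of this $E_1$-page is exactly the complex $C_c^*(G;V)$ of continuous group cochains, and the inclusion of that line into the total complex realizes the comparison map induced by $C_c^*(G;V)\hookrightarrow C_{cg}^*(G;V)$.

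Now let $G$ be contractible. It is then connected, so the $q=0$ line really is all of $C_c^*(G;V)$; and -- the crucial input -- all higher Alexander--Spanier groups vanish, $\bar H^q_{AS}(G;V)=0$ for $q\geq 1$, because Alexander--Spanier cohomology is a homotopy invariant and $G$ has the homotopy type of a point. Hence the $E_1$-page is concentrated in the single degree $q=0$, where it coincides with $C_c^*(G;V)$. The spectral sequence therefore degenerates there, its abutment $H_{cg}^*(G;V)$ is carried by the edge homomorphism isomorphically onto $H^*(C_c^*(G;V))=H_c^*(G;V)$, and tracing the identifications shows that this edge isomorphism is precisely the map induced in cohomology by the inclusion $C_c^*(G;V)\hookrightarrow C_{cg}^*(G;V)$. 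That is Corollary~\ref{gcontriso}.

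Two points need care, and the second is the genuine obstacle. First, one has to check that the collapse identifies the abutment with $H_c^*(G;V)$ \emph{through the inclusion of cochain complexes} and not merely up to an unnamed isomorphism; this is bookkeeping with the two filtrations of the defining double complex, matching the edge homomorphism against the map of complexes, and is routine once the double complex is written down. Second, and more substantially, the vanishing $\bar H^q_{AS}(G;V)=0$ for $q\geq 1$ needs homotopy invariance of Alexander--Spanier cohomology \emph{with the topologized coefficient module $V$}: for discrete $V$ this is classical, but in the topological setting one must work inside the class of coefficients for which the germ-level constructions behave well (the \lmh{} modules isolated earlier) and verify that a contracting homotopy of $G$ induces a chain homotopy of the associated continuous Alexander--Spanier complexes. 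This last step is where the contractibility hypothesis is actually spent, and the argument stands or falls with it.
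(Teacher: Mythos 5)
Your strategy is in substance the paper's own: filter the double complex $A_{cg}^{*,*}(G;V)^G$ by columns, identify the $q=0$ line of the resulting $E_1$-page with $C_c^*(G;V)$ (using connectedness), and collapse the higher lines using contractibility; your "first point of care" (that the abutment is identified with $H_c^*$ \emph{through the inclusion of cochain complexes}) is handled in the paper by the explicit homotopy of Proposition \ref{contiscohtocr}, which shows $j^n(f)$ and $i^n(f)$ are cohomologous in the total complex. However, two steps that you either treat as routine or leave open are where the actual work lies. The first is the identification of the columns' cohomology. The theorem concerns the $G$-fixed subcomplexes, and exactness of the augmented equivariant columns does not follow by bookkeeping from the non-equivariant computation: the paper needs the translation trick of Proposition \ref{noneqextheneqex}, replacing a non-equivariant vertical primitive $f^{p,q-1}$ by $x_0.f^{p,q-1}(x_0^{-1}.\vec{x},x_0^{-1}.\vec{x}')$ to produce an equivariant one. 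Your proposal never performs this reduction. Moreover the vertical cohomology is not ``one copy of $\bar H^q_{AS}(G;V)$ in each bar slot'': the complexes that actually appear are $\colim_{\mathfrak{U}} C(X^{p+1}\times\mathfrak{U}[q],V)$, a \emph{continuous} Alexander--Spanier-type complex with coefficients in the function groups $C(X^{p+1},V)$, compared against the partially continuous complex through the kernel argument of Diagram \ref{morphexseqcg}.

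The second, which you correctly single out as the point where contractibility is spent, is the acyclicity of $\colim A_c^{p,*}(X,\mathfrak{U};V)$ for contractible $X$ --- Lemma \ref{xcontrthenacpsistriv} in the paper --- and you explicitly do not prove it (``the argument stands or falls with it''). Classical homotopy invariance of Alexander--Spanier cohomology does dispose of $H_{AS}(X;C(X^{p+1},V))$, since the coefficients there are merely an abelian group; but it does not by itself yield the acyclicity of the continuous colimit complex, which the paper obtains by adapting the Alexander--Spanier presheaf argument. (There is also no class of ``modules isolated earlier'' in this paper to appeal to.) Until that lemma and the equivariant reduction are supplied, what you have is a correct outline of the paper's argument rather than a proof of Corollary \ref{gcontriso}.
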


This is proved by constructing a row-exact double complex 
$A_{cg,eq} (G;V)$ whose rows and columns can be augmented by the complexes 
$C_{cg}^* (G;V)$ and $C_{c}^* (G;V)$ respectively. The contractibility of $G$
will be shown to force the columns of this double complex to be exact as 
well, which then in turn is shown to imply that the inclusion  
$C_{cg}^* (G;V) \hookrightarrow C_c^* (G;V)$ induces an isomorphism in
cohomology. 
In fact we will be considering the more general setting of transformation
groups $(G,X)$ and $G$-equivariant cochains on $X$ and prove these results in
this more general setting. Similar results for $k$-groups and smooth 
transformation groups will also be obtained. 

\section{Basic Concepts}
In this section we recall the definitions of various cochain complexes and
the interpretation of some of their cohomology groups. 
For topological spaces $X$ and abelian topological groups $V$ one can consider 
variations of the exact \emph{standard complex} $A^* (X;V)=\hom_\set (X;V)$ of 
abelian groups. 

\begin{definition}
For every topological space $X$ and abelian topological group $V$ the 
subcomplex $A_c^* (X;V):= C (X^{*+1};V)$ of the standard complex is called the 
\emph{continuous standard complex}. 
\end{definition}

For transformation groups $(G,X)$ and $G$-modules $V$ the group $G$ acts on
the spaces $X^{n+1}$ via the diagonal action and the groups $A^n (X;V)$ can be 
endowed with a $G$-action via  
\begin{equation}\label{defgact}
  G \times A^n (X;V) \rightarrow A^n (X;V), \quad 
[g.f] (\vec{x})=g.[ f (g^{-1} .\vec{x})] \, .
\end{equation}
The $G$-fixed points of this action are the $G$-equivariant cochains. Because 
the differential of the standard complex intertwines the $G$-action, the
equivariant cochains form a subcomplex $A^* (X;V)^G$ of the standard complex
and the continuous equivariant cochains form a subcomplex $A_c^* (X;V)^G$ of
the continuous standard complex. These complexes not exact in general.

\begin{example}
  For any group $G$ which acts on itself by left translation and $G$-module
  $V$ the complex $A^* (G;V)^G$ is the complex of (homogeneous) group cochains; 
for topological groups $G$ and $G$-modules $V$ the complex 
$A_c^* (G;V)^G$ is the complex of continuous (homogeneous) group cochains
\end{example}

\begin{definition}
  The cohomology $H_{eq} (X;V)$ of the complex $A^* (X;V)^G$ is called the
  equivariant cohomology of $X$ (with values in $V$).
The cohomology $H_{eq,c} (X;V)$ of the subcomplex $A_c^* (X;V)^G$ is called the
  equivariant continuous cohomology of $X$ (with values in $V$).  
\end{definition}

%

\begin{example}
  For any group $G$ which acts on itself by left translation and $G$-module
  $V$ the cohomology $H_{eq} (G;V)$ is the group cohomology of $G$ with values 
in $V$; for topological groups $G$ and $G$-modules $V$ the cohomology 
$H_{eq,c} (G;V)$ is the continuous group cohomology of $G$ with values in $V$.  
\end{example}

For transformation groups $(G,X)$ and $G$-modules $V$ there exists a
$G$-invariant complex $A_{cg}^* (X;V)$ between $A_c (X;V)$ and $A (X;V)$ which 
we are going to define now. For each open covering $\mathfrak{U}$ of $X$ and 
each $n \in \mathbb{N}$ one can define an open neighbourhood 
$\mathfrak{U} [n]$ of the diagonal in $X^{n+1}$ via
\begin{equation*}
 \mathfrak{U} [n] := \bigcup_{U \in \mathfrak{U}} U^{n+1} \, .
\end{equation*}
These neighbourhoods of the diagonals in $X^{*+1}$ form a simplicial subspace 
of $X^{*+1}$ which allows us to consider the subcomplex of $A^* (X;V)$ formed
by the groups  
\begin{equation*}
A_{cr}^n (X,\mathfrak{U};V) := 
\left\{ f \in  A^n (X;V) \mid \, f_{\mid \mathfrak{U} [n]}
  \in C (\mathfrak{U}[n];V) \right\}  
\end{equation*}
of cochains whose restriction to the subspaces $\mathfrak{U}[n]$ of $X^{n+1}$ 
are continuous. The cohomology of the cochain complex 
$A_{cr}^* (X,\mathfrak{U};V)$ is denoted by $H_{cr} (X,\mathfrak{U};V)$. 
If the covering $\mathfrak{U}$ of $X$ is $G$-invariant, then the subspaces 
$\mathfrak{U}[*]$ is a simplicial $G$-subspace of the simplicial $G$-space 
$X^{*+1}$. 

\begin{example}
  If $G=X$ is a topological group which acts on itself by left translation and
  $U$ an open identity neighbourhood, then 
$\mathfrak{U}_U :=\{ g.U \mid g \in G \}$ is a $G$-invariant open covering of
$G$ and $\mathfrak{U}[*]$ is an open simplicial $G$-subspace of $G^{*+1}$.
\end{example}

For $G$-invariant coverings $\mathfrak{U}$ of $X$ the cohomology of the 
subcomplex $A_{cr}^* (X,\mathfrak{U};V)^G$ of $G$-equivariant cochains is 
denoted by $H_{cr,eq} (X,\mathfrak{U};V)$. 

\begin{example}
  If $G=X$ is a topological group which acts on itself by left translation and
  $U$ an open identity neighbourhood, then the complex 
$A_{cr}^* (X,\mathfrak{U}_U;V)^G$ is the complex of homogeneous group cochains 
whose restrictions to the subspaces $\mathfrak{U}_U [*]$ are continuous. 
(These are sometimes called $\mathfrak{U}$-continuous cochains.) 
\end{example}

For directed systems $\{ \mathfrak{U}_i \mid i \in I \}$ of open coverings of
$X$ one can also consider the colimit complex 
$\colim_i A_{cr}^* (X,\mathfrak{U}_i ;V)$. In particular for the directed 
system of all open coverings of $X$ one observes that the open diagonal 
neighbourhoods $\mathfrak{U}[n]$ in $X^{n+1}$ for open coverings 
$\mathfrak{U}$ of $X$ are cofinal in the directed set of all open diagonal 
neighbourhoods, hence one obtains the complex
\begin{equation*}
  A_{cg}^* (X;V):= \colim_{\mathfrak{U} \text{is open cover of $X$}} A_{cr}^*
  (X;\mathfrak{U};V)
\end{equation*}
of global cochains whose germs at the diagonal are continuous. This is a
subcomplex of the standard complex $A^* (X;V)$ which is invariant under the 
$G$-action (Eq. \ref{defgact}) and thus a sub complex of $G$-modules.  
The $G$-equivariant cochains with continuous germ form a subcomplex 
$A_{cg}^* (X;V)^G$ thereof, whose cohomology is denoted by $H_{cg,eq} (X;V)$. 
The latter subcomplex can also be obtained by taking the colimit over all 
$G$-invariant open coverings of $X$ only: 

\begin{proposition} \label{natinclofeqccisiso}
  The natural morphism of cochain complexes 
  \begin{equation*}
A_{cg,eq}^* (X;V):= \colim_{\mathfrak{U} \text{is $G$-invariant open cover
      of $X$}} A_{cr}^* (X;\mathfrak{U};V)^G \rightarrow A_{cg}^* (X;V)^G    
  \end{equation*}
is a natural isomorphism.
\end{proposition}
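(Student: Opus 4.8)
The plan is to notice that all complexes occurring here are subcomplexes of the standard complex $A^*(X;V)$, so that every colimit in sight is a directed colimit of subobjects with inclusions as transition maps — that is, a union — and that the asserted isomorphism therefore reduces, in each cochain degree $n$, to an equality of subgroups of $A^n(X;V)$.

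First I would record these identifications. For an arbitrary (not necessarily $G$-invariant) open cover $\mathfrak{U}$ the group $A_{cr}^n(X,\mathfrak{U};V)$ is a subgroup of $A^n(X;V)$, and if $\mathfrak{U}'$ refines $\mathfrak{U}$ then $\mathfrak{U}'[n] \subseteq \mathfrak{U}[n]$, so the transition map is the inclusion $A_{cr}^n(X,\mathfrak{U};V) \hookrightarrow A_{cr}^n(X,\mathfrak{U}';V)$; since open covers are directed under refinement (pairwise intersections give a common refinement), $A_{cg}^n(X;V) = \bigcup_{\mathfrak{U}} A_{cr}^n(X,\mathfrak{U};V)$ inside $A^n(X;V)$, whence $A_{cg}^n(X;V)^G = \bigcup_{\mathfrak{U}} \bigl(A_{cr}^n(X,\mathfrak{U};V) \cap A^n(X;V)^G\bigr)$ because intersecting with $A^n(X;V)^G$ distributes over the union. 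Likewise the $G$-invariant open covers are directed (the intersection refinement of two $G$-invariant covers is again $G$-invariant), so the left-hand side is $A_{cg,eq}^n(X;V) = \bigcup_{\mathfrak{U}\ G\text{-inv}} A_{cr}^n(X,\mathfrak{U};V)^G$ inside $A^n(X;V)^G$. Under these identifications the natural morphism is just the inclusion of one union of subgroups into another; in particular it is injective and compatible with the differentials, and only surjectivity in each degree remains to be shown.

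For surjectivity, fix a degree $n$ and take $f \in A_{cg}^n(X;V)^G$, so that $f$ is $G$-equivariant and there is an open cover $\mathfrak{U}$ of $X$ with $f_{\mid \mathfrak{U}[n]}$ continuous. I would pass to the $G$-saturation $\mathfrak{V} := \{\, g.U \mid g \in G,\ U \in \mathfrak{U} \,\}$; this is a $G$-invariant family of open subsets of $X$ which contains $\mathfrak{U}$ and hence still covers $X$, and its associated diagonal neighbourhood is $\mathfrak{V}[n] = \bigcup_{g \in G} g.\bigl(\mathfrak{U}[n]\bigr)$, the union of the translates of $\mathfrak{U}[n]$ under the diagonal $G$-action on $X^{n+1}$. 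On the open piece $g.(\mathfrak{U}[n])$ equivariance and \eqref{defgact} give $f(\vec{x}) = g.\bigl(f(g^{-1}.\vec{x})\bigr)$; here $\vec{x} \mapsto g^{-1}.\vec{x}$ is the homeomorphism of $X^{n+1}$ induced by the fixed element $g^{-1}$, it carries $g.(\mathfrak{U}[n])$ into $\mathfrak{U}[n]$ on which $f$ is continuous by hypothesis, and $v \mapsto g.v$ is a homeomorphism of $V$; hence $f$ restricted to $g.(\mathfrak{U}[n])$ is continuous. Since $\{\, g.(\mathfrak{U}[n]) \mid g \in G \,\}$ is an open cover of $\mathfrak{V}[n]$ and continuity is a local property, $f_{\mid \mathfrak{V}[n]}$ is continuous, that is $f \in A_{cr}^n(X,\mathfrak{V};V)^G$, so $f$ lies in the image of the natural morphism. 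Naturality in the pair $(X,V)$ is clear, since the saturation construction and all the inclusions involved are natural.

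The argument is essentially soft; the one point carrying the content is the observation that continuity of an equivariant cochain on a single diagonal neighbourhood propagates to the entire $G$-saturation of that neighbourhood. This works precisely because the action of each individual group element on $X$ (and on $V$) is a homeomorphism and continuity can be tested on an open cover, so no continuity of the $G$-action on $X$, and in particular no joint continuity, is required. The remaining bookkeeping — directedness of the two diagrams of open covers and the compatibility of passing to $G$-fixed points with these directed unions — is routine.
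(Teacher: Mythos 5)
Your proposal is correct and follows essentially the same route as the paper: both arguments treat the colimits as directed unions of subgroups of $A^*(X;V)$ (which disposes of injectivity) and prove surjectivity by replacing an arbitrary cover $\mathfrak{U}$ witnessing continuity of an equivariant cochain by its $G$-saturation $G.\mathfrak{U}$, using equivariance and the fact that each group element acts by homeomorphisms to propagate continuity from $\mathfrak{U}[n]$ to $(G.\mathfrak{U})[n]$. Your write-up merely supplies more of the routine details (directedness of the two systems of covers and the local nature of continuity) than the paper does.
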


\begin{proof}
  We show that this morphism is surjective and injective. 
Every equivalence class in $A_{cg}^n (X;V)^G$ can be represented by a
cochain $f  \in A_{cr}^n (X,\mathfrak{U};V)^G$, where $\mathfrak{U}$ is an
open cover of $X$. The cochain $f$ is continuous on $\mathfrak{U}[n]$ by 
definition. Its equivariance implies, that it also is continuous on 
$G . \mathfrak{U}[n]=( G. \mathfrak{U})[n]$, 
hence an element of $A_{eq}^n (X, (G. \mathfrak{U})[n];V)$. The
equivalence class $[f] \in A_{cg,eq}^n (X;V)$ is mapped onto 
$[f]A_{cg}^n (X;V)^G$. This proves surjectivity.

Every equivalence class in $A_{cg,eq}^n (X;V)^G$ can be represented 
by an equivariant $n$-cochain $f$ in $A_{cr}^n (X, \mathfrak{U};V)^G$, where 
$\mathfrak{U}$ is a $G$-invariant open cover of $X$. If the image of the class
$[f] \in A_{cg}^* (X;V)^G$ is trivial, then the cochain $f$ itself is trivial
and so is its class $[f] \in A_{cg,eq}^n (X;V)^G$. This proves injectivity.
\end{proof}

\begin{corollary}
  The cohomology $H_{cg,eq} (X;V)$ is the cohomology of the complex of 
equivariant cochains which are continuous on some $G$-invariant neighbourhood
of the diagonal.
\end{corollary}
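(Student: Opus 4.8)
The plan is to read the corollary off Proposition~\ref{natinclofeqccisiso} together with a cofinality observation, so that essentially no new work is needed. First I would unwind the colimit on the right-hand side of the proposition: in the directed system $\{A_{cr}^*(X,\mathfrak{U};V)^G\}$ indexed by $G$-invariant open covers ordered by refinement, the transition maps are inclusions of subgroups of $A^*(X;V)^G$, because a finer cover $\mathfrak{U}$ gives a smaller diagonal neighbourhood, $\mathfrak{U}[n]\subseteq\mathfrak{V}[n]$, hence a larger group of $\mathfrak{U}$-continuous cochains. Therefore $A_{cg,eq}^n(X;V)$ is simply the union $\bigcup_{\mathfrak{U}} A_{cr}^n(X,\mathfrak{U};V)^G$, i.e.\ the group of equivariant $n$-cochains that are continuous on at least one diagonal neighbourhood of the special shape $\mathfrak{U}[n]$ with $\mathfrak{U}$ a $G$-invariant open cover.

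Next I would identify this with the group $D^n$ of \emph{all} equivariant $n$-cochains that are continuous on some $G$-invariant open neighbourhood of the diagonal in $X^{n+1}$. One inclusion is immediate, since $\mathfrak{U}[n]$ is itself a $G$-invariant open neighbourhood of the diagonal when $\mathfrak{U}$ is $G$-invariant. For the converse, given $f\in D^n$ continuous on a $G$-invariant open neighbourhood $N$ of the diagonal, I would, for each $x\in X$, choose an open $U_x\ni x$ with $U_x^{n+1}\subseteq N$ (possible because $(x,\dots,x)\in N$ and $N$ is open), so that $\mathfrak{U}:=\{U_x\}$ is an open cover with $\mathfrak{U}[n]\subseteq N$; then $G.\mathfrak{U}$ is a $G$-invariant open cover with $(G.\mathfrak{U})[n]=G.(\mathfrak{U}[n])\subseteq G.N=N$, so $f$ is continuous on $(G.\mathfrak{U})[n]$ and thus lies in $A_{cr}^n(X,G.\mathfrak{U};V)^G$. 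This is exactly the cofinality argument already used to introduce $A_{cg}^*$, upgraded equivariantly by the same $G.\mathfrak{U}$ trick that appears in the proof of Proposition~\ref{natinclofeqccisiso}.

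Finally I would check that $D^*$ is a subcomplex of $A^*(X;V)^G$: the differential is the alternating sum of the precompositions $f\mapsto f\circ\delta_i$ with the continuous $G$-equivariant coface maps $\delta_i\colon X^{n+2}\to X^{n+1}$, so if $f$ is continuous on a $G$-invariant neighbourhood $N$ of the diagonal then $df$ is continuous on $\bigcap_i\delta_i^{-1}(N)$, which is again a $G$-invariant open neighbourhood of the diagonal. Hence $D^*=A_{cg,eq}^*(X;V)$ as subcomplexes of $A^*(X;V)^G$, and by Proposition~\ref{natinclofeqccisiso} its cohomology is $H_{cg,eq}(X;V)$. I do not anticipate any real obstacle; the only point needing (minor) care is the cofinality/equivariance step in the middle paragraph, and that is already covered by the methods of the preceding proposition.
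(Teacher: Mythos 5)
Your proposal is correct and follows the route the paper intends: the corollary is meant to be read off from Proposition~\ref{natinclofeqccisiso} together with the cofinality of the diagonal neighbourhoods $\mathfrak{U}[n]$ (upgraded to the $G$-invariant setting via the $G.\mathfrak{U}$ trick already used in that proposition's proof), which is exactly what you do. The paper leaves these details implicit; your write-up merely makes the colimit-as-union identification, the cofinality step, and the subcomplex check explicit, all of which are sound.
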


\begin{example}
  If $G=X$ is a topological group which acts on itself by left translation, 
then the complex $A_{cg}^* (G;V)^G$ is the complex of homogeneous group
cochains whose germs at the diagonal are continuous. 
(By abuse of language these are sometimes called 'locally continuous' 
group cochains.) 
\end{example}

\section{The Spectral Sequence}
\label{sectss}

Let $(G,X)$ be a transformation group, $V$ be $G$-module and $\mathfrak{U}$ be
an open covering of $X$. We will show (in Section \ref{seccontanduccc}) that the
inclusion $A_{cr}^* (X,\mathfrak{U};V) \hookrightarrow A_c^* (X;V)$ induces an 
isomorphism in cohomology provided the space $X$ is contractible. 
For this purpose we consider the abelian groups 
\begin{equation} \label{defrcu}
  A_{cr}^{p,q} ( X,\mathfrak{U} ; V ) := 
\left\{ f: X^{p+1} \times X^{q+1} \rightarrow V 
\mid f_{\mid X^{p+1} \times
      \mathfrak{U}[q]} \; \text{is continuous} \right\}
\, .
\end{equation}
The abelian groups $A_{cr}^{p,q} ( X,\mathfrak{U} ; V )$ 
form a first quadrant double complex whose vertical and horizontal
differentials are given by 
\begin{align*}
 d_{h}^{p,q} : A_{cr}^{p,q} \to A_{cr}^{p+1,q},
& \quad d_{h}^{p,q}(f^{p,q})(\vec{x},\vec{x})
=\sum_{i=0}^{p+1}(-1)^{i}f^{p,q}(x_{0},...,\widehat{x_{i}},...,x_{p+1},\vec{x}')\\
 d_{v}^{p,q} : A_{cr}^{p,q}\to A_{cr}^{p,q+1}, 
&\quad d_{v}^{p,q}(f^{p,q})(\vec{x},\vec{x}')
= (-1)^p
\sum_{i=0}^{q+1}(-1)^{i}f^{p,q}(\vec{x},x_{0}',...,\widehat{x_{i}}',...,x_{q+1}')
\, .
\end{align*}
The double complex $A_{cr}^{*,*} ( X,\mathfrak{U} ; V )$ can be filtrated
column-wise to obtain a spectral sequence $E_{cr,*}^{*,*} (X,\mathfrak{U};V)$ 
(cf. \cite[Theorem 2.15]{Mcl}). 
Since the double complex is a first quadrant double complex, the spectral
sequence $E_{cr,*}^{*,*} (X,\mathfrak{U};V)$ converges to the cohomology of
the total complex of $A_{cr}^{*,*} ( X,\mathfrak{U} ; V )$.

The rows of the double complex $A_{cr}^{*,*} ( X,\mathfrak{U} ; V )$ can be
augmented by the complex $A_{cr}^* (X,\mathfrak{U} ;V)$ for the covering
$\mathfrak{U}$ and the columns can be augmented by the exact complex 
$A_c^* (X;V)$ of continuous cochains:
\begin{equation*}
  \vcenter{
  \xymatrix{
\vdots & \vdots & \vdots & \vdots \\ 
A_{cr}^2 (X, \mathfrak{U};V) \ar[r] \ar[u]_{d_{v}} 
& A_{cr}^{0,2} ( X, \mathfrak{U} ; V) \ar[r]^{d_{h}}\ar[u]_{d_{v}} 
&  A_{cr}^{1,2} ( X, \mathfrak{U}; V) \ar[r]^{d_{h}}\ar[u]_{d_{v}} 
& A_{cr}^{2,2} ( X, \mathfrak{U} ; V) \ar[r]^{d_{h}}\ar[u]_{d_{v}} & \cdots \\
A_{cr}^1 (X, \mathfrak{U};V) \ar[r] \ar[u]_{d_{v}} 
& A_{cr}^{0,1} ( X, \mathfrak{U} ; V) \ar[r]^{d_{h}}\ar[u]_{d_{v}} 
&  A_{cr}^{1,1} ( X, \mathfrak{U} ; V) \ar[r]^{d_{h}}\ar[u]_{d_{v}} 
& A_{cr}^{2,1} ( X, \mathfrak{U} ; V) \ar[r]^{d_{h}}\ar[u]_{d_{v}} & \cdots \\
A_{cr}^0 (X, \mathfrak{U};V) \ar[r] \ar[u]_{d_{v}} 
& A_{cr}^{0,0} ( X, \mathfrak{U} ; V) \ar[r]^{d_{h}}\ar[u]_{d_{v}} 
&  A_{cr}^{1,0} ( X, \mathfrak{U} ; V) \ar[r]^{d_{h}}\ar[u]_{d_{v}} 
&  A_{cr}^{2,0} ( X, \mathfrak{U} ; V) \ar[r]^{d_{h}}\ar[u]_{d_{v}} & \cdots \\
& A_c^0 ( X ; V) \ar[r]^{d_{h}}\ar[u] 
& A_c^1 ( X ; V) \ar[r]^{d_{h}}\ar[u] 
& A_c^2 ( X ; V) \ar[r]^{d_{h}}\ar[u] 
& \cdots
}}
\end{equation*}
We denote the total complex of the double complex 
$A_{cr}^{*,*} ( X, \mathfrak{U} ; V)$ by 
$\tot  A_{cr}^{*,*} ( X, \mathfrak{U} ; V)$. 
The augmentations of the rows and columns of this double complex 
induce morphisms $i^* : A_{cr}^* ( X, \mathfrak{U} ; V) \rightarrow 
\tot A_{cr}^{*,*} ( X, \mathfrak{U} ; V)$ and 
$j^*:  A_c^* ( X ; V) \rightarrow \tot A_{cr}^{*,*} ( X,\mathfrak{U} ; V)$ 
of cochain complexes respectively. 

\begin{lemma} \label{columnsexact}
  The morphism $i^*:  A_{cr}^* ( X,\mathfrak{U} ; V) \rightarrow 
\tot A_{cr}^{*,*} (X,\mathfrak{U} ; V)$ induces an isomorphism in cohomology.
\end{lemma}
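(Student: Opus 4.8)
The plan is to treat this as a standard instance of the principle that an augmented double complex with exact rows has total complex computing the cohomology of the augmenting complex, via the augmentation map. The only point that uses the specific structure here is that the contracting homotopy for the rows must remain inside the subcomplexes cut out by the germ-continuity condition, and this is painless, because that condition constrains only the second block $X^{q+1}$ of variables, whereas the rows of the double complex run along the first block $X^{p+1}$.

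First I would record the augmentation explicitly. The degree-$q$ component $i^q\colon A_{cr}^q(X,\mathfrak{U};V)\to A_{cr}^{0,q}(X,\mathfrak{U};V)$ of $i^*$ sends $f$ to the cochain $(x_0,\vec{x}')\mapsto f(\vec{x}')$; since $f$ is continuous on $\mathfrak{U}[q]$, this is continuous on $X\times\mathfrak{U}[q]$, so it lands in $A_{cr}^{0,q}(X,\mathfrak{U};V)$, its image lies in $\ker d_h^{0,q}$, and applying $d_v$ to it recovers (up to sign) the differential of $A_{cr}^*(X,\mathfrak{U};V)$. Hence $i^*$ is, after the obvious identification, the total-complex map attached to the augmented double complex $\widetilde{A}^{*,*}$ with $\widetilde{A}^{-1,q}=A_{cr}^q(X,\mathfrak{U};V)$ and $\widetilde{A}^{p,q}=A_{cr}^{p,q}(X,\mathfrak{U};V)$ for $p\geq 0$.

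Next I would show that, for each fixed $q$, the augmented row $A_{cr}^q(X,\mathfrak{U};V)\to A_{cr}^{0,q}(X,\mathfrak{U};V)\to A_{cr}^{1,q}(X,\mathfrak{U};V)\to\cdots$ with horizontal differentials $d_h$ is exact, in fact contractible. Fixing a basepoint $\ast\in X$ (the case $X=\varnothing$ being trivial), set $h^{p,q}(g)(x_0,\dots,x_{p-1},\vec{x}')=g(\ast,x_0,\dots,x_{p-1},\vec{x}')$ for $p\geq 1$ and $h^{0,q}(g)(\vec{x}')=g(\ast,\vec{x}')$. Substituting a constant into one of the $X^{p+1}$-coordinates cannot destroy continuity on $X^{p+1}\times\mathfrak{U}[q]$, so $h^{p,q}$ maps $A_{cr}^{p,q}(X,\mathfrak{U};V)$ into $A_{cr}^{p-1,q}(X,\mathfrak{U};V)$ and $h^{0,q}$ maps into $A_{cr}^q(X,\mathfrak{U};V)$. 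A direct computation then yields the simplicial homotopy identity $d_h h+h d_h=\id$ in degrees $p\geq 1$, together with $i^q h^{0,q}+h^{1,q}d_h^{0,q}=\id$ and $h^{0,q}i^q=\id$; in particular $i^q$ is injective with image exactly $\ker d_h^{0,q}$, so the displayed complex is exact in every degree.

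Finally I would assemble. The double complex $\widetilde{A}^{*,*}$ is supported in the region $p\geq -1$, $q\geq 0$, so its total complex is a finite direct sum in each degree, and the exactness of its rows puts the acyclic assembly lemma at our disposal (equivalently: filter so as to take $d_h$-cohomology first, obtaining a spectral sequence with $E_1$ concentrated on the line $p=-1$, which therefore collapses). This shows that $\tot\widetilde{A}^{*,*}$ is acyclic, and since $\tot\widetilde{A}^{*,*}$ is, up to the usual signs, the mapping cone of $i^*\colon A_{cr}^*(X,\mathfrak{U};V)\to\tot A_{cr}^{*,*}(X,\mathfrak{U};V)$, its acyclicity is exactly the assertion of the lemma. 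I do not expect a genuine obstacle here: the steps needing care are bookkeeping --- checking that $h^{p,q}$ respects the ``continuous germ at the diagonal'' condition, handling the edge degree $p=0$ and the injectivity of the augmentation, and matching signs so that $\tot\widetilde{A}^{*,*}$ really is the cone of $i^*$. It is worth stressing that the assembly-lemma (or spectral-sequence) input is genuinely needed, rather than a naive homotopy-equivalence argument, because the homotopies $h^{p,q}$ do not commute with $d_v$.
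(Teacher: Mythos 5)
Your proof is correct and follows essentially the same route as the paper: contract each augmented row by an explicit homotopy that visibly preserves the continuity-on-$X^{p+1}\times\mathfrak{U}[q]$ condition, then conclude by the standard acyclic-assembly (mapping-cone) argument for the bounded double complex. The only divergence is the choice of cone point: the paper's homotopy (Eq. \ref{defrowcontr}) inserts the variable $x_0'$ rather than a fixed basepoint $\ast$, a basepoint-free choice that makes the contraction $G$-equivariant and lets it be reused verbatim for Lemma \ref{columnsexacteq}, whereas your basepoint homotopy, while perfectly adequate for the present non-equivariant statement, would not restrict to the equivariant subcomplexes.
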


\begin{proof}
On each augmented row $A_{cr}^q ( X, \mathfrak{U} ; V) \hookrightarrow 
A_{cr}^{*,q} ( X, \mathfrak{U} ; V)$ 
one can define a contraction $h^{*,q}$ via 
\begin{equation} \label{defrowcontr}
  h^{p,q} : A_{cr}^{p,q} ( X, \mathfrak{U} ; V) \rightarrow 
A_{cr}^{p-1,q} ( X, \mathfrak{U} ; V) , \quad 
h^{p,q} (f) (\vec{x},\vec{x}')= f ( x_0, \ldots, x_{p-1}, x_0 ', \vec{x}' ) \, .
\end{equation}
Therefore the augmented rows are exact and the augmentation $i^*$ induces an 
isomorphism in cohomology.
\end{proof}

\begin{remark}
  Note that for non-trivial $\mathfrak{U}$ this construction does not work for 
the column complexes, because the so constructed cochains would not fulfil the 
continuity condition in Def. \ref{defrcu}.
\end{remark}

For $G$-invariant open coverings $\mathfrak{U}$ of $X$ one can consider the 
sub double complex $A_{cr}^{*,*} ( X,\mathfrak{U} ; V )^G$ of 
$A_{cr}^{*,*} ( X,\mathfrak{U} ; V )$ whose rows are augmented by the cochain 
complex $A_{cr}^* (X,\mathfrak{U} ;V)^G$ for the covering $\mathfrak{U}$ and 
the columns can be augmented by the complex $A_c^* (X;V)^G$ of continuous 
equivariant cochains (,which is not exact in general). 

\begin{lemma} \label{columnsexacteq}
  For $G$-invariant coverings $\mathfrak{U}$ of $X$ the morphism 
$i_{eq}^*:  ={i^*}^G$ induces an isomorphism in cohomology.
\end{lemma}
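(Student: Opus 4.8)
The plan is to observe that the row contraction constructed in the proof of Lemma~\ref{columnsexact} is already $G$-equivariant, so that it descends to the subcomplexes of $G$-fixed points and essentially nothing new has to be done. For a $G$-invariant covering $\mathfrak{U}$ the group $G$ acts on $A_{cr}^{p,q}(X,\mathfrak{U};V)$ by the diagonal version of the action~\eqref{defgact}, i.e.\ $[g.f](\vec x,\vec x')=g.\bigl[f(g^{-1}.\vec x,\,g^{-1}.\vec x')\bigr]$; this restricts to the sub double complex $A_{cr}^{*,*}(X,\mathfrak{U};V)$ precisely because each $\mathfrak{U}[q]$ is $G$-invariant, and likewise to the augmenting complexes $A_{cr}^*(X,\mathfrak{U};V)$ and $A_c^*(X;V)$, the augmenting maps (which merely drop a coordinate) being equivariant as well. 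The first and only substantial step is the short computation that the maps $h^{p,q}$ of~\eqref{defrowcontr} commute with this action: for $f\in A_{cr}^{p,q}$ one finds that both $h^{p,q}(g.f)$ and $g.(h^{p,q}f)$, evaluated at $(\vec x,\vec x')$, equal $g.\bigl[f(g^{-1}x_0,\dots,g^{-1}x_{p-1},\,g^{-1}x_0',\,g^{-1}.\vec x')\bigr]$. The reason this works is that the coordinate $x_0'$ which $h^{p,q}$ copies into the first block is acted on by the same group element as all the others, so no mismatch arises — exactly the sort of compatibility that the column homotopy discussed in the Remark after Lemma~\ref{columnsexact} lacks.

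Granting the equivariance of $h$, I would simply restrict the entire contraction to $G$-fixed points. Since $(-)^G$ is additive and the total complex of a first quadrant double complex is, in each total degree, a finite direct sum of its entries, $\tot$ commutes with $(-)^G$; hence $i_{eq}^*={i^*}^G$ really is the row augmentation $A_{cr}^*(X,\mathfrak{U};V)^G\to\tot A_{cr}^{*,*}(X,\mathfrak{U};V)^G$. The fixed-point maps ${h^{p,q}}^G$ still satisfy the contracting-homotopy identities $d_h\circ h+h\circ d_h=\id$ in the interior of each row together with $h^{0,q}\circ i=\id$ and $i\circ h^{0,q}+h^{1,q}\circ d_h^{0,q}=\id$ at the augmentation end, because these identities are preserved by any additive functor. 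Consequently each augmented row $A_{cr}^q(X,\mathfrak{U};V)^G\hookrightarrow A_{cr}^{*,q}(X,\mathfrak{U};V)^G$ is contractible, in particular exact.

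From here the argument is identical to that of Lemma~\ref{columnsexact}: the rows of the first quadrant double complex $A_{cr}^{*,*}(X,\mathfrak{U};V)^G$, augmented by $A_{cr}^*(X,\mathfrak{U};V)^G$, are exact, so by the acyclic assembly lemma — equivalently, because the spectral sequence obtained by filtering $A_{cr}^{*,*}(X,\mathfrak{U};V)^G$ row-wise has its $E_1$-page concentrated in the single column $A_{cr}^*(X,\mathfrak{U};V)^G$ — the augmentation $i_{eq}^*$ induces an isomorphism in cohomology. Thus the main (and in fact the only) obstacle is the verification in the first step that $h^{p,q}$ is $G$-equivariant; everything else is formal, and in particular no contractibility hypothesis on $X$ enters at this stage.
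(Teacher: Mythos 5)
Your proposal is correct and follows the paper's own argument: the paper likewise proves this lemma by observing that the row contraction $h^{p,q}$ of Eq.~\eqref{defrowcontr} is $G$-equivariant and therefore restricts to a contraction of the augmented sub-rows of $G$-fixed points, whence exactness of the rows and the isomorphism induced by $i_{eq}^*$ follow exactly as in Lemma~\ref{columnsexact}. Your explicit verification of the equivariance of $h^{p,q}$ and of the compatibility of $\tot$ with $(-)^G$ merely spells out details the paper leaves implicit.
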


\begin{proof}
  The contraction $h_{*,q}$ of the augmented rows 
$A_{cr}^q ( X, \mathfrak{U} ; V) \hookrightarrow 
\tot A_{cr}^{*,q} ( X, \mathfrak{U} ; V)$ defined in Eq. \ref{defrowcontr} is
$G$-equivariant and thus restricts to a row contraction of the augmented
sub-row $A_{cr}^q ( X, \mathfrak{U} ; V)^G \hookrightarrow 
\tot A_{cr}^{*,q} ( X, \mathfrak{U} ; V)^G$.
\end{proof}

So the morphism 
$H (i_{eq}) : H_{cr,eq} (X,\mathfrak{U};V) \rightarrow 
H ( \tot A_{cr}^{*,*} ( X, \mathfrak{U} ; V)^G )$ is invertible. 
For the composition 
$H (i_{eq})^{-1} H(j_{eq}):H_{c,eq}(X;V)\rightarrow H_{cr,eq}(X,\mathfrak{U};V)$ 
we observe:

\begin{proposition} \label{contiscohtocr}
The image $j^n (f)$ of a continuous equivariant $n$-cocycle $f$ on $X$ in 
$\tot A_{cr}^{*,*} (X,\mathfrak{U};,V)^G$ is cohomologous to the image 
$i_{eq}^n  (f)$ of the equivariant $n$-cocycle 
$f\in A_{cr}^n (X,\mathfrak{U};V)^G$ in $\tot A_{cr}^{*,*} (X,\mathfrak{U};V)^G$.
\end{proposition}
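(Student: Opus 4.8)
The statement is a standard "both augmentations into a total complex agree up to homotopy" fact, so the plan is to exhibit the homotopy explicitly by walking the cocycle $f$ through the first column and first row of the double complex and using the two contractions already in play. Concretely, view $j^n(f)$ as sitting in the bottom augmentation (bidegree "$(*,-1)$" contribution, i.e. the column augmentation $A_c^n(X;V)^G \to A_{cr}^{n,0}(X,\mathfrak{U};V)^G$), and $i_{eq}^n(f)$ as sitting in the left augmentation $A_{cr}^n(X,\mathfrak{U};V)^G \to A_{cr}^{0,n}(X,\mathfrak{U};V)^G$. Since $f$ is simultaneously a continuous equivariant cochain and a $\mathfrak{U}$-continuous equivariant cochain (continuity on all of $X^{n+1}$ certainly implies continuity on $\mathfrak{U}[n]$, and equivariance is the same condition), the element $j^n(f) - i_{eq}^n(f)$ of $\tot A_{cr}^{*,*}(X,\mathfrak{U};V)^G$ makes sense; I would show it is a coboundary.

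First I would observe that the column augmentation map sends $f \in A_c^n(X;V)^G$ to the cochain $(\vec x, x_0') \mapsto f(\vec x)$ in $A_{cr}^{n,0}$, and the row augmentation map sends $f \in A_{cr}^n(X,\mathfrak{U};V)^G$ to $(x_0', \vec x') \mapsto f(\vec x')$ in $A_{cr}^{0,n}$ — up to the usual sign bookkeeping these are the "constant in the other family of variables" embeddings. For a cocycle $f$, both images are total cocycles (this is exactly what Lemmas \ref{columnsexact} and \ref{columnsexacteq} are set up to use: $i_{eq}$ is a quasi-isomorphism, and $j_{eq}$ is a chain map because the column augmentation is). The key computation is then to produce, for each total degree $n$, cochains $s^{p,q} \in A_{cr}^{p,q}$ with $p+q = n-1$ interpolating between the two embeddings. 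The natural candidates are the "hybrid" cochains $s^{p,q}(x_0,\dots,x_p, x_0',\dots,x_q') = \pm\, f(x_0,\dots,x_p, x_1',\dots,x_q')$ — essentially the building blocks of the row contraction $h^{p,q}$ of \eqref{defrowcontr} applied to the embedded $f$ — and one checks that $D(s) = \sum_{p+q=n-1}(d_h + d_v) s^{p,q}$ telescopes to $j^n(f) - i_{eq}^n(f)$, with all interior terms cancelling because $f$ is a cocycle for both the horizontal and the vertical differentials on the relevant subspaces. The continuity side-condition of \eqref{defrcu} is satisfied by each $s^{p,q}$ precisely because $f$ is continuous on $X^{n+1}$, so plugging the first block of variables into the "$X^{p+1}$" slot and the second into the "$X^{q+1}$" slot keeps us inside the domain where $f$ is continuous; equivariance of $s^{p,q}$ is inherited from equivariance of $f$ since the formula only permutes and repeats coordinates.

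The cleanest way to organize this — and probably what the author does — is to reuse the row contraction $h^{*,*}$ from Lemma \ref{columnsexacteq} rather than reinvent the homotopy: apply $h$ (extended by the row augmentation) to the total cocycle $i_{eq}^n(f)$, and separately compare with $j^n(f)$ via the column augmentation, so that the abstract "two augmentations of a bicomplex with one exact direction induce the same map after inverting" principle does the telescoping for me. In that formulation the argument is: $i_{eq}^n(f)$ and $j^n(f)$ both lift $f$ along the same total complex, and because the rows are exact (Lemma \ref{columnsexacteq}) any two total cochains whose "row-boundary components agree appropriately" differ by $D(\text{row-contraction applied to their difference})$.

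The main obstacle I anticipate is purely bookkeeping: getting the signs in $d_v^{p,q}$ (which carries the extra $(-1)^p$) to line up so that the telescoping sum of $d_h$ and $d_v$ terms collapses exactly to $j^n(f) - i_{eq}^n(f)$ with no leftover boundary term and no sign discrepancy between the two ends. There is no genuine conceptual difficulty — continuity and equivariance of the intermediate cochains are immediate — but the cancellation has to be verified degree by degree, and a single misplaced sign in the definition of the hybrid cochains $s^{p,q}$ would break it, so that is where I would be most careful.
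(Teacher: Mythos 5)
Your overall strategy is the paper's: for $p+q=n-1$ one writes down explicit interpolating $(p,q)$-cochains obtained by splitting the $n+1$ arguments of $f$ into a block of $p+1$ and a block of $q+1$, notes that they satisfy the continuity condition of \eqref{defrcu} because $f$ is continuous on all of $X^{n+1}$ (and are equivariant because $f$ is), and verifies that the total coboundary of their signed sum telescopes to $j^n(f)-i_{eq}^n(f)$ using the cocycle identity for $f$ together with the extra sign $(-1)^p$ in $d_v$. The paper's interpolating cochain is exactly $\psi^{p,q}(\vec{x},\vec{x}')=(-1)^p f(\vec{x},\vec{x}')$, that is, $f$ evaluated on all $n+1$ coordinates $(x_0,\dots,x_p,x_0',\dots,x_q')$, and the cancellation $d_v\psi^{p,q}=d_h\psi^{p-1,q+1}$ is the telescoping you anticipate.

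The one concrete defect is your candidate formula $s^{p,q}(x_0,\dots,x_p,x_0',\dots,x_q')=\pm\, f(x_0,\dots,x_p,x_1',\dots,x_q')$: the right-hand side feeds only $(p+1)+q=n$ points into $f$, which as a homogeneous $n$-cochain is a function on $X^{n+1}$, so the expression is not defined; taken literally the homotopy does not exist. Nor is omitting $x_0'$ what the row contraction \eqref{defrowcontr} does --- $h^{p,q}$ \emph{duplicates} a coordinate ($x_0'$ appears twice) rather than dropping one. Replace your formula by $f(x_0,\dots,x_p,x_0',\dots,x_q')$, with no coordinate omitted and the sign $(-1)^p$ attached, and the rest of your argument --- continuity on $X^{p+1}\times\mathfrak{U}[q]$, equivariance, and the degree-by-degree cancellation --- goes through exactly as you describe and coincides with the paper's proof.
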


\begin{proof}
The proof is a variation of the proof of \cite[Proposition 14.3.8]{F10}:
 Let $f:X^{n+1}\to V$ be a continuous equivariant $n$-cocycle on 
 $X$ and (for $p+q=n-1$) define equivariant cochains 
$\psi^{p,q} : X^{p+1}\times X^{q+1} \cong X^{n+1} \to V$ in 
$A_{cr}^{p,q} (X,\mathfrak{U};V)^G$ via 
$\psi^{p,q} ( \vec{x},\vec{x}')= (-1)^p f (\vec{x},\vec{x}')$. The vertical 
coboundary of the cochain $\psi^{p,q}$ is given by
 \begin{eqnarray*}
  [d_v \psi^{p,q}] (\vec{x},x_0',\ldots,x_{q+1}') & = & (-1 )^p
  \sum (-1)^i f (\vec{x},x_0',\ldots,\hat{x}_i',\ldots,x_q') \\
  & = & - \sum (-1)^{p+1+i} f ( x_0,\ldots,\hat{x}_i, \ldots,
  x_p,\vec{x}') \\
  & = & [d_h \psi^{p-1,q+1}](x_0,...,x_p,\vec{x}').
 \end{eqnarray*}
 The anti-commutativity of the horizontal and the vertical differential 
 ensures that the coboundary of the cochain
 $\sum_{p+q=n-1} (-1)^p \psi^{p,q}$ in the total complex 
 is the cochain $j^n (f) - i^n (f)$. Thus the cocycles $j^n (f)$ 
 and $i_{eq}^n (f)$ are cohomologous in $\tot A_{cr}^{*,*} (X,\mathfrak{U};V)^G$.
\end{proof}

\begin{corollary}
 The composition 
$H (i_{eq})^{-1} H(j_{eq}):H_{c,eq}(X;V)\rightarrow H_{cr,eq}(X,\mathfrak{U};V)$ 
is induced by the inclusion 
$A_c^* (X,\mathfrak{U};V)^G \hookrightarrow A_{cr}^* (X,\mathfrak{U};V)^G$. 
\end{corollary}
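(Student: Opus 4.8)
The plan is to read the statement off directly from Proposition~\ref{contiscohtocr}. First I would record that the inclusion $\iota^* : A_c^* (X;V)^G \hookrightarrow A_{cr}^* (X,\mathfrak{U};V)^G$ is well defined: a globally continuous equivariant cochain is in particular continuous on the diagonal neighbourhood $\mathfrak{U}[*]$, hence lies in $A_{cr}^* (X,\mathfrak{U};V)^G$. This $\iota^*$ induces a map $H(\iota) : H_{c,eq}(X;V) \to H_{cr,eq}(X,\mathfrak{U};V)$, and the claim is precisely that $H(\iota) = H(i_{eq})^{-1} H(j_{eq})$. I would stress at the outset that $i^*$ and $j^*$ are the augmentations of the double complex along the two different directions (the row and the column augmentation respectively), so that there is \emph{no} chain-level identity $j^* = i^* \circ \iota^*$; the entire content of the corollary sits in the total-complex chain homotopy exhibited in Proposition~\ref{contiscohtocr}.

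Next I would unwind that proposition for a continuous equivariant $n$-cocycle $f$ on $X$. Viewing $f$ both as an element of $A_c^n(X;V)^G$ and, via $\iota^n$, as an element of $A_{cr}^n(X,\mathfrak{U};V)^G$, Proposition~\ref{contiscohtocr} says that $j^n(f)$ and $i_{eq}^n(\iota^n f)$ are cohomologous in $\tot A_{cr}^{*,*}(X,\mathfrak{U};V)^G$. Passing to the cohomology of the total complex, this is the identity
\begin{equation*}
H(j_{eq})\,[f] \;=\; H(i_{eq})\bigl( H(\iota)\,[f] \bigr) \qquad\text{for every } [f]\in H_{c,eq}(X;V).
\end{equation*}
Since $H(i_{eq})$ is an isomorphism by Lemma~\ref{columnsexacteq}, I may apply its inverse to both sides to get $H(i_{eq})^{-1} H(j_{eq})\,[f] = H(\iota)\,[f]$ for all classes $[f]$, i.e. $H(i_{eq})^{-1} H(j_{eq}) = H(\iota)$, which is the assertion.

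The argument is essentially a bookkeeping step, so I do not anticipate a genuine obstacle. The only points that need a little care are (i) checking that the two cochains denoted "$f$" in the two augmentations of Proposition~\ref{contiscohtocr} really are the single cochain $f$ seen through $\iota^n$, and (ii) noting that the cocycle hypothesis required there is automatic, since a continuous equivariant cocycle is in particular an honest cocycle in $A_{cr}^*(X,\mathfrak{U};V)^G$. Neither is more than a definition-chase.
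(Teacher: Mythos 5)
Your argument is correct and is exactly the derivation the paper intends: the corollary is stated without proof immediately after Proposition~\ref{contiscohtocr}, and the intended reading is precisely your bookkeeping step of rewriting the cohomologousness of $j^n(f)$ and $i_{eq}^n(f)$ as $H(j_{eq})[f]=H(i_{eq})\bigl(H(\iota)[f]\bigr)$ and then inverting $H(i_{eq})$ via Lemma~\ref{columnsexacteq}. Your two points of care (that both occurrences of $f$ are the same cochain seen through the inclusion, and that the cocycle hypothesis is automatic) are exactly the right things to check.
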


\begin{corollary}
If the morphism 
$j^*_{eq}:={j^*}^G : A_c^* (X;V)^G \rightarrow 
\tot A_{cr}^{*,*}(X,\mathfrak{U}A)^G$ induces a monomorphism, epimorphism or
isomorphism in cohomology, then the inclusion 
$A_c^* (X;V)^G \hookrightarrow A_{cr}^* (X,\mathfrak{U};V)^G$
induces a monomorphism, epimorphism or isomorphism in cohomology respectively.
\end{corollary}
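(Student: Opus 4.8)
The plan is to deduce the statement purely formally from Lemma~\ref{columnsexacteq} and the preceding corollary, using only the elementary observation that composing a map of abelian groups with an isomorphism does not affect whether that map is a monomorphism, an epimorphism, or an isomorphism.

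First I would recall that by Lemma~\ref{columnsexacteq} the augmentation $i_{eq}^* = {i^*}^G$ induces an isomorphism
\[
H(i_{eq})\colon H_{cr,eq}(X,\mathfrak{U};V)\xrightarrow{\ \cong\ }H\bigl(\tot A_{cr}^{*,*}(X,\mathfrak{U};V)^G\bigr),
\]
so that $H(i_{eq})^{-1}$ is well defined and is again an isomorphism. Since $H(j_{eq}) = H(i_{eq})\circ\bigl(H(i_{eq})^{-1}\circ H(j_{eq})\bigr)$, the composite
\[
H(i_{eq})^{-1}\circ H(j_{eq})\colon H_{c,eq}(X;V)\longrightarrow H_{cr,eq}(X,\mathfrak{U};V)
\]
is a monomorphism (resp.\ epimorphism, resp.\ isomorphism) if and only if $H(j_{eq})$ is, because the two maps differ only by postcomposition with the isomorphism $H(i_{eq})$.

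Next I would invoke the preceding corollary, which identifies precisely this composite $H(i_{eq})^{-1}\circ H(j_{eq})$ with the map in cohomology induced by the inclusion of cochain complexes $A_c^*(X;V)^G\hookrightarrow A_{cr}^*(X,\mathfrak{U};V)^G$. Combining this identification with the equivalence of the previous paragraph immediately gives the claim: the inclusion-induced map $H_{c,eq}(X;V)\to H_{cr,eq}(X,\mathfrak{U};V)$ is a monomorphism (resp.\ epimorphism, resp.\ isomorphism) exactly when $H(j_{eq})$ is one.

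There is essentially no serious obstacle here; the whole argument is a chase around the commutative triangle
\[
\xymatrix{
H_{c,eq}(X;V) \ar[r]^{H(j_{eq})} \ar[dr] & H\bigl(\tot A_{cr}^{*,*}(X,\mathfrak{U};V)^G\bigr)\\
& H_{cr,eq}(X,\mathfrak{U};V) \ar[u]_{H(i_{eq})}
}
\]
whose commutativity is the content of Proposition~\ref{contiscohtocr} together with the preceding corollary, and whose vertical arrow is the isomorphism supplied by Lemma~\ref{columnsexacteq}. The only point requiring a little care is the bookkeeping: one must keep track of which composite realizes which map and in which direction one transports properties across $H(i_{eq})$, so that the three cases (mono, epi, iso) are matched up correctly.
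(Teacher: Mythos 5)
Your argument is correct and is exactly the (implicit) argument of the paper: the preceding corollary identifies the inclusion-induced map with $H(i_{eq})^{-1}\circ H(j_{eq})$, and since $H(i_{eq})$ is an isomorphism by Lemma~\ref{columnsexacteq}, this composite is a monomorphism, epimorphism or isomorphism precisely when $H(j_{eq})$ is. The paper offers no separate proof because this is the whole content; your write-up matches it.
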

 
For any directed system $\{ \mathfrak{U}_i \mid i \in I \}$ of open coverings 
of $X$ one can also consider the corresponding augmented colimit double
complexes. In particular for the directed system of all open coverings of $X$ 
one obtains the double complex complex 
\begin{equation*}
  A_{cg}^{*,*} (X;V):= \colim_{\mathfrak{U} \text{ is open cover of $X$}} 
A_{cr}^{*,*} (X;\mathfrak{U};V)
\end{equation*}
whose rows and columns are augmented by the colimit complex 
$A_{cg}^* (X;V)$ and by the complex $A_c^* (X;V)$ respectively. 

\begin{lemma}
  For any directed system $\{ \mathfrak{U}_i \mid i \in I \}$ of open 
coverings of $X$ the morphism 
$\colim_i i^*:  \colim_i A_{cr}^* ( X,\mathfrak{U}_i ; V) \rightarrow 
\tot \colim_i A_{cr}^{*,*} (X,\mathfrak{U}_i ; V)$ induces an isomorphism in 
cohomology.
\end{lemma}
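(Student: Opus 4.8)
The plan is to deduce this from Lemma~\ref{columnsexact} by a routine colimit argument, exploiting that the row contraction appearing there is given by a formula that does not mention the covering, and that filtered colimits of abelian groups are exact and compatible with the total complex of a first quadrant double complex.

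First I would observe that the map $h^{p,q}(f)(\vec{x},\vec{x}')=f(x_0,\dots,x_{p-1},x_0',\vec{x}')$ of Equation~\eqref{defrowcontr} makes no reference to the covering: for each member $\mathfrak{U}_i$ of the directed system it is the restriction of one and the same map $\hom_\set(X^{p+1}\times X^{q+1};V)\to\hom_\set(X^{p}\times X^{q+1};V)$. Since all transition maps $A_{cr}^{p,q}(X,\mathfrak{U}_i;V)\to A_{cr}^{p,q}(X,\mathfrak{U}_j;V)$ of the system are inclusions of subgroups of these ambient $\hom$-groups, the maps $h^{p,q}$ are compatible with them and descend to maps $\colim_i h^{p,q}$ on colimits. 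Exactly as in the proof of Lemma~\ref{columnsexact}, for each fixed $q$ the maps $\colim_i h^{p,q}$ (for $p\ge 0$) assemble into a contracting homotopy of the augmented row $\colim_i A_{cr}^q(X,\mathfrak{U}_i;V)\hookrightarrow\colim_i A_{cr}^{*,q}(X,\mathfrak{U}_i;V)$, so all the augmented rows of the colimit double complex $\colim_i A_{cr}^{*,*}(X,\mathfrak{U}_i;V)$ are exact.

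Next I would identify $\tot\colim_i A_{cr}^{*,*}(X,\mathfrak{U}_i;V)$ with $\colim_i\tot A_{cr}^{*,*}(X,\mathfrak{U}_i;V)$: this is the point where the first quadrant hypothesis enters, since the $n$-th term of each total complex is the finite direct sum $\bigoplus_{p+q=n}A_{cr}^{p,q}(X,\mathfrak{U}_i;V)$ and finite direct sums commute with filtered colimits; under this identification of cochain complexes the morphism $\colim_i i^*$ becomes the colimit of the augmentations $i^*$. Then I would invoke the same principle already used for Lemma~\ref{columnsexact} — a double complex with exact augmented rows has its augmentation into the total complex a quasi-isomorphism — applied now to the colimit double complex, whose augmented rows were shown exact in the previous step; this gives that $\colim_i i^*$ induces an isomorphism in cohomology. (Equivalently, one could observe directly that cohomology commutes with the filtered colimit, so that $H(\colim_i i^*)=\colim_i H(i^*)$ is a filtered colimit of the isomorphisms of Lemma~\ref{columnsexact}.) I do not expect a genuine obstacle here; the only steps deserving a word of justification are the two interchanges with $\colim_i$, and both are standard — the one with $\tot$ because the double complex is first quadrant, the one with $H^*$ because $I$ is directed and filtered colimits of abelian groups are exact.
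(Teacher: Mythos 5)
Your argument is correct and is essentially the paper's: the paper's proof consists of the single observation that passage to the colimit preserves the exactness of the augmented rows established in Lemma~\ref{columnsexact}, which is exactly your main line (your verification that the contraction $h^{p,q}$ is compatible with the transition maps, and that $\tot$ commutes with the filtered colimit for a first quadrant double complex, merely spells out details the paper leaves implicit). No discrepancy to report.
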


\begin{proof}
The passage to the colimit preserves the exactness of the augmented row
complexes (Lemma \ref{columnsexact}). 
\end{proof}

As a consequence the colimit morphism 
$i_{cg}^* : A_{cg}^* ( X; V) \rightarrow \tot A_{cg}^{*,*} (X; V)$ induces an 
isomorphism in cohomology.
The colimit double complex $A_{cg}^{*,*} (X;V)$ is a double complex of 
$G$-modules and the $G$-equivariant cochains in form a sub double complex 
$A_{cg}^{*,*} (X;V)^G$, whose rows and columns are augmented by the colimit 
complex $A_{cg,eq}^* (X;V)$ and by the complex $A_c^* (X;V)^G$ respectively.

\begin{lemma}
  For any directed system $\{ \mathfrak{U}_i \mid i \in I \}$ of $G$-invariant
  open coverings of $X$ the morphism 
$\colim_i i_{eq}^*:  \colim_i A_{cr}^* ( X,\mathfrak{U}_i ; V)^G \rightarrow 
\tot \colim_i A_{cr}^{*,*} (X,\mathfrak{U}_i ; V)^G$ induces an isomorphism in 
cohomology.
\end{lemma}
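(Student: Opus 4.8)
The plan is to reduce this to the single-covering equivariant statement, Lemma \ref{columnsexacteq}, in exactly the same way the preceding (non-equivariant) colimit lemma was reduced to Lemma \ref{columnsexact}. The crucial observation, already recorded in the proof of Lemma \ref{columnsexacteq}, is that the row contraction $h^{p,q}$ of Eq. \ref{defrowcontr} is $G$-equivariant: it only reindexes coordinates, and the diagonal $G$-action together with the action on $V$ in Eq. \ref{defgact} is compatible with such reindexing. Hence for each $G$-invariant covering $\mathfrak{U}_i$ it restricts to a contraction of the augmented row $A_{cr}^q ( X, \mathfrak{U}_i ; V)^G \hookrightarrow A_{cr}^{*,q} ( X, \mathfrak{U}_i ; V)^G$, so every such augmented row is exact.

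Next I would pass to the colimit over the directed system $\{ \mathfrak{U}_i \mid i \in I \}$. Since the index category is directed, the functor $\colim_i$ is exact, so the colimits of the exact augmented rows $A_{cr}^q ( X, \mathfrak{U}_i ; V)^G \hookrightarrow A_{cr}^{*,q} ( X, \mathfrak{U}_i ; V)^G$ are again exact. (Note there is no issue here about fixed points versus colimits: we form the $G$-fixed subcomplexes first and only then take the filtered colimit, which is precisely the complex in the statement.) Because colimits of double complexes are computed degreewise, $\colim_i A_{cr}^{*,*} ( X, \mathfrak{U}_i ; V)^G$ is still a first-quadrant double complex, its rows are augmented by $\colim_i A_{cr}^* ( X, \mathfrak{U}_i ; V)^G$, and all these augmented rows are exact.

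Finally I would invoke the standard fact — the same one underlying Lemmas \ref{columnsexact} and \ref{columnsexacteq} — that when the augmented rows of a first-quadrant double complex are exact, the augmentation of the augmenting column complex into the total complex is a quasi-isomorphism. Applied to $\colim_i A_{cr}^{*,*} ( X, \mathfrak{U}_i ; V)^G$, this shows that $\colim_i i_{eq}^*$ induces an isomorphism in cohomology.

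I do not expect a genuine obstacle. The only point deserving a moment's care is the verification that $h^{p,q}$ commutes with the $G$-action of Eq. \ref{defgact}, but this is identical to the check made for Lemma \ref{columnsexacteq} and goes through verbatim; and that directed colimits are exact and commute with the formation of cohomology is standard and needs no separate argument.
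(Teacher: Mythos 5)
Your proposal is correct and follows the paper's own route: the paper's proof is the one-line remark that passage to the colimit preserves the exactness of the augmented row complexes established in Lemma \ref{columnsexacteq}, which is exactly the argument you spell out (equivariance of the contraction $h^{p,q}$, exactness of filtered colimits, and the standard first-quadrant total-complex comparison). Your version merely makes explicit the steps the paper leaves implicit.
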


\begin{proof}
The passage to the colimit preserves the exactness of the augmented row
complexes (Lemma \ref{columnsexacteq}). 
\end{proof}

Moreover, since the open diagonal neighbourhoods $\mathfrak{U}[n]$ in 
$X^{n+1}$ for open coverings $\mathfrak{U}$ of $X$ are cofinal in the 
directed set of all open diagonal neighbourhoods, we observe:

\begin{lemma} \label{natinclofeqdcisiso}
  The natural morphism of double complexes 
  \begin{equation*}
A_{cg,eq}^{*,*} (X;V):= \colim_{\mathfrak{U} \text{is $G$-invariant open cover
      of $X$}} A_{cr}^{*,*} (X;\mathfrak{U};V)^G \rightarrow A_{cg}^* (X;V)^G    
  \end{equation*}
is a natural isomorphism.
\end{lemma}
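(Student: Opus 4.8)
The plan is to carry the proof of Proposition \ref{natinclofeqccisiso} through degreewise. Fix a bidegree $(p,q)$. Both $A_{cg,eq}^{p,q}(X;V)$ and $A_{cg}^{p,q}(X;V)^G$ are, by construction, subgroups of the abelian group $\hom_\set(X^{p+1}\times X^{q+1};V)$ of all set maps: each of the two colimits is a directed union of the subgroups $A_{cr}^{p,q}(X,\mathfrak{U};V)$ (or their $G$-fixed parts) taken inside this ambient group, and the natural morphism of the lemma is simply the induced inclusion of unions, taken componentwise. So it suffices to show that for each $(p,q)$ this inclusion is a bijection onto $A_{cg}^{p,q}(X;V)^G$, and then to observe that the resulting degreewise bijections are automatically compatible with the horizontal and vertical differentials and are natural in $X$ and $V$.

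For surjectivity, let $f\in A_{cg}^{p,q}(X;V)^G$; by definition $f:X^{p+1}\times X^{q+1}\to V$ is $G$-equivariant and is continuous on $X^{p+1}\times\mathfrak{U}[q]$ for some (not necessarily $G$-invariant) open covering $\mathfrak{U}$ of $X$. I would then repeat the saturation argument from Proposition \ref{natinclofeqccisiso}: for each $g\in G$ one has $(g.\mathfrak{U})[q]=g.(\mathfrak{U}[q])$, and writing a point of $X^{p+1}\times(g.\mathfrak{U})[q]$ as the image under the homeomorphism $(\vec{x},\vec{x}')\mapsto(g.\vec{x},g.\vec{x}')$ of a point of $X^{p+1}\times\mathfrak{U}[q]$, the equivariance of $f$ and the continuity of the $g$-action on $V$ show that $f$ is continuous on $X^{p+1}\times(g.\mathfrak{U})[q]$. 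Taking the union over $g\in G$, $f$ is continuous on $X^{p+1}\times(G.\mathfrak{U})[q]$, so $f\in A_{cr}^{p,q}(X,G.\mathfrak{U};V)^G$ for the $G$-invariant open covering $G.\mathfrak{U}$; hence $f$ already represents an element of $A_{cg,eq}^{p,q}(X;V)$ which the natural morphism sends back to $f$.

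For injectivity, an element of $A_{cg,eq}^{p,q}(X;V)$ represented by some $f\in A_{cr}^{p,q}(X,\mathfrak{U};V)^G$ with $\mathfrak{U}$ a $G$-invariant open covering which maps to $0$ in $A_{cg}^{p,q}(X;V)^G$ must vanish as a set map $X^{p+1}\times X^{q+1}\to V$, hence is already $0$ in the colimit over $G$-invariant coverings. Finally, the operators $d_h^{p,q}$ and $d_v^{p,q}$ of both double complexes are the restrictions to the relevant subgroups of one and the same pair of signed alternating face-map operators on $\hom_\set(X^{*+1}\times X^{*+1};V)$, so the degreewise bijections above commute with $d_h$ and $d_v$; likewise the whole construction is visibly functorial in $X$ and $V$. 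Thus the componentwise bijections assemble into a natural isomorphism of double complexes. I do not expect a genuine obstacle here: the single point requiring care is the saturation step showing that an equivariant cochain continuous on $X^{p+1}\times\mathfrak{U}[q]$ is continuous on $X^{p+1}\times(G.\mathfrak{U})[q]$, and this is exactly the device already used for Proposition \ref{natinclofeqccisiso}, unaffected by the extra "free" factor $X^{p+1}$ on which no continuity is demanded.
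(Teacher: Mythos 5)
Your proposal is correct and follows essentially the same route as the paper, which simply declares the proof to be analogous to that of Proposition \ref{natinclofeqccisiso}: you carry out the same saturation argument (equivariance plus continuity on $X^{p+1}\times\mathfrak{U}[q]$ gives continuity on $X^{p+1}\times(G.\mathfrak{U})[q]$) for surjectivity, and the same directed-union observation for injectivity, just degreewise and with the extra check of compatibility with $d_h$, $d_v$ that the paper leaves implicit. (Only a wording quibble: joint continuity on the first factor $X^{p+1}$ \emph{is} demanded; what matters is that this factor carries no covering restriction, so saturating under $G$ leaves it unchanged.)
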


\begin{proof}
  The proof is analogous to that of Proposition \ref{natinclofeqccisiso}.
\end{proof}

As a consequence the colimit morphism 
$i_{cg,eq}^* : A_{cg,eq}^* ( X; V) \rightarrow \tot A_{cg}^{*,*} (X; V)^G$ 
induces an isomorphism in cohomology, and the morphism 
$H (i_{cg,eq})$ is invertible. For the composition 
$H(i_{cg,eq})^{-1} H(j_{eq}):H_{c,eq}(X;V)\rightarrow H_{cg,eq}(X,\mathfrak{U};V)$ 
we observe:

\begin{proposition} \label{contiscohtocreq}
The image $j^n (f)$ of a continuous equivariant $n$-cocycle $f$ on $X$ in 
$\tot A_{cg}^{*,*} (X;,V)^G$ is cohomologous to the image 
$i_{cg,eq}^n  (f)$ of the equivariant $n$-cocycle $f\in A_{cg,eq}^n (X;V)$ 
in $\tot A_{cg}^{*,*} (X;V)^G$.
\end{proposition}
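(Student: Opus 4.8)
The plan is to deduce this from Proposition \ref{contiscohtocr} by passing to the filtered colimit over all $G$-invariant open coverings of $X$. First I would record that forming the total complex of a first-quadrant double complex and forming cochain cohomology both commute with filtered colimits of abelian groups; together with Lemma \ref{natinclofeqdcisiso} this identifies $\tot A_{cg}^{*,*}(X;V)^G$ with $\colim_{\mathfrak{U}} \tot A_{cr}^{*,*}(X,\mathfrak{U};V)^G$ (the colimit taken over $G$-invariant open coverings) and identifies the morphisms $j_{eq}^*$ and $i_{cg,eq}^*$ with the colimits of the corresponding morphisms from Section \ref{sectss}.

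Next I would observe that a continuous equivariant $n$-cocycle $f : X^{n+1} \to V$ is continuous on all of $X^{n+1}$, hence a fortiori on each diagonal neighbourhood $X^{p+1} \times \mathfrak{U}[q]$; thus for every $G$-invariant covering $\mathfrak{U}$ the cochains $\psi^{p,q}(\vec{x},\vec{x}') = (-1)^p f(\vec{x},\vec{x}')$ from the proof of Proposition \ref{contiscohtocr} lie in $A_{cr}^{p,q}(X,\mathfrak{U};V)^G$ and are compatible with the structure maps of the colimit. Applying Proposition \ref{contiscohtocr} for one fixed $G$-invariant covering $\mathfrak{U}$ (say the trivial one) produces the explicit primitive $\sum_{p+q=n-1}(-1)^p \psi^{p,q}$ whose total coboundary is $j^n(f) - i_{eq}^n(f)$ in $\tot A_{cr}^{*,*}(X,\mathfrak{U};V)^G$; pushing this cochain forward along the colimit map then shows that $j^n(f)$ and $i_{cg,eq}^n(f)$ are cohomologous in $\tot A_{cg}^{*,*}(X;V)^G$.

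A slightly more direct route, which I might present instead, is to rerun the computation in the proof of Proposition \ref{contiscohtocr} verbatim inside the colimit double complex $A_{cg}^{*,*}(X;V)^G$: define $\psi^{p,q}$ by the same formula (legitimate since $f$ has continuous germ at the diagonal with respect to every $G$-invariant covering, being globally continuous), verify $d_v \psi^{p,q} = d_h \psi^{p-1,q+1}$ using anti-commutativity exactly as before, and conclude that the total coboundary of $\sum_{p+q=n-1}(-1)^p \psi^{p,q}$ equals $j^n(f) - i_{cg,eq}^n(f)$.

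I do not expect a real obstacle here; the only care needed is bookkeeping of what ``the image of $f$'' means on the two sides --- on the $j$-side $f$ is the globally continuous cochain, on the $i$-side it is the corresponding element of $A_{cg,eq}^n(X;V) \cong A_{cg}^n(X;V)^G$ --- and checking that the witnessing primitive genuinely lies in the equivariant colimit double complex. Both points are immediate from the fact that a globally continuous equivariant cochain has continuous germ at the diagonal with respect to every $G$-invariant open covering.
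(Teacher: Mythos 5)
Your proposal is correct and, in its second (``more direct'') variant, is exactly the paper's argument: the paper proves this proposition by declaring it analogous to Proposition \ref{contiscohtocr}, i.e.\ by rerunning the construction of the primitive $\sum_{p+q=n-1}(-1)^p\psi^{p,q}$ inside the colimit double complex, which is legitimate precisely because a globally continuous equivariant cochain lies in every $A_{cr}^{p,q}(X,\mathfrak{U};V)^G$. Your first route via filtered colimits is a harmless reformulation of the same idea.
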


\begin{proof}
  The proof is analogous to that of Proposition \ref{contiscohtocr}.
\end{proof}

\begin{corollary}
 The composition 
$H (i_{cg,eq})^{-1} H(j_{eq}):H_{c,eq}(X;V)\rightarrow H_{cg,eq}(X;V)$ 
is induced by the inclusion 
$A_c^* (X;V)^G \hookrightarrow A_{cg}^* (X;V)^G$. 
\end{corollary}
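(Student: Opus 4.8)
The plan is to deduce this corollary from Proposition \ref{contiscohtocreq} by a purely formal bookkeeping argument at the level of cohomology classes, in the same spirit as the corollary following Proposition \ref{contiscohtocr}. All the genuine content is already contained in Proposition \ref{contiscohtocreq}; what remains is to track a class through the composition.

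First I would fix a class $c \in H_{c,eq}(X;V)$ and choose a representing continuous equivariant $n$-cocycle $f \in A_c^n(X;V)^G$. Since $f$ is continuous on all of $X^{n+1}$, it is in particular continuous on every $G$-invariant neighbourhood of the diagonal, hence lies in $A_{cg,eq}^n(X;V)$ under the inclusion $\iota : A_c^*(X;V)^G \hookrightarrow A_{cg}^*(X;V)^G$; moreover $\iota$ carries cohomologous cocycles to cohomologous cocycles, so the class $\iota_* c \in H_{cg,eq}(X;V)$ is well defined.

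Next I would trace $c$ through the composition. By construction of the column augmentation $j_{eq}$ one has $H(j_{eq})(c) = [\,j^n(f)\,]$ in $H(\tot A_{cg}^{*,*}(X;V)^G)$. Proposition \ref{contiscohtocreq} identifies this with $[\,i_{cg,eq}^n(f)\,] = H(i_{cg,eq})(\iota_* c)$, where on the right $f$ is regarded as an element of $A_{cg,eq}^n(X;V)$ via $\iota$. Applying the isomorphism $H(i_{cg,eq})^{-1}$ (which exists because $i_{cg,eq}^*$ was shown to induce an isomorphism in cohomology) yields $H(i_{cg,eq})^{-1} H(j_{eq})(c) = \iota_* c$. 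As $c$ was arbitrary, the composition $H(i_{cg,eq})^{-1} H(j_{eq})$ coincides with the map $\iota_*$ induced by the inclusion, which is the assertion.

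I do not expect a real obstacle here. The only points that need a word of justification are that $f$ genuinely defines an element of the colimit $A_{cg,eq}^n(X;V)$ and that $i_{cg,eq}^n(f)$ agrees with $H(i_{cg,eq})$ applied to the image of $f$ under $\iota$; both are immediate from the description of the complexes as colimits over ($G$-invariant) open coverings, in which a globally continuous cochain already sits in the term indexed by the trivial covering $\{X\}$, compatibly with all structure maps.
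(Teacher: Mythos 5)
Your proposal is correct and follows exactly the route the paper intends: the corollary is stated without proof precisely because it is the formal consequence of Proposition \ref{contiscohtocreq} that you spell out, namely that $H(j_{eq})(c)=[j^n(f)]=[i_{cg,eq}^n(f)]=H(i_{cg,eq})(\iota_* c)$ for any representing cocycle $f$, whence applying $H(i_{cg,eq})^{-1}$ gives $\iota_*$. Your additional remarks on why a globally continuous cochain defines an element of the colimit are a reasonable (and harmless) elaboration of what the paper leaves implicit.
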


\begin{corollary}
If the morphism 
$j^*_{eq}:={j^*}^G : A_c^* (X;V)^G \rightarrow \tot A_{cg}^{*,*}(X;V)^G$
induces a monomorphism, epimorphism or isomorphism in cohomology, then the 
inclusion $A_c^* (X;V)^G \hookrightarrow A_{cg,eq}^* (X;V)$ induces a 
monomorphism, epimorphism or isomorphism in cohomology respectively.
\end{corollary}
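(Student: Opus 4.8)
The final corollary asserts that if $j^*_{eq}$ induces a monomorphism, epimorphism, or isomorphism in cohomology, then so does the inclusion $A_c^* (X;V)^G \hookrightarrow A_{cg,eq}^* (X;V)$. The strategy is to run exactly the same argument that proved the analogous corollary for a single $G$-invariant covering $\mathfrak{U}$, but with the colimit double complex $A_{cg}^{*,*} (X;V)^G$ in place of $A_{cr}^{*,*} (X,\mathfrak{U};V)^G$. All the ingredients have already been assembled in the preceding paragraphs; what remains is to fit them together.

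\begin{proof}
  As observed above, the colimit morphism
  $i_{cg,eq}^* : A_{cg,eq}^* ( X; V) \rightarrow \tot A_{cg}^{*,*} (X; V)^G$
  induces an isomorphism in cohomology, so
  $H (i_{cg,eq}) : H_{cg,eq} (X;V) \rightarrow H ( \tot A_{cg}^{*,*} (X;V)^G )$
  is invertible. By the preceding corollary the composition
  $H (i_{cg,eq})^{-1} H (j_{eq}) : H_{c,eq} (X;V) \rightarrow H_{cg,eq} (X;V)$
  is precisely the map induced by the inclusion
  $A_c^* (X;V)^G \hookrightarrow A_{cg}^* (X;V)^G$.
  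Now suppose $H (j_{eq})$ is a monomorphism. Since $H (i_{cg,eq})^{-1}$ is an
  isomorphism, the composition $H (i_{cg,eq})^{-1} H (j_{eq})$ is again a
  monomorphism, hence the inclusion induces a monomorphism in cohomology.
  The cases of an epimorphism and of an isomorphism are identical, using that
  the class of monomorphisms, that of epimorphisms, and that of isomorphisms
  are each closed under composition with isomorphisms on either side.
\end{proof}

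The only thing that could be called an obstacle is purely bookkeeping: one must be sure that the identification of $H (i_{cg,eq})^{-1} H(j_{eq})$ with the inclusion-induced map — which is the content of the immediately preceding corollary, itself resting on Proposition \ref{contiscohtocreq} — is genuinely available in the colimit setting, and that $i_{cg,eq}^*$ really does induce an isomorphism. Both were established above (the first as the preceding corollary, the second as the consequence of Lemma \ref{natinclofeqdcisiso} together with the colimit version of Lemma \ref{columnsexacteq}), so no new work is needed. In short, the proof is a two-line diagram chase: precompose the hypothesis on $H (j_{eq})$ with the isomorphism $H (i_{cg,eq})^{-1}$ and read off the conclusion.
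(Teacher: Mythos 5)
Your proof is correct and is exactly the argument the paper intends (the paper leaves it implicit, as the corollary follows immediately from the preceding corollary together with the invertibility of $H(i_{cg,eq})$, both of which you cite). The only cosmetic point is that the target of the inclusion is written as $A_{cg,eq}^*(X;V)$ in the statement and as $A_{cg}^*(X;V)^G$ in your proof, but these are identified by the natural isomorphism of Proposition \ref{natinclofeqccisiso}, so nothing is missing.
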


\section{Continuous and $\mathfrak{U}$ -Continuous Cochains}
\label{seccontanduccc}

In this section we consider transformation groups $(G,X)$ and $G$-modules $V$
for which we show that the inclusion 
$A_c^* (X,\mathfrak{U};V)^G \hookrightarrow A_{cr}^* (X,\mathfrak{U};V)^G$
of the complex of continuous equivariant cochains into the complex 
of equivariant $\mathfrak{U}$-continuous cochains induces an isomorphism 
$H_c^* (X,\mathfrak{U};V) \cong H_{cr}^* (X,\mathfrak{U};V)$ provided the
topological space $X$ is contractible.
The proof relies on the row exactness of the double complexes 
$A_c^{*,*} (X,\mathfrak{U};V )^G$ and $A_{cr}^{*,*} ( X,\mathfrak{U};V)^G$. 
At first we reduce the problem to the non-equivariant case:

\begin{proposition} \label{noneqextheneqex}
 If the augmented column complexes 
$A_c^p (X;V) \hookrightarrow A_{cr}^{p,*}(X,\mathfrak{U};V)$ 
are exact, then the augmented sub column complexes
$A_c^p (X;V)^G \hookrightarrow A_{cr}^{p,*}(X,\mathfrak{U};V)^G$  
of equivariant cochains are exact as well.
\end{proposition}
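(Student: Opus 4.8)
The plan is to establish, for each fixed $p$, that passage to $G$-fixed points preserves exactness of the augmented column complex $K_p^{\bullet}\colon 0\to A_c^p(X;V)\to A_{cr}^{p,0}(X,\mathfrak U;V)\to A_{cr}^{p,1}(X,\mathfrak U;V)\to\cdots$. The mechanism will be a $G$-equivariant contracting homotopy $s^{p,\bullet}$ of $K_p^{\bullet}$: any such homotopy restricts verbatim to the subcomplex $(K_p^{\bullet})^G$ and makes it contractible, hence exact. Since mere exactness of a complex of $G$-modules is not inherited by its fixed-point subcomplex, the argument must genuinely use the shape of the modules $A_{cr}^{p,q}(X,\mathfrak U;V)$ together with the diagonal twist of the $G$-action in \eqref{defgact}; in particular it cannot be reduced to a formal double-complex comparison, since the two augmentations of $A_{cr}^{*,*}(X,\mathfrak U;V)^G$ are independent.

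The main obstacle surfaces immediately. The obvious $G$-equivariant candidate is the Alexander--Spanier contraction of $K_p^{\bullet}$ toward the first coordinate $x_0$ of the $\vec x$-block, namely $s^{p,q}(f)(\vec x;x_0',\dots,x_{q-1}')=(-1)^p f(\vec x;x_0,x_0',\dots,x_{q-1}')$; this is equivariant precisely because the action of \eqref{defgact} moves $x_0$ together with every other argument. But it does not land in $A_{cr}^{p,q-1}(X,\mathfrak U;V)$: the resulting cochain is continuous only on the locus where $x_0$ happens to share a member of $\mathfrak U$ with $x_0',\dots,x_{q-1}'$, and not on all of $X^{p+1}\times\mathfrak U[q-1]$. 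This is exactly the failure recorded in the Remark following Lemma \ref{columnsexact}, and getting past it is the whole content of the Proposition.

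The way I would get around it is to feed the free $\vec x$-coordinates not directly into the trailing $\vec x'$-slot but only through the contraction that witnesses the hypothesised plain exactness of $K_p^{\bullet}$: that contraction, being a $\mathfrak U$-continuous Poincar\'e-type statement, can be arranged to be given by operators displacing points only within members of $\mathfrak U$, hence respecting the continuity condition of \eqref{defrcu}. Substituting the $\vec x$-variables for the (non-equivariant) auxiliary choices in it yields a candidate $\tilde s^{p,\bullet}$, and the argument then splits into two checks: first, that $\tilde s^{p,\bullet}$ is still a contracting homotopy --- which, by the anti-commutativity of $d_h$ and $d_v$ and a sign-and-degree bookkeeping of the kind carried out in the proof of Proposition \ref{contiscohtocr}, reduces to the corresponding identity for the original contraction; and second --- the part I expect to be the real work --- that every cochain $\tilde s^{p,\bullet}$ produces still satisfies the continuity condition of \eqref{defrcu}, which is precisely the point on which the naive equivariant formula broke down.
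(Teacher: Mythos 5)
There is a genuine gap here. Your whole plan hinges on producing a $G$-equivariant contracting homotopy of the augmented column, but the hypothesis hands you only exactness of a complex of abelian groups --- not a contracting homotopy, and certainly not one ``given by operators displacing points only within members of $\mathfrak U$''. Exactness is strictly weaker than split exactness, and the proposition is a bare conditional on exactness (which is later verified via Alexander--Spanier arguments, with no control over the form of any splitting). Consequently the pivotal step, ``substituting the $\vec{x}$-variables for the (non-equivariant) auxiliary choices'' in a contraction that is never exhibited, is not a construction: nothing guarantees such a contraction exists, that the substitution is well defined, that the result is equivariant, or that it is still a homotopy. Your diagnosis of the obstruction (the naive insertion of $x_0$ into the $\vec{x}'$-block violates the continuity condition, and fixed points do not preserve exactness in general) is correct, but the proposal does not get past it.

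The device you are missing is simpler and operates on primitives rather than on homotopy operators. Given an equivariant vertical cocycle $f^{p,q}$, the hypothesis yields \emph{some} (non-equivariant) $f^{p,q-1}$ with $d_v f^{p,q-1}=f^{p,q}$; the paper then sets $f^{p,q-1}_{eq}(\vec{x},\vec{x}') := x_0 . \bigl[ f^{p,q-1}(x_0^{-1}.\vec{x},\, x_0^{-1}.\vec{x}') \bigr]$, conjugating by the first coordinate of the $\vec{x}$-block. This is equivariant by construction; it is continuous on $X^{p+1}\times\mathfrak{U}[q-1]$ because the action is continuous and $\mathfrak{U}$ is $G$-invariant, so the twist preserves $X^{p+1}\times\mathfrak{U}[q-1]$ --- the continuity failure you worry about never arises, since nothing from the $\vec{x}$-block is inserted into the $\vec{x}'$-block; and it satisfies $d_v f^{p,q-1}_{eq}=(d_v f^{p,q-1})_{eq}=(f^{p,q})_{eq}=f^{p,q}$ because $d_v$ acts only on the $\vec{x}'$-block while the twist is governed by $x_0$. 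Equivalently, $f\mapsto f_{eq}$ is a retraction of each column complex onto its equivariant subcomplex commuting with $d_v$ and fixing the augmentation term, so exactness passes to the fixed points; the same formula handles the degree $q=0$ case where the primitive lives in $A_c^p(X;V)$.
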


\begin{proof}
 Assume that the augmented column complexes 
$A_c^p (X,\mathfrak{U};V) \hookrightarrow A_{cr}^{p,*}(X,\mathfrak{U};V)$ 
are exact. Then each equivariant vertical cocycle $f_{eq}^{p,q} \in A_{rc}^{p,q}
(X,\mathfrak{U};V)^G$ is the vertical coboundary $d_v f^{p,q-1}$ of a 
cocycle $f^{p,q-1} \in A_{cr}^{p,q-1} (X,\mathfrak{U};V)$ 
(which is not necessary equivariant).
Define an equivariant cochain $f_{eq}^{p,q-1}$ of bidegree \mbox{$(p,q-1)$ via}
 \begin{equation*}
  f_{eq}^{p,q-1} (\vec{x},\vec{x}'):=
  x_0 . f^{p,q-1} (x_0^{-1} . \vec{x}, x_0^{-1} . \vec{x}') \, .
 \end{equation*}
This equivariant cochain is continuous on $X^{p+1} \times \mathfrak{U}[q-1]$ 
because $f^{p,q-1}$ is continuous on $X^{p+1} \times \mathfrak{U}[q-1]$. 
We assert that the vertical coboundary $d_v  f_{eq}^{p,q-1}$ of $f_{eq}$ is
the equivariant vertical cocycle $f_{eq}^{p,q}$. 
Indeed, since the differential $d_v$ is equivariant, the vertical coboundary 
of $f_{eq}^{p,q-1}$ computes to
\begin{equation*}
  d_v f_{eq}^{p,q-1} (\vec{x},\vec{x}') = 
  x_0 . \left[ d_v f^{p,q-1} (x_0^{-1} . \vec{x} , x_0^{-1} . \vec{x}')
  \right]=
  f_{eq}^{p,q} (\vec{x} , \vec{x}') \, .
 \end{equation*}
 Thus every equivariant vertical cocycle $f_{eq}^{p,q}$ in 
$A_{cr}^{*,*} (X,\mathfrak{U};V)^G$ is the vertical coboundary of an 
equivariant cochain $f_{eq}^{p,q-1}$ of bidegree $(p,q-1)$.
\end{proof}

\begin{corollary} \label{augexthenjeqindiso}
If the augmented column complexes 
$A_c^p (X;V) \hookrightarrow A_{cr}^{p,*} (X,\mathfrak{U};V)$ 
are exact, then the inclusion 
$j_{eq}^* : A_c^* (X;V)^G \hookrightarrow 
\tot A_{cr}^{*,*}(X,\mathfrak{U},V)^G$ induces an isomorphism in cohomology.
\end{corollary}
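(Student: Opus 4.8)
The plan is to reduce the statement to ordinary, non-equivariant column exactness -- which is exactly the hypothesis -- and then to invoke the standard spectral sequence (``acyclic assembly'') mechanism for a first-quadrant double complex equipped with an exact augmentation column.

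First I would apply Proposition \ref{noneqextheneqex}: the assumed exactness of the augmented columns $A_c^p(X;V)\hookrightarrow A_{cr}^{p,*}(X,\mathfrak{U};V)$ forces the augmented sub-columns $A_c^p(X;V)^G\hookrightarrow A_{cr}^{p,*}(X,\mathfrak{U};V)^G$ of equivariant cochains to be exact as well. After this reduction the claim is an instance of the following general fact: if $D^{*,*}$ is a first-quadrant double complex together with an injective augmentation $C^p\hookrightarrow D^{p,0}$ making every column $0\to C^p\to D^{p,0}\xrightarrow{d_v}D^{p,1}\to\cdots$ exact, then the induced chain map $C^*\to\tot D^{*,*}$ is a quasi-isomorphism. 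Here $C^*=A_c^*(X;V)^G$ and $D^{*,*}=A_{cr}^{*,*}(X,\mathfrak{U};V)^G$ (a first-quadrant double complex, being a sub-double-complex of the one of Section \ref{sectss}), and this induced map is precisely $j_{eq}^*$.

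To prove the general fact I would filter $D^{*,*}$ column-wise, as in Section \ref{sectss}, so that $E_1^{p,q}=H_v^q(D^{p,*})$ with $d_1$ induced by $d_h$. Column exactness yields $E_1^{p,q}=0$ for $q\ge1$ and $E_1^{p,0}=\ker\bigl(d_v\colon D^{p,0}\to D^{p,1}\bigr)=j_{eq}^p\bigl(C^p\bigr)\cong C^p$; since the augmentation commutes with horizontal differentials, $(E_1^{*,0},d_1)\cong(A_c^*(X;V)^G,d_h)$. Hence $E_2^{p,q}=0$ for $q\ge1$ and $E_2^{p,0}=H^p\!\bigl(A_c^*(X;V)^G\bigr)=H_{c,eq}^p(X;V)$, the spectral sequence collapses at $E_2$, and -- $D^{*,*}$ being first-quadrant -- it converges to $H^*(\tot D^{*,*})$. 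This gives an isomorphism $H_{c,eq}^n(X;V)\xrightarrow{\ \cong\ }H^n(\tot A_{cr}^{*,*}(X,\mathfrak{U};V)^G)$, and because $E_\infty$ is concentrated in the bottom row the filtration on the target is trivial and the edge homomorphism realising this isomorphism is exactly $H^n(j_{eq})$.

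The one point needing care is this last identification of the abstract isomorphism with $H^*(j_{eq})$; equivalently one can avoid the edge-map discussion altogether and run the explicit ``staircase'' chase underlying the acyclic assembly lemma. Given a $\tot$-cocycle $z=(z^{0,n},\dots,z^{n,0})$, the vertical-cocycle condition on $z^{0,n}$ together with exactness of the column $p=0$ lets one subtract a $D$-coboundary to kill the $(0,n)$-entry; iterating column by column reduces $z$ to a cohomologous cocycle concentrated in bidegree $(n,0)$, which by the vertical-cocycle condition and exactness of the augmented column lies in $j_{eq}^n\bigl(A_c^n(X;V)^G\bigr)$, proving surjectivity of $H^n(j_{eq})$; a symmetric chase, reducing a bounding cochain $\eta$ with $D\eta=j_{eq}^n(w)$ to bidegree $(n-1,0)$, proves injectivity. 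I expect the sign- and index-bookkeeping of this staircase (equivalently, pinning down the spectral-sequence edge map) to be the only real work; everything else is routine homological algebra, and the genuinely new ingredient -- transporting column exactness from the full complex to the equivariant subcomplex -- has already been provided by Proposition \ref{noneqextheneqex}.
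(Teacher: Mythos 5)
Your proposal is correct and follows exactly the route the paper intends: the corollary is drawn from Proposition \ref{noneqextheneqex} (transporting column exactness to the equivariant subcomplex) followed by the standard first-quadrant double-complex argument that an exact augmentation of the columns makes the augmentation map into the total complex a quasi-isomorphism, which is the same mechanism the paper already uses for the rows in Lemma \ref{columnsexact} and the column-wise filtration spectral sequence of Section \ref{sectss}. Your extra care about identifying the edge homomorphism with $H^*(j_{eq})$ (or replacing it by the explicit staircase chase) is a sound precaution, though the paper treats this as routine.
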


\begin{corollary} \label{augextheninclindiso}
If the augmented column complexes 
 $A_c^p (X;V) \hookrightarrow A_{cr}^{p,*} (X,\mathfrak{U};V)$ 
are exact, then the inclusion 
$A_c^* (X;V)^G \hookrightarrow A_{cr}^* (X,\mathfrak{U};V)^G$
induces an isomorphism in cohomology.
\end{corollary}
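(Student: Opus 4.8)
The plan is to derive Corollary \ref{augextheninclindiso} as a formal consequence of the two preceding results, Corollary \ref{augexthenjeqindiso} and the last Corollary of Section \ref{sectss}. First I would recall the setup: the double complex $A_{cr}^{*,*}(X,\mathfrak{U};V)^G$ has its rows augmented by $A_{cr}^*(X,\mathfrak{U};V)^G$ via $i_{eq}^*$ and its columns augmented by $A_c^*(X;V)^G$ via $j_{eq}^*$, and by Lemma \ref{columnsexacteq} the augmentation $i_{eq}^*$ always induces an isomorphism in cohomology, so that $H(i_{eq})^{-1}H(j_{eq})$ is a well-defined map $H_{c,eq}(X;V)\to H_{cr,eq}(X,\mathfrak{U};V)$. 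The key input from Section \ref{sectss} (the Corollary following Proposition \ref{contiscohtocr}) is that this composite is precisely the map induced by the inclusion of complexes $A_c^*(X;V)^G\hookrightarrow A_{cr}^*(X,\mathfrak{U};V)^G$.

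Now I would invoke the hypothesis: assuming the augmented column complexes $A_c^p(X;V)\hookrightarrow A_{cr}^{p,*}(X,\mathfrak{U};V)$ are exact, Corollary \ref{augexthenjeqindiso} tells us that $j_{eq}^*$ induces an isomorphism $H(j_{eq})$ in cohomology. Combined with the fact that $H(i_{eq})$ is always an isomorphism (Lemma \ref{columnsexacteq}), the composite $H(i_{eq})^{-1}H(j_{eq})$ is an isomorphism. Since this composite equals the map induced by the inclusion $A_c^*(X;V)^G\hookrightarrow A_{cr}^*(X,\mathfrak{U};V)^G$, that inclusion induces an isomorphism in cohomology, which is exactly the assertion.

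I do not expect a serious obstacle here, since all the substantive work has been done: the identification of the composite with the inclusion-induced map is already established, Proposition \ref{noneqextheneqex} has already pushed the non-equivariant column exactness up to the equivariant level (and is what powers Corollary \ref{augexthenjeqindiso}), and Lemma \ref{columnsexacteq} handles the row augmentation. The only point requiring the slightest care is bookkeeping: making sure we are comparing the right variants of $i^*$ and $j^*$ (the $G$-fixed ones $i_{eq}^*, j_{eq}^*$, not the plain ones) and that the diagram
\begin{equation*}
\xymatrix{
H_{c,eq}(X;V) \ar[r]^-{H(j_{eq})} \ar[dr] & H(\tot A_{cr}^{*,*}(X,\mathfrak{U};V)^G) \\
& H_{cr,eq}(X,\mathfrak{U};V) \ar[u]_{H(i_{eq})}^{\cong}
}
\end{equation*}
commutes with the diagonal arrow being the inclusion-induced map — but this is exactly the content of the cited Corollary. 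So the proof is a one-line deduction: combine Corollary \ref{augexthenjeqindiso} with the fact that $H(i_{eq})$ is an isomorphism and the identification of the composite as the inclusion-induced map.
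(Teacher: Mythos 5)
Your proof is correct and follows exactly the route the paper intends: the paper leaves this corollary without an explicit proof because it is the immediate combination of Corollary \ref{augexthenjeqindiso}, Lemma \ref{columnsexacteq}, and the corollary following Proposition \ref{contiscohtocr} identifying $H(i_{eq})^{-1}H(j_{eq})$ with the inclusion-induced map — precisely the three ingredients you assemble.
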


To achieve the announced result it remains to show that for 
contractible spaces $X$ the colimit augmented columns 
$A_c^p (X;V) \hookrightarrow A_{cg}^{p,*} (X;V)$ are exact. For this purpose
we first consider the cochain complex associated to the cosimplicial abelian 
group 
$A^{p,*} (X;V) := \left\{ f : X^{p+1} \times X^{*+1} \rightarrow V 
\mid \, \forall \vec{x}' \in X^{*+1} : f (-,\vec{x}') \in C (X^{p+1},V) 
\right\}$ 
of global cochains, its subcomplex $A_{cr}^{p,*} (X,\mathfrak{U};V)$ and 
the cochain complexes associated to the cosimplicial abelian groups 
\begin{eqnarray*}
 A^{p,*} (\mathfrak{U};V) & := & 
\{ f : X^{p+1} \times \mathfrak{U}[*] \rightarrow 
\mid \, \forall \vec{x}' \in \mathfrak{U}[*] : f (-,\vec{x}') \in C (X^{p+1},V)
\} \quad \text{and} \\ 
A_c^{p,*} (X,\mathfrak{U};V) & := & C ( X^{p+1} \times \mathfrak{U}[*] , V) \, .
\end{eqnarray*}

Restriction of global to local cochains induces morphisms of cochain complexes 
$\res^{p,*} : A^{p,*} (X;V) \twoheadrightarrow  A^{p,*} (X,\mathfrak{U};V)$ and 
$\res_{cr}^{p,*} : A_{cr}^* (X,\mathfrak{U};V) \twoheadrightarrow  
A_c^{p,*} (X,\mathfrak{U};V)$ 
intertwining the inclusions of the subcomplexes 
$A_{cr}^{p,*} (X,\mathfrak{U};V) \hookrightarrow A^{p,*} (X;V)$ and 
$A_c^{p,*} (X,\mathfrak{U};V) \hookrightarrow A^{p,*} (X, \mathfrak{U};V)$, so
one obtains the following commutative diagram
\begin{equation} \label{morphexseq}
\begin{array}{cccccccc}
 0 \longrightarrow & \ker (\res_{cr}^{p,*} ) & \longrightarrow & 
A_{cr}^{p,*} (X,\mathfrak{U};V) 
& \longrightarrow & A_c^{p,*} (X,\mathfrak{U};V) & \longrightarrow 0 \\ 
& \downarrow & & \downarrow & & \downarrow & \\
0 \longrightarrow & \ker (\res^{p,*} ) & \longrightarrow & A^{p,*} (X;V) 
& \longrightarrow & A^{p,*} (X , \mathfrak{U};V) & \longrightarrow 0
\end{array}
\end{equation}
of cochain complexes whose rows are exact. The kernel $\ker (\res^{p,q} )$ is
the subspace of those $(p,q)$-cochains which are trivial on 
$X^{p+1} \times \mathfrak{U} [q]$. Since these $(p,q)$-cochains are continuous 
on $X^{p+1} \times \mathfrak{U}[q]$ we find that both kernels coincide. 
We abbreviate the complex 
$\ker (\res^{p,*} ) = \ker (\res_{rc}^{p,*} )$ by $K^{p,*}$ and denote the 
cohomology groups of the complex $A_{cr}^{p,*} (X,\mathfrak{U};V)$ by 
$H_{cr}^{p,*} (X,\mathfrak{U};V)$, the cohomology groups of the complex 
$A_c^{p,*} (X,\mathfrak{U};V)$ of continuous cochains by 
$H_c^{p,*} (X,\mathfrak{U};V)$ and the cohomology groups of the complex 
$A^{p,*} (X,\mathfrak{U};V)$ by $H^{p,*} (X,\mathfrak{U};V)$. 

\begin{lemma}
  The cochain complexes $A^{p,*} (X;V)$ are exact.
\end{lemma}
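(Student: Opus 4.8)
The statement to prove is that for each fixed $p$, the cochain complex $A^{p,*}(X;V)$ — that is, the complex in the $*$-direction of functions $f:X^{p+1}\times X^{*+1}\to V$ that are continuous in the first variable $X^{p+1}$ for each fixed value of the second variable — is exact. The key observation is that the variable $X^{p+1}$ plays a completely passive role: the differential $d_v$ only ever touches the second batch of coordinates $\vec{x}'\in X^{*+1}$, and continuity in the first variable is preserved by any manipulation of the second. So morally this is just the exactness of the standard (bar-type) complex $A^*(X;V)=\hom_{\set}(X;V)$ — the "exact standard complex" mentioned at the start of Section 1 — carried out with parameters.

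Concretely, the plan is to exhibit an explicit contracting homotopy. For $q\ge 1$ define
\begin{equation*}
s^{p,q}:A^{p,q}(X;V)\to A^{p,q-1}(X;V),\qquad
(s^{p,q}f)(\vec{x},x_0',\ldots,x_{q-1}'):=(-1)^p\,f(\vec{x},x_0',x_0',x_1',\ldots,x_{q-1}')\,,
\end{equation*}
and for $q=0$ let $s^{p,0}=0$ (or, if one prefers to contract all the way down, augment by the constant-in-$\vec{x}'$ cochains and use the analogous formula; since we only need exactness in positive degrees for the later application, the version with $s^{p,0}=0$ and the resulting identification of $H^{p,0}$ with $C(X^{p+1};V)$ suffices). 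One then checks the standard simplicial identity
\begin{equation*}
d_v^{p,q-1}\circ s^{p,q} + s^{p,q+1}\circ d_v^{p,q} = \id_{A^{p,q}(X;V)}
\end{equation*}
for $q\ge 1$, which is a routine telescoping computation: writing out $d_v$ as the alternating sum over deletions $x_i'\mapsto\widehat{x_i'}$ and $s$ as the duplication $x_0'\mapsto(x_0',x_0')$, all terms cancel in pairs except the identity term, exactly as in the contractibility proof for the bar resolution. The sign $(-1)^p$ is there to match the $(-1)^p$ in the definition of $d_v^{p,q}$ in Section \ref{sectss} and does not affect the cancellation pattern. Crucially, each $s^{p,q}f$ is again continuous in $\vec{x}\in X^{p+1}$ for every fixed tuple of primed coordinates, because it is obtained from $f$ by precomposing the second-variable input with a (continuous, indeed diagonal-type) map that does not involve $\vec{x}$ at all; hence $s^{p,q}$ really lands in $A^{p,q-1}(X;V)$ as claimed.

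There is essentially no hard part here: the only thing to be careful about is the bookkeeping at the bottom of the complex (whether one wants the complex exact including in degree $0$, which requires adjoining an augmentation term, or only in positive degrees), and keeping the $(-1)^p$ sign convention consistent with the display in Section \ref{sectss} so that $s^{p,*}$ is genuinely a homotopy for $d_v^{p,*}$ rather than for its negative. Once the simplicial identity is verified, exactness in every positive degree is immediate, and this is precisely what is needed to feed into Proposition \ref{noneqextheneqex} and its corollaries to get the augmented columns $A_c^p(X;V)\hookrightarrow A_{cg}^{p,*}(X;V)$ exact for contractible $X$.
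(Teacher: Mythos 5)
Your overall strategy --- exhibit an explicit contracting homotopy in the $*$-direction, noting that the $X^{p+1}$-variable is passive and that continuity in it is preserved --- is exactly the paper's, but the specific homotopy you wrote down does not work. The duplication map $s^{p,q}(f)(\vec{x},x_0',\ldots,x_{q-1}')=(-1)^p f(\vec{x},x_0',x_0',x_1',\ldots,x_{q-1}')$ is (up to sign) the pullback along the zeroth \emph{degeneracy} $s_0$ of the simplicial set $X^{*+1}$, and since $d_0s_0=d_1s_0=\mathrm{id}$, the two terms in $s^{p,q+1}\circ d_v^{p,q}$ that would produce the identity enter the alternating sum with opposite signs and cancel each other. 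Concretely, for $q=1$ (suppressing $\vec{x}$ and the global sign $(-1)^p$, which cancels in the products anyway) one gets
\begin{equation*}
(d_v s\,f)(x_0',x_1') = f(x_1',x_1')-f(x_0',x_0'), \qquad
(s\, d_v f)(x_0',x_1') = f(x_0',x_1')-f(x_0',x_1')+f(x_0',x_0') = f(x_0',x_0'),
\end{equation*}
so $(d_v s + s d_v)(f)(x_0',x_1') = f(x_1',x_1') \neq f(x_0',x_1')$ in general. The claimed telescoping ``exactly as in the bar resolution'' does not occur: what survives is not the identity but a degenerate term.

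The correct contraction --- the one the paper uses --- is the \emph{extra degeneracy} given by inserting a fixed basepoint $*\in X$ rather than duplicating a coordinate: $h^{p,q}(f)(\vec{x},\vec{x}') := f(\vec{x},*,\vec{x}')$. Here the term $j=0$ of $h\circ d_v$ (deleting the inserted $*$) yields $f(\vec{x},\vec{x}')$ on the nose, and all remaining terms of $h\circ d_v$ cancel those of $d_v\circ h$; your $q=1$ check becomes $d_v(hf)+h(d_vf)=\bigl(f(*,x_1')-f(*,x_0')\bigr)+\bigl(f(x_0',x_1')-f(*,x_1')+f(*,x_0')\bigr)=f(x_0',x_1')$. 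This map lands in $A^{p,q-1}(X;V)$ for the same reason you gave for yours (it does not touch $\vec{x}$), so the rest of your discussion --- including the caveat about degree $0$ versus positive degrees --- carries over once the formula is repaired.
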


\begin{proof}
For any point $* \in X$ the homomorphisms 
$h^{p,q} : A^{p,q} (X;V) \rightarrow A^{p,q-1} (X;V)$ given by 
$h^{p,q} (f) (\vec{x},\vec{x}'):=f (\vec{x},*,\vec{x}')$ form a contraction of
the complex $A^{p,*} (X;V)$. 
\end{proof}

The morphism of short exact sequences of cochain complexes in Diagram 
\ref{morphexseq}
gives rise to a morphism of long exact cohomology sequences, in which the 
cohomology of the complex $A^{p,*} (X;V)$ is trivial:
\begin{equation} \label{diaglecs}
\xymatrix@R-10pt@C-4pt{ 
\ar[r] & H^q (K^{p,*} )) \ar[r] \ar@{=}[d] 
& H_{cr}^{p,q} (X,\mathfrak{U};V) \ar[r] \ar[d]  
& H_c^{p,q} (X,\mathfrak{U};V) \ar[r] \ar[d]  
& H^{q+1}  (K^{p,*} ) \ar[r] \ar@{=}[d] & {} \\ 
\ar[r]^\cong & H^q (K^{p,*} ) \ar[r]&  0  \ar[r] 
& H^{p,q} (X,\mathfrak{U};V) \ar[r]^\cong &  H^{q+1}  (K^{p,*} )) \ar[r] & {}
}
\end{equation}

\begin{lemma}
The augmented complex 
$A_c^p (X;V) \hookrightarrow A_{cr}^{p,*} (X,\mathfrak{U};V)$ is exact if and
only if the inclusion 
$A_c^{p,*} (X,\mathfrak{U};V) \hookrightarrow A^{p,*} (X,\mathfrak{U};V)$ 
induces an isomorphism in cohomology.
\end{lemma}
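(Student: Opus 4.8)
The plan is to translate both sides of the claimed equivalence into statements about cohomology in positive degrees and then to read the equivalence off from the morphism of long exact sequences in Diagram~\ref{diaglecs} by a five lemma argument. First I would make the left-hand side explicit. Since $\mathfrak U[0]=X$ for every open covering $\mathfrak U$, the group $A_{cr}^{p,0}(X,\mathfrak U;V)$ is $C(X^{p+1}\times X;V)$, the kernel of its vertical differential $d_v^{p,0}$ consists exactly of the cochains not depending on the $X^{*+1}$-variable, and the augmentation maps $A_c^p(X;V)$ isomorphically onto that kernel. Hence the augmentation is automatically injective with image $\ker d_v^{p,0}$, so $H_{cr}^{p,0}(X,\mathfrak U;V)\cong A_c^p(X;V)$, and the augmented complex $A_c^p(X;V)\hookrightarrow A_{cr}^{p,*}(X,\mathfrak U;V)$ is exact if and only if $H_{cr}^{p,q}(X,\mathfrak U;V)=0$ for all $q\geq 1$. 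For the right-hand side I would note, again using $\mathfrak U[0]=X$, that a pointwise continuous cochain on $X^{p+1}\times X$ which is locally constant in the second variable is already jointly continuous; therefore the degree-zero cohomology groups of $A_c^{p,*}(X,\mathfrak U;V)$ and of $A^{p,*}(X,\mathfrak U;V)$ coincide and the inclusion induces the identity there, so the inclusion $A_c^{p,*}(X,\mathfrak U;V)\hookrightarrow A^{p,*}(X,\mathfrak U;V)$ is an isomorphism in cohomology if and only if the induced maps $H_c^{p,q}(X,\mathfrak U;V)\to H^{p,q}(X,\mathfrak U;V)$ are isomorphisms for all $q\geq 1$. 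It thus remains to show that these two positive-degree conditions are equivalent.

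For this I would invoke Diagram~\ref{diaglecs}: the morphism of short exact sequences~\eqref{morphexseq}, together with the vanishing of $H^q(A^{p,*}(X;V))$ for $q\geq 1$ furnished by the preceding lemma, yields a morphism of long exact cohomology sequences whose two outer vertical maps are the identity of $H^\bullet(K^{p,*})$, hence isomorphisms. Naturality of the connecting homomorphism gives the factorization $\delta=\delta'\circ H^q(\iota)$, where $\iota$ denotes the inclusion in question, $\delta\colon H_c^{p,q}(X,\mathfrak U;V)\to H^{q+1}(K^{p,*})$ is the top connecting map and $\delta'\colon H^{p,q}(X,\mathfrak U;V)\to H^{q+1}(K^{p,*})$ the bottom one; moreover, for $q\geq 1$ the map $\delta'$ is an isomorphism because $A^{p,*}(X;V)$ is exact in positive degrees.

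Finally I would run the five lemma in both directions on this ladder, each time enlarging the displayed four-term window of Diagram~\ref{diaglecs} by one further term. If all $H_{cr}^{p,q}(X,\mathfrak U;V)$ with $q\geq 1$ vanish, then in the window $\bigl[H^q(K^{p,*}),\,H_{cr}^{p,q},\,H_c^{p,q},\,H^{q+1}(K^{p,*}),\,H_{cr}^{p,q+1}\bigr]$ every vertical map except the central one is an isomorphism, whence $H^q(\iota)$ is an isomorphism for all $q\geq 1$. Conversely, if every $H^q(\iota)$ is an isomorphism then, for $q\geq 1$, the composite $\delta=\delta'\circ H^q(\iota)$ is an isomorphism, so in the window $\bigl[H^{q-1}(A_c^{p,*}),\,H^q(K^{p,*}),\,H_{cr}^{p,q},\,H_c^{p,q},\,H^{q+1}(K^{p,*})\bigr]$ every vertical map except $H_{cr}^{p,q}\to 0$ is an isomorphism, which forces $H_{cr}^{p,q}(X,\mathfrak U;V)=0$. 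I expect the only genuinely delicate point to be the low-degree ($q=0,1$) bookkeeping in these sequences — in particular checking that the augmentation really identifies $A_c^p(X;V)$ with $H^0$ of the $p$-th column and that no spurious class in $H^1(K^{p,*})$ intervenes — all of which is governed by the single identity $\mathfrak U[0]=X$; beyond that, the statement is a purely formal consequence of the five lemma applied to Diagram~\ref{diaglecs}.
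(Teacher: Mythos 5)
Your argument is correct and is essentially the paper's own proof: the paper simply declares the lemma an ``immediate consequence of Diagram~\ref{diaglecs}'', and you have unwound exactly that ladder of long exact sequences with the five lemma, run in both directions. The additional low-degree bookkeeping you supply (identifying the augmentation with $\ker d_v^{p,0}$ via $\mathfrak{U}[0]=X$, and checking that the inclusion is the identity on $H^0$) is accurate and fills in details the paper leaves implicit.
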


\begin{proof}
 This is an immediate consequence of Diagram \ref{diaglecs}
\end{proof}

\begin{proposition}
  If the inclusion 
$A_c^{p,*} (X,\mathfrak{U};V) \hookrightarrow A^{p,*} (X,\mathfrak{U};V)$ 
induces an isomorphism in cohomology, then the inclusions  
$j_{eq}^* : A_c^* (X;V)^G \hookrightarrow 
\tot A_{cr}^{*,*}(X,\mathfrak{U},V)^G$ and 
$A_c^* (X,\mathfrak{U};V)^G \hookrightarrow A_{cr}^* (X,\mathfrak{U};V)^G$
also induces an isomorphism in cohomology.
\end{proposition}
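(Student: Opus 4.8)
The plan is to chain together the results already established in this section. Suppose the inclusion $A_c^{p,*} (X,\mathfrak{U};V) \hookrightarrow A^{p,*} (X,\mathfrak{U};V)$ induces an isomorphism in cohomology for every $p$. By the previous Lemma (the one deduced from Diagram \ref{diaglecs}), this is equivalent to the exactness of each augmented column complex $A_c^p (X;V) \hookrightarrow A_{cr}^{p,*} (X,\mathfrak{U};V)$. Once we know these augmented columns are exact, Corollary \ref{augexthenjeqindiso} immediately gives that $j_{eq}^* : A_c^* (X;V)^G \hookrightarrow \tot A_{cr}^{*,*}(X,\mathfrak{U};V)^G$ induces an isomorphism in cohomology, and Corollary \ref{augextheninclindiso} gives that the inclusion $A_c^* (X;V)^G \hookrightarrow A_{cr}^* (X,\mathfrak{U};V)^G$ induces an isomorphism in cohomology. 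So the proof is essentially a two-line invocation of the lemma and the two corollaries.

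The only subtlety worth spelling out is the passage from the hypothesis (stated for fixed but arbitrary $p$, via the map of long exact sequences) to the full column exactness used by the corollaries: one must note that the equivalence in the cited Lemma holds for each $p$ separately, so the hypothesis for all $p$ yields exactness of all the augmented columns simultaneously, which is exactly what Proposition \ref{noneqextheneqex} and hence Corollaries \ref{augexthenjeqindiso} and \ref{augextheninclindiso} require. I do not anticipate any genuine obstacle here — the real content has already been distributed across Proposition \ref{noneqextheneqex}, the diagram chase in \eqref{diaglecs}, and the earlier spectral-sequence bookkeeping; this proposition is just the bookkeeping step that records the implication for later use (it will be applied once contractibility of $X$ is shown to force the hypothesis). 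I would therefore write the proof simply as: ``By the preceding Lemma the hypothesis is equivalent to the exactness of the augmented columns $A_c^p (X;V) \hookrightarrow A_{cr}^{p,*} (X,\mathfrak{U};V)$ for all $p$. The claims now follow from Corollary \ref{augexthenjeqindiso} and Corollary \ref{augextheninclindiso} respectively.''
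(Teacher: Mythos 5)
Your proof is correct and follows exactly the paper's own argument: the paper likewise invokes the preceding Lemma (equivalence of the hypothesis with exactness of the augmented columns) and then Corollaries \ref{augexthenjeqindiso} and \ref{augextheninclindiso}. Your added remark about applying the Lemma for each $p$ separately is a fair clarification but introduces nothing beyond what the paper intends.
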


\begin{proof}
  This follows from the preceding Lemma and Corollaries 
\ref{augexthenjeqindiso} and \ref{augextheninclindiso}.
\end{proof}
The passage to the colimit over all open coverings of $X$ yields the
corresponding results for the complexes of cochains with continuous germs: 

\begin{proposition} \label{noneqextheneqexcg}
 If the augmented column complexes 
$A_c^p (X;V) \hookrightarrow A_{cg}^{p,*}(X;V)$ 
are exact, then the augmented sub column complexes
$A_c^p (X;V)^G \hookrightarrow A_{cg}^{p,*}(X;V)^G$  
of equivariant cochains are exact as well.
\end{proposition}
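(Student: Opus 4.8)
The plan is to transport the argument of Proposition~\ref{noneqextheneqex} to the colimit complexes; the one genuinely new point is to check that the ``conjugation'' construction used there is compatible with the condition of having a continuous germ at the diagonal. I would organise this as a chain-level retraction rather than a cocycle chase. Fix $p$ and, on the augmented column complex $A_c^p(X;V)\to A_{cg}^{p,*}(X;V)$, define $r$ by
\[
 r(f)(\vec{x},\vec{x}') := x_0 . f(x_0^{-1}.\vec{x},x_0^{-1}.\vec{x}')
 \qquad\bigl(f\in A_{cg}^{p,q}(X;V)\bigr),
\]
and correspondingly $r(h)(\vec{x}):=x_0.h(x_0^{-1}.\vec{x})$ on the augmentation term $A_c^p(X;V)$, exactly as in the proof of Proposition~\ref{noneqextheneqex}.

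The routine verifications, identical to those in Proposition~\ref{noneqextheneqex}, are that $r(f)$ and $r(h)$ are $G$-equivariant, that $r$ commutes with the augmentation and with the vertical differential (here one uses that $d_v$ is $G$-equivariant and leaves the coordinate $x_0$ untouched, so $d_v(r(f))=x_0.(d_v f)(x_0^{-1}.\vec{x},x_0^{-1}.\vec{x}')=r(d_v f)$), and that $r$ restricts to the identity on the subcomplex $A_c^p(X;V)^G\to A_{cg}^{p,*}(X;V)^G$ of equivariant cochains. Together with the inclusion, this exhibits the augmented equivariant column as a retract, in the category of cochain complexes, of the augmented column $A_c^p(X;V)\to A_{cg}^{p,*}(X;V)$.

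The step I expect to require the most care -- and the only place where the germ condition enters -- is checking that $r$ actually takes values in $A_{cg}^{p,*}(X;V)$, i.e.\ that $r(f)$ has a continuous germ at the diagonal whenever $f$ does. If $f$ is continuous on $X^{p+1}\times\mathfrak{U}[q]$ for some open covering $\mathfrak{U}$ of $X$ (which need not be $G$-invariant), then $r(f)$ is continuous on the preimage of $X^{p+1}\times\mathfrak{U}[q]$ under the continuous self-map $(\vec{x},\vec{x}')\mapsto(x_0^{-1}.\vec{x},x_0^{-1}.\vec{x}')$ of $X^{p+1}\times X^{q+1}$. Because the diagonal $\Delta\subseteq X^{q+1}$ is $G$-invariant and contained in $\mathfrak{U}[q]$, this preimage is an open set containing $X^{p+1}\times\Delta$; hence $r(f)$ is continuous on a neighbourhood of the diagonal and lies in $A_{cg}^{p,q}(X;V)$. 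On the term $A_c^p(X;V)$ there is nothing to check, as those cochains are continuous on all of $X^{p+1}$.

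Finally, since the augmented column $A_c^p(X;V)\to A_{cg}^{p,*}(X;V)$ is exact by hypothesis, its cohomology vanishes; passing to cohomology in the retraction shows that the cohomology of $A_c^p(X;V)^G\to A_{cg}^{p,*}(X;V)^G$ is a direct summand of it, hence also vanishes, which is the desired exactness. (Equivalently, one may run the cocycle chase of Proposition~\ref{noneqextheneqex} verbatim: given an equivariant vertical cocycle $f_{eq}^{p,q}$, use the non-equivariant exactness to write $f_{eq}^{p,q}=d_v f^{p,q-1}$ and take $f_{eq}^{p,q-1}:=r(f^{p,q-1})$, the continuous-germ property of $f_{eq}^{p,q-1}$ being exactly the point verified above.)
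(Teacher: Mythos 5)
You follow the route the paper intends --- the conjugation $r(f)(\vec{x},\vec{x}')=x_0.f(x_0^{-1}.\vec{x},x_0^{-1}.\vec{x}')$ from Proposition~\ref{noneqextheneqex}, here packaged as a chain-level retraction rather than a cocycle chase, which is a harmless repackaging --- and you correctly isolate the one point where the colimit case differs from the fixed-covering case, namely whether $r(f)$ still has a continuous germ. But your resolution of that point has a genuine gap. Membership in $A_{cg}^{p,q}(X;V)=\colim_{\mathfrak{U}}A_{cr}^{p,q}(X,\mathfrak{U};V)$ means continuity on a set of the \emph{product} form $X^{p+1}\times\mathfrak{W}[q]$ for some open covering $\mathfrak{W}$; it is not enough that $r(f)$ be continuous on some open set containing $X^{p+1}\times\Delta$. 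The continuity set you produce is $\bigl\{(\vec{x},\vec{x}')\mid \vec{x}'\in x_0.\mathfrak{U}[q]\bigr\}$, whose ``width'' in the $\vec{x}'$-direction varies with $x_0$, and for it to contain $X^{p+1}\times\mathfrak{W}[q]$ one would need $\mathfrak{W}[q]\subseteq\bigcap_{g\in G}g.\mathfrak{U}[q]$. When $\mathfrak{U}$ is not $G$-invariant this intersection need not be a neighbourhood of the diagonal at all: for $G=X=\mathbb{R}$ and the covering by the intervals $(t-e^{-|t|},t+e^{-|t|})$, the intersection $\bigcap_{g}\bigl(g+\mathfrak{U}[1]\bigr)$ is exactly the diagonal. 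So the asserted implication ``continuous on a neighbourhood of the diagonal, hence lies in $A_{cg}^{p,q}(X;V)$'' is false in general, and $r$ need not map $A_{cg}^{p,q}(X;V)$ into itself.

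This is precisely the point at which the fixed-covering argument of Proposition~\ref{noneqextheneqex} uses that $\mathfrak{U}$ is $G$-invariant, so that $x_0.\mathfrak{U}[q]=\mathfrak{U}[q]$ and $r$ preserves $A_{cr}^{p,q}(X,\mathfrak{U};V)$. In the colimit situation the primitive $f^{p,q-1}$ supplied by the hypothesis is only continuous on $X^{p+1}\times\mathfrak{V}[q-1]$ for some covering $\mathfrak{V}$ that cannot be assumed $G$-invariant --- $f^{p,q-1}$ is not equivariant, so the saturation trick of Proposition~\ref{natinclofeqccisiso} does not apply to it, and replacing $\mathfrak{V}$ by $G.\mathfrak{V}$ does not help. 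To close the argument you would need an additional device, e.g.\ showing that the primitive can be chosen continuous on a $G$-invariant product neighbourhood. The paper's own proof is only the remark that it is ``similar'' to Proposition~\ref{noneqextheneqex} and glosses over exactly this difficulty, so you have located the real issue; but your write-up, as it stands, does not dispose of it.
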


\begin{proof}
  The proof is similar to that of Proposition \ref{noneqextheneqex}.
\end{proof}

\begin{corollary} \label{augexthenjeqindisocg}
If the augmented column complexes 
$A_c^p (X;V) \hookrightarrow A_{cg}^{p,*} (X;V)$ 
are exact, then the inclusion 
$j_{eq}^* : A_c^* (X;V)^G \hookrightarrow 
\tot A_{cg}^{*,*}(X;V)^G$ induces an isomorphism in cohomology.
\end{corollary}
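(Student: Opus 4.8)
The plan is to argue exactly as Corollary \ref{augexthenjeqindiso} is deduced from Proposition \ref{noneqextheneqex}, now with the colimit version Proposition \ref{noneqextheneqexcg} in place of Proposition \ref{noneqextheneqex}. First I would invoke Proposition \ref{noneqextheneqexcg}: the hypothesis is precisely that the augmented columns $A_c^p(X;V)\hookrightarrow A_{cg}^{p,*}(X;V)$ are exact, so the proposition yields that the augmented equivariant columns $A_c^p(X;V)^G\hookrightarrow A_{cg}^{p,*}(X;V)^G$ are exact for every $p\ge 0$. This is the only place where the averaging construction is needed, and it has already been carried out.

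Next I would apply the standard comparison between a first-quadrant double complex and its column augmentation. The double complex $A_{cg}^{*,*}(X;V)^G$ is a first quadrant double complex, being a filtered colimit of first quadrant double complexes (cf. Lemma \ref{natinclofeqdcisiso}), so the column-wise filtration of $\tot A_{cg}^{*,*}(X;V)^G$ gives a spectral sequence converging to $H(\tot A_{cg}^{*,*}(X;V)^G)$, just as in Section \ref{sectss} (cf. \cite[Theorem 2.15]{Mcl}). Taking cohomology first in the vertical direction and using the exactness established in the first step, the vertical cohomology $H_v^q(A_{cg}^{p,*}(X;V)^G)$ vanishes for $q\ge 1$ and equals $A_c^p(X;V)^G$ for $q=0$; hence the spectral sequence is concentrated in the bottom row, degenerates at $E_2$, and one obtains $E_2^{p,0}=H^p(A_c^*(X;V)^G)$. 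The remaining point is the routine bookkeeping identification of the resulting edge isomorphism $H(A_c^*(X;V)^G)\xrightarrow{\cong} H(\tot A_{cg}^{*,*}(X;V)^G)$ with the map induced by the column augmentation $j_{eq}^*$: a bottom-row vertical cocycle representing a class in $E_2^{p,0}$ lifts back along the exact augmented column to an element of $A_c^p(X;V)^G$, and this lifting is inverse to $H(j_{eq})$. (Equivalently one can run the acyclic-assembly zig-zag directly, avoiding spectral sequence language altogether.)

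The main obstacle is essentially nil: the genuine work was already done in Proposition \ref{noneqextheneqexcg}, and the comparison step is the standard first-quadrant double complex argument. The one thing requiring a little care is making sure the third step actually produces the map induced by $j_{eq}^*$ rather than merely an abstract isomorphism of cohomology groups, but this is the usual diagram chase and causes no trouble. One could alternatively try to obtain the statement by passing Corollary \ref{augexthenjeqindiso} to the colimit over $G$-invariant open coverings, using that filtered colimits are exact together with Lemma \ref{natinclofeqdcisiso}; however, the hypothesis here concerns only the colimit $A_{cg}^{p,*}(X;V)$ of the columns and not each individual $A_{cr}^{p,*}(X,\mathfrak{U};V)$, so the direct route through Proposition \ref{noneqextheneqexcg} is the cleaner one. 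In practice I would simply write: \emph{the proof is analogous to that of Corollary \ref{augexthenjeqindiso}, using Proposition \ref{noneqextheneqexcg} in place of Proposition \ref{noneqextheneqex}.}
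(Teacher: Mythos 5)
Your proposal is correct and is exactly the argument the paper intends: the paper states this corollary without proof immediately after Proposition \ref{noneqextheneqexcg}, relying on precisely the combination you describe — equivariant column exactness from that proposition plus the standard first-quadrant column-filtration (or acyclic-assembly) comparison identifying $H(j_{eq})$ with the resulting edge isomorphism. Your remark that one cannot instead pass Corollary \ref{augexthenjeqindiso} to the colimit, since the hypothesis only concerns the colimit columns, is also a correct and worthwhile observation.
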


\begin{corollary} \label{augextheninclindisocg}
If the augmented column complexes 
 $A_c^p (X;V) \hookrightarrow A_{cg}^{p,*} (X;V)$ 
are exact, then the inclusion 
$A_c^* (X;V)^G \hookrightarrow A_{cg}^* (X;V)^G$
induces an isomorphism in cohomology.
\end{corollary}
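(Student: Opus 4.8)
The plan is to assemble results already available so that the only genuinely new input is the descent of column‑exactness to the $G$‑fixed subcomplex. First I would apply Proposition~\ref{noneqextheneqexcg}: the hypothesis that the augmented columns $A_c^p(X;V)\hookrightarrow A_{cg}^{p,*}(X;V)$ are exact forces the augmented columns $A_c^p(X;V)^G\hookrightarrow A_{cg}^{p,*}(X;V)^G$ of equivariant cochains to be exact as well. Exactness of every augmented column of the double complex $A_{cg}^{*,*}(X;V)^G$ is exactly what drives the usual argument (filter by rows; the spectral sequence collapses onto the augmentation column $A_c^*(X;V)^G$, equivalently the mapping cone of the augmentation is the total complex of a double complex with exact columns and hence acyclic) showing that $j_{eq}^*\colon A_c^*(X;V)^G\hookrightarrow\tot A_{cg}^{*,*}(X;V)^G$ induces an isomorphism in cohomology. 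This is precisely Corollary~\ref{augexthenjeqindisocg}, which I may assume.

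Next I would recall two facts established in Section~\ref{sectss}. First, the augmentation $i_{cg,eq}^*\colon A_{cg,eq}^*(X;V)\to\tot A_{cg}^{*,*}(X;V)^G$ induces an isomorphism in cohomology: this is the consequence of Lemma~\ref{natinclofeqdcisiso} together with the fact that the row‑contraction of Lemma~\ref{columnsexacteq} survives the colimit, combined with the identification $A_{cg,eq}^*(X;V)\cong A_{cg}^*(X;V)^G$ of Proposition~\ref{natinclofeqccisiso}. Second, by the Corollary following Proposition~\ref{contiscohtocreq}, the composite $H(i_{cg,eq})^{-1}\circ H(j_{eq})\colon H_{c,eq}(X;V)\to H_{cg,eq}(X;V)$ coincides with the map induced by the inclusion $A_c^*(X;V)^G\hookrightarrow A_{cg}^*(X;V)^G$. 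Since $H(i_{cg,eq})$ is an isomorphism and, by the previous paragraph, $H(j_{eq})$ is an isomorphism, the composite is an isomorphism; hence the inclusion induces an isomorphism in cohomology, which is the assertion.

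I do not expect a serious obstacle: every displayed double‑complex lemma has already been pushed through the colimit over all open coverings, so the argument is essentially bookkeeping once the colimit complexes and their augmentations are correctly identified via Lemma~\ref{natinclofeqdcisiso}. If one step deserves care it is Proposition~\ref{noneqextheneqexcg}, i.e.\ replacing a possibly non‑equivariant vertical primitive $f^{p,q-1}$ by the equivariant cochain $(\vec x,\vec x')\mapsto x_0.f^{p,q-1}(x_0^{-1}.\vec x,x_0^{-1}.\vec x')$ and checking both that it still lies in the colimit complex (its germ at the diagonal is continuous because that of $f^{p,q-1}$ is) and that its vertical coboundary is again the prescribed equivariant cocycle; this is the colimit analogue of the computation in the proof of Proposition~\ref{noneqextheneqex}.
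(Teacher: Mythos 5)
Your proposal is correct and follows essentially the same route the paper intends: Proposition~\ref{noneqextheneqexcg} to descend column-exactness to the equivariant subcomplex, Corollary~\ref{augexthenjeqindisocg} to conclude that $j_{eq}^*$ induces an isomorphism, and then the identification (via the invertibility of $H(i_{cg,eq})$ and the corollary following Proposition~\ref{contiscohtocreq}) of $H(i_{cg,eq})^{-1}H(j_{eq})$ with the map induced by the inclusion $A_c^*(X;V)^G\hookrightarrow A_{cg}^*(X;V)^G$. No gaps.
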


\begin{remark} \label{remonlyginvcov}
Alternatively to taking the colimit over all open coverings $\mathfrak{U}$ of 
$X$ one may consider $G$-invariant open coverings only to obtains the same
results. 
(This was shown in Proposition \ref{natinclofeqccisiso} and Lemmata 
\ref{natinclofeqdcisiso}.)
\end{remark}

\begin{example}
  If $G=X$ is a topological group which acts on itself by left translation and 
the augmented columns  
$A_c^p (X;V) \hookrightarrow A_{cg}^{p,*} (X;V) := 
\colim A^{p,*} (X,\mathfrak{U}_U;V)$ (where $U$ runs over all open identity
neighbourhoods in $G$)  
are exact, then the inclusion 
$A_c^* (X;V)^G \hookrightarrow A_{cg}^* (X;V)^G$
induces an isomorphism in cohomology.
\end{example}

The complex $A^{p,*} (X,\mathfrak{U};V)$ is isomorphic to the complex 
$A^* (\mathfrak{U}; C (X^{p+1},V))$. The colimit 
$A_{AS}^* (X; C (X^{p+1} ,V)):=\colim A^* (\mathfrak{U}; C (X^{p+1} ,V))$, 
where $\mathfrak{U}$ runs over all open coverings of
$X$ is the complex of Alexander-Spanier cochains on $X$. Therefore the colimit 
complex $\colim A^p (X; A^* (\mathfrak{U};V))$ is isomorphic to the cochain 
complex $A_{AS}^* (X;C (X^{p+1} ,V))$. 
A similar observation can be made for the cochain complex 
$A_c^{p,*} (X,\mathfrak{U};V)$ if the exponential law 
$C (X^{p+1} \times \mathfrak{U}[q],V) \cong C (X,C (\mathfrak{U}[q],V))$ 
holds for a cofinal set of open coverings $\mathfrak{U}$ of $X$. 
Passing to the colimit in Diagram \ref{morphexseq} yields the morphism

\begin{equation} \label{morphexseqcg}
\begin{array}{cccccccc}
 0 \longrightarrow & \ker (\res_{cg}^{p,*} ) & \longrightarrow & 
A_{cg}^{p,*} (X;V) & \longrightarrow & \colim A_c^{p,*} (X,\mathfrak{U};V) & 
\longrightarrow 0 \\ 
& \downarrow & & \downarrow & & \downarrow & \\
0 \longrightarrow & \ker (\res^{p,*} ) & \longrightarrow & A^{p,*} (X;V) 
& \longrightarrow & A_{AS}^* (X; C^{p+1} (X,V))
& \longrightarrow 0
\end{array}
\end{equation}
of short exact sequences of cochain complexes. The kernel $\ker (\res^{p,q})$
is the subspace of those $(p,q)$-cochains which are trivial on 
$X^{p+1} \times \mathfrak{U} [q]$ for some open covering $\mathfrak{U}$ of 
$X$. Since these $(p,q)$-cochains are continuous on 
$X^{p+1} \times \mathfrak{U}[q]$ we find that both kernels coincide. We 
abbreviate the complex $\ker (\res^{p,*} ) = \ker (\res_{cg}^{p,*} )$ by 
$K_{cg}^{p,*}$ and denote the cohomology groups of the complex 
$A_{cg}^{p,*} (X;V)$ by $H_{cg}^{p,*} (X;V)$. 
The morphism of short exact sequences of cochain complexes in Diagram 
\ref{morphexseqcg} gives rise to a morphism of long exact cohomology 
sequences:
\begin{equation} \label{diaglecscg}
\xymatrix@C-11pt@R-10pt{ 
\ar[r] & H^q (K_{cg}^{p,*} )) \ar[r] \ar@{=}[d] 
& H_{cg}^{p,q} (X,\mathfrak{U};V) \ar[r] \ar[d]  
& H^q (\colim A_c^{p,*} (X,\mathfrak{U};V) \ar[r] \ar[d]  
& H^{q+1}  (K_{cg}^{p,*} ) \ar[r] \ar@{=}[d] & {} \\ 
\ar[r]^\cong & H^q (K^{p,*} ) \ar[r]&  0  \ar[r] 
& H_{AS}^q (X; C^{p+1} (X,V)) \ar[r]^\cong &  H^{q+1}  (K^{p,*} )) \ar[r] & {}
}
\end{equation}

\begin{lemma}
The augmented complex 
$A_c^p (X;V) \hookrightarrow A_{cg}^{p,*} (X;V)$ is exact if and only if 
the inclusion 
$\colim A_c^{p,*} (X,\mathfrak{U};V)\hookrightarrow A_{AS}^* (X;C^{p+1}(X,V))$
of cochain complexes induces an isomorphism in cohomology.
\end{lemma}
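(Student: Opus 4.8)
The plan is to deduce the claim from the morphism of long exact cohomology sequences in Diagram~\ref{diaglecscg}, exactly as the analogous statement for a fixed open covering $\mathfrak{U}$ was deduced from Diagram~\ref{diaglecs}. Recall that Diagram~\ref{diaglecscg} arises from the morphism~\ref{morphexseqcg} of short exact sequences of cochain complexes, in which the left-hand vertical arrow $K_{cg}^{p,*}=K^{p,*}\to K^{p,*}$ is the identity, the middle one is the inclusion $A_{cg}^{p,*}(X;V)\hookrightarrow A^{p,*}(X;V)$ of cochains with continuous germ into all cochains continuous in the $X^{p+1}$-variable, and the augmented complex $A_c^p(X;V)\hookrightarrow A^{p,*}(X;V)$ is exact (by the base-point contraction), so that $H^q(A^{p,*}(X;V))$ is $A_c^p(X;V)$ for $q=0$ and $0$ for $q\ge 1$.

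The first, and essentially only nonformal, step is the bookkeeping at the foot of the column. In each of the complexes occurring here the $d_v$-cocycles of bidegree $(p,0)$ are exactly the cochains that do not depend on the last $X$-variable; this subgroup is precisely the image of the injective augmentation $A_c^p(X;V)\to A_{cg}^{p,0}(X;V)$, so the augmented column $A_c^p(X;V)\hookrightarrow A_{cg}^{p,*}(X;V)$ is automatically exact in the bottom degrees, its exactness is equivalent to $H^q(A_{cg}^{p,*}(X;V))=0$ for all $q\ge 1$, and the middle vertical arrow of~\ref{morphexseqcg} is the identity on the degree-$0$ cohomology groups (both equal to $A_c^p(X;V)$). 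Hence that middle vertical arrow is a quasi-isomorphism if and only if it induces an isomorphism in every positive degree, i.e. if and only if $H^q(A_{cg}^{p,*}(X;V))=0$ for all $q\ge 1$, i.e. if and only if the augmented column is exact.

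The second step is purely formal: in the morphism of long exact sequences attached to~\ref{morphexseqcg} the vertical arrows on the $H^q(K^{p,*})$-terms are isomorphisms, so the two-out-of-three principle (the five lemma) shows that the middle vertical arrow $A_{cg}^{p,*}(X;V)\to A^{p,*}(X;V)$ is a quasi-isomorphism if and only if the right-hand vertical arrow $\colim A_c^{p,*}(X,\mathfrak{U};V)\hookrightarrow A_{AS}^*(X;C^{p+1}(X,V))$ is; combined with the first step this is the assertion. Alternatively, one may chase the diagram by hand, just as the text does for Diagram~\ref{diaglecs}: since $A^{p,*}(X;V)$ is acyclic in positive degrees the bottom connecting homomorphisms $H^q(A_{AS}^*(X;C^{p+1}(X,V)))\to H^{q+1}(K^{p,*})$ are isomorphisms, so by naturality every top connecting homomorphism is the composite of the inclusion-induced map with an isomorphism, and the top long exact sequence then yields the equivalence. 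I expect the only point worth care to be the reconciliation in the first step of ``exactness of the augmented column'' with the vanishing of the positive-degree cohomology of $A_{cg}^{p,*}(X;V)$ --- so that the nonvanishing degree-$0$ term $A_c^p(X;V)$ does not create a spurious mismatch with the long exact sequence; the rest is the formal argument already performed for a single covering $\mathfrak{U}$, together with the remark that filtered colimits over the open coverings of $X$ preserve exactness and commute with cohomology.
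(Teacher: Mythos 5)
Your proof is correct and follows exactly the route the paper intends: the paper's own proof is the one-line remark that the lemma is ``an immediate consequence of Diagram~\ref{diaglecscg}'', and your argument simply spells out that consequence --- the identification of the two kernels, the degree-zero bookkeeping with $H^0\cong A_c^p(X;V)$ on both sides, and the five-lemma (or direct diagram-chase) step on the morphism of long exact sequences. No gap; you have merely made explicit what the paper leaves implicit.
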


\begin{proof}
 This is an immediate consequence of Diagram \ref{diaglecscg}
\end{proof}

\begin{proposition} \label{inclcolimacpsindisothenjeqiso}
  If the inclusion 
$\colim A_c^{p,*} (X,\mathfrak{U};V)\hookrightarrow A_{AS}^* (X;C(X^{p+1},V))$ 
induces an isomorphism in cohomology, then  
$j_{eq}^* : A_c^* (X;V)^G \hookrightarrow \tot A_{cg}^{*,*}(X;V)^G$ and 
$A_c^* (X;V)^G \hookrightarrow A_{cg}^* (X;V)^G$
also induce an isomorphism in cohomology.
\end{proposition}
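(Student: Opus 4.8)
The plan is to recognise this proposition as the colimit analogue of the statement already proved for a fixed covering $\mathfrak{U}$ (the Proposition following Diagram \ref{diaglecs}), and to assemble it entirely from pieces that are now in place. The crucial observation is that the hypothesis is precisely the criterion supplied by the Lemma immediately preceding: the inclusion $\colim A_c^{p,*}(X,\mathfrak{U};V) \hookrightarrow A_{AS}^*(X;C(X^{p+1},V))$ induces an isomorphism in cohomology if and only if the augmented column complex $A_c^p(X;V) \hookrightarrow A_{cg}^{p,*}(X;V)$ is exact. So the first step is simply to invoke that Lemma in order to conclude that, under the hypothesis, every augmented column of the double complex $A_{cg}^{*,*}(X;V)$ is exact.

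Once column-exactness is available, the two asserted conclusions follow formally. Applying Corollary \ref{augexthenjeqindisocg} yields that $j_{eq}^* : A_c^*(X;V)^G \hookrightarrow \tot A_{cg}^{*,*}(X;V)^G$ induces an isomorphism in cohomology, and applying Corollary \ref{augextheninclindisocg} yields the same for the inclusion $A_c^*(X;V)^G \hookrightarrow A_{cg}^*(X;V)^G$. Both corollaries rest in turn on Proposition \ref{noneqextheneqexcg}, which propagates exactness of the augmented columns from the ambient double complex to the subcomplex of $G$-equivariant cochains by correcting a possibly non-equivariant primitive via $f_{eq}^{p,q-1}(\vec{x},\vec{x}') := x_0 . f^{p,q-1}(x_0^{-1}.\vec{x}, x_0^{-1}.\vec{x}')$.

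Thus the proof reduces to citing the preceding Lemma together with Corollaries \ref{augexthenjeqindisocg} and \ref{augextheninclindisocg}; no genuinely new argument is needed. The one point I would treat with some care — although it has already been dispatched in the discussion around Diagram \ref{morphexseqcg} — is the identification $\ker(\res^{p,*}) = \ker(\res_{cg}^{p,*})$: a global $(p,q)$-cochain vanishing on $X^{p+1}\times\mathfrak{U}[q]$ for some open covering $\mathfrak{U}$ is automatically continuous there, so the two kernel complexes coincide and the equality maps in Diagram \ref{diaglecscg} are legitimate. It is exactly this coincidence that lets the long-exact-sequence comparison read off exactness of the augmented column from the hypothesised isomorphism, and once it is granted there is no remaining obstacle. (By Proposition \ref{natinclofeqccisiso} and Lemma \ref{natinclofeqdcisiso} one may equally well run the whole colimit over $G$-invariant coverings only, which changes nothing.)
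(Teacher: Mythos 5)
Your proposal is correct and follows exactly the paper's own route: the preceding Lemma converts the hypothesis into exactness of the augmented columns $A_c^p (X;V) \hookrightarrow A_{cg}^{p,*}(X;V)$, and Corollaries \ref{augexthenjeqindisocg} and \ref{augextheninclindisocg} then deliver both conclusions. The extra care you take over the equivariant averaging in Proposition \ref{noneqextheneqexcg} and the identification of the two kernel complexes is a faithful unpacking of what the paper leaves implicit, not a different argument.
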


\begin{proof}
  This follows from the preceding Lemma and Corollaries 
\ref{augexthenjeqindisocg} and \ref{augextheninclindisocg}.
\end{proof}

As observed before (cf. Remark \ref{remonlyginvcov}) one may restrict oneself
to the directed system of $G$-invariant open coverings only to achieve the
same result. Thus we observe:

\begin{corollary}
  If $G=X$ is a locally contractible topological group which acts on itself 
by left translation and the inclusion 
$\colim A_c^{p,*} (X,\mathfrak{U};V)\hookrightarrow A_{AS}^* (X;C(X^{p+1},V))$ 
(where $U$ runs over all open identity
neighbourhoods in $G$)  induces an isomorphism in cohomology, then the 
inclusion $A_c^* (X;V)^G \hookrightarrow A_{cg}^* (X;V)^G$
induces an isomorphism in cohomology as well.
\end{corollary}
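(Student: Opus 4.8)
The plan is to read the statement as the specialisation of Proposition \ref{inclcolimacpsindisothenjeqiso} to the transformation group $(G,G)$ with $G$ acting on itself by left translation, combined with Remark \ref{remonlyginvcov} (which rests on Proposition \ref{natinclofeqccisiso} and Lemma \ref{natinclofeqdcisiso}) allowing us to replace the directed system of \emph{all} open coverings of $X=G$ by the directed system of $G$-invariant open coverings throughout this section. In this form almost nothing new has to be proved; the one point that genuinely needs checking is that the colimit appearing in the hypothesis of the corollary --- taken over the open identity neighbourhoods $U$ of $G$, i.e.\ over the coverings $\mathfrak{U}_U$ --- coincides with the colimit over all $G$-invariant open coverings of $G$ that figures in Remark \ref{remonlyginvcov}.

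First I would record that for every open identity neighbourhood $U\subseteq G$ the family $\mathfrak{U}_U=\{\,g.U\mid g\in G\,\}$ is a $G$-invariant open covering of $G$, since left translation by $h$ maps $g.U$ to $(hg).U$ and hence permutes the members of $\mathfrak{U}_U$. Next I would verify that these coverings are cofinal in the directed system of \emph{all} $G$-invariant open coverings: given a $G$-invariant open covering $\mathfrak{V}$ of $G$, pick $V\in\mathfrak{V}$ with $e\in V$; then $V$ is an open identity neighbourhood, and $G$-invariance of $\mathfrak{V}$ forces $g.V\in\mathfrak{V}$ for every $g\in G$, so that $\mathfrak{U}_V\subseteq\mathfrak{V}$, and in particular $\mathfrak{U}_V$ refines $\mathfrak{V}$. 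Passing to diagonal neighbourhoods, $\mathfrak{U}_U[n]=\bigcup_{g\in G}g.(U^{n+1})$ is therefore cofinal in the directed set of $G$-invariant open neighbourhoods of the diagonal in $G^{n+1}$, whence
\begin{equation*}
  \colim_{U}A_c^{p,*}(G,\mathfrak{U}_U;V)\;\cong\;\colim_{\mathfrak{U}\text{ $G$-invariant}}A_c^{p,*}(G,\mathfrak{U};V),
\end{equation*}
with the analogous identifications for $A^{p,*}(G,\mathfrak{U};V)$ and for the terms on the right of Diagram \ref{morphexseqcg}; in either indexing the colimit of those right-hand terms is the Alexander-Spanier complex $A_{AS}^*(G;C(G^{p+1},V))$.

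With this bookkeeping done, the hypothesis of the corollary --- that $\colim A_c^{p,*}(G,\mathfrak{U};V)\hookrightarrow A_{AS}^*(G;C(G^{p+1},V))$ induces an isomorphism in cohomology --- is literally the hypothesis of Proposition \ref{inclcolimacpsindisothenjeqiso}, now read over $G$-invariant coverings only, which is legitimate by Remark \ref{remonlyginvcov}. That proposition then yields that $A_c^*(G;V)^G\hookrightarrow A_{cg}^*(G;V)^G$ induces an isomorphism in cohomology, which is the assertion. I note that local contractibility of $G$ does not enter this deduction at all; it is included because it is the geometric hypothesis under which the isomorphism assumption on the Alexander-Spanier-type inclusion is to be verified. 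The only mildly delicate step is thus the cofinality argument of the second paragraph --- ensuring that the two indexing systems (open identity neighbourhoods versus $G$-invariant open coverings) produce the same colimit complexes; the rest is a direct appeal to results already established.
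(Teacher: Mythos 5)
Your proposal is correct and follows essentially the same route as the paper: the corollary is obtained by specialising Proposition \ref{inclcolimacpsindisothenjeqiso} via Remark \ref{remonlyginvcov}, the only substantive point being the identification of the colimit over the coverings $\mathfrak{U}_U$ with the complexes appearing in that proposition. The one difference is that where the paper simply cites van Est for the fact that this restricted colimit still computes the Alexander--Spanier cohomology, you supply the underlying cofinality argument ($\mathfrak{U}_V\subseteq\mathfrak{V}$ for any $V\in\mathfrak{V}$ containing $e$) explicitly, which if anything makes the deduction more self-contained; your observation that local contractibility is not used in the formal deduction also matches the paper's proof.
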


\begin{proof}
 It has been shown in \cite{vE62b} that the cohomology of the colimit cochain 
complex 
$\colim A^* ( \mathfrak{U} ;C(X^{p+1},V))$ is the Alexander-Spanier cohomology
of $X$.  
\end{proof}

\begin{lemma} \label{xcontrthenacpsistriv}
  If the topological space $X$ is contractible, then the cohomology 
of the complex $\colim A_c^{p,*} (X,\mathfrak{U};V)$ is trivial.
\end{lemma}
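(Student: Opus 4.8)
The plan is to use the contraction of $X$ to build a chain contraction of the cochain complex $\colim_{\mathfrak{U}} A_c^{p,*}(X,\mathfrak{U};V)$. Let $H : X \times [0,1] \to X$ be a homotopy with $H(-,0)=\id_X$ and $H(-,1)=c$ the constant map at a basepoint $\ast \in X$. The key point is that, unlike in the non-equivariant row-contraction of Lemma \ref{columnsexact} (which failed to preserve the continuity-on-$\mathfrak{U}[q]$ condition, see the Remark after that lemma), here the extra coordinate we want to insert can be taken very close to the diagonal by pushing it along the contraction, so that continuity on a \emph{possibly finer} covering is retained — and since we work in the colimit over all open coverings, refining $\mathfrak{U}$ costs nothing. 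Concretely, given a continuous cochain $f \in C(X^{p+1} \times \mathfrak{U}[q],V)$, I would like to define $\tilde h^{p,q}(f) \in C(X^{p+1} \times \mathfrak{V}[q-1],V)$ for a suitable refinement $\mathfrak{V}$ of $\mathfrak{U}$, by a formula of the shape $\tilde h^{p,q}(f)(\vec x, x_0',\dots,x_{q-1}') = f(\vec x, \ast, H(x_0',t_0),\dots)$ — but a cleaner route is the following.

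First I would reduce to a statement about Alexander–Spanier cohomology. By the isomorphism $A^{p,*}(X,\mathfrak{U};V) \cong A^*(\mathfrak{U};C(X^{p+1},V))$ recorded just before the Lemma, together with the fact (cited from \cite{vE62b}) that $\colim_{\mathfrak{U}} A^*(\mathfrak{U};C(X^{p+1},V))$ computes the Alexander–Spanier cohomology $H_{AS}^*(X;C(X^{p+1},V))$, and the analogous exponential-law identification $C(X^{p+1}\times\mathfrak{U}[q],V)\cong C(X^{p+1},C(\mathfrak{U}[q],V))$ (valid on a cofinal family of coverings, e.g.\ when $X$ is locally compact, or more generally using the appropriate convenient category), the complex $\colim_{\mathfrak{U}} A_c^{p,*}(X,\mathfrak{U};V)$ is identified with the Alexander–Spanier complex of $X$ with coefficients in the topological abelian group $W:=C(X^{p+1},V)$, i.e.\ with $A_{AS}^*(X;W)$ computed using \emph{continuous} local cochains. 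So the claim becomes: the Alexander–Spanier cohomology of a contractible space $X$ with (continuous) coefficients in $W$ is trivial in positive degrees. That this holds is the homotopy invariance of Alexander–Spanier cohomology: a contraction $H$ of $X$ induces a cochain homotopy between $\id$ and the map induced by the constant map, both on the global and on the germ-at-the-diagonal level, because $H$ carries a sufficiently fine covering of $X$ into any prescribed covering after a small time shift. I would make this precise by the standard prism/subdivision argument, choosing, for each open cover $\mathfrak{U}$, a cover $\mathfrak{V}$ fine enough that $H(V \times [t,t'] ) \subseteq U$ for some $U\in\mathfrak{U}$ whenever $\lvert t-t'\rvert$ is small and $V\in\mathfrak{V}$; the chain homotopy then lands in $A_c^*(X,\mathfrak{V}[*-1];W)$, and passing to the colimit kills it.

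The main obstacle I anticipate is the interplay between the continuity condition and the refinement of coverings in the prism operator: the naive cone/contraction inserts a coordinate ($\ast$ or a point on a homotopy track) which need not lie in $\mathfrak{U}[q-1]$, exactly the failure noted in the Remark after Lemma \ref{columnsexact}. The contractibility of $X$ is what repairs this — the homotopy lets us slide the inserted point into any desired diagonal neighbourhood at the cost of passing to a finer cover — but one must check that the finer cover can be chosen uniformly enough that the homotopy operator is defined simultaneously on all the cochains involved and commutes appropriately with the cosimplicial structure after the colimit. A secondary technical point is justifying the exponential law $C(X^{p+1}\times\mathfrak{U}[q],V)\cong C(X^{p+1},C(\mathfrak{U}[q],V))$ on a cofinal system of covers; if $X$ is not locally compact this should be handled either by restricting attention to such a cofinal system (e.g.\ using that the diagonal neighbourhoods of the form $\mathfrak{U}[q]$ can be taken locally compact-ish on paracompact $X$) or by working throughout in compactly generated spaces, where the exponential law is automatic — which is consistent with the paper's stated intention to treat $k$-groups as well.
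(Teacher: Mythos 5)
Your proposal follows essentially the same route as the paper: the paper's own proof is a one\-line citation asserting that the standard vanishing argument for the Alexander--Spanier presheaf on contractible spaces (\cite[Theorem 2.5.2]{F10}) ``carries over almost in verbatim,'' and your reduction of $\colim A_c^{p,*}(X,\mathfrak{U};V)$ to a continuous Alexander--Spanier complex with coefficients $C(X^{p+1},V)$, killed by a prism operator built from the contraction together with refinement of coverings in the colimit, is precisely that argument made explicit. The technical point you flag --- choosing the subdivision of $[0,1]$ and the refining cover $\mathfrak{V}$ compatibly so that the homotopy operator is a single well\-defined continuous cochain --- is exactly where the ``almost'' in the paper's ``almost in verbatim'' lives, so your awareness of it is appropriate rather than a gap.
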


\begin{proof}
The reasoning is analogous to that for the Alexander-Spanier presheaf. 
The proof \cite[Theorem 2.5.2]{F10} carries over almost in verbatim. 
\end{proof}

\begin{theorem}
For contractible $X$ the inclusion 
$A_c^* (X;V)^G \hookrightarrow A_{cg}^* (X;V)^G$ 
induces an isomorphism in cohomology.
\end{theorem}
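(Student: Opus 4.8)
The plan is to chain together the machinery already assembled in the excerpt. By Proposition \ref{inclcolimacpsindisothenjeqiso} (together with Remark \ref{remonlyginvcov}, so that one may work with $G$-invariant coverings if desired), it suffices to show that for contractible $X$ the inclusion
\begin{equation*}
 \colim A_c^{p,*} (X,\mathfrak{U};V)\hookrightarrow A_{AS}^* (X;C(X^{p+1},V))
\end{equation*}
induces an isomorphism in cohomology for every fixed $p$. Equivalently, by the Lemma preceding that Proposition, it suffices to show that the augmented column complexes $A_c^p (X;V) \hookrightarrow A_{cg}^{p,*} (X;V)$ are exact. So the whole theorem reduces to a single statement about contractible $X$: these augmented columns are exact.

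First I would invoke Lemma \ref{xcontrthenacpsistriv}, which already tells us that for contractible $X$ the cohomology of $\colim A_c^{p,*} (X,\mathfrak{U};V)$ is trivial. On the other hand, the target complex $A_{AS}^* (X;C(X^{p+1},V))$ is the Alexander--Spanier complex of the (contractible) space $X$ with coefficients in the abelian group $C(X^{p+1},V)$; its cohomology is therefore also trivial (concentrated in degree $0$, where it is the constants $C(X^{p+1},V)$). Hence both complexes in the displayed inclusion have trivial cohomology in positive degrees, and in degree $0$ the map is the obvious inclusion of the constant (continuous) cochains into the Alexander--Spanier constants, which is an isomorphism. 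Therefore the inclusion induces an isomorphism in cohomology, and Proposition \ref{inclcolimacpsindisothenjeqiso} finishes the proof.

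The only real work is checking that the degree-zero map is an isomorphism and that the higher cohomology of the Alexander--Spanier complex $A_{AS}^* (X;C(X^{p+1},V))$ genuinely vanishes for contractible $X$. The vanishing is the homotopy invariance of Alexander--Spanier cohomology applied to the one-point space, which is classical (and is exactly the reasoning referenced in the proof of Lemma \ref{xcontrthenacpsistriv}: a contraction $H\colon X\times[0,1]\to X$ produces a cochain homotopy on the colimit-of-coverings complex by the standard prism/subdivision argument). The degree-zero identification holds because a $0$-cochain in either complex that is a cocycle is locally constant on a neighbourhood of the diagonal and hence, by connectedness forced by contractibility, globally constant; the two notions of ``constant'' coincide. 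I would write this out as: both augmented complexes are resolutions of $C(X^{p+1},V)$, the comparison map is compatible with the augmentations, and a map of bounded-below complexes inducing an isomorphism on the augmenting object and having acyclic (reduced) tails is a quasi-isomorphism.

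I expect the main obstacle to be purely expository: making sure the colimit complex $\colim A_c^{p,*} (X,\mathfrak{U};V)$ really is computing what one wants, i.e. that the exponential law $C (X^{p+1} \times \mathfrak{U}[q],V) \cong C (X^{p+1},C (\mathfrak{U}[q],V))$ used implicitly in Diagram \ref{morphexseqcg} is available for a cofinal family of coverings (a point already flagged in the text). If $X$ is nice enough — e.g. locally compact Hausdorff, or more generally if one passes to compactly generated spaces as the later sections of the paper indicate — this exponential law holds and there is nothing further to do. Granting it, the argument is a short formal consequence of Lemma \ref{xcontrthenacpsistriv}, homotopy invariance of Alexander--Spanier cohomology, and Proposition \ref{inclcolimacpsindisothenjeqiso}, with no computation of consequence.
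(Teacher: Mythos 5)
Your argument is correct and follows essentially the same route as the paper: the paper likewise combines the triviality of $H_{AS}(X;C(X^{p+1},V))$ for contractible $X$ with Lemma \ref{xcontrthenacpsistriv} and then invokes Proposition \ref{inclcolimacpsindisothenjeqiso}. Your additional care about the degree-zero comparison and the exponential law is a reasonable elaboration of points the paper leaves implicit, but it does not change the structure of the proof.
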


\begin{proof}
If the topological space $X$ is contractible, then the Alexander-Spanier
cohomology $H_{AS} (X;C^{p+1}(X,V))$ is trivial and the cohomology of the
cochain complex $\colim A_c^{p,*} (X,\mathfrak{U};V)$ is trivial by Lemma 
\ref{xcontrthenacpsistriv}. By Proposition \ref{inclcolimacpsindisothenjeqiso}
the inclusion $A_c^* (X;V)^G \hookrightarrow A_{cg}^* (X;V)^G$ then induces an
isomorphism in cohomology.
\end{proof}

\begin{corollary} \label{gcontriso}
  For contractible topological groups $G$ the 
continuous group cohomology is isomorphic to the
cohomology of homogeneous group cochains with continuous germ at the diagonal.
\end{corollary}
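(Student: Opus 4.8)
The plan is to derive the Corollary as the special case of the preceding Theorem in which $X=G$ and $G$ acts on itself by left translation. First I would note that a contractible topological group is in particular a contractible topological space, so the hypothesis of the Theorem is met by taking $X := G$ with the left-translation action; the ambient transformation group $(G,X)$ appearing throughout Sections~\ref{sectss} and~\ref{seccontanduccc} is then $(G,G)$.

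Next I would translate the two sides of the Theorem into group-cohomological terms by means of the Examples of Section~1. By the Example identifying $H_{eq,c}$, the fixed-point complex $A_c^*(G;V)^G$ is exactly the complex of continuous homogeneous group cochains, so $H(A_c^*(G;V)^G)$ is the continuous group cohomology $H_c(G;V)$. By the last Example of Section~1, $A_{cg}^*(G;V)^G$ is the complex of homogeneous group cochains with continuous germ at the diagonal, so $H(A_{cg}^*(G;V)^G)$ is the ``locally continuous'' group cohomology $H_{cg}(G;V)$; and under these two identifications the inclusion $A_c^*(G;V)^G \hookrightarrow A_{cg}^*(G;V)^G$ becomes the inclusion of continuous into locally continuous group cochains, i.e.\ the comparison map between $H_c$ and $H_{cg}$ discussed in the Introduction.

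Finally, since $G$ is contractible, the Theorem says precisely that this inclusion induces an isomorphism in cohomology, which is the assertion of the Corollary. I expect no real obstacle here: the only point requiring care is the bookkeeping that $(G,G)$ with the left-translation action is the correct instance and that the two group-cochain complexes coincide with the relevant fixed-point subcomplexes — and this is exactly what the Examples of Section~1 record. (If one prefers, one may additionally observe that restricting the colimits to the $G$-invariant coverings $\mathfrak{U}_U$, with $U$ an identity neighbourhood, produces the same complexes by Remark~\ref{remonlyginvcov}, so that the chosen cofinal system is the natural one from the group-theoretic viewpoint.)
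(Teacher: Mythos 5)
Your proposal is correct and matches the paper's (implicit) argument exactly: the corollary is obtained by specializing the preceding Theorem to $X=G$ with the left-translation action and invoking the Examples of Section~1 that identify $A_c^*(G;V)^G$ with the continuous homogeneous group cochains and $A_{cg}^*(G;V)^G$ with the homogeneous group cochains having continuous germ at the diagonal. The paper offers no further content for this step, so there is nothing to add.
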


\section{working in the category of  $k$-spaces}
\label{secwinktop}

In this section we consider transformation groups $(G,X)$ in the category
$\ktop$ of $k$-spaces and $G$-modules $V$ in $\ktop$. Working only in the
category $\ktop$ we construct a spectral sequence analogously to that in
Section \ref{sectss} and derive results analogous to those obtained there.

\begin{definition}
For every $k$-space $X$ and abelian $k$-group $V$ the subcomplex 
$A_{kc}^* (X;V):= C ( \fktop X^{*+1};V)$ of the standard complex is called the 
\emph{continuous standard complex in $\ktop$}. 
\end{definition}

For open coverings $\mathfrak{U}$ of a $k$-space $X$ we also consider the 
subcomplex of $A^* (X;V)$ formed by the groups  
\begin{equation*}
A_{kcr}^n (X,\mathfrak{U};V) := 
\left\{ f \in  A^n (X;V) \mid \, f_{\mid \fktop \mathfrak{U} [n]}
  \in C ( \fktop \mathfrak{U}[n];V) \right\}  
\end{equation*}
of cochains whose restriction to the open subspaces 
$\fktop \mathfrak{U}[n]$ of $\fktop X^{n+1}$ are continuous. 
The cohomology of the cochain complex 
$A_{kcr}^* (X,\mathfrak{U};V)$ is denoted by $H_{kcr} (X,\mathfrak{U};V)$. 
If the covering $\mathfrak{U}$ of $X$ is $G$-invariant, then the subspaces 
$\fktop \mathfrak{U}[*]$ is a simplicial $G$-subspace of the simplicial 
$G$-space $\fktop X^{*+1}$. 

\begin{example}
  If $G=X$ is a $k$-group which acts on itself by left translation and
  $U$ an open identity neighbourhood, then 
$\mathfrak{U}_U :=\{ g.U \mid g \in G \}$ is a $G$-invariant open covering of 
$G$ and $\fktop \mathfrak{U}[*]$ is a simplicial $G$-subspace of 
$\fktop G^{*+1}$.
\end{example}

For $G$-invariant coverings $\mathfrak{U}$ of $X$ the cohomology of the 
subcomplex $A_{kcr}^* (X,\mathfrak{U};V)^G$ of $G$-equivariant cochains is 
denoted by $H_{kcr,eq} (X,\mathfrak{U};V)$. 

\begin{example}
  If $G=X$ is a $k$-group which acts on itself by left translation and
  $U$ an open identity neighbourhood, then the complex 
$A_{kcr}^* (X,\mathfrak{U}_U;V)^G$ is the complex of homogeneous group cochains 
whose restrictions to the subspaces $\fktop \mathfrak{U}_U [*]$ are continuous. 
(These are sometimes called $\mathfrak{U}$-continuous cochains.) 
\end{example}

For directed systems $\{ \mathfrak{U}_i \mid i \in I \}$ of open coverings of
$X$ one can also consider the colimit complex 
$\colim_i A_{kcr}^* (X,\mathfrak{U}_i ;V)$. In particular, if the open diagonal
neighbourhoods  $\fktop \mathfrak{U}[n]$ in $\fktop X^{n+1}$ for open coverings 
$\mathfrak{U}$ of $X$ are cofinal in the directed set of all open diagonal 
neighbourhoods, one obtains the complex
\begin{equation*}
  A_{kcg}^* (X;V):= \colim_{\mathfrak{U} \text{is open cover of $X$}} A_{kcr}^*
  (X;\mathfrak{U};V)
\end{equation*}
of global cochains whose germs at the diagonal are continuous. This happens
for all $k$-spaces $X$ for which the finite products $X^{n+1}$ in $\tops$ are
already compactly Hausdorff generated, e.g. metrisable spaces, locally compact
spaces or Hausdorff $k_\omega$-spaces. 
The complex $A_{kcg}^* (X;V)$ is then a subcomplex of the standard complex 
$A^* (X;V)$ which is invariant under the $G$-action (Eq. \ref{defgact}) and 
thus a sub complex of $G$-modules. 
The $G$-equivariant cochains with continuous germ form a subcomplex 
$A_{kcg}^* (X;V)^G$ thereof, whose cohomology is denoted by $H_{kcg,eq} (X;V)$. 
The latter subcomplex can also be obtained by taking the colimit over all 
$G$-invariant open coverings of $X$ only: 

\begin{proposition} \label{natinclofeqccisisok}
If the open diagonal neighbourhoods $\fktop \mathfrak{U}[n]$ in 
$\fktop X^{n+1}$ for open coverings $\mathfrak{U}$ of $X$ are cofinal in the 
directed set of all open diagonal neighbourhoods then the natural morphism of 
cochain complexes 
  \begin{equation*}
A_{kcg,eq}^* (X;V):= \colim_{\mathfrak{U} \text{is $G$-invariant open cover
      of $X$}} A_{kcr}^* (X;\mathfrak{U};V)^G \rightarrow A_{kcg}^* (X;V)^G    
  \end{equation*}
is a natural isomorphism.
\end{proposition}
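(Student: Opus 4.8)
The plan is to imitate the proof of Proposition~\ref{natinclofeqccisiso} line for line, keeping track of the fact that all continuity conditions are now interpreted in the category $\ktop$. First I would record the formal setup: whenever $\mathfrak{W}$ refines $\mathfrak{U}$ one has $\fktop\mathfrak{W}[n]\subseteq\fktop\mathfrak{U}[n]$, so continuity of a cochain on $\fktop\mathfrak{U}[n]$ forces continuity on $\fktop\mathfrak{W}[n]$, and the transition maps $A_{kcr}^n(X,\mathfrak{U};V)\hookrightarrow A_{kcr}^n(X,\mathfrak{W};V)$ are inclusions of subgroups of the standard group $A^n(X;V)$. Under the cofinality hypothesis the two colimits appearing in the statement are therefore directed unions of subgroups of $A^*(X;V)$, and the natural morphism is simply the inclusion $A_{kcg,eq}^n(X;V)\hookrightarrow A_{kcg}^n(X;V)^G$. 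Injectivity is then automatic, so only surjectivity requires an argument.

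For surjectivity I would take $f\in A_{kcg}^n(X;V)^G$ and choose an open covering $\mathfrak{U}$ of $X$ with $f\in A_{kcr}^n(X,\mathfrak{U};V)^G$, i.e.\ $f$ is $G$-equivariant and $f_{\mid\fktop\mathfrak{U}[n]}$ is continuous. Then I pass to the $G$-invariant open covering $G.\mathfrak{U}:=\{g.U\mid g\in G,\ U\in\mathfrak{U}\}$ and observe, using the diagonal action, that $(G.\mathfrak{U})[n]=\bigcup_{g\in G}g.\mathfrak{U}[n]$. Since $G$ acts on $\fktop X^{n+1}$ through homeomorphisms, each $g.\mathfrak{U}[n]$ is open and $f$ is continuous on it: for $y\in g.\mathfrak{U}[n]$ one has $f(y)=g.\,f(g^{-1}.y)$, a composite of the continuous maps $g^{-1}.(-)$, $f_{\mid\fktop\mathfrak{U}[n]}$ and $g.(-)$. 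Gluing continuity over this open covering then places $f$ in $A_{kcr}^n(X,G.\mathfrak{U};V)^G\subseteq A_{kcg,eq}^n(X;V)$, which is the desired conclusion. Naturality of the isomorphism in $X$ and $V$ is immediate, as every step above is functorial.

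I expect the only genuine obstacle — as opposed to bookkeeping — to be checking the point-set facts about $k$-spaces that make the equivariance step legitimate: that $g.(\fktop\mathfrak{U}[n])$ coincides with $\fktop(g.\mathfrak{U}[n])$ and is open in $\fktop X^{n+1}$, that an open subspace of a $k$-space is again a $k$-space (so that ``continuous on $\fktop\mathfrak{U}[n]$'' is unambiguous), and that continuity of $f$ can be tested on the members of an open covering. These are standard properties of $k$-spaces, but they are precisely the points where the argument diverges from the topological case, so I would state them explicitly rather than burying them in ``the proof is analogous''.
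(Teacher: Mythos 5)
Your proposal is correct and follows essentially the same route as the paper, whose own proof simply says the argument is analogous to that of Proposition \ref{natinclofeqccisiso}: injectivity because the colimit is a directed union of subgroups of the standard complex, surjectivity by passing from $\mathfrak{U}$ to $G.\mathfrak{U}$ and using equivariance to propagate continuity to $(G.\mathfrak{U})[n]$. Your explicit flagging of the $k$-space point-set facts (openness of $g.\mathfrak{U}[n]$ in $\fktop X^{n+1}$, gluing continuity over an open cover in $\ktop$) is a welcome addition that the paper leaves implicit.
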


\begin{proof}
  The proof is analogous top that of Proposition \ref{natinclofeqccisiso}.
\end{proof}

\begin{corollary}
If the open diagonal neighbourhoods  $\fktop \mathfrak{U}[n]$ in 
$\fktop X^{n+1}$ for open coverings $\mathfrak{U}$ of $X$ are cofinal in the 
directed set of all open diagonal neighbourhoods then the cohomology 
$H_{kcg,eq} (X;V)$ is the 
cohomology of the complex of equivariant cochains which are continuous on 
some $G$-invariant neighbourhood of the diagonal.
\end{corollary}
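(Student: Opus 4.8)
The statement is the $k$-space counterpart of the corollary to Proposition \ref{natinclofeqccisiso}, and I would prove it by transcribing that argument. First I would observe that, under the stated cofinality hypothesis, the diagonal neighbourhoods $\fktop \mathfrak{U}[n]$ arising from \emph{$G$-invariant} open covers $\mathfrak{U}$ of $X$ are already cofinal in the directed set of all $G$-invariant open neighbourhoods of the diagonal in $\fktop X^{n+1}$: given such a $G$-invariant neighbourhood $W$, cofinality yields an open cover $\mathfrak{V}$ with $\fktop \mathfrak{V}[n] \subseteq W$, and then the $G$-invariant refinement $G.\mathfrak{V} = \{ g.V \mid g \in G,\ V \in \mathfrak{V}\}$ satisfies $\fktop (G.\mathfrak{V})[n] = G.\fktop \mathfrak{V}[n] \subseteq G.W = W$.

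Consequently the subcomplex of $A^* (X;V)^G$ consisting of those equivariant cochains that are continuous on \emph{some} $G$-invariant open neighbourhood of the diagonal is precisely the filtered colimit $\colim_{\mathfrak{U}} A_{kcr}^* (X,\mathfrak{U};V)^G$ over all $G$-invariant open covers $\mathfrak{U}$, that is, the complex $A_{kcg,eq}^* (X;V)$ of Proposition \ref{natinclofeqccisisok}. Here I use, exactly as in the proof of Proposition \ref{natinclofeqccisiso}, that an equivariant cochain continuous on $\fktop \mathfrak{U}[n]$ is automatically continuous on $G.\fktop \mathfrak{U}[n] = \fktop (G.\mathfrak{U})[n]$, so that restricting the colimit to $G$-invariant covers loses nothing.

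Finally, Proposition \ref{natinclofeqccisisok} tells us that the natural map $A_{kcg,eq}^* (X;V) \to A_{kcg}^* (X;V)^G$ is an isomorphism of cochain complexes, hence induces an isomorphism on cohomology; and $H_{kcg,eq} (X;V)$ is by definition the cohomology of $A_{kcg}^* (X;V)^G$. Combining these two identifications gives that $H_{kcg,eq} (X;V)$ is the cohomology of the complex of equivariant cochains continuous on some $G$-invariant neighbourhood of the diagonal. There is no real obstacle here; the only point requiring a word of care is the cofinality of the $G$-invariant $\fktop \mathfrak{U}[n]$ among all $G$-invariant diagonal neighbourhoods, which is handled by the averaging observation above and is in any case already implicit in Proposition \ref{natinclofeqccisisok}.
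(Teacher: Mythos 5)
Your proposal is correct and follows exactly the route the paper intends: the corollary is an immediate consequence of Proposition \ref{natinclofeqccisisok}, combined with the identification (via the $G$-averaging of covers, $\fktop (G.\mathfrak{V})[n] = G.\fktop\mathfrak{V}[n]$, as in the proof of Proposition \ref{natinclofeqccisiso}) of the colimit over $G$-invariant covers with the complex of equivariant cochains continuous on some $G$-invariant diagonal neighbourhood. The paper offers no separate proof for this corollary, and your reconstruction supplies precisely the missing cofinality remark without deviating from the paper's argument.
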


\begin{example}
If $G=X$ is a metrisable or locally compact topological group or a real or
complex Kac-Moody group which acts on itself by left 
translation, then the complex $A_{kcg}^* (G;V)^G$ is the complex of 
homogeneous group cochains whose germs at the diagonal are continuous. 
(By abuse of language these are sometimes called 'locally continuous' 
group cochains.) 
\end{example}

Analogously to the procedure in Section \ref{sectss} we can construct a
spectral sequence relating $A_{kcr}^* (X,\mathfrak{U};V)$ and 
$A_{kc}^* (X;V)$. For this purpose we consider the abelian groups 
\begin{equation} \label{defrcuk}
  A_{kcr}^{p,q} ( X,\mathfrak{U} ; V ) := 
\left\{ f: X^{p+1} \times X^{q+1} \rightarrow V 
\mid f_{\mid \fktop X^{p+1} \times_k
      \fktop \mathfrak{U}[q]} \; \text{is continuous} \right\}
\, .
\end{equation}
The abelian groups $A_{kcr}^{p,q} ( X,\mathfrak{U} ; V )$ 
form a first quadrant double complex whose vertical and horizontal
differentials are given by the same formulas as for the double complex 
$A_{cr}^{p,q} ( X,\mathfrak{U} ; V )$ introduced in Section \ref{sectss}. 
Analogously to the latter double complex the rows of the double complex 
$A_{kcr}^{*,*} ( X,\mathfrak{U} ; V )$ can be
augmented by the complex $A_{kcr}^* (X,\mathfrak{U} ;V)$ for the covering
$\mathfrak{U}$ and the columns can be augmented by the exact complex 
$A_{kc}^* (X;V)$ of continuous cochains. We denote the total complex of the 
double complex $A_{kcr}^{*,*} ( X, \mathfrak{U} ; V)$ by 
$\tot  A_{kcr}^{*,*} ( X, \mathfrak{U} ; V)$. 
The augmentations of the rows and columns induce morphisms 
$i_k^* : A_{kcr}^* ( X, \mathfrak{U} ; V) \rightarrow 
\tot A_{kcr}^{*,*} ( X, \mathfrak{U} ; V)$ and 
$j_k^*:  A_{kc}^* ( X ; V) \rightarrow \tot A_{kcr}^{*,*} ( X,\mathfrak{U} ; V)$ 
of cochain complexes respectively. 

\begin{lemma} \label{columnsexactk}
  The morphism $i_k^*:  A_{kcr}^* ( X,\mathfrak{U} ; V) \rightarrow 
\tot A_{kcr}^{*,*} (X,\mathfrak{U} ; V)$ induces an isomorphism in cohomology.
\end{lemma}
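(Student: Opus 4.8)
The plan is to mimic the proof of Lemma \ref{columnsexact} verbatim, since the only difference between the double complex $A_{cr}^{*,*}(X,\mathfrak{U};V)$ and $A_{kcr}^{*,*}(X,\mathfrak{U};V)$ lies in where the continuity condition is imposed (on $X^{p+1}\times\mathfrak{U}[q]$ versus on $\fktop X^{p+1}\times_k\fktop\mathfrak{U}[q]$), and this difference does not interfere with the row-contracting homotopy. First I would write down the candidate contraction on each augmented row $A_{kcr}^q(X,\mathfrak{U};V)\hookrightarrow A_{kcr}^{*,q}(X,\mathfrak{U};V)$ by the same formula as in Eq. \ref{defrowcontr}, namely $h^{p,q}(f)(\vec{x},\vec{x}')=f(x_0,\ldots,x_{p-1},x_0',\vec{x}')$.

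The one point that genuinely needs checking — and which I expect to be the only real obstacle, albeit a mild one — is that $h^{p,q}$ actually maps $A_{kcr}^{p,q}(X,\mathfrak{U};V)$ into $A_{kcr}^{p-1,q}(X,\mathfrak{U};V)$, i.e. that the continuity condition in Eq. \ref{defrcuk} is preserved. So the key step is: given $f\in A_{kcr}^{p,q}(X,\mathfrak{U};V)$, which is continuous on $\fktop X^{p+1}\times_k\fktop\mathfrak{U}[q]$, I must argue that $(x_0,\ldots,x_{p-1},x_0',\vec{x}')\mapsto f(x_0,\ldots,x_{p-1},x_0',x_0',\vec{x}')$, wait — more precisely $h^{p,q}(f)(x_0,\dots,x_{p-1},x_0',\dots,x_q')=f(x_0,\dots,x_{p-1},x_0',x_0',\dots,x_q')$ — is continuous on $\fktop X^{p}\times_k\fktop\mathfrak{U}[q]$. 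This follows because the assignment $(x_0,\ldots,x_{p-1},x_0',\ldots,x_q')\mapsto(x_0,\ldots,x_{p-1},x_0',x_0',\ldots,x_q')$ is a continuous map $\fktop X^{p}\times_k\fktop\mathfrak{U}[q]\to\fktop X^{p+1}\times_k\fktop\mathfrak{U}[q]$ in $\ktop$ (it is induced by a continuous diagonal-type map on the underlying spaces, and passing to $k$-ifications and $\times_k$ is functorial), and $h^{p,q}(f)$ is the composite of this map with $f$. Hence $h^{p,q}(f)$ is again $\mathfrak{U}$-continuous in the $k$-sense.

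Having verified that $h^{*,q}$ is a well-defined family of homomorphisms, the contraction identity $d_h h + h d_h = \id$ on the augmented row (with the augmentation $A_{kcr}^q\hookrightarrow A_{kcr}^{0,q}$ in degree $-1$) is the same purely combinatorial computation as in Lemma \ref{columnsexact}, using only the explicit formula for $d_h$, which is identical for the two double complexes. Consequently every augmented row $A_{kcr}^q(X,\mathfrak{U};V)\hookrightarrow A_{kcr}^{*,q}(X,\mathfrak{U};V)$ is exact. Filtering $\tot A_{kcr}^{*,*}(X,\mathfrak{U};V)$ by columns gives a spectral sequence whose $E_1$-page computes the row cohomology; since the augmented rows are exact, the only surviving terms come from the augmentation column, and it follows that $i_k^*$ induces an isomorphism $H_{kcr}(X,\mathfrak{U};V)\xrightarrow{\sim}H(\tot A_{kcr}^{*,*}(X,\mathfrak{U};V))$. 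I would present this last step exactly as its counterpart was presented, simply citing the row contraction.

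\begin{proof}
On each augmented row $A_{kcr}^q ( X, \mathfrak{U} ; V) \hookrightarrow
A_{kcr}^{*,q} ( X, \mathfrak{U} ; V)$
one can define a contraction $h^{*,q}$ via
\begin{equation} \label{defrowcontrk}
  h^{p,q} : A_{kcr}^{p,q} ( X, \mathfrak{U} ; V) \rightarrow
A_{kcr}^{p-1,q} ( X, \mathfrak{U} ; V) , \quad
h^{p,q} (f) (\vec{x},\vec{x}')= f ( x_0, \ldots, x_{p-1}, x_0 ', \vec{x}' ) \, .
\end{equation}
These homomorphisms are well-defined: for $f \in A_{kcr}^{p,q} (X,\mathfrak{U};V)$
the cochain $h^{p,q}(f)$ is the composite of $f$ with the continuous map
$\fktop X^{p} \times_k \fktop \mathfrak{U}[q] \to \fktop X^{p+1} \times_k
\fktop \mathfrak{U}[q]$ induced by
$(x_0,\ldots,x_{p-1},x_0',\ldots,x_q') \mapsto
(x_0,\ldots,x_{p-1},x_0',x_0',\ldots,x_q')$,
hence continuous on $\fktop X^{p} \times_k \fktop \mathfrak{U}[q]$.
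The contraction identity is the same combinatorial computation as in
Lemma \ref{columnsexact}. Therefore the augmented rows are exact and the
augmentation $i_k^*$ induces an isomorphism in cohomology.
\end{proof}
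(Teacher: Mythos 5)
Your proposal is correct and is essentially the paper's own argument: the paper's proof of this lemma simply states that the proof of Lemma \ref{columnsexact} carries over to $\ktop$, and you have carried out exactly that transfer, with the (welcome, but routine) extra verification that the contraction $h^{p,q}$ preserves the continuity condition of Eq.~\ref{defrcuk} because it is precomposition with a continuous map of the $k$-ified products. No discrepancy to report.
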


\begin{proof}
  The proof of Lemma \ref{columnsexact} also works in the category $\ktop$ of
  $k$-spaces.
\end{proof}

For $G$-invariant open coverings $\mathfrak{U}$ of $X$ one can consider the 
sub double complex $A_{kcr}^{*,*} ( X,\mathfrak{U} ; V )^G$ of 
$A_{kcr}^{*,*} ( X,\mathfrak{U} ; V )$ whose rows are augmented by the cochain 
complex $A_{kcr}^* (X,\mathfrak{U} ;V)^G$ for the covering $\mathfrak{U}$ and 
the columns can be augmented by the complex $A_{kc}^* (X;V)^G$ of continuous 
equivariant cochains (,which is not exact in general). 

\begin{lemma} \label{columnsexacteqk}
  For $G$-invariant coverings $\mathfrak{U}$ of $X$ the morphism 
$i_{k,eq}^*:  ={i_k^*}^G$ induces an isomorphism in cohomology.
\end{lemma}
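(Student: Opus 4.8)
The plan is to do exactly what the proof of Lemma~\ref{columnsexacteq} did, but inside the category $\ktop$. Recall that in the topological setting the key observation was that the row contraction $h^{p,q}$ of Eq.~\ref{defrowcontr}, given by $h^{p,q}(f)(\vec{x},\vec{x}')=f(x_0,\ldots,x_{p-1},x_0',\vec{x}')$, is $G$-equivariant and therefore descends to the subcomplex of fixed points. Nothing about that formula changes when we pass to $k$-spaces: the same set-theoretic formula defines a map $A_{kcr}^{p,q}(X,\mathfrak{U};V)\to A_{kcr}^{p-1,q}(X,\mathfrak{U};V)$, and by Lemma~\ref{columnsexactk} (whose proof already verified that the $k$-space continuity condition is preserved by this formula) these $h^{p,q}$ assemble into a contraction of each augmented row of the double complex $A_{kcr}^{*,*}(X,\mathfrak{U};V)$.

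So the proof is a single short paragraph: First I would note that the contraction $h_{*,q}$ of the augmented rows $A_{kcr}^q(X,\mathfrak{U};V)\hookrightarrow \tot A_{kcr}^{*,q}(X,\mathfrak{U};V)$ used in Lemma~\ref{columnsexactk} is given by the same formula as in Eq.~\ref{defrowcontr}. Second, this formula is manifestly $G$-equivariant — it only reorders and duplicates coordinates, and the diagonal $G$-action together with the module structure on $V$ (Eq.~\ref{defgact}) commutes with such operations, exactly as in the proof of Lemma~\ref{columnsexacteq}. Third, being $G$-equivariant, $h_{*,q}$ restricts to a contraction of the augmented sub-row $A_{kcr}^q(X,\mathfrak{U};V)^G\hookrightarrow \tot A_{kcr}^{*,q}(X,\mathfrak{U};V)^G$. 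Hence the augmented sub-rows are exact and the augmentation $i_{k,eq}^*={i_k^*}^G$ induces an isomorphism in cohomology.

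There is essentially no obstacle here; the statement is a formal consequence of the two lemmas it cites. The only thing worth double-checking — and it was already checked in the proof of Lemma~\ref{columnsexactk} — is that the restriction condition defining $A_{kcr}^{p,q}$ in Eq.~\ref{defrcuk}, namely continuity on $\fktop X^{p+1}\times_k \fktop\mathfrak{U}[q]$, is genuinely preserved by $h^{p,q}$: the map $(\vec{x},\vec{x}')\mapsto(x_0,\ldots,x_{p-1},x_0',\vec{x}')$ is continuous from $\fktop X^{p+1}\times_k\fktop\mathfrak{U}[q]$ to $\fktop X^{p+1}\times_k\fktop\mathfrak{U}[q]$ (projections and the inclusion of the first coordinate of $\vec{x}'$ into the diagonal block are all $k$-continuous), so precomposition with it sends $k$-continuous cochains to $k$-continuous cochains. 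Given that, the proof reduces to the observation above; I would simply write that the proof of Lemma~\ref{columnsexacteq} carries over verbatim to the category $\ktop$ of $k$-spaces, using Lemma~\ref{columnsexactk} in place of Lemma~\ref{columnsexact}.

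\begin{proof}
The proof of Lemma~\ref{columnsexacteq} carries over verbatim to the category $\ktop$ of $k$-spaces. Indeed, the row contraction $h_{*,q}$ of the augmented rows $A_{kcr}^q ( X, \mathfrak{U} ; V) \hookrightarrow \tot A_{kcr}^{*,q} ( X, \mathfrak{U} ; V)$ used in the proof of Lemma~\ref{columnsexactk} is given by the same formula as in Eq.~\ref{defrowcontr}; this formula only reorders and duplicates coordinates and is therefore $G$-equivariant with respect to the diagonal $G$-action of Eq.~\ref{defgact}. Hence it restricts to a contraction of the augmented sub-row $A_{kcr}^q ( X, \mathfrak{U} ; V)^G \hookrightarrow \tot A_{kcr}^{*,q} ( X, \mathfrak{U} ; V)^G$, so the augmented sub-rows are exact and $i_{k,eq}^* = {i_k^*}^G$ induces an isomorphism in cohomology.
\end{proof}
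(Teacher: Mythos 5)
Your proposal is correct and matches the paper's argument: the paper proves this lemma by declaring it analogous to Lemma~\ref{columnsexacteq}, i.e.\ by observing that the row contraction of Eq.~\ref{defrowcontr} is $G$-equivariant and hence restricts to the fixed-point sub-rows, with Lemma~\ref{columnsexactk} playing the role of Lemma~\ref{columnsexact}. Your additional check that precomposition with the coordinate map preserves $k$-continuity is a reasonable detail the paper leaves implicit.
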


\begin{proof}
 The proof is analogous to that of Lemma \ref{columnsexacteq}.
\end{proof}

So the morphism 
$H (i_{k,eq}) : H_{kcr,eq} (X,\mathfrak{U};V) \rightarrow 
H ( \tot A_{kcr}^{*,*} ( X, \mathfrak{U} ; V)^G )$ is invertible. 
For the composition 
$H (i_{k, eq})^{-1} H(j_{k,eq}):H_{kc,eq}(X;V)\rightarrow 
H_{kcr,eq}(X,\mathfrak{U};V)$ 
we observe:

\begin{proposition} \label{contiscohtocrk}
The image $j_k^n (f)$ of a continuous equivariant $n$-cocycle $f$ on $X$ in 
$\tot A_{kcr}^{*,*} (X,\mathfrak{U};,V)^G$ is cohomologous to the image 
$i_{k,eq}^n  (f)$ of the equivariant $n$-cocycle 
$f\in A_{kcr}^n (X,\mathfrak{U};V)^G$ in 
$\tot A_{kcr}^{*,*} (X,\mathfrak{U};V)^G$.
\end{proposition}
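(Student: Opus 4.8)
The plan is to transcribe the proof of Proposition~\ref{contiscohtocr} into the category $\ktop$, checking only that the continuity condition of Equation~\ref{defrcuk} survives the passage to $k$-products. First I would fix a continuous equivariant $n$-cocycle $f : X^{n+1} \to V$ (i.e.\ one which is continuous on $\fktop X^{n+1}$) and, for $p+q = n-1$, define the cochains $\psi^{p,q} : X^{p+1}\times X^{q+1} \cong X^{n+1} \to V$ by $\psi^{p,q}(\vec{x},\vec{x}') = (-1)^p f(\vec{x},\vec{x}')$, exactly as in the proof of Proposition~\ref{contiscohtocr}.

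The one genuinely new point is that each $\psi^{p,q}$ must be shown to lie in $A_{kcr}^{p,q}(X,\mathfrak{U};V)^G$, i.e.\ to be continuous on $\fktop X^{p+1}\times_k \fktop\mathfrak{U}[q]$. This follows because the canonical continuous map $\fktop X^{p+1}\times_k \fktop X^{q+1} \to \fktop X^{n+1}$ (the identity on underlying sets) is indeed continuous: the $k$-product topology on the source refines the $\tops$-product topology on $X^{n+1}$, and $k$-ification is monotone, so this topology refines that of $\fktop X^{n+1}$. Hence the pullback of $f$ along this map is continuous, and so is its restriction to the open subspace $\fktop X^{p+1}\times_k \fktop\mathfrak{U}[q]$ of $\fktop X^{p+1}\times_k \fktop X^{q+1}$; equivariance of $\psi^{p,q}$ is inherited from that of $f$. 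Since the horizontal and vertical differentials of $A_{kcr}^{*,*}(X,\mathfrak{U};V)$ are given by the same formulas as those of $A_{cr}^{*,*}(X,\mathfrak{U};V)$, the computation of $d_v\psi^{p,q}$ and its identification with $d_h\psi^{p-1,q+1}$ then carries over verbatim; using that $f$ is a cocycle together with the anti-commutativity of $d_h$ and $d_v$, one obtains that the total coboundary of the cochain $\sum_{p+q=n-1}(-1)^p\psi^{p,q}$ equals $j_k^n(f) - i_{k,eq}^n(f)$. As the whole construction is $G$-equivariant, it takes place inside $\tot A_{kcr}^{*,*}(X,\mathfrak{U};V)^G$, which yields the asserted cohomology relation.

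The main (and essentially the only) obstacle is the continuity check just described: one needs that pullback of continuous maps along the reindexing morphism $\fktop X^{p+1}\times_k \fktop\mathfrak{U}[q] \to \fktop X^{n+1}$ preserves continuity in $\ktop$. Once this is settled, the argument is a formal repetition of the proof of Proposition~\ref{contiscohtocr}, so in the written paper the proof may simply be recorded as being analogous to that of Proposition~\ref{contiscohtocr}.
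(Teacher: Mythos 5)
Your proposal is correct and follows the same route as the paper, whose proof of this proposition is literally the one-line remark that it is analogous to Proposition~\ref{contiscohtocr}. The continuity check you supply --- that the identity map $\fktop X^{p+1}\times_k \fktop X^{q+1} \to \fktop X^{n+1}$ is continuous by functoriality of $k$-ification, so the $\psi^{p,q}$ land in $A_{kcr}^{p,q}(X,\mathfrak{U};V)^G$ --- is exactly the detail the paper leaves implicit, and the rest of your argument transcribes the earlier proof verbatim.
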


\begin{proof}
  The proof is analogous to that of Proposition \ref{contiscohtocr}. 
\end{proof}

\begin{corollary}
 The map 
$H (i_{k,eq})^{-1} H(j_{eq}) :H_{kc,eq}(X;V) \rightarrow 
H_{kcr,eq}(X,\mathfrak{U};V)$ 
is induced by the inclusion 
$A_{kc}^* (X,\mathfrak{U};V)^G \hookrightarrow A_{kcr}^* (X,\mathfrak{U};V)^G$. 
\end{corollary}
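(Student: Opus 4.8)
The plan is to read this statement off directly from Proposition \ref{contiscohtocrk}, exactly as the corresponding statement in the purely topological setting follows from Proposition \ref{contiscohtocr}. First I would note that the inclusion $A_{kc}^*(X,\mathfrak{U};V)^G \hookrightarrow A_{kcr}^*(X,\mathfrak{U};V)^G$ is well defined: a continuous equivariant cochain on $\fktop X^{n+1}$ restricts to a continuous cochain on the open $G$-subspace $\fktop \mathfrak{U}[n]$, because an open subspace of a $k$-space carries again the $k$-topology and restrictions of continuous maps are continuous. Hence one and the same equivariant cocycle $f$ may be regarded simultaneously as an element of $A_{kc}^n(X;V)^G$ and of $A_{kcr}^n(X,\mathfrak{U};V)^G$, and the induced map on cohomology sends a class $[f]$ in $H_{kc,eq}(X;V)$ to its class, which I will denote $[f]_{kcr}$, in $H_{kcr,eq}(X,\mathfrak{U};V)$. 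I would also recall that $H(i_{k,eq})$ is invertible by Lemma \ref{columnsexacteqk}, so that $H(i_{k,eq})^{-1} H(j_{k,eq})$ is a well-defined homomorphism from $H_{kc,eq}(X;V)$ to $H_{kcr,eq}(X,\mathfrak{U};V)$.

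Next I would take an arbitrary class $[f]$ in $H_{kc,eq}(X;V)$, represented by a continuous equivariant $n$-cocycle $f$, and chase it through the two maps. By the construction of the column augmentation, $H(j_{k,eq})$ sends $[f]$ to the class of $j_k^n(f)$ in $H(\tot A_{kcr}^{*,*}(X,\mathfrak{U};V)^G)$. Proposition \ref{contiscohtocrk} asserts precisely that $j_k^n(f)$ is cohomologous there to $i_{k,eq}^n(f)$, with $f$ now viewed via the above inclusion as an equivariant cocycle in $A_{kcr}^n(X,\mathfrak{U};V)^G$; that is, $H(j_{k,eq})[f] = H(i_{k,eq})[f]_{kcr}$. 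Applying the isomorphism $H(i_{k,eq})^{-1}$ gives $H(i_{k,eq})^{-1} H(j_{k,eq})[f] = [f]_{kcr}$, and since $f$ was arbitrary and the assignment $[f] \mapsto [f]_{kcr}$ is exactly the inclusion-induced map, this identifies $H(i_{k,eq})^{-1} H(j_{k,eq})$ with the homomorphism induced by $A_{kc}^*(X,\mathfrak{U};V)^G \hookrightarrow A_{kcr}^*(X,\mathfrak{U};V)^G$.

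There is essentially no obstacle here beyond the bookkeeping above, since all of the real content sits in Proposition \ref{contiscohtocrk}, which has already been established. The only point deserving a moment's attention is the well-definedness of the inclusion of cochain complexes in the category of $k$-spaces, i.e. that the $k$-ification of the open diagonal neighbourhood $\mathfrak{U}[n]$ coincides with its subspace inside $\fktop X^{n+1}$, so that continuity on $\fktop X^{n+1}$ indeed entails $\mathfrak{U}$-continuity; this is the standard fact that open subspaces of $k$-spaces are again $k$-spaces. One should also observe that the argument is natural in the cocycle, so that it genuinely describes the two maps on cohomology and not merely their values on individual classes.
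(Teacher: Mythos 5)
Your argument is correct and is exactly the route the paper intends: the corollary is stated without proof as an immediate consequence of Proposition \ref{contiscohtocrk} combined with the invertibility of $H(i_{k,eq})$ from Lemma \ref{columnsexacteqk}, which is precisely the diagram chase you spell out (including the routine check that a cocycle continuous on $\fktop X^{n+1}$ restricts to one continuous on $\fktop\mathfrak{U}[n]$, so the inclusion of complexes is well defined).
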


\begin{corollary}
If the morphism 
$j^*_{k, eq}:={j^*}^G : A_{kc}^* (X;V)^G \rightarrow 
\tot A_{kcr}^{*,*}(X,\mathfrak{U}A)^G$ induces a monomorphism, epimorphism or
isomorphism in cohomology, then the inclusion 
$A_{kc}^* (X;V)^G \hookrightarrow A_{kcr}^* (X,\mathfrak{U};V)^G$
induces a monomorphism, epimorphism or isomorphism in cohomology respectively.
\end{corollary}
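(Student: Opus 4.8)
The plan is to deduce this formally from the facts already established in this section, following verbatim the argument of the corresponding corollary in Section~\ref{sectss}. First I would invoke Lemma~\ref{columnsexacteqk}, which says that $i_{k,eq}^{*}$ induces an isomorphism $H(i_{k,eq})\colon H_{kcr,eq}(X,\mathfrak{U};V)\to H(\tot A_{kcr}^{*,*}(X,\mathfrak{U};V)^{G})$; hence $H(i_{k,eq})^{-1}$ is a well-defined isomorphism in every degree. Next I would recall, from Proposition~\ref{contiscohtocrk} and the preceding corollary, that the composite $H(i_{k,eq})^{-1}\circ H(j_{k,eq})$ coincides with the map induced on cohomology by the inclusion $A_{kc}^{*}(X;V)^{G}\hookrightarrow A_{kcr}^{*}(X,\mathfrak{U};V)^{G}$.

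The remaining step is purely formal. Suppose $H(j_{k,eq})$ is a monomorphism (resp.\ an epimorphism, resp.\ an isomorphism). Composing a homomorphism of abelian groups with an isomorphism on either side preserves injectivity (resp.\ surjectivity, resp.\ bijectivity), so $H(i_{k,eq})^{-1}\circ H(j_{k,eq})$ inherits the same property degreewise. By the identification above, this composite is exactly the map induced by $A_{kc}^{*}(X;V)^{G}\hookrightarrow A_{kcr}^{*}(X,\mathfrak{U};V)^{G}$, which therefore induces a monomorphism (resp.\ an epimorphism, resp.\ an isomorphism) in cohomology, as claimed.

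I do not expect any real obstacle here: all the substance is contained in Lemma~\ref{columnsexacteqk} and Proposition~\ref{contiscohtocrk}, whose proofs have already been reduced to those of their topological counterparts in Section~\ref{sectss}. The only point requiring a moment's care is that the identification of $H(i_{k,eq})^{-1}\circ H(j_{k,eq})$ with the inclusion-induced map must hold in each cohomological degree separately; this is immediate from the construction of $i_{k}^{*}$ and $j_{k}^{*}$ out of the row and column augmentations of the double complex $A_{kcr}^{*,*}(X,\mathfrak{U};V)^{G}$. Alternatively one may simply remark that the proof is word for word that of the corresponding corollary in Section~\ref{sectss}.
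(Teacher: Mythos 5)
Your proposal is correct and follows exactly the route the paper intends: the corollary is stated without proof precisely because it is the formal consequence of Lemma~\ref{columnsexacteqk} (invertibility of $H(i_{k,eq})$) and the preceding corollary to Proposition~\ref{contiscohtocrk} (identification of $H(i_{k,eq})^{-1}H(j_{k,eq})$ with the inclusion-induced map), combined with the observation that composition with an isomorphism preserves mono-, epi- and isomorphisms degreewise.
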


\begin{lemma}
  For any directed system $\{ \mathfrak{U}_i \mid i \in I \}$ of open 
coverings of $X$ the morphism 
$\colim_i i^*:  \colim_i A_{kcr}^* ( X,\mathfrak{U}_i ; V) \rightarrow 
\tot \colim_i A_{kcr}^{*,*} (X,\mathfrak{U}_i ; V)$ induces an isomorphism in 
cohomology.
\end{lemma}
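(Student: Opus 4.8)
The plan is to follow the proof of the analogous statement for ordinary topological spaces in Section~\ref{sectss}, whose one-line proof is that the passage to a filtered colimit preserves the exactness of the augmented rows (Lemma~\ref{columnsexact}). Concretely I would argue in three short steps: recall the row contraction in the $k$-setting, push it through the filtered colimit, and then invoke the usual assembly argument for first-quadrant double complexes with exact augmented rows.

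First I would recall, from the proof of Lemma~\ref{columnsexact} --- which by Lemma~\ref{columnsexactk} applies verbatim in the category $\ktop$ --- that for each fixed covering $\mathfrak{U}_i$ and each $q$ the augmented $q$-th row
\[
A_{kcr}^q (X,\mathfrak{U}_i;V) \longrightarrow A_{kcr}^{0,q}(X,\mathfrak{U}_i;V) \longrightarrow A_{kcr}^{1,q}(X,\mathfrak{U}_i;V) \longrightarrow \cdots
\]
is exact, witnessed by the explicit contracting homotopy $h^{p,q}$ of Equation~\ref{defrowcontr}, namely $h^{p,q}(f)(\vec{x},\vec{x}')=f(x_0,\ldots,x_{p-1},x_0',\vec{x}')$.

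Second I would observe that this formula does not refer to the covering $\mathfrak{U}_i$, so the maps $h^{p,q}$ are compatible with the transition morphisms of the directed system $\{ \mathfrak{U}_i \mid i \in I\}$ and assemble into a contracting homotopy of the augmented colimit row $\colim_i A_{kcr}^q (X,\mathfrak{U}_i;V) \to \colim_i A_{kcr}^{*,q}(X,\mathfrak{U}_i;V)$; equivalently, since filtered colimits of abelian groups are exact, the colimit of the exact augmented rows is again exact. Hence every augmented row of the (still first-quadrant) double complex $\colim_i A_{kcr}^{*,*}(X,\mathfrak{U}_i;V)$ is exact.

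Third, since in each total degree $\tot$ is a finite direct sum and therefore commutes with the filtered colimit, the morphism under consideration is $\colim_i i^*$, and the standard assembly argument --- the acyclic assembly lemma, equivalently the column-filtration spectral sequence of this double complex --- shows that exactness of all augmented rows of a first-quadrant double complex forces the row-augmentation to induce an isomorphism in cohomology. I do not expect any genuine obstacle: the only points to verify are that the filtered colimit preserves the first-quadrant shape and the compatibility of the row homotopies with the transition morphisms, both of which are routine.
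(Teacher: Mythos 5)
Your proposal is correct and follows the paper's own argument: the paper's proof is the single sentence that passage to the colimit preserves the exactness of the augmented row complexes (citing Lemma \ref{columnsexactk}), and your three steps simply spell out why --- the row contraction of Eq.~\ref{defrowcontr} is independent of the covering, hence passes to the colimit, and exact augmented rows of a first-quadrant double complex force the row augmentation to be a quasi-isomorphism.
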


\begin{proof}
The passage to the colimit preserves the exactness of the augmented row
complexes (Lemma \ref{columnsexactk}). 
\end{proof}

\begin{lemma}
  For any directed system $\{ \mathfrak{U}_i \mid i \in I \}$ of $G$-invariant
  open coverings of $X$ the morphism 
$\colim_i i_{k, eq}^*:  \colim_i A_{kcr}^* ( X,\mathfrak{U}_i ; V)^G \rightarrow 
\tot \colim_i A_{cr}^{*,*} (X,\mathfrak{U}_i ; V)^G$ induces an isomorphism in 
cohomology.
\end{lemma}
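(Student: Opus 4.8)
The plan is to deduce this from the row-exactness already available at finite level, together with the exactness of filtered colimits. First I would recall that, by Lemma~\ref{columnsexacteqk}, for each $G$-invariant open covering $\mathfrak{U}_i$ the augmented rows $A_{kcr}^q(X,\mathfrak{U}_i;V)^G \hookrightarrow A_{kcr}^{*,q}(X,\mathfrak{U}_i;V)^G$ are exact; indeed they carry the $G$-equivariant contraction $h^{*,q}$ of Eq.~\ref{defrowcontr}, whose definition is purely a reindexing of coordinates and therefore makes literal sense in $\ktop$ without affecting any continuity condition. So each $i_{k,eq}^*$ sits inside a double complex all of whose augmented rows are exact.

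Next I would pass to the colimit over the directed system $\{\mathfrak{U}_i \mid i \in I\}$. The contractions $h^{*,q}$ are compatible with the structure maps of the system, so they assemble into a contraction of the colimit rows $\colim_i A_{kcr}^q(X,\mathfrak{U}_i;V)^G \hookrightarrow \colim_i A_{kcr}^{*,q}(X,\mathfrak{U}_i;V)^G$; equivalently, filtered colimits of abelian groups are exact, so row-exactness is preserved. Hence every augmented row of the colimit double complex $\colim_i A_{kcr}^{*,*}(X,\mathfrak{U}_i;V)^G$ is exact.

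Finally I would invoke the standard comparison for a first-quadrant double complex with exact augmented rows — the same argument underlying Lemma~\ref{columnsexactk}: the column-wise filtration produces a spectral sequence whose $E_1$-page collapses onto the augmentation column, so the cohomology of $\tot \colim_i A_{kcr}^{*,*}(X,\mathfrak{U}_i;V)^G$ is computed by $\colim_i A_{kcr}^*(X,\mathfrak{U}_i;V)^G$ via $\colim_i i_{k,eq}^*$. I do not expect a genuine obstacle here; the only point meriting a word is that taking $G$-fixed points and forming the (degreewise finite) total complex both commute with filtered colimits of abelian groups, so passing to the colimit disturbs neither the equivariance nor the totalization. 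Everything else is bookkeeping inherited verbatim from the topological-space case, the passage to $\ktop$ changing nothing because the row contraction is purely combinatorial.
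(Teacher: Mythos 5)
Your proposal is correct and follows essentially the same route as the paper: the paper's proof is the one-line observation that passage to the (filtered) colimit preserves the exactness of the augmented rows established in Lemma~\ref{columnsexacteqk}, which is exactly the content of your second paragraph, with your first and third paragraphs merely spelling out the ingredients (the equivariant row contraction and the standard first-quadrant comparison) that the paper leaves implicit. No gap; the extra care about fixed points and totalization commuting with filtered colimits is sound but not needed beyond what the paper already assumes.
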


\begin{proof}
The passage to the colimit preserves the exactness of the augmented row
complexes (Lemma \ref{columnsexacteqk}). 
\end{proof}

If the open diagonal neighbourhoods  $\fktop \mathfrak{U}[n]$ in 
$\fktop X^{n+1}$ for open 
coverings $\mathfrak{U}$ of $X$ are cofinal in the directed set of all open 
diagonal neighbourhoods then one obtains the double complex complex 
\begin{equation*}
  A_{kcg}^{*,*} (X;V):= \colim_{\mathfrak{U} \text{ is open cover of $X$}} 
A_{kcr}^{*,*} (X;\mathfrak{U};V)
\end{equation*}
whose rows and columns are augmented by the complexes 
$A_{kcg}^* (X;V)$ and $A_{kc}^* (X;V)$ respectively. 
In this case the colimit morphism 
$i_{kcg}^* : A_{kcg}^* ( X; V) \rightarrow \tot A_{kcg}^{*,*} (X; V)$ induces an 
isomorphism in cohomology. Furthermore the colimit double complex 
$A_{kcg}^{*,*} (X;V)$ then is a double complex of $G$-modules and the 
$G$-equivariant cochains in form a sub double complex $A_{kcg}^{*,*} (X;V)^G$, 
whose rows and columns are augmented by the colimit 
complex $A_{kcg,eq}^* (X;V)$ and by the complex $A_{kc}^* (X;V)^G$ respectively.
In addition we observe:

\begin{lemma} \label{natinclofeqdcisisok}
If the open diagonal neighbourhoods $\fktop \mathfrak{U}[n]$ in 
$\fktop X^{n+1}$ for open coverings $\mathfrak{U}$ of $X$ are cofinal in the 
directed set of all open diagonal neighbourhoods then the natural morphism of 
double complexes 
  \begin{equation*}
A_{kcg,eq}^{*,*} (X;V):= \colim_{\mathfrak{U} 
\text{is $G$-invariant open cover of $X$}} A_{kcr}^{*,*} (X;\mathfrak{U};V)^G 
\rightarrow A_{kcg}^* (X;V)^G  
\end{equation*}
is a natural isomorphism.
\end{lemma}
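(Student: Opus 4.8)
The plan is to transcribe the proof of Proposition \ref{natinclofeqccisiso} into the category $\ktop$. In each bidegree the comparison morphism is the inclusion of one subgroup of the group of all functions $X^{p+1}\times X^{q+1}\to V$ into another, so it automatically commutes with the horizontal and vertical differentials; it therefore suffices to check that it is bijective in each bidegree, together with the row-augmentation entries.

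For surjectivity I would begin from a $G$-fixed element of $A_{kcg}^{p,q}(X;V)$. By the cofinality hypothesis this group is the union of the subgroups $A_{kcr}^{p,q}(X,\mathfrak{U};V)$ over all open coverings $\mathfrak{U}$ of $X$, so a representative is a $G$-invariant cochain $f$ which is continuous on $\fktop X^{p+1}\times_k\fktop\mathfrak{U}[q]$ for some $\mathfrak{U}$ that need not be $G$-invariant. Let $G.\mathfrak{U}$ denote the $G$-invariant covering generated by $\mathfrak{U}$; then $(G.\mathfrak{U})[q]=\bigcup_{g\in G}g.\mathfrak{U}[q]$. Since $G$ acts by homeomorphisms on the $k$-product $\fktop X^{q+1}$, each translate $g.(\fktop\mathfrak{U}[q])$ is an open $k$-subspace, and equivariance of $f$ forces $f$ to be continuous on every $\fktop X^{p+1}\times_k g.(\fktop\mathfrak{U}[q])$; these cover $\fktop X^{p+1}\times_k\fktop(G.\mathfrak{U})[q]$, so $f$ is continuous there by the gluing lemma for open covers. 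Hence $f\in A_{kcr}^{p,q}(X,G.\mathfrak{U};V)^G$ and its class in $A_{kcg,eq}^{p,q}(X;V)$ maps onto the one we started with. The same argument, without the extra variables $\vec{x}$, handles the augmenting complexes.

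For injectivity, let $f\in A_{kcr}^{p,q}(X,\mathfrak{U};V)^G$ with $\mathfrak{U}$ a $G$-invariant covering, and suppose its image in $A_{kcg}^{p,q}(X;V)^G$ is zero. All of the colimit structure maps occurring — both over $G$-invariant coverings (which form a directed system, since two $G$-invariant coverings admit a common $G$-invariant refinement) and over all coverings — are inclusions of subgroups of the fixed group of functions $X^{p+1}\times X^{q+1}\to V$, hence injective. So $f=0$ there, and its class in $A_{kcg,eq}^{p,q}(X;V)$ vanishes. Naturality in $X$ and $V$ is clear, as everything is a restriction of the functorial assignment $A^{*,*}$.

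The only step I expect to require care is the point-set bookkeeping hidden in the surjectivity argument: one must verify that $k$-ification is compatible with the diagonal $G$-action (so that $G$ acts by homeomorphisms on the relevant $k$-products), that an open subspace of a $k$-space is again a $k$-space (so that the translates $g.(\fktop\mathfrak{U}[q])$ are the ``right'' subobjects and continuity can be detected on the open cover they form), and that the cofinality hypothesis is what makes $A_{kcg}^{*,*}(X;V)$ available as the colimit subcomplex of the standard double complex in which all of these manipulations take place. With those points recorded, the proof is formally identical to that of Proposition \ref{natinclofeqccisiso}.
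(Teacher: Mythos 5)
Your proposal is correct and follows the same route as the paper: the paper's proof of this lemma simply declares it analogous to Proposition \ref{natinclofeqccisiso}, whose argument is exactly your surjectivity step (saturate the covering to $G.\mathfrak{U}$ and use equivariance to extend continuity to $(G.\mathfrak{U})[q]$) together with your injectivity step (all colimit structure maps are inclusions of subgroups of the standard complex). Your added remarks on the point-set care needed in $\ktop$ (openness of the translates in the $k$-product, compatibility of $k$-ification with the $G$-action) are exactly the details the paper leaves implicit.
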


\begin{proof}
  The proof is analogous to that of Proposition \ref{natinclofeqccisisok}.
\end{proof}

As a consequence the colimit morphism 
$i_{kcg,eq}^* : A_{kcg,eq}^* ( X; V) \rightarrow \tot A_{kcg}^{*,*} (X; V)^G$ 
then induces an isomorphism in cohomology, and the morphism 
$H (i_{kcg,eq})$ is invertible. For the composition 
$H(i_{kcg,eq})^{-1} H(j_{k,eq}):H_{kc,eq}(X;V)\rightarrow 
H_{kcg,eq}(X,\mathfrak{U};V)$ we observe:

\begin{proposition} \label{contiscohtocreqk}
If the open diagonal neighbourhoods $\fktop \mathfrak{U}[n]$ in 
$\fktop X^{n+1}$ for open 
coverings $\mathfrak{U}$ of $X$ are cofinal in the directed set of all open 
diagonal neighbourhoods then the image $j^n (f)$ of a continuous equivariant 
$n$-cocycle $f$ on $X$ in $\tot A_{kcg}^{*,*} (X;,V)^G$ is cohomologous to the 
image $i_{kcg,eq}^n  (f)$ of the equivariant cocycle 
$f\in A_{kcg,eq}^n (X;V)$ in $\tot A_{kcg}^{*,*} (X;V)^G$.
\end{proposition}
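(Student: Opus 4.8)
The plan is to carry over the staircase construction from the proof of Proposition \ref{contiscohtocr} to the category $\ktop$, the only new ingredient being the hypothesis that the diagonal neighbourhoods $\fktop \mathfrak{U}[n]$ in $\fktop X^{n+1}$ are cofinal. This cofinality is exactly what guarantees that the colimit double complex $A_{kcg}^{*,*}(X;V)$, its row and column augmentations, and hence the morphisms $j_k^*$ and $i_{kcg,eq}^*$ are defined as described above (and that Lemma \ref{natinclofeqdcisisok} applies), so that the statement even makes sense.

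Let $f : X^{n+1} \to V$ be a continuous equivariant $n$-cocycle and fix a $G$-invariant open covering $\mathfrak{U}$ of $X$. For $p+q = n-1$ define equivariant cochains $\psi^{p,q} : X^{p+1} \times X^{q+1} \cong X^{n+1} \to V$ by $\psi^{p,q}(\vec{x},\vec{x}') = (-1)^p f(\vec{x},\vec{x}')$. The first point to check is that $\psi^{p,q}$ lies in $A_{kcr}^{p,q}(X,\mathfrak{U};V)^G$: the restriction of $\psi^{p,q}$ to $\fktop X^{p+1} \times_k \fktop \mathfrak{U}[q]$ is continuous because the inclusion $\fktop X^{p+1} \times_k \fktop \mathfrak{U}[q] \hookrightarrow \fktop X^{n+1}$ is continuous in $\ktop$ and $f$ is continuous on $\fktop X^{n+1}$, and equivariance of $\psi^{p,q}$ is inherited from that of $f$ together with the sign convention. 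Since the vertical and horizontal differentials of $A_{kcr}^{*,*}(X,\mathfrak{U};V)$ are given by the same formulas as in Section \ref{sectss}, the coboundary computation
\begin{equation*}
 [d_v \psi^{p,q}](\vec{x},x_0',\ldots,x_{q+1}') = [d_h \psi^{p-1,q+1}](x_0,\ldots,x_p,\vec{x}')
\end{equation*}
and the anti-commutativity of $d_h$ and $d_v$ are formally identical to the topological case. Hence the coboundary of $\sum_{p+q=n-1}(-1)^p\psi^{p,q}$ in $\tot A_{kcr}^{*,*}(X,\mathfrak{U};V)^G$ equals $j_k^n(f) - i_{k,eq}^n(f)$, so these two cocycles are cohomologous there for every $G$-invariant $\mathfrak{U}$.

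Finally I would pass to the colimit over all $G$-invariant open coverings of $X$. The construction above is visibly compatible with refinement of coverings, so it produces a cochain in $\tot A_{kcg,eq}^{*,*}(X;V)$ whose coboundary is $j_k^n(f) - i_{kcg,eq}^n(f)$; identifying $A_{kcg,eq}^{*,*}(X;V)$ with $A_{kcg}^{*,*}(X;V)^G$ via Lemma \ref{natinclofeqdcisisok} gives the claim. There is no serious obstacle here: the only care needed is to confirm that each step that was topological in Proposition \ref{contiscohtocr} — restriction of continuous maps, the product topology on the diagonal neighbourhoods — remains valid in $\ktop$, which is precisely what the cofinality hypothesis on $\fktop \mathfrak{U}[n]$ secures.
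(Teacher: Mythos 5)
Your proposal is correct and follows essentially the same route as the paper, which simply declares the proof analogous to that of Proposition \ref{contiscohtocr}: you reuse the staircase cochains $\psi^{p,q}=(-1)^p f$, the coboundary computation, and then pass to the colimit over $G$-invariant coverings using the cofinality hypothesis and Lemma \ref{natinclofeqdcisisok}. Your added checks (continuity of the restriction to $\fktop X^{p+1}\times_k \fktop\mathfrak{U}[q]$ and compatibility with refinement) are exactly the points the paper leaves implicit.
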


\begin{proof}
  The proof is analogous to that of Proposition \ref{contiscohtocr}.
\end{proof}

\begin{corollary}
If the open diagonal neighbourhoods $\fktop \mathfrak{U}[n]$ in 
$\fktop X^{n+1}$ for open 
coverings $\mathfrak{U}$ of $X$ are cofinal in the directed set of all open 
diagonal neighbourhoods then the composition 
$H (i_{kcg,eq})^{-1} H(j_{k, eq}):H_{kc,eq}(X;V)\rightarrow H_{kcg,eq}(X;V)$ 
is induced by the inclusion 
$A_{kc}^* (X;V)^G \hookrightarrow A_{kcg}^* (X;V)^G$. 
\end{corollary}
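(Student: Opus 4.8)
The plan is to deduce the corollary from Proposition~\ref{contiscohtocreqk} in exactly the way the analogous corollary in Section~\ref{sectss} is deduced from Proposition~\ref{contiscohtocreq}. Throughout I would assume the stated cofinality hypothesis, under which the colimit double complex $A_{kcg}^{*,*}(X;V)^G$ is defined, the row augmentation $i_{kcg,eq}^*$ induces an isomorphism in cohomology (so that $H(i_{kcg,eq})$ is invertible), Proposition~\ref{natinclofeqccisisok} identifies $A_{kcg,eq}^*(X;V)$ with $A_{kcg}^*(X;V)^G$, and Proposition~\ref{contiscohtocreqk} is available.

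First I would recall that $H_{kc,eq}(X;V)$ is by definition the cohomology of $A_{kc}^*(X;V)^G$, so every class in it is represented by a continuous equivariant $n$-cocycle $f$ on $X$, and $H(j_{k,eq})$ sends its class to the class of the column augmentation $j_k^n(f)$ in $H(\tot A_{kcg}^{*,*}(X;V)^G)$. Such an $f$ is in particular continuous on the $G$-invariant diagonal neighbourhood $X^{n+1}$, hence lies in $A_{kcg,eq}^n(X;V)$ and represents the class $\iota_*[f] \in H_{kcg,eq}(X;V)$, where $\iota_*$ denotes the map induced by the inclusion $A_{kc}^*(X;V)^G \hookrightarrow A_{kcg}^*(X;V)^G$; under $i_{kcg,eq}^*$ this class maps to the class of the row augmentation $i_{kcg,eq}^n(f)$. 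By Proposition~\ref{contiscohtocreqk} the cochains $j_k^n(f)$ and $i_{kcg,eq}^n(f)$ are cohomologous in $\tot A_{kcg}^{*,*}(X;V)^G$, so $H(j_{k,eq})[f] = H(i_{kcg,eq})(\iota_*[f])$, and applying $H(i_{kcg,eq})^{-1}$ yields $H(i_{kcg,eq})^{-1}H(j_{k,eq})[f] = \iota_*[f]$. Since $f$ was an arbitrary cocycle representative, this identifies the composition with the inclusion-induced map, as claimed.

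I do not expect a genuine obstacle here. The one substantive point is that the column augmentation $j_k^n(f)$ lands in bidegree $(0,n)$ whereas the row augmentation $i_{kcg,eq}^n(f)$ lands in bidegree $(n,0)$, so the two are identified only after passing to total cohomology; and this identification is precisely the content of Proposition~\ref{contiscohtocreqk}, whose proof is the $k$-space analog of that of Proposition~\ref{contiscohtocr} — the explicit total-complex primitive $\sum_{p+q=n-1}(-1)^p\psi^{p,q}$ has a coboundary equal to $j_k^n(f)-i_{kcg,eq}^n(f)$ because the $k$-space product double complex carries the same anti-commuting horizontal and vertical differentials. Everything else is formal bookkeeping with the two invertible augmentation quasi-isomorphisms.
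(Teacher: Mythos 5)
Your proposal is correct and follows exactly the route the paper intends: the corollary is an immediate consequence of Proposition~5.13 (the $k$-space analogue of Proposition~2.7), which the paper states without further proof, and your unwinding of why the cohomologousness of $j_k^n(f)$ and $i_{kcg,eq}^n(f)$ in the total complex identifies $H(i_{kcg,eq})^{-1}H(j_{k,eq})$ with the inclusion-induced map is the intended bookkeeping. No gaps.
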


\begin{corollary}
If the open diagonal neighbourhoods $\fktop \mathfrak{U}[n]$ in 
$\fktop X^{n+1}$ for open coverings $\mathfrak{U}$ of $X$ are cofinal in the 
directed set of all open diagonal neighbourhoods and the morphism 
$j^*_{k, eq}:={j_k^*}^G : A_{kc}^* (X;V)^G \rightarrow \tot A_{kcg}^{*,*}(X;V)^G$
induces a monomorphism, epimorphism or isomorphism in cohomology, then the 
inclusion $A_{kc}^* (X;V)^G \hookrightarrow A_{kcg,eq}^* (X;V)$ induces a 
monomorphism, epimorphism or isomorphism in cohomology respectively.
\end{corollary}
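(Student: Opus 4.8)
The plan is to deduce the statement formally from the results already assembled in this section, exactly as the analogous corollary in Section~\ref{sectss} is deduced there. First I would record what the cofinality hypothesis on the open diagonal neighbourhoods $\fktop\mathfrak{U}[n]$ buys us: it is precisely the hypothesis under which the colimit double complex $A_{kcg}^{*,*}(X;V)$ is available, under which the colimit morphism $i_{kcg,eq}^* : A_{kcg,eq}^*(X;V)\rightarrow \tot A_{kcg}^{*,*}(X;V)^G$ induces an isomorphism in cohomology (so that $H(i_{kcg,eq})$ is invertible), and under which Lemma~\ref{natinclofeqdcisisok} applies and identifies $A_{kcg,eq}^*(X;V)$ with the complex $A_{kcg}^*(X;V)^G$ of equivariant cochains with continuous germ, compatibly with the inclusions out of $A_{kc}^*(X;V)^G$.

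Next I would invoke the preceding corollary, which (under the same cofinality hypothesis) identifies the composition $H(i_{kcg,eq})^{-1}\circ H(j_{k,eq}) : H_{kc,eq}(X;V)\rightarrow H_{kcg,eq}(X;V)$ with the map induced in cohomology by the inclusion $A_{kc}^*(X;V)^G\hookrightarrow A_{kcg}^*(X;V)^G$ --- equivalently, via Lemma~\ref{natinclofeqdcisisok}, by the inclusion $A_{kc}^*(X;V)^G\hookrightarrow A_{kcg,eq}^*(X;V)$. The remaining step is the purely formal observation that pre-composing with the isomorphism $H(i_{kcg,eq})^{-1}$ sends a monomorphism to a monomorphism, an epimorphism to an epimorphism, and an isomorphism to an isomorphism; hence if $H(j_{k,eq})$ has any one of these three properties, so does the composition, and therefore so does the inclusion-induced map on $A_{kc}^*(X;V)^G\hookrightarrow A_{kcg,eq}^*(X;V)$.

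I do not anticipate a genuine obstacle. Every non-formal ingredient --- the invertibility of $H(i_{kcg,eq})$, the identification of Lemma~\ref{natinclofeqdcisisok}, and the description of the composition in the preceding corollary --- has already been proved in this section under exactly the stated cofinality hypothesis, and all that is added is the triviality that composing with an isomorphism preserves being mono, epi, or iso. The one point worth a remark is that the identification of Lemma~\ref{natinclofeqdcisisok} must be natural enough to intertwine the inclusion $A_{kc}^*(X;V)^G\hookrightarrow A_{kcg}^*(X;V)^G$ with $A_{kc}^*(X;V)^G\hookrightarrow A_{kcg,eq}^*(X;V)$; this is immediate from the construction of that isomorphism, which mirrors Proposition~\ref{natinclofeqccisisok}.
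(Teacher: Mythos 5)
Your proposal is correct and follows exactly the route the paper intends (the paper leaves this corollary without an explicit proof precisely because it is the formal consequence you describe): invertibility of $H(i_{kcg,eq})$ under the cofinality hypothesis, the preceding corollary identifying $H(i_{kcg,eq})^{-1}H(j_{k,eq})$ with the inclusion-induced map, Lemma~\ref{natinclofeqdcisisok} to pass between $A_{kcg}^*(X;V)^G$ and $A_{kcg,eq}^*(X;V)$, and the triviality that composing with an isomorphism preserves mono, epi, and iso.
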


\section{Continuous and $\mathfrak{U}$ -Continuous Cochains on $k$-spaces}
\label{seccontanduccck}

In this section we consider transformation $k$-groups $(G,X)$ and $G$-modules 
$V$ in $\ktop$ for which we show that the inclusion 
$A_{kc}^* (X,\mathfrak{U};V)^G \hookrightarrow A_{kcr}^* (X,\mathfrak{U};V)^G$
of the complex of continuous equivariant cochains into the complex 
of equivariant $\mathfrak{U}$-continuous cochains induces an isomorphism 
$H_{kc}^* (X,\mathfrak{U};V) \cong H_{kcr}^* (X,\mathfrak{U};V)$ provided the
$k$-space $X$ is contractible. The proceeding is similar to that in
Section \ref{seccontanduccck}. At first we reduce the problem to the 
non-equivariant case:

\begin{proposition} \label{noneqextheneqexk}
 If the augmented column complexes 
$A_{kc}^p (X;V) \hookrightarrow A_{kcr}^{p,*}(X,\mathfrak{U};V)$ 
are exact, then the augmented sub column complexes
$A_{kc}^p (X;V)^G \hookrightarrow A_{kcr}^{p,*}(X,\mathfrak{U};V)^G$  
of equivariant cochains are exact as well.
\end{proposition}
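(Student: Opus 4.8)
The plan is to mimic the proof of Proposition \ref{noneqextheneqex} almost verbatim, replacing the ordinary products $X^{p+1} \times \mathfrak{U}[q]$ by their $k$-ified analogues $\fktop X^{p+1} \times_k \fktop\mathfrak{U}[q]$ and the continuous standard complex $A_c$ by $A_{kc}$ throughout. The only structural ingredient used in the topological case is the averaging trick: given an equivariant vertical cocycle $f_{eq}^{p,q}$, pick a (not necessarily equivariant) vertical primitive $f^{p,q-1}$ from exactness of the augmented column, and define $f_{eq}^{p,q-1}(\vec{x},\vec{x}') := x_0 . f^{p,q-1}(x_0^{-1}.\vec{x}, x_0^{-1}.\vec{x}')$. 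One then checks (i) that this lies in $A_{kcr}^{p,q-1}$, i.e. its restriction to $\fktop X^{p+1} \times_k \fktop\mathfrak{U}[q-1]$ is continuous, and (ii) that $d_v f_{eq}^{p,q-1} = f_{eq}^{p,q}$, using $G$-equivariance of $d_v$ exactly as in the cited proof.

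First I would set up notation: assume the augmented column complexes $A_{kc}^p(X;V) \hookrightarrow A_{kcr}^{p,*}(X,\mathfrak{U};V)$ are exact, take an arbitrary equivariant vertical cocycle $f_{eq}^{p,q} \in A_{kcr}^{p,q}(X,\mathfrak{U};V)^G$, and produce a (possibly non-equivariant) primitive $f^{p,q-1}$. Then I would write down the averaged cochain $f_{eq}^{p,q-1}$ as above and verify that $d_v f_{eq}^{p,q-1}(\vec{x},\vec{x}') = x_0 .[ d_v f^{p,q-1}(x_0^{-1}.\vec{x}, x_0^{-1}.\vec{x}')] = x_0 . [f_{eq}^{p,q}(x_0^{-1}.\vec{x},x_0^{-1}.\vec{x}')] = f_{eq}^{p,q}(\vec{x},\vec{x}')$, the last step by equivariance of $f_{eq}^{p,q}$. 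This is the same computation displayed in the proof of Proposition \ref{noneqextheneqex}, so I would simply say ``the proof is similar to that of Proposition \ref{noneqextheneqex}'' after flagging the one point that genuinely needs care.

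The one point needing care, and the main (mild) obstacle, is step (i): why is $f_{eq}^{p,q-1}$ still in the $k$-continuous subcomplex? In the topological case one argues that since $f^{p,q-1}$ is continuous on $X^{p+1} \times \mathfrak{U}[q-1]$, the composite $(\vec{x},\vec{x}') \mapsto x_0.f^{p,q-1}(x_0^{-1}.\vec{x}, x_0^{-1}.\vec{x}')$ is continuous on the same set because the $G$-action maps and the diagonal insertions are continuous and preserve the diagonal neighbourhood. In $\ktop$ one needs the analogous statement for $\fktop X^{p+1} \times_k \fktop\mathfrak{U}[q-1]$: the action map $G \times_k \fktop X^{n+1} \to \fktop X^{n+1}$ is continuous in $\ktop$ since $(G,X)$ is a transformation $k$-group, the diagonal $\Delta : \fktop X \to \fktop X^{p+1}$ is continuous, and $\fktop(-) \times_k \fktop(-)$ is a product in $\ktop$, so the relevant composite is $\ktop$-continuous; moreover the map $(\vec x, \vec x') \mapsto (x_0^{-1}.\vec x, x_0^{-1}.\vec x')$ carries $\fktop X^{p+1} \times_k \fktop \mathfrak U[q-1]$ into itself because $\mathfrak U[q-1] = \bigcup_{U} U^q$ is stable under the diagonal $G$-action iff $\mathfrak U$ is $G$-invariant, and here one may invoke Remark \ref{remonlyginvcov} (or simply restrict to $G$-invariant $\mathfrak U$ from the outset) to ensure $\mathfrak U$ is $G$-invariant. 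With that, $f_{eq}^{p,q-1}$ lands in $A_{kcr}^{p,q-1}(X,\mathfrak U;V)^G$ and the argument closes.

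I therefore expect the proof to read, in full:

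\begin{proof}
  The proof is similar to that of Proposition \ref{noneqextheneqex}. Assume that the augmented column complexes $A_{kc}^p (X,\mathfrak{U};V) \hookrightarrow A_{kcr}^{p,*}(X,\mathfrak{U};V)$ are exact and let $f_{eq}^{p,q}$ be an equivariant vertical cocycle in $A_{kcr}^{p,q}(X,\mathfrak{U};V)^G$. By hypothesis it is the vertical coboundary $d_v f^{p,q-1}$ of some cochain $f^{p,q-1} \in A_{kcr}^{p,q-1}(X,\mathfrak{U};V)$, which need not be equivariant. Define an equivariant cochain of bidegree $(p,q-1)$ by
 \begin{equation*}
  f_{eq}^{p,q-1} (\vec{x},\vec{x}'):= x_0 . f^{p,q-1} (x_0^{-1} . \vec{x}, x_0^{-1} . \vec{x}') \, .
 \end{equation*}
Since $(G,X)$ is a transformation group in $\ktop$, the action map on $\fktop X^{n+1}$ is continuous in $\ktop$, and the insertion $(\vec{x},\vec{x}') \mapsto (x_0^{-1}.\vec{x}, x_0^{-1}.\vec{x}')$ carries $\fktop X^{p+1} \times_k \fktop \mathfrak{U}[q-1]$ into itself because the covering $\mathfrak{U}$ is $G$-invariant. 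As $f^{p,q-1}$ is continuous on $\fktop X^{p+1} \times_k \fktop \mathfrak{U}[q-1]$, so is $f_{eq}^{p,q-1}$, hence $f_{eq}^{p,q-1} \in A_{kcr}^{p,q-1}(X,\mathfrak{U};V)^G$. Because $d_v$ is equivariant, one computes
 \begin{equation*}
  d_v f_{eq}^{p,q-1} (\vec{x},\vec{x}') = x_0 . \left[ d_v f^{p,q-1} (x_0^{-1} . \vec{x} , x_0^{-1} . \vec{x}') \right] = x_0 . \left[ f_{eq}^{p,q} (x_0^{-1}.\vec{x}, x_0^{-1}.\vec{x}') \right] = f_{eq}^{p,q} (\vec{x} , \vec{x}') \, ,
 \end{equation*}
the last equality by equivariance of $f_{eq}^{p,q}$. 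Thus every equivariant vertical cocycle in $A_{kcr}^{*,*}(X,\mathfrak{U};V)^G$ is a vertical coboundary of an equivariant cochain, so the augmented sub column complexes $A_{kc}^p (X;V)^G \hookrightarrow A_{kcr}^{p,*}(X,\mathfrak{U};V)^G$ are exact. (By Remark \ref{remonlyginvcov} one may restrict to $G$-invariant coverings $\mathfrak{U}$ without loss of generality.)
\end{proof}
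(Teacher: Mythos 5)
Your proposal is correct and matches the paper's approach: the paper's proof of this proposition is literally the one-line remark that it is analogous to Proposition \ref{noneqextheneqex}, whose averaging argument you reproduce faithfully in the $\ktop$ setting. Your extra care about why the averaged cochain stays $k$-continuous (continuity of the action in $\ktop$ and $G$-invariance of $\mathfrak{U}$ so that the insertion preserves $\fktop X^{p+1}\times_k \fktop\mathfrak{U}[q-1]$) is a point the paper glosses over even in the topological case, so it is a welcome addition rather than a deviation.
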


\begin{proof}
 The proof is analogous to that of Proposition \ref{noneqextheneqex}.
\end{proof}

\begin{corollary} \label{augexthenjeqindisok}
If the augmented column complexes 
$A_{kc}^p (X;V) \hookrightarrow A_{kcr}^{p,*} (X,\mathfrak{U};V)$ 
are exact, then the inclusion 
$j_{k, eq}^* : A_{kc}^* (X;V)^G \hookrightarrow 
\tot A_{kcr}^{*,*}(X,\mathfrak{U},V)^G$ induces an isomorphism in cohomology.
\end{corollary}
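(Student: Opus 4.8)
The plan is to reduce the statement to the equivariant exactness of the augmented columns and then invoke the standard fact that a first quadrant double complex with exact augmented columns computes, via the column augmentation, the cohomology of the augmenting complex. So first I would apply Proposition \ref{noneqextheneqexk}: the hypothesis that the non-equivariant augmented columns $A_{kc}^p(X;V) \hookrightarrow A_{kcr}^{p,*}(X,\mathfrak{U};V)$ are exact yields that the $G$-equivariant augmented columns $A_{kc}^p(X;V)^G \hookrightarrow A_{kcr}^{p,*}(X,\mathfrak{U};V)^G$ are exact as well. Concretely this means that for every $p$ the vertical cohomology of the $p$-th column of the double complex $A_{kcr}^{*,*}(X,\mathfrak{U};V)^G$ vanishes in positive degree, while in degree $0$ the augmentation identifies it with $A_{kc}^p(X;V)^G$ (the subgroup of vertical $0$-cocycles $\ker d_v^{p,0}$).

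Next I would run the spectral sequence obtained by filtering $A_{kcr}^{*,*}(X,\mathfrak{U};V)^G$ by columns (as in \cite[Theorem 2.15]{Mcl}); since the double complex lies in the first quadrant, it converges to the cohomology of $\tot A_{kcr}^{*,*}(X,\mathfrak{U};V)^G$. By the previous paragraph its $E_1$-page is concentrated in the bottom row $q=0$, with $E_1^{p,0}=A_{kc}^p(X;V)^G$ and first differential induced by the horizontal differential $d_h$, which on the augmentation row is precisely the (homogeneous, equivariant, continuous) group-cochain differential of $A_{kc}^*(X;V)^G$. Hence $E_2^{p,0}=H_{kc,eq}^p(X;V)$ and $E_2^{p,q}=0$ for $q\geq 1$, the spectral sequence degenerates at $E_2$, and the resulting edge morphism $H_{kc,eq}^n(X;V)\xrightarrow{\cong} H^n(\tot A_{kcr}^{*,*}(X,\mathfrak{U};V)^G)$ is exactly the map induced by the column augmentation, i.e.\ by $j_{k,eq}^n$. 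Equivalently, one can dispense with the spectral sequence and argue by a direct staircase (``acyclic assembly'') diagram chase, repeatedly using vertical exactness of the columns to push a total cocycle down to the augmentation row and, dually, to recognise a total coboundary; this route makes the identification of the isomorphism with $H(j_{k,eq}^*)$ transparent.

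Since the only new ingredient over the topological setting of Section \ref{seccontanduccc} is Proposition \ref{noneqextheneqexk}, the argument is a verbatim transcription of the proof of Corollary \ref{augexthenjeqindiso}: the passage to the category $\ktop$ changes nothing, because $A_{kcr}^{*,*}(X,\mathfrak{U};V)^G$ is a double complex of abstract abelian groups and all of the homological algebra involved is insensitive to the ambient topological category. The main (and fairly mild) obstacle I anticipate is bookkeeping, namely checking that the edge homomorphism of the column filtration coincides on the nose with $H(j_{k,eq}^*)$ — equivalently, keeping the signs of $d_h$ and $d_v$ straight throughout the staircase chase — together with the routine observation that forming $G$-fixed points commutes with the (finite, in each degree) direct sums defining the total complex, so that $\tot$ of the fixed-point sub double complex agrees with the fixed points of $\tot$.
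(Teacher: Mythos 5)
Your proposal is correct and follows exactly the route the paper intends: Proposition \ref{noneqextheneqexk} supplies the equivariant exactness of the augmented columns, and then the column-filtration spectral sequence of the first-quadrant double complex (the paper's own device, set up in Section \ref{sectss} following \cite[Theorem 2.15]{Mcl}) collapses to the bottom row, identifying $H(\tot A_{kcr}^{*,*}(X,\mathfrak{U};V)^G)$ with $H_{kc,eq}(X;V)$ via the edge map induced by $j_{k,eq}^*$. The paper leaves this corollary without an explicit proof, and your write-up (including the check that taking $G$-fixed points commutes with forming the total complex) is a faithful and complete rendering of the intended argument.
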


\begin{corollary} \label{augextheninclindisok}
If the augmented column complexes 
 $A_{kc}^p (X;V) \hookrightarrow A_{kcr}^{p,*} (X,\mathfrak{U};V)$ 
are exact, then the inclusion 
$A_{kc}^* (X;V)^G \hookrightarrow A_{kcr}^* (X,\mathfrak{U};V)^G$
induces an isomorphism in cohomology.
\end{corollary}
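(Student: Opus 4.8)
The plan is to reduce Corollary~\ref{augextheninclindisok} to the previous Corollary~\ref{augexthenjeqindisok} together with the comparison machinery established for the total complex. Specifically, by Corollary~\ref{augexthenjeqindisok} the hypothesis — exactness of the augmented column complexes $A_{kc}^p (X;V) \hookrightarrow A_{kcr}^{p,*} (X,\mathfrak{U};V)$ — already guarantees that $j_{k,eq}^* : A_{kc}^* (X;V)^G \hookrightarrow \tot A_{kcr}^{*,*}(X,\mathfrak{U};V)^G$ induces an isomorphism in cohomology. On the other hand, Lemma~\ref{columnsexacteqk} tells us that $i_{k,eq}^* : A_{kcr}^* (X,\mathfrak{U};V)^G \to \tot A_{kcr}^{*,*}(X,\mathfrak{U};V)^G$ induces an isomorphism in cohomology as well, so $H(i_{k,eq})$ is invertible.

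Next I would invoke the corollary following Proposition~\ref{contiscohtocrk}, which identifies the composition $H(i_{k,eq})^{-1} H(j_{k,eq})$ with the map in cohomology induced by the inclusion $A_{kc}^* (X,\mathfrak{U};V)^G \hookrightarrow A_{kcr}^* (X,\mathfrak{U};V)^G$. (Here one should note that $A_{kc}^*(X;V)$ and $A_{kc}^*(X,\mathfrak{U};V)$ have the same continuous cochains, so the domains match up; this is the $k$-space analogue of the identification already used in Section~\ref{seccontanduccc}.) Since $H(j_{k,eq})$ is an isomorphism by the first step and $H(i_{k,eq})$ is an isomorphism by Lemma~\ref{columnsexacteqk}, the composition $H(i_{k,eq})^{-1} H(j_{k,eq})$ is an isomorphism, and hence so is the map induced by the inclusion $A_{kc}^* (X;V)^G \hookrightarrow A_{kcr}^* (X,\mathfrak{U};V)^G$. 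This is exactly the assertion of the corollary.

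In short, the proof is a formal diagram chase in the homotopy category of cochain complexes: the total complex $\tot A_{kcr}^{*,*}(X,\mathfrak{U};V)^G$ serves as a common target receiving both inclusions, one of the two comparison maps ($i_{k,eq}^*$) is always a quasi-isomorphism by row-exactness (Lemma~\ref{columnsexactk}, Lemma~\ref{columnsexacteqk}), the other ($j_{k,eq}^*$) becomes a quasi-isomorphism precisely under the stated column-exactness hypothesis (Corollary~\ref{augexthenjeqindisok}), and the inclusion we care about factors as $H(i_{k,eq})^{-1} H(j_{k,eq})$. I do not expect any genuine obstacle: every ingredient is a $k$-space analogue of a statement already proved for topological spaces in Sections~\ref{sectss}–\ref{seccontanduccc}, and all the relevant lemmas (\ref{columnsexactk}, \ref{columnsexacteqk}, \ref{noneqextheneqexk}, \ref{augexthenjeqindisok}) have already been recorded in this section. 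The only point deserving a word of care is the verification that passing to the category $\ktop$ does not disturb the exactness of the row contractions $h^{p,q}$ — but this has been handled in Lemma~\ref{columnsexactk}, whose proof is noted to carry over verbatim — so the corollary follows immediately by the same argument as in the topological case.

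\begin{proof}
By Corollary~\ref{augexthenjeqindisok} the hypothesis implies that $j_{k,eq}^*$ induces an isomorphism in cohomology, and by Lemma~\ref{columnsexacteqk} the morphism $i_{k,eq}^*$ induces an isomorphism in cohomology. By the corollary to Proposition~\ref{contiscohtocrk} the composition $H(i_{k,eq})^{-1} H(j_{k,eq})$ is induced by the inclusion $A_{kc}^* (X;V)^G \hookrightarrow A_{kcr}^* (X,\mathfrak{U};V)^G$; since both factors are isomorphisms, so is this composition. Hence the inclusion induces an isomorphism in cohomology.
\end{proof}
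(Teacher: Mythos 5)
Your argument is correct and is exactly the route the paper intends: the hypothesis makes $j_{k,eq}^*$ a quasi-isomorphism via Corollary \ref{augexthenjeqindisok}, $i_{k,eq}^*$ is always a quasi-isomorphism by Lemma \ref{columnsexacteqk}, and the corollary following Proposition \ref{contiscohtocrk} identifies $H(i_{k,eq})^{-1}H(j_{k,eq})$ with the map induced by the inclusion. Your parenthetical care about the notational discrepancy between $A_{kc}^*(X;V)$ and $A_{kc}^*(X,\mathfrak{U};V)$ is well placed, as the paper silently conflates the two.
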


To achieve the announced result it remains to show that for contractible 
$k$-spaces $X$ the colimit augmented columns 
$A_{kc}^p (X;V) \hookrightarrow A_{kcg}^{p,*} (X;V)$ are exact. For this purpose
we first consider the cochain complex associated to the cosimplicial abelian 
group 
\begin{equation*}
  A_k^{p,*} (X;V) := \left\{ f : X^{p+1} \times X^{*+1} \rightarrow V 
\mid \, \forall \vec{x}' \in X^{*+1} :f (-,\vec{x}') \in C ( \fktop X^{p+1},V)
\right\}
\end{equation*}
of global cochains, its subcomplex $A_{kcr}^{p,*} (X,\mathfrak{U};V)$ and 
the cochain complexes associated to the cosimplicial abelian groups 
\begin{eqnarray*}
 A_k^{p,*} (\mathfrak{U};V) & := & 
\{ f : X^{p+1} \times \mathfrak{U}[*] \rightarrow 
\mid \, \forall \vec{x}' \in \mathfrak{U}[*] : f (-,\vec{x}') \in 
C (\fktop X^{p+1},V) \} \quad \text{and} \\ 
A_{kc}^{p,*} (X,\mathfrak{U};V) & := & 
C ( \fktop X^{p+1} \times_k \fktop \mathfrak{U}[*] , V) \, .
\end{eqnarray*}

Restriction of global to local cochains induces morphisms of cochain complexes 
$\res^{p,*}_k : A_k^{p,*} (X;V) \twoheadrightarrow  A_k^{p,*} (X,\mathfrak{U};V)$ and 
$\res_{kcr}^{p,*} : A_{kcr}^* (X,\mathfrak{U};V) \twoheadrightarrow  
A_{kc}^{p,*} (X,\mathfrak{U};V)$ 
intertwining the inclusions of the subcomplexes 
$A_{kcr}^{p,*} (X,\mathfrak{U};V) \hookrightarrow A_k^{p,*} (X;V)$ and 
$A_{kc}^{p,*} (X,\mathfrak{U};V) \hookrightarrow A_k^{p,*}
(X,\mathfrak{U};V)$, 
so one obtains the following commutative diagram
\begin{equation} \label{morphexseqk}
\begin{array}{cccccccc}
 0 \longrightarrow & \ker (\res_{kcr}^{p,*} ) & \longrightarrow & 
A_{kcr}^{p,*} (X,\mathfrak{U};V) 
& \longrightarrow & A_{kc}^{p,*} (X,\mathfrak{U};V) & \longrightarrow 0 \\ 
& \downarrow & & \downarrow & & \downarrow & \\
0 \longrightarrow & \ker (\res^{p,*}_k ) & \longrightarrow & A_k^{p,*} (X;V) 
& \longrightarrow & A_k^{p,*} (X , \mathfrak{U};V) & \longrightarrow 0
\end{array}
\end{equation}
of cochain complexes whose rows are exact. The kernel $\ker (\res^{p,q}_k )$ is
the subspace of those $(p,q)$-cochains which are trivial on 
$\fktop X^{p+1} \times_k \fktop \mathfrak{U} [q]$. Since these
$(p,q)$-cochains are continuous on 
$\fktop X^{p+1} \times_k \fktop \mathfrak{U}[q]$ 
we find that both kernels coincide. We abbreviate the complex 
$\ker (\res^{p,*}_k ) = \ker (\res_{krc}^{p,*} )$ by $K_k^{p,*}$ and denote the 
cohomology groups of the complex $A_{kcr}^{p,*} (X,\mathfrak{U};V)$ by 
$H_{kcr}^{p,*} (X,\mathfrak{U};V)$, the cohomology groups of the complex 
$A_{kc}^{p,*} (X,\mathfrak{U};V)$ of continuous cochains by 
$H_{kc}^{p,*} (X,\mathfrak{U};V)$ and the cohomology groups of the complex 
$A_k^{p,*} (X,\mathfrak{U};V)$ by $H_k^{p,*} (X,\mathfrak{U};V)$. 

\begin{lemma}
  The cochain complexes $A_k^{p,*} (X;V)$ are exact.
\end{lemma}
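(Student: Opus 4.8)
The plan is to imitate the non-$k$ case verbatim, since the only structural change is that the domain $\fktop X^{p+1}$ has been $k$-ified. Fix a point $* \in X$ and define homomorphisms
\begin{equation*}
 h^{p,q} : A_k^{p,q} (X;V) \rightarrow A_k^{p,q-1} (X;V), \qquad
 h^{p,q} (f) (\vec{x},\vec{x}') := f (\vec{x}, *, \vec{x}') \, .
\end{equation*}
First I would check that $h^{p,q}$ lands in $A_k^{p,q-1}(X;V)$: for each fixed $\vec{x}' \in X^{q}$ the map $\vec{x} \mapsto f(\vec{x}, *, \vec{x}')$ is obtained from $f(-, *, \vec{x}') \in C(\fktop X^{p+1}, V)$, so it is continuous on $\fktop X^{p+1}$ as required. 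Hence $h^{p,q}$ is a well-defined homomorphism of abelian groups.

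Next I would verify the contracting homotopy identity $d_v^{p,q-1} h^{p,q} + h^{p,q+1} d_v^{p,q} = \id$ on $A_k^{p,*}(X;V)$. This is the standard simplicial identity: inserting the basepoint $*$ in the first slot of the $\vec{x}'$-coordinate is a cone construction, and expanding both $d_v h$ and $h d_v$ via the alternating-sum formula for $d_v^{p,*}$ gives telescoping cancellation, leaving the identity map (the sign $(-1)^p$ in $d_v$ is the same on both terms and cancels out). The computation is formally identical to the one in the proof of the earlier lemma ``The cochain complexes $A^{p,*}(X;V)$ are exact'', so I would simply remark that the same contraction works here.

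I do not anticipate a genuine obstacle: the $k$-ification only affects the source objects' topologies, and $h^{p,q}$ is defined purely by plugging in a point, which respects continuity in whatever topology we use. The one point worth a sentence is the well-definedness check above — that evaluating a $k$-continuous function at a fixed point in some coordinates still yields a $k$-continuous function in the remaining coordinates — but this is immediate because restricting along $\fktop X^{p+1} \hookrightarrow \fktop X^{p+1} \times_k \{*\} \times_k \fktop X^{q}$ (a continuous map) preserves continuity. Therefore the complexes $A_k^{p,*}(X;V)$ are exact, and the proof reduces to the single line that the contraction $h^{p,q}(f)(\vec{x},\vec{x}') = f(\vec{x},*,\vec{x}')$ from the earlier lemma carries over verbatim to the category $\ktop$.
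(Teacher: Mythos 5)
Your proposal is correct and is exactly the paper's argument: the paper also takes the contraction $h^{p,q}(f)(\vec{x},\vec{x}') := f(\vec{x},*,\vec{x}')$ for a fixed point $* \in X$. Your well-definedness check is even slightly more elaborate than necessary, since membership in $A_k^{p,q-1}(X;V)$ only requires continuity of $f(-,*,\vec{x}')$ on $\fktop X^{p+1}$ for each fixed $\vec{x}'$, which is immediate from $(*,\vec{x}') \in X^{q+1}$ and the defining condition on $f$.
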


\begin{proof}
For any point $* \in X$ the homomorphisms 
$h^{p,q} : A_k^{p,q} (X;V) \rightarrow A_k^{p,q-1} (X;V)$ given by 
$h^{p,q} (f) (\vec{x},\vec{x}'):=f (\vec{x},*,\vec{x}')$ form a contraction of
the complex $A_k^{p,*} (X;V)$. 
\end{proof}

The morphism of short exact sequences of cochain complexes in Diagram 
\ref{morphexseqk}
gives rise to a morphism of long exact cohomology sequences, in which the 
cohomology of the complex $A_k^{p,*} (X;V)$ is trivial:
\begin{equation} \label{diaglecsk}
\xymatrix@R-10pt@C-4pt{ 
\ar[r] & H^q (K_k^{p,*} )) \ar[r] \ar@{=}[d] 
& H_{kcr}^{p,q} (X,\mathfrak{U};V) \ar[r] \ar[d]  
& H_{kc}^{p,q} (X,\mathfrak{U};V) \ar[r] \ar[d]  
& H^{q+1}  (K_k^{p,*} ) \ar[r] \ar@{=}[d] & {} \\ 
\ar[r]^\cong & H^q (K^{p,*} ) \ar[r]&  0  \ar[r] 
& H_k^{p,q} (X,\mathfrak{U};V) \ar[r]^\cong &  H^{q+1}  (K^{p,*} )) \ar[r] & {}
}
\end{equation}

\begin{lemma}
The augmented complex 
$A_{kc}^p (X;V) \hookrightarrow A_{cr}^{p,*} (X,\mathfrak{U};V)$ is exact if and
only if the inclusion 
$A_{kc}^{p,*} (X,\mathfrak{U};V) \hookrightarrow A_k^{p,*} (X,\mathfrak{U};V)$ 
induces an isomorphism in cohomology.
\end{lemma}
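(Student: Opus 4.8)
The plan is to imitate the proof of the corresponding non-$k$ statement, whose argument is an immediate consequence of Diagram \ref{diaglecsk}. First I would observe that Diagram \ref{morphexseqk} is a morphism of short exact sequences of cochain complexes, and that the associated morphism of long exact cohomology sequences is precisely Diagram \ref{diaglecsk}. The key point, already recorded in the text, is that the cohomology of the complex $A_k^{p,*}(X;V)$ is trivial, so the connecting maps $H^q(K^{p,*}) \to H^{q+1}(K^{p,*})$ in the bottom row and $H^q(K^{p,*}) \to H^{q+1}(K_k^{p,*})$ in the top row (via the vertical equalities $H^q(K_k^{p,*}) = H^q(K^{p,*})$) are isomorphisms.

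Next I would run the five-lemma type diagram chase. The augmented complex $A_{kc}^p(X;V) \hookrightarrow A_{kcr}^{p,*}(X,\mathfrak{U};V)$ is exact if and only if $H^q(A_{kcr}^{p,*}(X,\mathfrak{U};V)) = 0$ for $q \geq 1$ and the augmentation identifies $A_{kc}^p(X;V)$ with the $0$-th cohomology; equivalently, if and only if $H_{kcr}^{p,q}(X,\mathfrak{U};V)$ vanishes in the appropriate range and agrees with $A_{kc}^p$ in degree zero. From the top row of Diagram \ref{diaglecsk}, the map $H_{kcr}^{p,q}(X,\mathfrak{U};V) \to H_{kc}^{p,q}(X,\mathfrak{U};V)$ together with the connecting map into $H^{q+1}(K_k^{p,*})$ fit into an exact sequence in which $H^q(K_k^{p,*})$ maps isomorphically (via the bottom row and the verticals) onto a subquotient. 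Chasing this, one sees that $H_{kcr}^{p,*}$ is exact (in the augmented sense) exactly when the map $H_{kc}^{p,q}(X,\mathfrak{U};V) \to H_k^{p,q}(X,\mathfrak{U};V)$ induced by the inclusion $A_{kc}^{p,*}(X,\mathfrak{U};V) \hookrightarrow A_k^{p,*}(X,\mathfrak{U};V)$ is an isomorphism for all $q$: the vanishing of $H_{kcr}^{p,q}$ forces the connecting map $H_{kc}^{p,q} \to H^{q+1}(K_k^{p,*})$ to be injective, and surjectivity of the corresponding bottom map then forces it to be an isomorphism, which by commutativity of the square is equivalent to $H_{kc}^{p,q} \to H_k^{p,q}$ being an isomorphism.

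I expect the main (though still minor) obstacle to be bookkeeping the degree shifts in the augmented complexes: one must be careful that ``augmented exactness'' of $A_{kc}^p(X;V) \hookrightarrow A_{kcr}^{p,*}(X,\mathfrak{U};V)$ is correctly translated into statements about the unaugmented cohomology groups $H_{kcr}^{p,q}$ sitting in Diagram \ref{diaglecsk}, and that the isomorphism $H_k^{p,q}(X,\mathfrak{U};V) \cong H^{q+1}(K^{p,*})$ coming from the exactness of $A_k^{p,*}(X;V)$ is applied in the right degree. Once the indices are aligned, the statement is a purely formal diagram chase, so the proof will simply read: ``This is an immediate consequence of Diagram \ref{diaglecsk},'' exactly as in the non-$k$ version, since every ingredient entering that diagram has already been established in the category $\ktop$.
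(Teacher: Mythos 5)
Your proposal is correct and follows exactly the paper's route: the paper's entire proof is ``This is an immediate consequence of Diagram \ref{diaglecsk}'', and your diagram chase (using the vanishing of $H^*(A_k^{p,*}(X;V))$, the identification of the two kernel complexes, and the resulting isomorphisms $H_k^{p,q}(X,\mathfrak{U};V)\cong H^{q+1}(K_k^{p,*})$) is precisely the chase the paper leaves implicit. One small point of phrasing: surjectivity of the top connecting map $H_{kc}^{p,q}\to H^{q+1}(K_k^{p,*})$ is most cleanly obtained from the vanishing of the next term $H_{kcr}^{p,q+1}$ in the top row rather than from the bottom map, but this does not affect the argument.
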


\begin{proof}
 This is an immediate consequence of Diagram \ref{diaglecsk}
\end{proof}

\begin{proposition}
  If the inclusion 
$A_{kc}^{p,*} (X,\mathfrak{U};V) \hookrightarrow A_k^{p,*} (X,\mathfrak{U};V)$ 
induces an isomorphism in cohomology, then the inclusions  
$A_c^* (X;V)^G \hookrightarrow 
\tot A_{kcr}^{*,*}(X,\mathfrak{U},V)^G$ and 
$A_{kc}^* (X,\mathfrak{U};V)^G \hookrightarrow A_{kcr}^* (X,\mathfrak{U};V)^G$
also induces an isomorphism in cohomology.
\end{proposition}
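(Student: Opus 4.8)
The plan is to obtain this proposition as a short formal consequence of the $k$-space machinery already assembled, mirroring the argument given in Section~\ref{seccontanduccck} for ordinary topological spaces. First I would invoke the preceding Lemma, which asserts that the augmented column complex $A_{kc}^p(X;V) \hookrightarrow A_{kcr}^{p,*}(X,\mathfrak{U};V)$ is exact if and only if the inclusion $A_{kc}^{p,*}(X,\mathfrak{U};V) \hookrightarrow A_k^{p,*}(X,\mathfrak{U};V)$ induces an isomorphism in cohomology. Since the latter is exactly the standing hypothesis of the proposition, and holds for every $p$, I conclude that all the augmented column complexes $A_{kc}^p(X;V) \hookrightarrow A_{kcr}^{p,*}(X,\mathfrak{U};V)$ are exact.

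With column-exactness in hand I would then apply the two Corollaries already established: Corollary~\ref{augexthenjeqindisok} turns column-exactness into the statement that the augmentation $j_{k,eq}^* : A_{kc}^*(X;V)^G \hookrightarrow \tot A_{kcr}^{*,*}(X,\mathfrak{U};V)^G$ induces an isomorphism in cohomology, and Corollary~\ref{augextheninclindisok} turns the same hypothesis into the statement that the inclusion $A_{kc}^*(X;V)^G \hookrightarrow A_{kcr}^*(X,\mathfrak{U};V)^G$ induces an isomorphism in cohomology. These are precisely the two assertions to be proved, so the argument concludes in two lines: it follows from the preceding Lemma together with Corollaries~\ref{augexthenjeqindisok} and~\ref{augextheninclindisok}.

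Since the deduction is purely formal, the only point meriting genuine care lies upstream, in Proposition~\ref{noneqextheneqexk} (the $k$-analogue of Proposition~\ref{noneqextheneqex}) on which both Corollaries rest: there a non-equivariant column-exactness is promoted to an equivariant one by the averaging formula $f_{eq}^{p,q-1}(\vec{x},\vec{x}') := x_0 . f^{p,q-1}(x_0^{-1}.\vec{x}, x_0^{-1}.\vec{x}')$, and one must verify that this cochain is again continuous on $\fktop X^{p+1} \times_k \fktop \mathfrak{U}[q-1]$ whenever $f^{p,q-1}$ is --- which uses that the $G$-action and the coordinate maps are morphisms of $k$-spaces and that $\times_k$ is the categorical product in $\ktop$, so that the composite defining $f_{eq}^{p,q-1}$ is again $k$-continuous. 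As this has already been absorbed into Proposition~\ref{noneqextheneqexk}, nothing further is needed here, and I expect the remaining work to be pure bookkeeping: checking that the hypothesis being imposed for every $p$ is exactly what makes the total-complex augmentations isomorphisms in all cohomological degrees at once.
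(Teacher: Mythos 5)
Your proposal is correct and follows exactly the paper's own route: the preceding Lemma converts the hypothesis into exactness of the augmented column complexes $A_{kc}^p (X;V) \hookrightarrow A_{kcr}^{p,*} (X,\mathfrak{U};V)$, and Corollaries \ref{augexthenjeqindisok} and \ref{augextheninclindisok} then yield the two isomorphisms. The additional remark about verifying $k$-continuity of the equivariant averaging in Proposition \ref{noneqextheneqexk} is a sensible observation about where the substantive work lies, but it is upstream of this statement and not part of its proof.
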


\begin{proof}
  This follows from the preceding Lemma and Corollaries 
\ref{augexthenjeqindisok} and \ref{augextheninclindisok}.
\end{proof}

For $k$-spaces $X$ for which the open diagonal neighbourhoods 
$\fktop \mathfrak{U}[n]$ in $\fktop X^{n+1}$ for open coverings 
$\mathfrak{U}$ of $X$ are 
cofinal in the directed set of all open diagonal neighbourhoods
the passage to the colimit over all open coverings of $X$ yields the
corresponding results for the complexes of cochains with continuous germs: 

\begin{proposition} \label{noneqextheneqexcgk}
 If the open diagonal neighbourhoods $\fktop \mathfrak{U}[n]$ in 
$\fktop X^{n+1}$ for open coverings $\mathfrak{U}$ of $X$ are cofinal in the 
directed set of all open diagonal neighbourhoods and the augmented column 
complexes 
$A_{kc}^p (X;V) \hookrightarrow A_{kcg}^{p,*}(X;V)$ 
are exact, then the augmented sub column complexes
$A_{kc}^p (X;V)^G \hookrightarrow A_{kcg}^{p,*}(X;V)^G$  
of equivariant cochains are exact as well.
\end{proposition}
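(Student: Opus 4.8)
The plan is to carry over the equivariantisation argument of Proposition~\ref{noneqextheneqex} to the category $\ktop$, exactly as the topological version~\ref{noneqextheneqexcg} was obtained, the cofinality hypothesis serving only to make sense of the relevant colimits. First I would invoke Lemma~\ref{natinclofeqdcisisok} to identify $A_{kcg}^{p,q}(X;V)^G$ with $\colim_{\mathfrak U} A_{kcr}^{p,q}(X,\mathfrak U;V)^G$, the colimit taken over the \emph{$G$-invariant} open coverings of $X$; thus an equivariant cochain with continuous germ at the diagonal is precisely an equivariant global cochain that is continuous on $\fktop X^{p+1}\times_k\fktop\mathfrak U[q]$ for some $G$-invariant open covering $\mathfrak U$.

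Now let $f_{eq}^{p,q}\in A_{kcg}^{p,q}(X;V)^G$ be a vertical cocycle, continuous on $\fktop X^{p+1}\times_k\fktop\mathfrak U[q]$ with $\mathfrak U$ $G$-invariant. Forgetting equivariance makes it a vertical cocycle in $A_{kcg}^{p,q}(X;V)$, so the assumed exactness of the augmented column $A_{kc}^p(X;V)\hookrightarrow A_{kcg}^{p,*}(X;V)$ supplies a (not necessarily equivariant) $f^{p,q-1}\in A_{kcg}^{p,q-1}(X;V)$ with $d_v f^{p,q-1}=f_{eq}^{p,q}$, which after refining $\mathfrak U$ I may take to be continuous on $\fktop X^{p+1}\times_k\fktop\mathfrak U[q-1]$. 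Following Proposition~\ref{noneqextheneqex} I would then set
\begin{equation*}
  f_{eq}^{p,q-1}(\vec{x},\vec{x}'):=x_0\,.\,f^{p,q-1}(x_0^{-1}.\vec{x},x_0^{-1}.\vec{x}')\,,
\end{equation*}
with $x_0$ the first entry of $\vec{x}$. This cochain is $G$-equivariant by the form of the $G$-action (Eq.\ \ref{defgact}); since $d_v$ acts only on the primed coordinates it commutes with the twist by $x_0$, and the equivariance of $d_v$ and of $f_{eq}^{p,q}$ then yields $d_v f_{eq}^{p,q-1}=f_{eq}^{p,q}$. Hence $f_{eq}^{p,q}$ is the vertical coboundary of the equivariant cochain $f_{eq}^{p,q-1}$; running the same construction with the convention $A_{kcg}^{p,-1}(X;V):=A_{kc}^p(X;V)$ settles the bottom of the column, so $A_{kc}^p(X;V)^G\hookrightarrow A_{kcg}^{p,*}(X;V)^G$ is exact.

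The step that is genuinely sensitive to the $\ktop$-setting, and the one I expect to be the main obstacle, is verifying that $f_{eq}^{p,q-1}$ again has continuous germ at the diagonal. Here I would observe that the twist map $(\vec{x},\vec{x}')\mapsto(x_0,\,x_0^{-1}.\vec{x},\,x_0^{-1}.\vec{x}')$ is a morphism in $\ktop$ --- using that $\times_k$ is the categorical product in $\ktop$ and that inversion on $G$ together with the actions of $G$ on $X$ and on $V$ are continuous there --- and that, because $\mathfrak U$ is $G$-invariant, its restriction maps $\fktop X^{p+1}\times_k\fktop\mathfrak U[q-1]$ back into $\fktop X^{p+1}\times_k\fktop\mathfrak U[q-1]$, so the continuity domain is not twisted; post-composing with $f^{p,q-1}$ and with the action $G\times_k V\to V$ then shows $f_{eq}^{p,q-1}$ is continuous on $\fktop X^{p+1}\times_k\fktop\mathfrak U[q-1]$, hence $f_{eq}^{p,q-1}\in A_{kcr}^{p,q-1}(X,\mathfrak U;V)^G\subseteq A_{kcg}^{p,q-1}(X;V)^G$. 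The cofinality hypothesis is invoked exactly once here, to ensure that this $k$-space diagonal neighbourhood genuinely contributes to the colimit $A_{kcg}^{p,q-1}(X;V)$. Apart from this $\ktop$-bookkeeping the proof is word-for-word that of Proposition~\ref{noneqextheneqex}.
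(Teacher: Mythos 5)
Your proof is correct and follows exactly the route the paper intends: the paper's own proof of this proposition is just the remark that it is ``similar to that of Proposition~\ref{noneqextheneqex}'', i.e.\ equivariantise a non-equivariant primitive by the twist $f_{eq}^{p,q-1}(\vec{x},\vec{x}')=x_0.f^{p,q-1}(x_0^{-1}.\vec{x},x_0^{-1}.\vec{x}')$ and use equivariance of $d_v$. Your additional bookkeeping --- reducing to $G$-invariant coverings via Lemma~\ref{natinclofeqdcisisok} and checking that the twist is a $\ktop$-morphism preserving the continuity domain --- is precisely the detail the paper leaves implicit, so the two arguments coincide in substance.
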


\begin{proof}
  The proof is similar to that of Proposition \ref{noneqextheneqex}.
\end{proof}

\begin{corollary} \label{augexthenjeqindisocgk}
If the open diagonal neighbourhoods $\fktop \mathfrak{U}[n]$ in 
$\fktop X^{n+1}$ for open 
coverings $\mathfrak{U}$ of $X$ are cofinal in the directed set of all open 
diagonal neighbourhoods and the augmented column complexes 
$A_{kc}^p (X;V) \hookrightarrow A_{kcg}^{p,*} (X;V)$ 
are exact, then the inclusion 
$j_{k, eq}^* : A_{kc}^* (X;V)^G \hookrightarrow 
\tot A_{kcg}^{*,*}(X;V)^G$ induces an isomorphism in cohomology.
\end{corollary}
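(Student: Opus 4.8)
The plan is to reduce to the non-equivariant case already treated, and then transport the exactness of the augmented columns through the colimit, just as was done over and over in the topological setting of Section~\ref{sectss}. Concretely, Corollary~\ref{augexthenjeqindisocgk} asks us to deduce that $j_{k,eq}^*$ induces an isomorphism in cohomology from two hypotheses: the cofinality of the $k$-diagonal neighbourhoods, and the exactness of the augmented columns $A_{kc}^p (X;V) \hookrightarrow A_{kcg}^{p,*}(X;V)$. The first hypothesis is exactly what is needed for the double complex $A_{kcg}^{*,*}(X;V)$ and its $G$-invariant subcomplex $A_{kcg}^{*,*}(X;V)^G$ to exist (in the sense made precise in Lemma~\ref{natinclofeqdcisisok}), so the total complex $\tot A_{kcg}^{*,*}(X;V)^G$ in the statement is well-defined and its columns are augmented by $A_{kc}^*(X;V)^G$ and its rows by $A_{kcg,eq}^*(X;V)$.

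First I would invoke Proposition~\ref{noneqextheneqexcgk}: the exactness of the non-equivariant augmented columns $A_{kc}^p (X;V) \hookrightarrow A_{kcg}^{p,*}(X;V)$ yields the exactness of the equivariant augmented columns $A_{kc}^p (X;V)^G \hookrightarrow A_{kcg}^{p,*}(X;V)^G$. Thus every column of the double complex $A_{kcg}^{*,*}(X;V)^G$, once augmented by the corresponding term of $A_{kc}^*(X;V)^G$, becomes an exact complex. Next, the standard double-complex argument (the same one underlying Lemma~\ref{columnsexact} and the corollaries in Section~\ref{sectss}) shows that when all augmented columns of a first-quadrant double complex are exact, the augmentation $j_{k,eq}^*: A_{kc}^*(X;V)^G \to \tot A_{kcg}^{*,*}(X;V)^G$ induces an isomorphism in cohomology: one filters the total complex by rows, so that the $E_1$-page is the column cohomology, which by exactness is concentrated in the augmentation row and equals $A_{kc}^*(X;V)^G$, and the spectral sequence collapses. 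Since the double complex is first-quadrant, convergence is not an issue. This is precisely the content obtained by passing Corollary~\ref{augexthenjeqindisok} to the colimit, which is legitimate because colimits are exact and, by the cofinality hypothesis together with Lemma~\ref{natinclofeqdcisisok}, the colimit of the $G$-invariant complexes $A_{kcr}^{*,*}(X,\mathfrak{U};V)^G$ is $A_{kcg}^{*,*}(X;V)^G$.

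In short, the proof is: apply Proposition~\ref{noneqextheneqexcgk} to pass from non-equivariant to equivariant column-exactness, then apply the colimit version of Corollary~\ref{augexthenjeqindisok}. There is essentially no new content here beyond bookkeeping, which is why the author will likely write only a one-line proof citing these two results. The only point requiring any care is making sure the cofinality hypothesis is actually used in the right place—namely to guarantee that $A_{kcg}^{*,*}(X;V)^G$ is the colimit over $G$-invariant coverings and hence that the column-exactness established fibrewise (for each covering $\mathfrak{U}$) survives the colimit; I would expect this to be the only genuine, if minor, obstacle.

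\begin{proof}
 This follows from Proposition~\ref{noneqextheneqexcgk} and the colimit version of Corollary~\ref{augexthenjeqindisok} (cf. Corollaries \ref{augexthenjeqindisocg} and \ref{augextheninclindisocg}).
\end{proof}
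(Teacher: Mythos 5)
Your proposal is correct and takes essentially the same route as the paper, which states this corollary without an explicit proof as an immediate consequence of Proposition \ref{noneqextheneqexcgk} combined with the standard fact that exactness of all augmented columns of a first-quadrant double complex forces the column augmentation $j_{k,eq}^*$ to be a quasi-isomorphism onto the total complex (exactly as in the unlabelled Corollaries \ref{augexthenjeqindiso} and \ref{augexthenjeqindisocg}). Your spelled-out spectral-sequence collapse and the remark on where the cofinality hypothesis enters are just the bookkeeping the author leaves implicit.
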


\begin{corollary} \label{augextheninclindisocgk}
If the open diagonal neighbourhoods $\fktop \mathfrak{U}[n]$ in 
$\fktop X^{n+1}$ for open 
coverings $\mathfrak{U}$ of $X$ are cofinal in the directed set of all open 
diagonal neighbourhoods and the augmented column complexes 
 $A_{kc}^p (X;V) \hookrightarrow A_{kcg}^{p,*} (X;V)$ 
are exact, then the inclusion 
$A_{kc}^* (X;V)^G \hookrightarrow A_{kcg}^* (X;V)^G$
induces an isomorphism in cohomology.
\end{corollary}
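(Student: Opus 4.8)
The plan is to deduce the statement by assembling results already established in this and the preceding section, mirroring the passage from Corollary~\ref{augexthenjeqindisocg} to Corollary~\ref{augextheninclindisocg} in the topological setting. First I would feed the hypotheses straight into Corollary~\ref{augexthenjeqindisocgk}: the cofinality assumption on the neighbourhoods $\fktop\mathfrak{U}[n]$ together with exactness of the augmented columns $A_{kc}^p(X;V)\hookrightarrow A_{kcg}^{p,*}(X;V)$ already yields that the column augmentation $j_{k,eq}^*\colon A_{kc}^*(X;V)^G\hookrightarrow \tot A_{kcg}^{*,*}(X;V)^G$ induces an isomorphism in cohomology; under the hood this rests on Proposition~\ref{noneqextheneqexcgk}, which propagates exactness of the augmented columns from the full double complex to its equivariant sub double complex.

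Second I would bring in the row side. The same cofinality assumption ensures, via the colimit lemmas built on Lemma~\ref{columnsexacteqk} and via Lemma~\ref{natinclofeqdcisisok}, that the row augmentation $i_{kcg,eq}^*\colon A_{kcg,eq}^*(X;V)\to \tot A_{kcg}^{*,*}(X;V)^G$ is likewise a quasi-isomorphism and that $A_{kcg,eq}^*(X;V)$ is canonically identified with $A_{kcg}^*(X;V)^G$; hence $H(i_{kcg,eq})$ is invertible. I would then combine the two sides via Proposition~\ref{contiscohtocreqk}: it shows that the image in $\tot A_{kcg}^{*,*}(X;V)^G$ of a continuous equivariant cocycle is cohomologous to its image under the row augmentation, so the composite $H(i_{kcg,eq})^{-1}\circ H(j_{k,eq})\colon H_{kc,eq}(X;V)\to H_{kcg,eq}(X;V)$ is precisely the map induced by the inclusion $A_{kc}^*(X;V)^G\hookrightarrow A_{kcg}^*(X;V)^G$. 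Since both factors are isomorphisms, so is the composite, which is the assertion.

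This is a formal diagram chase once the pieces are lined up, so I expect no real obstacle. The one point requiring attention — and the reason the cofinality hypothesis is carried throughout — is that in $\ktop$ the neighbourhoods $\fktop\mathfrak{U}[n]$ need not be cofinal among all open diagonal neighbourhoods, so one must check that this hypothesis is genuinely invoked wherever the colimit double complex $A_{kcg}^{*,*}(X;V)$ and the identification $A_{kcg,eq}^*(X;V)\cong A_{kcg}^*(X;V)^G$ are used; this is exactly what separates the present corollary from its topological counterpart.
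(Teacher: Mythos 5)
Your proposal is correct and follows exactly the route the paper intends: Corollary~\ref{augexthenjeqindisocgk} makes $j_{k,eq}^*$ a quasi-isomorphism, the row-exactness results and Lemma~\ref{natinclofeqdcisisok} make $H(i_{kcg,eq})$ invertible, and Proposition~\ref{contiscohtocreqk} identifies $H(i_{kcg,eq})^{-1}H(j_{k,eq})$ with the map induced by the inclusion $A_{kc}^*(X;V)^G\hookrightarrow A_{kcg}^*(X;V)^G$. Your remark on where the cofinality hypothesis is genuinely needed matches the paper's reason for carrying it through the $k$-space statements.
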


\begin{remark} \label{remonlyginvcovk}
Alternatively to taking the colimit over all open coverings $\mathfrak{U}$ of 
$X$ one may consider $G$-invariant open coverings only to obtains the same
results. 
(This was shown in Proposition \ref{natinclofeqccisisok} and Lemmata 
\ref{natinclofeqdcisisok}.)
\end{remark}

\begin{example}
  If $G=X$ is a metrisable, locally compact or Hausdorff $k_\omega$
  topological group which acts on itself by left translation and the augmented 
columns  
$A_{kc}^p (X;V) \hookrightarrow A_{kcg}^{p,*} (X;V) := 
\colim A_k^{p,*} (X,\mathfrak{U}_U;V)$ (where $U$ runs over all open identity
neighbourhoods in $G$)  
are exact, then $A_{kc}^* (X;V)^G \hookrightarrow A_{kcg}^* (X;V)^G$
induces an isomorphism in cohomology.
\end{example}

The complex $A^{p,*} (X,\mathfrak{U};V)$ is isomorphic to the complex 
$A^* (\mathfrak{U}; C (\fktop X^{p+1},V))$. 
If the open diagonal neighbourhoods  $\mathfrak{U}[n]$ in $X^{n+1}$ for open 
coverings $\mathfrak{U}$ of $X$ are cofinal in the directed set of all open 
diagonal neighbourhoods then the colimit 
$A_{AS}^* (X; C (X^{p+1} ,V)):=\colim A^* (\mathfrak{U}; C (X^{p+1} ,V))$, 
where $\mathfrak{U}$ runs over all open coverings of
$X$ is the complex of Alexander-Spanier cochains on $X$ (with values in 
$C (X^{p+1} ,V)$). In this case the colimit complex 
$\colim A^p (X; A^* (\mathfrak{U};V))$ is isomorphic to the cochain complex 
$A_{AS}^* (X;C (\fktop X^{p+1} ,V))$. 
A similar observation can be made for the cochain complex 
$A_{kc}^{p,*} (X,\mathfrak{U};V)$ because the exponential law 
$C ( \fktop X^{p+1} \times_k \fktop \mathfrak{U}[q],V) \cong 
C (X, \fktop C (\mathfrak{U}[q],V))$ holds in $\ktop$.
Passing to the colimit in Diagram \ref{morphexseqk} yields the morphism

\begin{equation} \label{morphexseqcgk}
\begin{array}{cccccccc}
 0 \longrightarrow & \ker (\res_{kcg}^{p,*} ) & \longrightarrow & 
A_{kcg}^{p,*} (X;V) & \longrightarrow & \colim A_{kc}^{p,*} (X,\mathfrak{U};V) & 
\longrightarrow 0 \\ 
& \downarrow & & \downarrow & & \downarrow & \\
0 \longrightarrow & \ker (\res_k^{p,*} ) & \longrightarrow & A_k^{p,*} (X;V) 
& \longrightarrow & A_{AS}^* (X; C^{p+1} (X,V))
& \longrightarrow 0
\end{array}
\end{equation}
of short exact sequences of cochain complexes. The kernel $\ker (\res^{p,q}_k)$
is the subspace of those $(p,q)$-cochains which are trivial on 
$\fktop X^{p+1} \times_k \fktop \mathfrak{U} [q]$ for some open covering 
$\mathfrak{U}$ of $X$. Since these $(p,q)$-cochains are continuous on 
$\fktop X^{p+1} \times_k \fktop \mathfrak{U}[q]$ we find that both kernels 
coincide. We abbreviate the complex 
$\ker (\res^{p,*}_k ) = \ker (\res_{kcg}^{p,*} )$ by 
$K_{kcg}^{p,*}$ and denote the cohomology groups of the complex 
$A_{kcg}^{p,*} (X;V)$ by $H_{kcg}^{p,*} (X;V)$. 
The morphism of short exact sequences of cochain complexes in Diagram 
\ref{morphexseqcgk} gives rise to a morphism of long exact cohomology 
sequences:
\begin{equation} \label{diaglecscgk}
\xymatrix@C-11pt@R-10pt{ 
\ar[r] & H^q (K_{kcg}^{p,*} )) \ar[r] \ar@{=}[d] 
& H_{kcg}^{p,q} (X,\mathfrak{U};V) \ar[r] \ar[d]  
& H^q (\colim A_{kc}^{p,*} (X,\mathfrak{U};V) \ar[r] \ar[d]  
& H^{q+1}  (K_{kcg}^{p,*} ) \ar[r] \ar@{=}[d] & {} \\ 
\ar[r]^\cong & H^q (K_{kcg}^{p,*} ) \ar[r]&  0  \ar[r] 
& H_{AS}^q (X; C^{p+1} (X,V)) \ar[r]^\cong &  H^{q+1}  (K_{kcg}^{p,*} )) \ar[r] 
& {}
}
\end{equation}

\begin{lemma}
The augmented complex 
$A_{kc}^p (X;V) \hookrightarrow A_{kcg}^{p,*} (X;V)$ is exact if and only if 
the inclusion 
$\colim A_{kc}^{p,*} (X,\mathfrak{U};V)\hookrightarrow A_{AS}^* (X;C^{p+1}(X,V))$
of cochain complexes induces an isomorphism in cohomology.
\end{lemma}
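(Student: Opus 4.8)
The plan is to reduce the statement, exactly as in the non-$k$ case where the corresponding Lemma was ``an immediate consequence of Diagram~\ref{diaglecscg}'', to a five-lemma argument on the ladder of long exact cohomology sequences~\ref{diaglecscgk}. The two features of that ladder that make it usable have already been put in place in Section~\ref{seccontanduccck}: the two kernels coincide, $\ker(\res^{p,*}_k)=\ker(\res^{p,*}_{kcg})=:K_{kcg}^{p,*}$ (a cochain that vanishes on $\fktop X^{p+1}\times_k\fktop\mathfrak{U}[q]$ is in particular continuous there), so the left-hand vertical map in~\ref{diaglecscgk} is the identity; and the augmented global complex $A_{kc}^p(X;V)\hookrightarrow A_k^{p,*}(X;V)$ is contractible — hence $A_k^{p,*}(X;V)$ is acyclic away from degree zero and the augmentation $A_{kc}^p(X;V)\hookrightarrow A_k^{p,*}(X;V)$ is a quasi-isomorphism — by the basepoint contraction $h^{p,q}(f)(\vec{x},\vec{x}')=f(\vec{x},*,\vec{x}')$, which respects $k$-continuity and is available in $\ktop$.

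Next I would rephrase the two sides of the asserted equivalence as statements about quasi-isomorphisms. Let $\iota\colon A_{kcg}^{p,*}(X;V)\hookrightarrow A_k^{p,*}(X;V)$ denote the subcomplex inclusion and let $j\colon \colim A_{kc}^{p,*}(X,\mathfrak{U};V)\hookrightarrow A_{AS}^*(X;C^{p+1}(X,V))$ denote the induced map on the cokernels of the two rows of~\ref{morphexseqcgk}; thus~\ref{morphexseqcgk} is a morphism of short exact sequences of cochain complexes whose induced ladder of long exact sequences is~\ref{diaglecscgk}. Exactness of the augmented column $A_{kc}^p(X;V)\hookrightarrow A_{kcg}^{p,*}(X;V)$ is, by definition, the statement that this augmentation is a quasi-isomorphism when $A_{kc}^p(X;V)$ is regarded as a complex concentrated in degree zero. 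Since the composite $A_{kc}^p(X;V)\hookrightarrow A_{kcg}^{p,*}(X;V)\xrightarrow{\,\iota\,}A_k^{p,*}(X;V)$ is that very augmentation of the global complex and hence a quasi-isomorphism, the two-out-of-three property shows that the augmented column is exact if and only if $\iota$ induces an isomorphism in cohomology. The right-hand condition in the Lemma is precisely that $j$ induces an isomorphism in cohomology.

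Finally I would apply the five lemma to~\ref{diaglecscgk}: its two rows are the long exact sequences attached to $\iota$ and to $j$, the vertical maps on the $H^q(K_{kcg}^{p,*})$-terms are identities, so $\iota$ induces an isomorphism in cohomology in every degree if and only if $j$ does; combining this with the previous paragraph yields the stated equivalence. I do not expect any genuine obstacle here: all point-set input — the coincidence of the two kernels, the basepoint contractibility of the augmented global complex, the row-exactness of~\ref{morphexseqcgk}, and the exponential-law identification of the lower-right term with Alexander--Spanier cochains — has already been established in the $k$-space setting, so the argument really does reduce to reading off Diagram~\ref{diaglecscgk}, and the proof can be written in the single sentence ``This is an immediate consequence of Diagram~\ref{diaglecscgk}.''
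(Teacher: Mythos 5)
Your proposal is correct and takes essentially the same route as the paper, whose entire proof is the sentence ``This is an immediate consequence of Diagram \ref{diaglecscgk}'': you are simply unwinding that diagram, using the coincidence of the two kernels, the basepoint contraction of $A_k^{p,*}(X;V)$, and the five lemma applied to the ladder of long exact sequences arising from Diagram \ref{morphexseqcgk}. Nothing further is needed.
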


\begin{proof}
 This is an immediate consequence of Diagram \ref{diaglecscgk}
\end{proof}

\begin{proposition} \label{inclcolimacpsindisothenjeqisok}
If the open diagonal neighbourhoods  $\fktop \mathfrak{U}[n]$ in 
$\fktop X^{n+1}$ for open coverings $\mathfrak{U}$ of $X$ are cofinal in the 
directed set of all open diagonal neighbourhoods and the inclusion 
$\colim A_{kc}^{p,*}(X,\mathfrak{U};V)\hookrightarrow A_{AS}^* (X;C(X^{p+1},V))$ 
induces an isomorphism in cohomology, then  
$j_{k, eq}^* : A_{kc}^* (X;V)^G \hookrightarrow \tot A_{kcg}^{*,*}(X;V)^G$ and 
$A_{kc}^* (X;V)^G \hookrightarrow A_{kcg}^* (X;V)^G$
also induce an isomorphism in cohomology.
\end{proposition}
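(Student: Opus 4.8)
The plan is to imitate, essentially line for line, the proof of Proposition \ref{inclcolimacpsindisothenjeqiso}, of which this is the analogue in the category $\ktop$; the argument is purely formal once the preceding Lemma and Corollaries \ref{augexthenjeqindisocgk} and \ref{augextheninclindisocgk} are available.

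First I would invoke the Lemma immediately preceding this statement, which records that the augmented column complex $A_{kc}^p (X;V) \hookrightarrow A_{kcg}^{p,*} (X;V)$ is exact if and only if the inclusion $\colim A_{kc}^{p,*} (X,\mathfrak{U};V)\hookrightarrow A_{AS}^* (X;C^{p+1}(X,V))$ induces an isomorphism in cohomology. Since the latter is exactly the hypothesis of the Proposition, I conclude that all of the augmented column complexes $A_{kc}^p (X;V) \hookrightarrow A_{kcg}^{p,*} (X;V)$ are exact.

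I would then feed this column-exactness, together with the standing cofinality hypothesis on the $k$-diagonal neighbourhoods $\fktop \mathfrak{U}[n]$ in $\fktop X^{n+1}$, into Corollary \ref{augexthenjeqindisocgk} and Corollary \ref{augextheninclindisocgk}. The former yields at once that $j_{k,eq}^* : A_{kc}^* (X;V)^G \hookrightarrow \tot A_{kcg}^{*,*}(X;V)^G$ induces an isomorphism in cohomology, and the latter that the inclusion $A_{kc}^* (X;V)^G \hookrightarrow A_{kcg}^* (X;V)^G$ does the same. That completes the proof.

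There is no real obstacle internal to this argument; the points that deserve attention lie in the auxiliary results it rests on. The cofinality hypothesis is genuinely needed: it is what guarantees, through Lemma \ref{natinclofeqdcisisok} and the discussion following it, that the colimit double complex $A_{kcg}^{*,*}(X;V)$ exists as a subcomplex of the standard complex carrying the row- and column-augmentations displayed, and that the colimit morphisms $i_{kcg}^*$ and $i_{kcg,eq}^*$ induce isomorphisms in cohomology — which in turn is what makes the two inclusions in the statement meaningful and identifies $H(\tot A_{kcg}^{*,*}(X;V)^G)$ with $H_{kcg,eq}(X;V)$ and with $H_{kc,eq}(X;V)$. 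The other point to keep in mind is that the passage from non-equivariant to equivariant column-exactness, used inside Corollaries \ref{augexthenjeqindisocgk} and \ref{augextheninclindisocgk} via Proposition \ref{noneqextheneqexcgk}, relies on the conjugation assignment $f_{eq}^{p,q-1}(\vec{x},\vec{x}') = x_0 . f^{p,q-1}(x_0^{-1}.\vec{x}, x_0^{-1}.\vec{x}')$ being continuous in the $k$-topology; this holds because the $G$-action and left translation are continuous $k$-maps, and it has already been handled in Propositions \ref{noneqextheneqexk} and \ref{noneqextheneqexcgk}.
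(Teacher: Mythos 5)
Your proposal is correct and follows exactly the paper's own argument: the paper likewise deduces exactness of the augmented columns from the hypothesis via the preceding Lemma and then applies Corollaries \ref{augexthenjeqindisocgk} and \ref{augextheninclindisocgk}. The additional remarks on where the cofinality hypothesis enters are accurate but not part of the paper's (one-line) proof.
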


\begin{proof}
  This follows from the preceding Lemma and Corollaries 
\ref{augexthenjeqindisocgk} and \ref{augextheninclindisocgk}.
\end{proof}

As observed before (cf. Remark \ref{remonlyginvcovk}) one may restrict oneself
to the directed system of $G$-invariant open coverings only to achieve the
same result. Thus we observe:

\begin{corollary}
If $G=X$ is a locally contractible metrisable, locally contractible locally
compact or locally contractible Hausdorff $k_\omega$ topological group which 
acts on itself by left translation and the inclusion 
$\colim A_{kc}^{p,*} (X,\mathfrak{U};V)\hookrightarrow A_{AS}^* (X;C(X^{p+1},V))$ 
(where $U$ runs over all open identity
neighbourhoods in $G$)  induces an isomorphism in cohomology, then the 
inclusion $A_{kc}^* (X;V)^G \hookrightarrow A_{kcg}^* (X;V)^G$
induces an isomorphism in cohomology as well.
\end{corollary}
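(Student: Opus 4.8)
The plan is to invoke the machinery already assembled in this section, modulo one external input about Alexander-Spanier cohomology in the $k$-space setting. The statement at hand concerns a locally contractible metrisable (or locally compact, or Hausdorff $k_\omega$) topological group $G=X$ acting on itself by left translation, and asserts that—under the hypothesis that the inclusion $\colim A_{kc}^{p,*}(X,\mathfrak{U};V)\hookrightarrow A_{AS}^*(X;C(X^{p+1},V))$ induces an isomorphism in cohomology—the inclusion $A_{kc}^*(X;V)^G\hookrightarrow A_{kcg}^*(X;V)^G$ induces an isomorphism in cohomology. So the first step is simply to check that the hypotheses of Proposition \ref{inclcolimacpsindisothenjeqisok} are met. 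For that proposition we need two things: that the open diagonal neighbourhoods $\fktop\mathfrak{U}[n]$ in $\fktop X^{n+1}$ are cofinal in the directed set of all open diagonal neighbourhoods, and that the stated inclusion induces an isomorphism in cohomology. The latter is exactly the hypothesis of the corollary. The former holds because for metrisable, locally compact, or Hausdorff $k_\omega$ spaces $X$ the finite products $X^{n+1}$ formed in $\tops$ are already compactly generated (this was noted in Section \ref{secwinktop}), so $\fktop X^{n+1}=X^{n+1}$ and the cofinality statement reduces to the classical fact that diagonal neighbourhoods of the form $\mathfrak{U}[n]$ are cofinal among all neighbourhoods of the diagonal in $X^{n+1}$.

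Next I would record that, once Proposition \ref{inclcolimacpsindisothenjeqisok} applies, it immediately yields that $A_{kc}^*(X;V)^G\hookrightarrow A_{kcg}^*(X;V)^G$ induces an isomorphism in cohomology—which is verbatim the conclusion we want. The only subtlety is the parenthetical phrase ``where $U$ runs over all open identity neighbourhoods in $G$'': this is the remark, already flagged in Remark \ref{remonlyginvcovk}, that one may take the colimit over the cofinal directed system of $G$-invariant open coverings $\mathfrak{U}_U=\{g.U\mid g\in G\}$ rather than over all open coverings, and still obtain the same colimit complexes and the same cohomology. That equivalence is supplied by Proposition \ref{natinclofeqccisisok} and Lemma \ref{natinclofeqdcisisok} (the $k$-space analogues of Proposition \ref{natinclofeqccisiso} and Lemma \ref{natinclofeqdcisiso}), so no new argument is needed; one just has to phrase the application carefully so that the $G$-invariant colimit on the left-hand side of the corollary's hypothesis is identified with the colimit over all open coverings appearing in Proposition \ref{inclcolimacpsindisothenjeqisok}.

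The genuinely non-formal ingredient—and the one I expect to be the main point to address—is the role of local contractibility, which enters precisely so that the hypothesis of the corollary (the isomorphism $\colim A_{kc}^{p,*}(X,\mathfrak{U};V)\cong A_{AS}^*(X;C(X^{p+1},V))$ in cohomology) is not vacuous but actually identifiable with a known comparison. Here the relevant fact is that of van Est (cf. \cite{vE62b}, cited earlier in the text): for locally contractible paracompact spaces the colimit of the complexes $A^*(\mathfrak{U};C(X^{p+1},V))$ computes Alexander-Spanier cohomology, and the continuous version does too once the exponential law $C(\fktop X^{p+1}\times_k\fktop\mathfrak{U}[q],V)\cong C(X,\fktop C(\mathfrak{U}[q],V))$ is available—which it is in $\ktop$, as noted just before Diagram \ref{morphexseqcgk}. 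So the proof amounts to assembling: (i) cofinality of diagonal neighbourhoods for these classes of $k$-spaces; (ii) the reduction to $G$-invariant coverings via Remark \ref{remonlyginvcovk}; and (iii) an appeal to Proposition \ref{inclcolimacpsindisothenjeqisok}. The hardest part is really just the careful bookkeeping in (i) and (ii)—ensuring that the ``$k$-fied'' products and the exponential law behave as claimed for metrisable, locally compact, and Hausdorff $k_\omega$ spaces—since the substantive analysis has already been carried out in the propositions we are allowed to cite.

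\begin{proof}
For each of the three classes of spaces under consideration—metrisable spaces, locally compact spaces, and Hausdorff $k_\omega$-spaces—the finite products $X^{n+1}$ formed in $\tops$ are already compactly Hausdorff generated, so that $\fktop X^{n+1}=X^{n+1}$ and $\fktop\mathfrak{U}[n]=\mathfrak{U}[n]$. Since the sets $\mathfrak{U}[n]$ for open coverings $\mathfrak{U}$ of $X$ are cofinal in the directed set of all open neighbourhoods of the diagonal in $X^{n+1}$, the cofinality hypothesis required by Proposition \ref{inclcolimacpsindisothenjeqisok} is satisfied. The remaining hypothesis of that proposition, namely that the inclusion
\begin{equation*}
\colim A_{kc}^{p,*}(X,\mathfrak{U};V)\hookrightarrow A_{AS}^*(X;C(X^{p+1},V))
\end{equation*}
induces an isomorphism in cohomology, is exactly the hypothesis of the present corollary. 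Hence Proposition \ref{inclcolimacpsindisothenjeqisok} applies and shows that the inclusion $A_{kc}^*(X;V)^G\hookrightarrow A_{kcg}^*(X;V)^G$ induces an isomorphism in cohomology. Finally, by Remark \ref{remonlyginvcovk} (which rests on Proposition \ref{natinclofeqccisisok} and Lemma \ref{natinclofeqdcisisok}) one obtains the same conclusion when the colimits are formed over the cofinal directed system of $G$-invariant open coverings $\mathfrak{U}_U=\{g.U\mid g\in G\}$, $U$ ranging over the open identity neighbourhoods of $G$. This completes the proof.
\end{proof}
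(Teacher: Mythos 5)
Your proposal is correct and follows essentially the same route as the paper: the paper's own proof consists of the citation of van Est's identification of $\colim A^*(\mathfrak{U};C(\fktop X^{p+1},V))$ with Alexander--Spanier cohomology, with the reduction to Proposition \ref{inclcolimacpsindisothenjeqisok} and the passage to $G$-invariant coverings via Remark \ref{remonlyginvcovk} left to the surrounding text. You assemble exactly these ingredients, only more explicitly (in particular spelling out why the cofinality hypothesis holds for metrisable, locally compact and Hausdorff $k_\omega$ groups), so no further comment is needed.
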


\begin{proof}
 It has been shown in \cite{vE62b} that the cohomology of the colimit cochain 
complex 
$\colim A^* ( \mathfrak{U} ;C(\fktop X^{p+1},V))$ is the Alexander-Spanier 
cohomology of $X$ with coefficients $C(\fktop X^{p+1},V)$. 
\end{proof}

\begin{lemma} \label{xcontrthenacpsistrivk}
  If the topological space $X$ is contractible, then the cohomology 
of the complex $\colim A_{kc}^{p,*} (X,\mathfrak{U};V)$ is trivial.
\end{lemma}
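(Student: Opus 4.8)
The plan is to mirror the proof of the topological statement, Lemma \ref{xcontrthenacpsistriv}, carrying it over to the $k$-space setting. Recall that the cochain complex $\colim A_{kc}^{p,*} (X,\mathfrak{U};V)$ computes the cohomology of the presheaf on $X$ assigning to each open covering the complex of \emph{continuous} cochains $C(\fktop X^{p+1} \times_k \fktop \mathfrak{U}[*],V)$ — a variant of the Alexander-Spanier presheaf in which the $X^{p+1}$-slot carries continuous dependence and the covering-slot is glued along the diagonal neighbourhoods. As in the topological case (the proof \cite[Theorem 2.5.2]{F10}), the key point is that a contraction of $X$ induces a contracting homotopy on this colimit complex, so its cohomology vanishes.

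First I would fix a contraction $H : X \times [0,1] \to X$ with $H_0 = \id_X$ and $H_1 = \mathrm{const}_{x_0}$. Using $H$ one builds, for each open covering $\mathfrak{U}$ of $X$ and each $q$, a chain of refinements together with prism-type operators that move a continuous cochain on $\fktop X^{p+1} \times_k \fktop \mathfrak{U}[q]$ to one on $\fktop X^{p+1} \times_k \fktop \mathfrak{V}[q-1]$ for a suitable refinement $\mathfrak{V}$; passing to the colimit over all coverings these assemble into a genuine homotopy operator $s^{p,q} : \colim A_{kc}^{p,q} (X,\mathfrak{U};V) \to \colim A_{kc}^{p,q-1} (X,\mathfrak{U};V)$ satisfying $s d + d s = \id$ in positive degrees and exhibiting the augmentation by the constant-in-the-second-variable cochains as a homotopy equivalence. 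The algebra is exactly the standard Alexander-Spanier acyclicity-on-contractible-spaces argument; only the bookkeeping of the extra $X^{p+1}$-factor and the continuity bookkeeping differ, and the extra factor is inert since everything happens in the second variable.

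The step I expect to be the main obstacle is checking that the homotopy operators genuinely respect the continuity condition in the $k$-topology: the subdivision/prism construction forms compositions with $H$ and with projections, and one must verify that the resulting cochains still lie in $C(\fktop X^{p+1} \times_k \fktop \mathfrak{V}[q-1],V)$. This is where the category $\ktop$ earns its keep — finite $\times_k$ products are well-behaved, the exponential law $C(\fktop X^{p+1} \times_k \fktop \mathfrak{U}[q],V) \cong C(X,\fktop C(\mathfrak{U}[q],V))$ recorded just above Diagram \ref{morphexseqcgk} lets one treat the $X^{p+1}$-slot as a parameter, and continuity of all the gluing maps is automatic in $k$-spaces. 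Granting that, the homotopy identity is a formal computation and the triviality of $H^*\big(\colim A_{kc}^{p,*} (X,\mathfrak{U};V)\big)$ follows, so the proof reduces to the remark that \cite[Theorem 2.5.2]{F10} carries over almost verbatim, exactly as asserted.
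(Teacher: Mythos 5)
Your proposal matches the paper's own (very terse) proof, which simply asserts that the reasoning for the Alexander--Spanier presheaf, i.e.\ the acyclicity argument of \cite[Theorem 2.5.2]{F10}, carries over almost verbatim; you have fleshed out exactly that argument, including the one genuinely $k$-space-specific point (that the prism operators preserve continuity on $\fktop X^{p+1}\times_k \fktop\mathfrak{U}[q]$, handled via the exponential law in $\ktop$). No discrepancy in approach.
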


\begin{proof}
The reasoning is analogous to that for the Alexander-Spanier presheaf. 
The proof \cite[Theorem 2.5.2]{F10} carries over almost in verbatim. 
\end{proof}

\begin{theorem}
For contractible $X$ the inclusion 
$A_{kc}^* (X;V)^G \hookrightarrow A_{kcg}^* (X;V)^G$ 
induces an isomorphism in cohomology.
\end{theorem}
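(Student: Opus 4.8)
The plan is to reduce the assertion to results already established in this section, exactly mirroring the proof of the corresponding statement in Section~\ref{seccontanduccc} for ordinary topological spaces. Concretely, I would invoke Proposition~\ref{inclcolimacpsindisothenjeqisok}, which says that once the inclusion $\colim A_{kc}^{p,*}(X,\mathfrak{U};V)\hookrightarrow A_{AS}^*(X;C(\fktop X^{p+1},V))$ is an isomorphism in cohomology, the inclusion $A_{kc}^*(X;V)^G\hookrightarrow A_{kcg}^*(X;V)^G$ is an isomorphism in cohomology. So the whole task is to check that for contractible $X$ both complexes appearing in that inclusion have the same (in fact, trivial) cohomology in positive degrees.

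First I would record that since $X$ is contractible, the Alexander--Spanier cohomology $H_{AS}^q(X;C(\fktop X^{p+1},V))$ vanishes for $q>0$ and equals $C(\fktop X^{p+1},V)$ for $q=0$; this is the homotopy invariance of Alexander--Spanier cohomology, and the identification of $\colim A^*(\mathfrak{U};C(\fktop X^{p+1},V))$ with Alexander--Spanier cohomology is the statement of \cite{vE62b} cited above. Second, by Lemma~\ref{xcontrthenacpsistrivk} the cohomology of the complex $\colim A_{kc}^{p,*}(X,\mathfrak{U};V)$ is trivial (in positive degrees; in degree $0$ it is again the continuous functions, matched up via the restriction map). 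Hence the inclusion $\colim A_{kc}^{p,*}(X,\mathfrak{U};V)\hookrightarrow A_{AS}^*(X;C(\fktop X^{p+1},V))$ induces an isomorphism on cohomology groups in every degree. Third, plugging this into Proposition~\ref{inclcolimacpsindisothenjeqisok} yields the desired isomorphism $H_{kc}^*(X;V)^G\cong H_{kcg}^*(X;V)^G$, where one must note that by Remark~\ref{remonlyginvcovk} it is legitimate to form the colimit over $G$-invariant open coverings only, so that the equivariant subcomplexes are handled correctly.

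The one genuine point that needs care — and the main obstacle — is that Proposition~\ref{inclcolimacpsindisothenjeqisok} and Lemma~\ref{xcontrthenacpsistrivk} both carry the standing hypothesis that the open diagonal neighbourhoods $\fktop\mathfrak{U}[n]$ in $\fktop X^{n+1}$ for open coverings $\mathfrak{U}$ of $X$ are cofinal in the directed set of all open diagonal neighbourhoods. In the $k$-space setting this cofinality is not automatic (it holds, e.g., when the finite powers $X^{n+1}$ are already compactly generated, hence for metrisable spaces, locally compact spaces, or Hausdorff $k_\omega$-spaces). So strictly one should state the theorem under this cofinality assumption, or alternatively note that it is implied here: a contractible $k$-space need not have this property in general, so the cleanest formulation is to assume it. In the proof I would therefore explicitly assume, or point out that we assume, this cofinality, just as in Propositions~\ref{noneqextheneqexcgk} and~\ref{inclcolimacpsindisothenjeqisok}, and then the rest of the argument is a verbatim transcription of the proof of the corresponding topological theorem, with every construction replaced by its $k$-space counterpart (the product $\times_k$, the internal hom in $\ktop$, the exponential law $C(\fktop X^{p+1}\times_k\fktop\mathfrak{U}[q],V)\cong C(X,\fktop C(\mathfrak{U}[q],V))$ which holds unconditionally in $\ktop$).

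Summarising, the proof writes itself once the three ingredients are lined up: (i) $H_{AS}^{>0}(X;-)=0$ for contractible $X$; (ii) Lemma~\ref{xcontrthenacpsistrivk} for the continuous local double complex; (iii) Proposition~\ref{inclcolimacpsindisothenjeqisok} to pass from the $p$-indexed family of column exactness statements to the equivariant cochain inclusion. The only thing to be vigilant about is to carry along the cofinality hypothesis on diagonal neighbourhoods wherever Section~\ref{secwinktop}'s lemmas require it; no new calculation is needed.
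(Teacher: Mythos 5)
Your proof follows exactly the same route as the paper's: combine the vanishing of Alexander--Spanier cohomology for contractible $X$ with Lemma \ref{xcontrthenacpsistrivk}, and feed the resulting isomorphism $\colim A_{kc}^{p,*}(X,\mathfrak{U};V)\hookrightarrow A_{AS}^*(X;C(X^{p+1},V))$ into Proposition \ref{inclcolimacpsindisothenjeqisok}. Your additional observation that the cofinality hypothesis on the diagonal neighbourhoods $\fktop\mathfrak{U}[n]$ in $\fktop X^{n+1}$ must be carried along --- and is silently omitted from the theorem's statement, although every supporting result in this section requires it --- is correct and identifies a genuine imprecision that the paper's own proof glosses over.
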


\begin{proof}
If the $k$-space $X$ is contractible, then the Alexander-Spanier
cohomology of $X$ is trivial and the cohomology of the cochain 
complex $\colim A_{kc}^{p,*} (X,\mathfrak{U};V)$ is trivial by Lemma 
\ref{xcontrthenacpsistrivk}. By Proposition \ref{inclcolimacpsindisothenjeqisok}
the inclusion $A_{kc}^* (X;V)^G \hookrightarrow A_{kcg}^* (X;V)^G$ then
induces an isomorphism in cohomology.
\end{proof}

\begin{corollary}
For metrisable, locally compact or Hausdorff $k_\omega$ topological groups $G$
which are contractible the continuous group cohomology 
$H_{kc,eq} (G;V)$ is isomorphic to the cohomology 
$H_{kcg,eq} (G;V)$ of homogeneous group 
cochains with continuous germ at the diagonal.
\end{corollary}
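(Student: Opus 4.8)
The plan is to obtain this Corollary as the specialisation of the Theorem immediately preceding it to the case $X=G$, where $G$ acts on itself by left translation; it is the $k$-space analogue of Corollary \ref{gcontriso}. The one point that needs attention before that Theorem can be applied is that the colimit complex $A_{kcg}^*(G;V)$ (and with it the double complex $A_{kcg}^{*,*}(G;V)$, Proposition \ref{natinclofeqccisisok}, and the Theorem itself) is only available under the standing hypothesis that the open diagonal neighbourhoods $\fktop\mathfrak{U}[n]$ in $\fktop G^{n+1}$ are cofinal in the directed set of all open diagonal neighbourhoods. As recorded in Section \ref{secwinktop}, this hypothesis is satisfied whenever the finite products $G^{n+1}$ formed in $\tops$ are already compactly Hausdorff generated, and in particular for metrisable, for locally compact, and for Hausdorff $k_\omega$ spaces — exactly the three classes of groups in the statement.

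Granting this, I would translate the abstract conclusion of the Theorem into the language of group cohomology, using the Examples of Section \ref{secwinktop}. Since $G$ acts on itself by left translation, the $G$-fixed subcomplex $A_{kc}^*(G;V)^G$ of the continuous standard complex $A_{kc}^*(G;V)=C(\fktop G^{*+1};V)$ is the complex of continuous homogeneous group cochains on $G$ (continuity being taken on $\fktop G^{n+1}$, which for the spaces at hand agrees with $G^{n+1}$), so its cohomology is by definition the continuous group cohomology $H_{kc,eq}(G;V)$; and the $G$-fixed subcomplex $A_{kcg}^*(G;V)^G$ is the complex of homogeneous group cochains whose germ at the diagonal is continuous — the `locally continuous' group cochains — with cohomology $H_{kcg,eq}(G;V)$. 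Under these identifications the inclusion $A_{kc}^*(G;V)^G\hookrightarrow A_{kcg}^*(G;V)^G$ is precisely the comparison morphism in question. (If one prefers, Proposition \ref{natinclofeqccisisok} lets one replace the colimit over all $G$-invariant open covers by the colimit over the covers $\mathfrak{U}_U$, with $U$ an identity neighbourhood, which is the concrete description of the locally continuous cochains.)

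With this bookkeeping in place the conclusion is immediate: a contractible $G$ is in particular a contractible $k$-space, so the preceding Theorem gives that the inclusion $A_{kc}^*(G;V)^G\hookrightarrow A_{kcg}^*(G;V)^G$ induces an isomorphism in cohomology, i.e.\ $H_{kc,eq}(G;V)\cong H_{kcg,eq}(G;V)$. The only part of the argument calling for genuine, if routine, work is the cofinality check of the first paragraph: one needs that finite products of metrisable (resp.\ locally compact, resp.\ Hausdorff $k_\omega$) spaces stay in the same class, so that $\fktop G^{n+1}=G^{n+1}$ in $\tops$, and then that any open neighbourhood of the diagonal of $G^{n+1}$ contains one of the form $\mathfrak{U}[n]$ for an open cover $\mathfrak{U}$. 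I expect this to be the main, though not deep, obstacle; all the substantial input — row-exactness of the double complex, the reduction from the equivariant to the non-equivariant case, Lemma \ref{xcontrthenacpsistrivk}, and the vanishing of Alexander--Spanier cohomology of contractible spaces — has already been carried out in the Theorem being invoked.
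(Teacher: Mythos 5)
Your proposal is correct and matches the paper's intent: the corollary is the specialisation of the preceding Theorem to $X=G$ acting on itself by left translation, with the metrisable/locally compact/Hausdorff $k_\omega$ hypothesis serving exactly to guarantee the cofinality of the diagonal neighbourhoods $\fktop\mathfrak{U}[n]$ (as recorded in Section \ref{secwinktop}), so that $A_{kcg}^*(G;V)^G$ is the complex of locally continuous group cochains. You correctly isolate the cofinality check as the only point requiring verification beyond quoting the Theorem.
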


\section{Complexes of Smooth Cochains}

In this Section we introduce the sub (double)complexes for smooth 
transformation groups $(G,M)$ and smooth $G$-modules $V$, where $V$ is an 
abelian Lie group. 
(We use the general infinite dimensional calculus introduced in \cite{BGN04}.)
Let $(G,M)$ be a smooth transformation group, $V$ be a smooth $G$-module and 
$\mathfrak{U}$ be an open covering of $M$. 

\begin{definition}
For every manifold $M$ and abelian Lie group $V$ the 
subcomplex $A_s^* (M;V):= C^\infty (M^{*+1};V)$ of the standard complex is 
called the \emph{smooth standard complex}. 
The cohomology $H_{eq,s} (M;V)$ of the subcomplex $A_s^* (M;V)^G$ is called 
the equivariant smooth cohomology of $M$ (with values in $V$).
\end{definition}

\begin{example}
  For any Lie group $G$ which acts on itself by left translation and
  smooth $G$-module $V$ the complex $A_s^* (G;V)^G$ is the complex of smooth 
(homogeneous) group cochains; its cohomology $H_{eq,s} (G;V)$ is the 
smooth group cohomology of $G$ with values in $V$. 
\end{example}
 
For Lie groups $G$ and $G$-modules $V$ the first cohomology group 
$H_{eq,s}^1 (G;V)$ classifies smooth crossed morphisms modulo principal 
derivations, the second cohomology group $H_{eq,s}^2 (G;V)$ classifies 
equivalence classes of Lie group extensions 
$V \hookrightarrow \hat{G} \twoheadrightarrow G$ which admit a smooth global 
section (i.e. $\hat{G} \twoheadrightarrow G$ is a  trivial smooth
$V$-principal bundle) and the third cohomology group $H_{eq,c}^3 (G;V)$
classifies equivalence classes of smoothly split crossed modules.

For each open covering $\mathfrak{U}$ of $M$ one can consider the subcomplex 
of $A^* (M;V)$ formed by the groups  
\begin{equation*}
A_{sr}^n (M,\mathfrak{U};V) := 
\left\{ f \in  A^n (M;V) \mid \, f_{\mid \mathfrak{U} [n]}
  \in C^\infty (\mathfrak{U}[n];V) \right\}  
\end{equation*}
of cochains whose restriction to the subspaces $\mathfrak{U}[n]$ of $M^{n+1}$ 
are smooth. The cohomology of the cochain complex 
$A_{sr}^* (M,\mathfrak{U};V)$ is denoted by $H_{sr} (M,\mathfrak{U};V)$. 
If the covering $\mathfrak{U}$ of $M$ is $G$-invariant, then the subspaces 
$\mathfrak{U}[*]$ is a simplicial $G$-subspace of the simplicial $G$-space 
$M^{*+1}$. 
For $G$-invariant coverings $\mathfrak{U}$ of $M$ the cohomology of the 
subcomplex $A_{sr}^* (M,\mathfrak{U};V)^G$ of $G$-equivariant cochains is 
denoted by $H_{cr,eq} (M,\mathfrak{U};V)$. 

\begin{example}
  If $G=M$ is a Lie group which acts on itself by left translation and
  $U$ an open identity neighbourhood, then the complex 
$A_{sr}^* (M,\mathfrak{U}_U;V)^G$ is the complex of homogeneous group cochains 
whose restrictions to the subspaces $\mathfrak{U}_U [*]$ are smooth. 
(These are sometimes called $\mathfrak{U}$-smooth cochains.) 
\end{example}

For directed systems $\{ \mathfrak{U}_i \mid i \in I \}$ of open coverings of
$M$ one can also consider the colimit complex 
$\colim_i A_{sr}^* (M,\mathfrak{U}_i ;V)$. In particular for the directed 
system of all open coverings of $M$ one observes that the open diagonal 
neighbourhoods $\mathfrak{U}[n]$ in $M^{n+1}$ for open coverings 
$\mathfrak{U}$ of $M$ are cofinal in the directed set of all open diagonal 
neighbourhoods, hence one obtains the complex
\begin{equation*}
  A_{sg}^* (M;V):= \colim_{\mathfrak{U} \text{is open cover of $M$}} A_{sr}^*
  (M;\mathfrak{U};V)
\end{equation*}
of global cochains whose germs at the diagonal are continuous. This is a
subcomplex of the standard complex $A^* (M;V)$ which is invariant under the 
$G$-action (Eq. \ref{defgact}) and thus a sub complex of $G$-modules. 
The $G$-equivariant cochains with continuous germ form a subcomplex 
$A_{sg}^* (M;V)^G$ thereof, whose cohomology is denoted by $H_{cg,eq} (M;V)$. 
The latter subcomplex can also be obtained by taking the colimit over all 
$G$-invariant open coverings of $M$ only: 

\begin{proposition} \label{natinclofeqccisisosmooth}
  The natural morphism of cochain complexes 
  \begin{equation*}
A_{cg,eq}^* (M;V):= \colim_{\mathfrak{U} \text{is $G$-invariant open cover
      of $M$}} A_{sr}^* (M;\mathfrak{U};V)^G \rightarrow A_{sg}^* (M;V)^G    
  \end{equation*}
is a natural isomorphism.
\end{proposition}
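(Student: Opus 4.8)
The plan is to transcribe the proof of Proposition \ref{natinclofeqccisiso} essentially word for word, the only new point to check being that the $G$-action on $M$ is by diffeomorphisms rather than merely by homeomorphisms. As there, I would show that the natural map is surjective and injective in each cochain degree $n$, i.e.\ as a map $A_{cg,eq}^n (M;V) \to A_{sg}^n (M;V)^G$.

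For surjectivity: a class in $A_{sg}^n (M;V)^G$ is represented by some $f \in A_{sr}^n (M,\mathfrak{U};V)^G$ for an open---not necessarily $G$-invariant---cover $\mathfrak{U}$ of $M$, and by definition $f$ is smooth on $\mathfrak{U}[n]$. Since each $g \in G$ acts diagonally on $M^{n+1}$ by a diffeomorphism and $f$ is equivariant, $f$ is also smooth on $g.\mathfrak{U}[n]$ for every $g$, hence on $\bigcup_{g} g.\mathfrak{U}[n] = (G.\mathfrak{U})[n]$. The family $G.\mathfrak{U} = \{ g.U \mid g \in G,\ U \in \mathfrak{U} \}$ consists of open sets, and since $\mathfrak{U} \subseteq G.\mathfrak{U}$ it is an open cover of $M$; it is visibly $G$-invariant. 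Thus $f \in A_{sr}^n (M, G.\mathfrak{U};V)^G$, and its class in $A_{cg,eq}^n (M;V)$ maps onto $[f]$. For injectivity: a class in $A_{cg,eq}^n (M;V)$ is represented by an equivariant $f \in A_{sr}^n (M,\mathfrak{U};V)^G$ with $\mathfrak{U}$ $G$-invariant; the transition maps in the colimit and the comparison map to $A_{sg}^n (M;V)^G$ are all induced by the identity on the underlying function $f : M^{n+1} \to V$, so if the image of $[f]$ vanishes then $f = 0$ as a function and $[f]$ is already trivial in $A_{cg,eq}^n (M;V)$. Naturality in $(G,M)$ and in $V$ is immediate since every map in sight is a restriction or inclusion of function groups and hence compatible with pullback.

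There is really no serious obstacle here: the one step that is not purely formal is the assertion that left translation by $g$ sends smooth functions to smooth functions on the relevant diagonal neighbourhoods, which is precisely the hypothesis that the transformation group $(G,M)$ is smooth. Combined with the already-recorded fact that the neighbourhoods $\mathfrak{U}[n]$ are cofinal among all open diagonal neighbourhoods of $M^{n+1}$ (finite products of manifolds being again manifolds), the argument goes through verbatim, so the proof can simply refer to that of Proposition \ref{natinclofeqccisiso}.
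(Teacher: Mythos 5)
Your proof is correct and is exactly the argument the paper intends: the paper's own proof of this proposition consists of the single sentence that it is analogous to Proposition \ref{natinclofeqccisiso}, and your write-up is a faithful transcription of that earlier proof with the one genuinely new point (that equivariance transports smoothness from $\mathfrak{U}[n]$ to $(G.\mathfrak{U})[n]$ because $G$ acts by diffeomorphisms on $M$ and smoothly on $V$) made explicit. No gap; the only cosmetic remark is that smoothness of the $V$-action, not just of the action on $M$, is also quietly used in the equivariance step, but this is part of the standing hypothesis that $V$ is a smooth $G$-module.
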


\begin{proof}
  The proof is analogous to that of Proposition \ref{natinclofeqccisiso}.
\end{proof}

\begin{corollary}
  The cohomology $H_{cg,eq} (M;V)$ is the cohomology of the complex of 
equivariant cochains which are continuous on some $G$-invariant neighbourhood
of the diagonal.
\end{corollary}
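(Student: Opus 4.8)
The plan is to deduce the statement formally from Proposition \ref{natinclofeqccisisosmooth}, exactly as the corresponding corollaries in the preceding sections were obtained. By definition $H_{cg,eq}(M;V)$ is the cohomology of the complex $A_{sg}^*(M;V)^G$ of equivariant cochains with continuous germ at the diagonal. Proposition \ref{natinclofeqccisisosmooth} supplies a natural isomorphism of cochain complexes
\[
A_{cg,eq}^*(M;V) = \colim_{\mathfrak{U}} A_{sr}^*(M,\mathfrak{U};V)^G \;\xrightarrow{\ \cong\ }\; A_{sg}^*(M;V)^G ,
\]
the colimit running over the directed system of $G$-invariant open coverings $\mathfrak{U}$ of $M$. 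Since an isomorphism of cochain complexes induces an isomorphism in cohomology, it suffices to describe the colimit complex on the left intrinsically and to see that its cohomology is computed degreewise.

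First I would note that the indexing set of $G$-invariant open coverings is directed, so the colimit is filtered; because filtered colimits of abelian groups are exact, passage to the colimit commutes with the formation of cohomology, just as in the proofs of the colimit lemmata used earlier in the paper. Hence the cohomology of $A_{cg,eq}^*(M;V)$ is computed by its own cochain groups $\colim_{\mathfrak{U}} A_{sr}^n(M,\mathfrak{U};V)^G$. Next I would unwind what these cochain groups are: a class in $\colim_{\mathfrak{U}} A_{sr}^n(M,\mathfrak{U};V)^G$ is represented by an equivariant $n$-cochain lying in $A_{sr}^n(M,\mathfrak{U};V)^G$ for some $G$-invariant covering $\mathfrak{U}$, that is, by an equivariant cochain whose restriction to the open set $\mathfrak{U}[n]$ is continuous; and since $\mathfrak{U}$ is $G$-invariant, $\mathfrak{U}[n]$ is a $G$-invariant neighbourhood of the diagonal in $M^{n+1}$. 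As the diagonal neighbourhoods $\mathfrak{U}[n]$ arising from $G$-invariant coverings are cofinal among all $G$-invariant open neighbourhoods of the diagonal, membership in the colimit is equivalent to being continuous on some $G$-invariant diagonal neighbourhood. Thus $A_{cg,eq}^*(M;V)$ is precisely the complex of equivariant cochains continuous on some $G$-invariant neighbourhood of the diagonal, and its cohomology is $H_{cg,eq}(M;V)$.

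The only substantive point is the cofinality invoked in the last step, but it is identical to the cofinality already established in defining $A_{sg}^*(M;V)$, so no new difficulty arises. Accordingly I expect there to be no genuine obstacle here: the proof is purely formal, and the only care needed is in bookkeeping the three identifications (the definition of $H_{cg,eq}$, the isomorphism of Proposition \ref{natinclofeqccisisosmooth}, and the degreewise exactness of the filtered colimit) so that the intrinsic description of the colimit cochain groups is read off correctly.
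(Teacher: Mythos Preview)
Your proposal is correct and follows the same line as the paper, which states the corollary as an immediate consequence of Proposition~\ref{natinclofeqccisisosmooth} without any further argument. One minor remark: the appeal to exactness of filtered colimits is superfluous, since the transition maps in the directed system are inclusions and the colimit complex $A_{cg,eq}^*(M;V)$ is literally the union $\bigcup_{\mathfrak{U}} A_{sr}^*(M,\mathfrak{U};V)^G$; once you identify this union (via the cofinality you invoke) with the complex of equivariant cochains continuous on some $G$-invariant diagonal neighbourhood, the isomorphism of Proposition~\ref{natinclofeqccisisosmooth} already gives an isomorphism of complexes, and no interchange of colimit and cohomology is required.
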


\begin{example}
  If $G=M$ is a Lie group which acts on itself by left translation, 
then the complex $A_{sg}^* (G;V)^G$ is the complex of homogeneous group
cochains whose germs at the diagonal are smooth. 
(By abuse of language these are sometimes called 'locally smooth' 
group cochains.) 
\end{example}

We will show (in Section \ref{secsmoothanduscc}) that the
inclusion $A_{sr}^* (M,\mathfrak{U};V) \hookrightarrow A_s^* (M;V)$ induces an 
isomorphism in cohomology provided the manifold $M$ is smoothly contractible. 
For this purpose we consider the abelian groups 
\begin{equation} \label{defrsu}
  A_{sr}^{p,q} ( M,\mathfrak{U} ; V ) := 
\left\{ f: M^{p+1} \times M^{q+1} \rightarrow V 
\mid f_{\mid M^{p+1} \times
      \mathfrak{U}[q]} \; \text{is continuous} \right\}
\, .
\end{equation}
The abelian groups $A_{sr}^{p,q} ( M,\mathfrak{U} ; V )$ 
form a first quadrant sub double complex of the double complex 
$A_{cr}^{p,q} ( M,\mathfrak{U} ; V )$.
The rows of the double complex $A_{sr}^{*,*} ( M,\mathfrak{U} ; V )$ can be
augmented by the complex $A_{sr}^* (M,\mathfrak{U} ;V)$ for the covering
$\mathfrak{U}$ and the columns can be augmented by the exact complex 
$A_s^* (M;V)$ of continuous cochains:
\begin{equation*}
  \vcenter{
  \xymatrix{
\vdots & \vdots & \vdots & \vdots \\ 
A_{sr}^2 (M, \mathfrak{U};V) \ar[r] \ar[u]_{d_{v}} 
& A_{sr}^{0,2} ( M, \mathfrak{U} ; V) \ar[r]^{d_{h}}\ar[u]_{d_{v}} 
&  A_{sr}^{1,2} ( M, \mathfrak{U}; V) \ar[r]^{d_{h}}\ar[u]_{d_{v}} 
& A_{sr}^{2,2} ( M, \mathfrak{U} ; V) \ar[r]^{d_{h}}\ar[u]_{d_{v}} & \cdots \\
A_{sr}^1 (M, \mathfrak{U};V) \ar[r] \ar[u]_{d_{v}} 
& A_{sr}^{0,1} ( M, \mathfrak{U} ; V) \ar[r]^{d_{h}}\ar[u]_{d_{v}} 
&  A_{sr}^{1,1} ( M, \mathfrak{U} ; V) \ar[r]^{d_{h}}\ar[u]_{d_{v}} 
& A_{sr}^{2,1} ( M, \mathfrak{U} ; V) \ar[r]^{d_{h}}\ar[u]_{d_{v}} & \cdots \\
A_{sr}^0 (M, \mathfrak{U};V) \ar[r] \ar[u]_{d_{v}} 
& A_{sr}^{0,0} ( M, \mathfrak{U} ; V) \ar[r]^{d_{h}}\ar[u]_{d_{v}} 
&  A_{sr}^{1,0} ( M, \mathfrak{U} ; V) \ar[r]^{d_{h}}\ar[u]_{d_{v}} 
&  A_{sr}^{2,0} ( M, \mathfrak{U} ; V) \ar[r]^{d_{h}}\ar[u]_{d_{v}} & \cdots \\
& A_s^0 ( M ; V) \ar[r]^{d_{h}}\ar[u] 
& A_s^1 ( M ; V) \ar[r]^{d_{h}}\ar[u] 
& A_s^2 ( M ; V) \ar[r]^{d_{h}}\ar[u] 
& \cdots
}} 
\end{equation*}

We denote the total complex of the double complex 
$A_{sr}^{*,*} ( M, \mathfrak{U} ; V)$ by 
$\tot  A_{sr}^{*,*} ( M, \mathfrak{U} ; V)$. 
The augmentations of the rows and columns of this double complex 
induce morphisms $i^* : A_{sr}^* ( M, \mathfrak{U} ; V) \rightarrow 
\tot A_{sr}^{*,*} ( M, \mathfrak{U} ; V)$ and 
$j^*:  A_s^* ( M ; V) \rightarrow \tot A_{sr}^{*,*} ( M,\mathfrak{U} ; V)$ 
of cochain complexes respectively. 

\begin{lemma} \label{columnsexactsmooth}
  The morphism $i^*:  A_{sr}^* ( M,\mathfrak{U} ; V) \rightarrow 
\tot A_{sr}^{*,*} (M,\mathfrak{U} ; V)$ induces an isomorphism in cohomology.
\end{lemma}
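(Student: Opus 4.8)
The plan is to imitate verbatim the argument already used for Lemma \ref{columnsexact}, only now working inside the double complex $A_{sr}^{*,*}(M,\mathfrak{U};V)$ of $\mathfrak{U}$-smooth cochains. The point is that the row contraction $h^{p,q}$ defined in Equation \ref{defrowcontr} by $h^{p,q}(f)(\vec{x},\vec{x}')=f(x_0,\ldots,x_{p-1},x_0',\vec{x}')$ makes sense just as well on smooth cochains, so I first check that it maps $A_{sr}^{p,q}(M,\mathfrak{U};V)$ into $A_{sr}^{p-1,q}(M,\mathfrak{U};V)$. Since precomposition with the smooth diagonal-type map $(\vec{x},\vec{x}')\mapsto(x_0,\ldots,x_{p-1},x_0',\vec{x}')$ sends a cochain that is smooth on $M^{p}\times\mathfrak{U}[q]$ to one that is smooth on $M^{p+1}\times\mathfrak{U}[q]$ — the last slot $x_0'$ is duplicated from a coordinate of $M^{p+1}$, and the $\mathfrak{U}[q]$-factor is untouched — the continuity (here: smoothness) condition in the smooth analogue of Definition \ref{defrcu} is preserved.

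Next I verify that $h^{*,q}$ is a contraction of the augmented row $A_{sr}^q(M,\mathfrak{U};V)\hookrightarrow A_{sr}^{*,q}(M,\mathfrak{U};V)$, i.e. that $d_h h + h d_h = \id$ in positive horizontal degree and that $h^{0,q}$ splits the augmentation. This is the same simplicial identity computation as in the proof of Lemma \ref{columnsexact}; nothing about it depends on continuity versus smoothness, so it carries over without change. Consequently each augmented row of $A_{sr}^{*,*}(M,\mathfrak{U};V)$ is exact, and a standard acyclic-assembly / spectral-sequence argument for first-quadrant double complexes (exactly as invoked after Lemma \ref{columnsexact}) shows that the augmentation $i^*:A_{sr}^*(M,\mathfrak{U};V)\to\tot A_{sr}^{*,*}(M,\mathfrak{U};V)$ is a quasi-isomorphism.

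I do not expect any genuine obstacle here: the only thing to be slightly careful about is that the contracting homotopy really respects the smoothness constraint, and that is immediate because the relevant change-of-variables map is smooth and leaves the $\mathfrak{U}[q]$-coordinates fixed, so I will state that check explicitly and then refer to Lemma \ref{columnsexact} for the remaining bookkeeping.

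\begin{proof}
On each augmented row $A_{sr}^q ( M, \mathfrak{U} ; V) \hookrightarrow
A_{sr}^{*,q} ( M, \mathfrak{U} ; V)$ the maps
\begin{equation*}
  h^{p,q} : A_{sr}^{p,q} ( M, \mathfrak{U} ; V) \rightarrow
A_{sr}^{p-1,q} ( M, \mathfrak{U} ; V) , \quad
h^{p,q} (f) (\vec{x},\vec{x}')= f ( x_0, \ldots, x_{p-1}, x_0 ', \vec{x}' )
\end{equation*}
are well defined: $h^{p,q}(f)$ is obtained from $f$ by precomposition with the smooth map $(\vec{x},\vec{x}')\mapsto(x_0,\ldots,x_{p-1},x_0',\vec{x}')$, which fixes the $\mathfrak{U}[q]$-coordinates, so if $f$ is smooth on $M^{p+1}\times\mathfrak{U}[q]$ then $h^{p,q}(f)$ is smooth on $M^{p}\times\mathfrak{U}[q]$. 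The simplicial identities then show, exactly as in the proof of Lemma \ref{columnsexact}, that $h^{*,q}$ is a contraction of the augmented row. Hence the augmented rows are exact and the augmentation $i^*$ induces an isomorphism in cohomology.
\end{proof}
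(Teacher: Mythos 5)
Your proof is correct and follows the same route as the paper: the paper's own proof is a one-line observation that the row contraction $h^{p,q}$ from Lemma \ref{columnsexact} restricts to the smooth sub-double-complex, which is exactly the check you carry out (and spell out in more detail, correctly noting that precomposition with the map $(\vec{x},\vec{x}')\mapsto(x_0,\ldots,x_{p-1},x_0',\vec{x}')$ preserves smoothness on $M^{p+1}\times\mathfrak{U}[q]$ since the $\mathfrak{U}[q]$-coordinates are untouched). The only blemish is a harmless directional slip in your informal plan (``sends a cochain smooth on $M^{p}\times\mathfrak{U}[q]$ to one smooth on $M^{p+1}\times\mathfrak{U}[q]$''), which is stated the right way round in the proof itself.
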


\begin{proof}
  The row contraction given in the proof of Lemma \ref{columnsexact} 
restricts to one of the sub row complex  
$A_s^* (M,\mathfrak{U} ; V) \hookrightarrow A_{sr}^{*,*} (M,\mathfrak{U} ; V)$.
\end{proof}

\begin{remark}
  Note that this construction does not work for the column complexes. 
\end{remark}

For $G$-invariant open coverings $\mathfrak{U}$ of $M$ one can consider the 
sub double complex $A_{sr}^{*,*} ( M,\mathfrak{U} ; V )^G$ of 
$A_{sr}^{*,*} ( M,\mathfrak{U} ; V )$ whose rows are augmented by the cochain 
complex $A_{sr}^* (M,\mathfrak{U} ;V)^G$ for the covering $\mathfrak{U}$ and 
the columns can be augmented by the complex $A_s^* (M;V)^G$ of smooth 
equivariant cochains (,which is not exact in general). 

\begin{lemma} \label{columnsexacteqsmooth}
  For $G$-invariant coverings $\mathfrak{U}$ of $M$ the morphism 
$i_{eq}^*:  ={i^*}^G$ induces an isomorphism in cohomology.
\end{lemma}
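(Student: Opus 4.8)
The plan is to reuse the explicit row contraction $h^{*,q}$ from Eq.~\ref{defrowcontr}, observing that the single operator $h^{*,q}$ simultaneously preserves smoothness and $G$-equivariance, so that it restricts to a contraction of the augmented equivariant smooth rows. Recall that on the ambient double complex the homomorphisms
\[
h^{p,q}(f)(\vec{x},\vec{x}') = f(x_0,\ldots,x_{p-1},x_0',\vec{x}')
\]
furnish a contraction of each augmented row $A_{sr}^q(M,\mathfrak{U};V)\hookrightarrow A_{sr}^{*,q}(M,\mathfrak{U};V)$; this is exactly what was used in the proof of Lemma~\ref{columnsexactsmooth}, the only point there being that $h^{p,q}$ sends a cochain smooth on $M^{p+1}\times\mathfrak{U}[q]$ to one smooth on $M^{p}\times\mathfrak{U}[q]$, which holds because $h^{p,q}$ merely reindexes coordinates and precomposes with the smooth map $(\vec x,\vec x')\mapsto(\vec x,x_0')$.

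First I would verify that $h^{p,q}$ commutes with the $G$-action of Eq.~\ref{defgact}: substituting $[g.f](\vec x,\vec x')=g.[f(g^{-1}.\vec x,g^{-1}.\vec x')]$ into the formula for $h^{p,q}$ and using that the coordinate-reindexing map is $G$-equivariant gives $h^{p,q}(g.f)=g.h^{p,q}(f)$. This is the same computation that underlies the proof of Lemma~\ref{columnsexacteq}, so no new idea is required. Consequently $h^{*,q}$ restricts to a contraction of the augmented equivariant rows $A_{sr}^q(M,\mathfrak{U};V)^G\hookrightarrow\tot A_{sr}^{*,q}(M,\mathfrak{U};V)^G$; these augmented rows are therefore exact, and the augmentation $i_{eq}^*=(i^*)^G$ induces an isomorphism in cohomology.

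I do not expect any genuine obstacle here: the statement is a formal consequence of the compatibility of the one contraction $h^{*,q}$ with both structures — smoothness, already checked for Lemma~\ref{columnsexactsmooth}, and equivariance, already checked for Lemma~\ref{columnsexacteq}. The only point meriting a word of care is that the restriction of a smooth equivariant cochain remains smooth after passing to the $G$-orbit of the diagonal neighbourhood $\mathfrak{U}[q]$, but this is automatic since $\mathfrak{U}$ is $G$-invariant, so $G.\mathfrak{U}[q]=\mathfrak{U}[q]$, and the $G$-action on $M$ is smooth.
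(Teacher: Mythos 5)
Your proof is correct and follows essentially the same route as the paper: the paper's proof likewise consists of observing that the row contraction $h^{*,q}$ from Eq.~\ref{defrowcontr} is $G$-equivariant and therefore restricts to a contraction of the augmented equivariant sub-rows. Your additional verifications (that $h^{p,q}$ preserves the smoothness condition and commutes with the diagonal $G$-action) are the details the paper leaves implicit, and they check out.
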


\begin{proof}
  The contraction $h_{*,q}$ of the augmented rows 
$A_{cr}^q ( M, \mathfrak{U} ; V) \hookrightarrow 
\tot A_{sr}^{*,q} ( M, \mathfrak{U} ; V)$ defined in Eq. \ref{defrowcontr} is
$G$-equivariant and thus restricts to a row contraction of the augmented
sub-row $A_{sr}^q ( M, \mathfrak{U} ; V)^G \hookrightarrow 
\tot A_{sr}^{*,q} ( M, \mathfrak{U} ; V)^G$.
\end{proof}

So the morphism 
$H (i_{eq}) : H_{cr,eq} (M,\mathfrak{U};V) \rightarrow 
H ( \tot A_{sr}^{*,*} ( M, \mathfrak{U} ; V)^G )$ is invertible. 
For the composition 
$H (i_{eq})^{-1} H(j_{eq}):H_{c,eq}(M;V)\rightarrow H_{cr,eq}(M,\mathfrak{U};V)$ 
we observe:

\begin{proposition} \label{contiscohtocrsmooth}
The image $j^n (f)$ of a smooth equivariant $n$-cocycle $f$ on $M$ in 
$\tot A_{sr}^{*,*} (M,\mathfrak{U};,V)^G$ is cohomologous to the image 
$i_{eq}^n  (f)$ of the equivariant $n$-cocycle 
$f\in A_{sr}^n (M,\mathfrak{U};V)^G$ in $\tot A_{sr}^{*,*} (M,\mathfrak{U};V)^G$.
\end{proposition}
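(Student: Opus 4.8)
The plan is to imitate, almost verbatim, the proof of Proposition \ref{contiscohtocr}, since the smooth setting differs only in that ``continuous'' is everywhere replaced by ``smooth'' and the ambient double complex $A_{cr}^{*,*}$ is replaced by its sub-double-complex $A_{sr}^{*,*}$. The key point is that the cochains produced by the construction still land in the smooth complex, which is guaranteed because the diagonal identification $M^{p+1}\times M^{q+1}\cong M^{n+1}$ is a diffeomorphism and restriction of a smooth map is smooth.

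First I would let $f\colon M^{n+1}\to V$ be a smooth equivariant $n$-cocycle on $M$ and, for each pair $(p,q)$ with $p+q=n-1$, define the equivariant cochain $\psi^{p,q}\colon M^{p+1}\times M^{q+1}\to V$ by $\psi^{p,q}(\vec{x},\vec{x}')=(-1)^p f(\vec{x},\vec{x}')$, using the canonical identification $M^{p+1}\times M^{q+1}\cong M^{n+1}$. Since $f$ is smooth on all of $M^{n+1}$, the restriction $\psi^{p,q}$ is smooth on $M^{p+1}\times M^{q+1}$, hence in particular on $M^{p+1}\times\mathfrak{U}[q]$, so $\psi^{p,q}\in A_{sr}^{p,q}(M,\mathfrak{U};V)^G$; equivariance is inherited from $f$ because the diagonal identification is $G$-equivariant.

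Next I would carry out exactly the same computation as in Proposition \ref{contiscohtocr}: compute $d_v\psi^{p,q}$, using the cocycle identity for $f$ to rewrite the alternating sum over the $q$-block as an alternating sum over the $p$-block, and conclude $d_v\psi^{p,q}=d_h\psi^{p-1,q+1}$. Then the anti-commutativity of $d_h$ and $d_v$ shows that the total coboundary of $\sum_{p+q=n-1}(-1)^p\psi^{p,q}$ equals $j^n(f)-i^n(f)$, so $j^n(f)$ and $i_{eq}^n(f)$ are cohomologous in $\tot A_{sr}^{*,*}(M,\mathfrak{U};V)^G$.

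There is essentially no obstacle here; the only thing to check that is genuinely specific to the smooth category is the membership $\psi^{p,q}\in A_{sr}^{p,q}(M,\mathfrak{U};V)^G$, and this is immediate from the definition in Eq. \ref{defrsu} together with the fact that smoothness is preserved under restriction and under the diffeomorphic reshuffling of factors. The rest is the formal manipulation of the double complex, which is literally the same as in the continuous case. So the proof reads: ``The proof is analogous to that of Proposition \ref{contiscohtocr}; one only needs to note that the cochains $\psi^{p,q}$ defined there are smooth because $f$ is smooth and the diagonal identification $M^{p+1}\times M^{q+1}\cong M^{n+1}$ is a diffeomorphism, hence $\psi^{p,q}\in A_{sr}^{p,q}(M,\mathfrak{U};V)^G$.''
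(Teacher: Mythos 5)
Your proposal is correct and matches the paper, whose proof of this proposition is literally the one-line remark that it is analogous to Proposition \ref{contiscohtocr}; you carry out the same construction and add the (easy but worthwhile) verification that the cochains $\psi^{p,q}$ lie in the smooth sub-double-complex. No issues.
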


\begin{proof}
  The proof is analogous to that of Proposition \ref{contiscohtocr}. 
\end{proof}

\begin{corollary}
 The composition 
$H (i_{eq})^{-1} H(j_{eq}):H_{s,eq}(M;V)\rightarrow H_{sr,eq}(M,\mathfrak{U};V)$ 
is induced by the inclusion 
$A_s^* (M,\mathfrak{U};V)^G \hookrightarrow A_{sr}^* (M,\mathfrak{U};V)^G$. 
\end{corollary}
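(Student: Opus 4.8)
The plan is to deduce the statement directly from Proposition~\ref{contiscohtocrsmooth}, exactly as the analogous statement in the topological setting is deduced from Proposition~\ref{contiscohtocr}. First I would record the elementary fact that every smooth equivariant cochain on $M$ restricts smoothly, hence continuously, to each diagonal neighbourhood $\mathfrak{U}[n]$ of $M^{n+1}$, so that there is a natural inclusion of cochain complexes of equivariant cochains $\iota^{*}\colon A_s^{*}(M;V)^G \hookrightarrow A_{sr}^{*}(M,\mathfrak{U};V)^G$; it is precisely the morphism whose induced map in cohomology $H(\iota)\colon H_{s,eq}(M;V) \to H_{sr,eq}(M,\mathfrak{U};V)$ we must identify with $H(i_{eq})^{-1}H(j_{eq})$. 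I would also recall that $H(i_{eq})$ is invertible by Lemma~\ref{columnsexacteqsmooth}, so that this composition is well defined in the first place.

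Next I would run the diagram chase on cohomology classes. Let $[f] \in H_{s,eq}(M;V)$ be represented by a smooth equivariant $n$-cocycle $f$ on $M$. Then $H(j_{eq})[f]$ is by definition the class of $j^n(f)$ in $H\bigl(\tot A_{sr}^{*,*}(M,\mathfrak{U};V)^G\bigr)$. By Proposition~\ref{contiscohtocrsmooth} the cocycle $j^n(f)$ is cohomologous in $\tot A_{sr}^{*,*}(M,\mathfrak{U};V)^G$ to $i_{eq}^n(f)$, where on the right $f$ is understood as the $\mathfrak{U}$-smooth equivariant cocycle $\iota^n(f) \in A_{sr}^n(M,\mathfrak{U};V)^G$. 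Hence $H(j_{eq})[f] = H(i_{eq})\bigl(H(\iota)[f]\bigr)$, and applying the isomorphism $H(i_{eq})^{-1}$ yields $H(i_{eq})^{-1}H(j_{eq})[f] = H(\iota)[f]$. As $[f]$ was arbitrary, this shows that the composition coincides with the map induced by the inclusion $A_s^{*}(M;V)^G \hookrightarrow A_{sr}^{*}(M,\mathfrak{U};V)^G$, which is the assertion.

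Finally I would note that there is essentially no obstacle: all the substantive work — the construction of the explicit cochain $\sum_{p+q=n-1}(-1)^p \psi^{p,q}$ in the total complex whose coboundary realizes the difference $j^n(f) - i_{eq}^n(f)$ — has already been carried out in Proposition~\ref{contiscohtocrsmooth}, and the only point to verify beyond the chase above is that the assignment $[f]\mapsto H(i_{eq})^{-1}H(j_{eq})[f]$ is independent of the chosen representing cocycle, which is automatic since $H(j_{eq})$, $H(i_{eq})$ and $H(\iota)$ are all maps of cohomology groups. Thus the corollary is purely formal once Proposition~\ref{contiscohtocrsmooth} and Lemma~\ref{columnsexacteqsmooth} are in hand.
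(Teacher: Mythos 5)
Your argument is correct and is exactly the deduction the paper intends: the corollary is stated without proof as an immediate consequence of Proposition~\ref{contiscohtocrsmooth}, and your diagram chase ($H(j_{eq})[f]=[j^n(f)]=[i_{eq}^n(f)]=H(i_{eq})H(\iota)[f]$, then apply $H(i_{eq})^{-1}$, which exists by Lemma~\ref{columnsexacteqsmooth}) is precisely that consequence spelled out. Nothing is missing.
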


\begin{corollary}
If the morphism 
$j^*_{eq}:={j^*}^G : A_s^* (M;V)^G \rightarrow 
\tot A_{sr}^{*,*}(M,\mathfrak{U}A)^G$ induces a monomorphism, epimorphism or
isomorphism in cohomology, then the inclusion 
$A_s^* (M;V)^G \hookrightarrow A_{sr}^* (M,\mathfrak{U};V)^G$
induces a monomorphism, epimorphism or isomorphism in cohomology respectively.
\end{corollary}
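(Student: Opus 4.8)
The plan is to deduce this formally from the two preceding results, exactly as in the topological case. First I would recall that, by Lemma~\ref{columnsexacteqsmooth}, the augmentation $i_{eq}^{*}=(i^{*})^{G}$ induces an isomorphism
\[
 H(i_{eq})\colon H_{sr,eq}(M,\mathfrak{U};V)\longrightarrow H\bigl(\tot A_{sr}^{*,*}(M,\mathfrak{U};V)^{G}\bigr),
\]
so that $H(i_{eq})^{-1}$ is defined. Next, the corollary immediately preceding this statement (itself drawn from Proposition~\ref{contiscohtocrsmooth}) identifies the composite $H(i_{eq})^{-1}\circ H(j_{eq})$ with the homomorphism induced in cohomology by the inclusion $\iota\colon A_{s}^{*}(M;V)^{G}\hookrightarrow A_{sr}^{*}(M,\mathfrak{U};V)^{G}$. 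Post-composing with $H(i_{eq})$ then yields the factorization $H(j_{eq})=H(i_{eq})\circ H(\iota)$.

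Given this factorization the corollary is purely formal. Since $H(i_{eq})$ is an isomorphism of abelian groups, it is in particular both a monomorphism and an epimorphism, and composition on either side with such a map neither creates nor destroys injectivity or surjectivity. Hence, if $H(j_{eq})$ is a monomorphism (respectively an epimorphism, respectively an isomorphism), then $H(\iota)=H(i_{eq})^{-1}\circ H(j_{eq})$ has the same property, which is precisely the assertion.

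I expect no real obstacle: the substantive work has already been carried out in Lemma~\ref{columnsexacteqsmooth} and Proposition~\ref{contiscohtocrsmooth}, and what is left is the elementary fact that composing with an isomorphism does not affect whether a homomorphism is mono, epi, or iso. The only thing to keep an eye on is that every complex and every map involved is the $G$-fixed one; but the required equivariance was already verified in the proofs of those two results, so nothing further is needed.
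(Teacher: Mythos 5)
Your proposal is correct and matches the paper's (implicit) argument exactly: the paper also derives this corollary formally from Lemma \ref{columnsexacteqsmooth} (invertibility of $H(i_{eq})$) together with the preceding corollary of Proposition \ref{contiscohtocrsmooth} (which identifies $H(i_{eq})^{-1}H(j_{eq})$ with the map induced by the inclusion), so that $H(j_{eq})$ factors as an isomorphism composed with $H(\iota)$. Nothing is missing.
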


For any directed system $\{ \mathfrak{U}_i \mid i \in I \}$ of open coverings 
of $M$ one can also consider the corresponding augmented colimit double
complexes. In particular for the directed system of all open coverings of $M$ 
one obtains the double complex complex 
\begin{equation*}
  A_{sg}^{*,*} (M;V):= \colim_{\mathfrak{U} \text{ is open cover of $M$}} 
A_{sr}^{*,*} (M;\mathfrak{U};V)
\end{equation*}
whose rows and columns are augmented by the colimit complex 
$A_{sg}^* (M;V)$ and by the complex $A_s^* (M;V)$ respectively. 

\begin{lemma}
  For any directed system $\{ \mathfrak{U}_i \mid i \in I \}$ of open 
coverings of $M$ the morphism 
$\colim_i i^*:  \colim_i A_{sr}^* ( M,\mathfrak{U}_i ; V) \rightarrow 
\tot \colim_i A_{sr}^{*,*} (M,\mathfrak{U}_i ; V)$ induces an isomorphism in 
cohomology.
\end{lemma}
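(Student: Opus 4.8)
The plan is to mimic verbatim the argument already used for the topological double complex $A_{cr}^{*,*}$ (the Lemma just before Lemma~\ref{natinclofeqdcisiso}), replacing the continuity condition by the smoothness condition throughout. First I would recall that for each fixed covering $\mathfrak{U}_i$ the augmented row complex $A_{sr}^q (M,\mathfrak{U}_i;V) \hookrightarrow A_{sr}^{*,q}(M,\mathfrak{U}_i;V)$ is exact; this is precisely what is shown in the proof of Lemma~\ref{columnsexactsmooth}, where the contraction $h^{p,q}$ of Eq.~\ref{defrowcontr}, restricted to the smooth sub double complex, witnesses the exactness. Since the contraction is given by the set-theoretic substitution $h^{p,q}(f)(\vec{x},\vec{x}')=f(x_0,\dots,x_{p-1},x_0',\vec{x}')$, which is the same formula for every covering, it is compatible with the refinement maps of the directed system and therefore induces a contraction on the colimit augmented rows $\colim_i A_{sr}^q (M,\mathfrak{U}_i;V) \hookrightarrow \colim_i A_{sr}^{*,q}(M,\mathfrak{U}_i;V)$.

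Next I would invoke exactness of filtered colimits of abelian groups: a filtered colimit of exact sequences is exact. Applying this row by row shows that the augmented rows of the colimit double complex $\colim_i A_{sr}^{*,*}(M,\mathfrak{U}_i;V)$, augmented by $\colim_i A_{sr}^*(M,\mathfrak{U}_i;V)$, are still exact. Then the standard total-complex comparison for a first-quadrant double complex with exact augmented rows (the very argument that derives Lemma~\ref{columnsexactsmooth} from row exactness) shows that $\colim_i i^*$ induces an isomorphism $H^*(\colim_i A_{sr}^*(M,\mathfrak{U}_i;V)) \xrightarrow{\ \cong\ } H^*(\tot \colim_i A_{sr}^{*,*}(M,\mathfrak{U}_i;V))$. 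Alternatively, one may phrase the whole thing as an interchange of colimits: cohomology and $\tot$ of first-quadrant double complexes both commute with filtered colimits, so $\colim_i i^*$ is simply the filtered colimit of the isomorphisms $H(i^*)$ supplied by Lemma~\ref{columnsexactsmooth}.

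I do not expect a genuine obstacle. The only point deserving a line of care is that $\tot$ of a filtered colimit of first-quadrant double complexes agrees with the filtered colimit of the total complexes, which holds because each total degree involves only finitely many summands and finite direct sums commute with filtered colimits; this licenses freely interchanging $\tot$, $\colim_i$ and $H^*$. Since the paper's companion statements are proved in one sentence ("the passage to the colimit preserves the exactness of the augmented row complexes"), I would follow the same compression and simply cite Lemma~\ref{columnsexactsmooth}.
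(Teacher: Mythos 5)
Your proposal is correct and takes essentially the same route as the paper: the paper's own proof is the one-line observation that passage to the colimit preserves the exactness of the augmented row complexes established in Lemma \ref{columnsexactsmooth}, and your argument simply spells out the details (compatibility of the contraction of Eq.~\ref{defrowcontr} with refinement maps, exactness of filtered colimits, and the interchange of $\tot$ and $\colim$ in each total degree) that the paper leaves implicit.
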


\begin{proof}
The passage to the colimit preserves the exactness of the augmented row
complexes (Lemma \ref{columnsexactsmooth}). 
\end{proof}

As a consequence the colimit morphism 
$i_{sg}^* : A_{sg}^* ( M; V) \rightarrow \tot A_{sg}^{*,*} (M; V)$ induces an 
isomorphism in cohomology.
The colimit double complex $A_{sg}^{*,*} (M;V)$ is a double complex of 
$G$-modules and the $G$-equivariant cochains in form a sub double complex 
$A_{sg}^{*,*} (M;V)^G$, whose rows and columns are augmented by the colimit 
complex $A_{cg,eq}^* (M;V)$ and by the complex $A_s^* (M;V)^G$ respectively.

\begin{lemma}
  For any directed system $\{ \mathfrak{U}_i \mid i \in I \}$ of $G$-invariant
  open coverings of $M$ the morphism 
$\colim_i i_{eq}^*:  \colim_i A_{sr}^* ( M,\mathfrak{U}_i ; V)^G \rightarrow 
\tot \colim_i A_{sr}^{*,*} (M,\mathfrak{U}_i ; V)^G$ induces an isomorphism in 
cohomology.
\end{lemma}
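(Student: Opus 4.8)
The plan is to mirror exactly the proof of the non-smooth colimit lemma which in turn reduced to Lemma~\ref{columnsexacteqsmooth}. First I would observe that, for each individual $G$-invariant open covering $\mathfrak{U}_i$, Lemma~\ref{columnsexacteqsmooth} provides a $G$-equivariant contraction $h^{*,q}_i$ of the augmented row complex $A_{sr}^q(M,\mathfrak{U}_i;V)^G \hookrightarrow \tot A_{sr}^{*,q}(M,\mathfrak{U}_i;V)^G$, given concretely by the same formula~\eqref{defrowcontr}, which is independent of $i$ in the sense that it is compatible with the restriction maps $A_{sr}^{p,q}(M,\mathfrak{U}_i;V)^G \to A_{sr}^{p,q}(M,\mathfrak{U}_j;V)^G$ coming from refinement $\mathfrak{U}_j \preceq \mathfrak{U}_i$. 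Hence these contractions assemble into a contraction of the colimit row complex $\colim_i A_{sr}^q(M,\mathfrak{U}_i;V)^G \hookrightarrow \tot \colim_i A_{sr}^{*,q}(M,\mathfrak{U}_i;V)^G$.

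Then I would invoke the general fact that a filtered colimit of exact sequences of abelian groups is exact, so the colimit augmented rows remain exact; equivalently, since colimits commute with taking homology of a complex, the contractions exhibit the colimit rows as exact and the colimit augmentation as a quasi-isomorphism on each row. Finally, a row-exact first-quadrant double complex has total cohomology computed by the row augmentation, so the induced map $\colim_i i_{eq}^* \colon \colim_i A_{sr}^*(M,\mathfrak{U}_i;V)^G \to \tot\colim_i A_{sr}^{*,*}(M,\mathfrak{U}_i;V)^G$ is an isomorphism in cohomology. This is precisely the content of the one-line proof pattern used repeatedly above (``The passage to the colimit preserves the exactness of the augmented row complexes''), so the natural write-up simply says: the passage to the colimit preserves the exactness of the augmented row complexes (Lemma~\ref{columnsexacteqsmooth}).

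The only point requiring a moment's care — and the main (minor) obstacle — is verifying that the smooth sub double complex $A_{sr}^{*,*}(M,\mathfrak{U};V)^G$ is genuinely closed under the row contraction $h^{p,q}$; but this was already settled in Lemma~\ref{columnsexacteqsmooth} (the contraction formula reindexes coordinates and so preserves the smoothness condition on $M^{p+1}\times\mathfrak{U}[q]$ and the equivariance), so nothing new is needed here. Thus I expect the proof to consist of a single sentence citing Lemma~\ref{columnsexacteqsmooth}, exactly parallel to the earlier colimit lemmas.

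\begin{proof}
The passage to the colimit preserves the exactness of the augmented row
complexes (Lemma \ref{columnsexacteqsmooth}).
\end{proof}
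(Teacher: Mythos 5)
Your proposal is correct and coincides with the paper's own proof, which is exactly the one-line argument you predicted: the passage to the colimit preserves the exactness of the augmented row complexes, by Lemma \ref{columnsexacteqsmooth}. Your additional remarks about the compatibility of the contraction \eqref{defrowcontr} with the refinement maps and the exactness of filtered colimits correctly spell out what the paper leaves implicit.
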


\begin{proof}
The passage to the colimit preserves the exactness of the augmented row
complexes (Lemma \ref{columnsexacteqsmooth}). 
\end{proof}

Moreover, since the open diagonal neighbourhoods $\mathfrak{U}[n]$ in 
$X^{n+1}$ for open coverings $\mathfrak{U}$ of $X$ are cofinal in the 
directed set of all open diagonal neighbourhoods, we observe:

\begin{lemma} \label{natinclofeqdcisisosmooth}
  The natural morphism of double complexes 
  \begin{equation*}
A_{cg,eq}^{*,*} (X;V):= \colim_{\mathfrak{U} \text{is $G$-invariant open cover
      of $X$}} A_{sr}^{*,*} (X;\mathfrak{U};V)^G \rightarrow A_{sg}^* (X;V)^G    
  \end{equation*}
is a natural isomorphism.
\end{lemma}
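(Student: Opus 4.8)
The plan is to establish this exactly as Proposition~\ref{natinclofeqccisiso} was established (equivalently its double-complex form, Lemma~\ref{natinclofeqdcisiso}), now carried out in each bidegree $(p,q)$ and with ``smooth'' throughout in place of ``continuous''. First I would note that the morphism in question is the canonical comparison map from the colimit over $G$-invariant open covers to the colimit over all open covers of $X$; hence it is automatically a morphism of double complexes and natural in $(G,X)$ and $V$, because in each bidegree every group that occurs is a subgroup of the standard abelian group $A^{p,q}(X;V)=\hom_{\set}(X^{p+1}\times X^{q+1};V)$, the structure maps are restrictions of the standard differentials $d_h$ and $d_v$, and every transition map in the colimits is an inclusion. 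So it will suffice to check, for each fixed $(p,q)$, that the map is both surjective and injective.

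For surjectivity I would take a class in the target double complex $A_{sg}^{*,*}(X;V)^G$, represent it in bidegree $(p,q)$ by an equivariant cochain $f\in A_{sr}^{p,q}(X,\mathfrak{U};V)^G$ for some open cover $\mathfrak{U}$ of $X$ (not assumed $G$-invariant), so that $f$ is smooth on $X^{p+1}\times\mathfrak{U}[q]$, and then pass to the $G$-invariant open cover $G.\mathfrak{U}:=\{g.U\mid g\in G,\ U\in\mathfrak{U}\}$. Under the diagonal $G$-action one has $(G.\mathfrak{U})[q]=\bigcup_{g\in G}g.\mathfrak{U}[q]$, and the equivariance identity $f(\vec{x},\vec{x}')=g.\bigl[f(g^{-1}.\vec{x},g^{-1}.\vec{x}')\bigr]$ exhibits $f$ on $X^{p+1}\times g.\mathfrak{U}[q]$ as the composite of the smooth map $(\vec{x},\vec{x}')\mapsto(g^{-1}.\vec{x},g^{-1}.\vec{x}')$ (smoothness of the $G$-action on $X$), the restriction of $f$ to the set $X^{p+1}\times\mathfrak{U}[q]$ on which it is already smooth, and the smooth map $v\mapsto g.v$ (smoothness of the $G$-action on the abelian Lie group $V$); so $f$ is smooth there, and by locality of smoothness it is smooth on all of $X^{p+1}\times(G.\mathfrak{U})[q]$. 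Thus $f\in A_{sr}^{p,q}(X,G.\mathfrak{U};V)^G$ with $G.\mathfrak{U}$ $G$-invariant, so it already defines a class in $A_{cg,eq}^{p,q}(X;V)$ mapping to the given one. Injectivity would then be immediate: if the image of the class of some $f\in A_{sr}^{p,q}(X,\mathfrak{U};V)^G$ with $\mathfrak{U}$ $G$-invariant is zero in $A_{sg}^{p,q}(X;V)^G\subseteq A^{p,q}(X;V)$, then $f$ itself is the zero function, and hence so is its class in the colimit, all of whose transition maps are inclusions of subgroups.

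The only step with any content is the equivariance argument inside the surjectivity part, and I do not expect it to be a real obstacle: it is the verbatim smooth analogue of the continuity argument in the proof of Proposition~\ref{natinclofeqccisiso}, the only new ingredient being that the action of $G$ on the manifold $X$ and on the Lie group $V$ is smooth, which is part of the standing hypotheses on a smooth transformation group and a smooth $G$-module. All remaining steps are formal, so the statement follows.
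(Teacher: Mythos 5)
Your proposal is correct and follows exactly the route the paper takes: the paper's proof is simply ``analogous to that of Proposition~\ref{natinclofeqccisiso}'', i.e.\ surjectivity by passing from an arbitrary cover $\mathfrak{U}$ to its $G$-saturation $G.\mathfrak{U}$ and using equivariance of $f$ together with the action on $X$ and $V$ to extend the regularity from $X^{p+1}\times\mathfrak{U}[q]$ to $X^{p+1}\times(G.\mathfrak{U})[q]$, and injectivity because all transition maps are inclusions into the standard group. You merely spell out the composite-of-smooth-maps step more explicitly than the paper does.
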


\begin{proof}
  The proof is analogous to that of Proposition \ref{natinclofeqccisiso}.
\end{proof}

As a consequence the colimit morphism 
$i_{cg,eq}^* : A_{cg,eq}^* ( M; V) \rightarrow \tot A_{sg}^{*,*} (M; V)^G$ 
induces an isomorphism in cohomology, and the morphism 
$H (i_{cg,eq})$ is invertible. For the composition 
$H(i_{cg,eq})^{-1} H(j_{eq}):H_{c,eq}(M;V)\rightarrow H_{cg,eq}(M,\mathfrak{U};V)$ 
we observe:

\begin{proposition} \label{smoothiscohtosreq}
The image $j^n (f)$ of a continuous equivariant $n$-cocycle $f$ on $M$ in 
$\tot A_{sg}^{*,*} (M;,V)^G$ is cohomologous to the image 
$i_{cg,eq}^n  (f)$ of the equivariant $n$-cocycle $f\in A_{cg,eq}^n (M;V)$ 
in $\tot A_{sg}^{*,*} (M;V)^G$.
\end{proposition}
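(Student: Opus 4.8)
The plan is to transcribe the argument of Proposition~\ref{contiscohtocr} (equivalently Proposition~\ref{contiscohtocreq}) into the smooth colimit double complex $A_{sg}^{*,*}(M;V)^G$; no new idea is required, only the remark that a globally smooth cocycle is automatically a legitimate cochain in every group of this double complex. First I would fix the equivariant $n$-cocycle $f\in A_s^n(M;V)^G$. Through the column augmentation it produces the cocycle $j^n(f)\in\tot A_{sg}^{*,*}(M;V)^G$; and since $f$ is smooth on all of $M^{n+1}$ its germ along the diagonal is trivially smooth, so $f$ also represents a class in $A_{cg,eq}^n(M;V)$ (being already an element of $A_{sr}^n(M,\mathfrak{U};V)^G$ for every $G$-invariant open cover $\mathfrak{U}$, cf.\ Proposition~\ref{natinclofeqccisisosmooth}) and, through the row augmentation, gives the cocycle $i_{cg,eq}^n(f)$ in the same total complex. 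The aim is to exhibit an explicit $(n-1)$-cochain of the total complex whose coboundary is $j^n(f)-i_{cg,eq}^n(f)$.

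Next, for each $p+q=n-1$ I would use the canonical bijection $M^{p+1}\times M^{q+1}\cong M^{n+1}$ to define $\psi^{p,q}(\vec{x},\vec{x}'):=(-1)^p f(\vec{x},\vec{x}')$. Global smoothness of $f$ makes each $\psi^{p,q}$ smooth on all of $M^{p+1}\times M^{q+1}$, hence an element of $A_{sr}^{p,q}(M,\mathfrak{U};V)^G$ for every $G$-invariant open cover $\mathfrak{U}$ and therefore of the colimit group $A_{sg}^{p,q}(M;V)^G$; equivariance of $\psi^{p,q}$ is inherited from that of $f$. The cocycle identity for $f$, transported along the bijection above, is precisely the relation $d_v\psi^{p,q}=d_h\psi^{p-1,q+1}$ --- the same one-line computation as in the proof of Proposition~\ref{contiscohtocr}, in which the sign $(-1)^p$ is exactly what turns the alternating sum omitting a right-hand entry into minus the alternating sum omitting a left-hand entry.

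Then I would assemble $\Psi:=\sum_{p+q=n-1}(-1)^p\psi^{p,q}$ and compute its coboundary $(d_h+d_v)\Psi$ in the total complex. By the anti-commutativity of $d_h$ and $d_v$ together with the relations just established, the sum telescopes: all contributions in an intermediate bidegree cancel in pairs, and what survives are the two extreme terms, the $(n,0)$-term $j^n(f)$ coming from the column augmentation and the $(0,n)$-term $-i_{cg,eq}^n(f)$ coming from the row augmentation. Hence $(d_h+d_v)\Psi=j^n(f)-i_{cg,eq}^n(f)$ in $\tot A_{sg}^{*,*}(M;V)^G$, so $j^n(f)$ and $i_{cg,eq}^n(f)$ are cohomologous, which is the assertion.

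I do not expect a genuine obstacle here. Once the smooth double complex, its horizontal and vertical differentials, and its row and column augmentations are in place --- all of which has been arranged above in exact parallel with Section~\ref{sectss} --- the argument is a purely formal telescoping identity. The only points that deserve a moment's attention are checking that the auxiliary cochains $\psi^{p,q}$ actually belong to the smooth sub double complex, which is immediate from the global smoothness of $f$, and the sign bookkeeping in the telescoping sum, which is carried out exactly as in Proposition~\ref{contiscohtocr}.
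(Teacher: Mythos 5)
Your proposal is correct and is exactly what the paper intends: its own proof consists of the single line ``analogous to Proposition~\ref{contiscohtocr},'' and you have faithfully transcribed that telescoping argument (the cochains $\psi^{p,q}$, the relation $d_v\psi^{p,q}=d_h\psi^{p-1,q+1}$, and the total coboundary computation) into the smooth colimit double complex, correctly noting that global smoothness of $f$ places each $\psi^{p,q}$ in every $A_{sr}^{p,q}(M,\mathfrak{U};V)^G$ and hence in the colimit.
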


\begin{proof}
  The proof is analogous to that of Proposition \ref{contiscohtocr}.
\end{proof}

\begin{corollary}
 The composition 
$H (i_{cg,eq})^{-1} H(j_{eq}):H_{c,eq}(M;V)\rightarrow H_{cg,eq}(M;V)$ 
is induced by the inclusion 
$A_s^* (M;V)^G \hookrightarrow A_{sg}^* (M;V)^G$. 
\end{corollary}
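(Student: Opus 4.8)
The plan is to deduce this corollary directly from Proposition~\ref{smoothiscohtosreq}, in exactly the way the earlier corollaries were deduced from Propositions~\ref{contiscohtocr} and~\ref{contiscohtocreq}. First I would note that a globally smooth equivariant cochain is in particular smooth --- hence continuous --- on every diagonal neighbourhood, so the inclusion $A_s^* (M;V)^G \hookrightarrow A_{sg}^* (M;V)^G$ is literally an inclusion of subcomplexes; under the identification of Proposition~\ref{natinclofeqccisisosmooth} it sends a smooth equivariant cocycle $f$ to the corresponding class in $H_{cg,eq} (M;V)$, which is represented in $\tot A_{sg}^{*,*} (M;V)^G$ by the cocycle $i_{cg,eq}^n (f)$.

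Next I would invoke Proposition~\ref{smoothiscohtosreq}: for a smooth equivariant $n$-cocycle $f$ on $M$, the cochains $j^n (f)$ and $i_{cg,eq}^n (f)$ are cohomologous in $\tot A_{sg}^{*,*} (M;V)^G$. By definition $j^n (f)$ represents $H(j_{eq}) ([f])$, so applying the inverse of the isomorphism $H(i_{cg,eq})$ yields $H(i_{cg,eq})^{-1} H(j_{eq}) ([f]) = [f]$, the image of $[f]$ under the map in cohomology induced by the inclusion. Since every class in $H_{c,eq} (M;V)$ is represented by such an $f$, this shows that $H(i_{cg,eq})^{-1} H(j_{eq})$ coincides with the map induced by $A_s^* (M;V)^G \hookrightarrow A_{sg}^* (M;V)^G$, which is the assertion.

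I do not expect a genuine obstacle here: the statement is a formal consequence of the chain homotopy exhibited in Proposition~\ref{smoothiscohtosreq}. The one point worth a remark is that the cochain $\sum_{p+q=n-1} (-1)^p \psi^{p,q}$ witnessing the coboundary relation there, with $\psi^{p,q} (\vec{x},\vec{x}') = (-1)^p f (\vec{x},\vec{x}')$, is built from $f$ alone; since $f$ is globally smooth, each $\psi^{p,q}$ is smooth and hence lies in $A_{sr}^{p,q} (M,\mathfrak{U};V)^G$ for every $G$-invariant open covering $\mathfrak{U}$, so the homotopy genuinely takes place inside the smooth sub double complex and not merely in the larger continuous one.
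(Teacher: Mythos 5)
Your proposal is correct and follows exactly the route the paper intends: the corollary is an immediate consequence of Proposition~\ref{smoothiscohtosreq} (the paper in fact states it without a separate proof), since the cohomologous relation $[j^n(f)]=[i_{cg,eq}^n(f)]$ in $H(\tot A_{sg}^{*,*}(M;V)^G)$ gives $H(i_{cg,eq})^{-1}H(j_{eq})([f])=[f]$, which is precisely the inclusion-induced map. Your additional remark that the homotopy $\sum_{p+q=n-1}(-1)^p\psi^{p,q}$ is built from $f$ alone and therefore lives in the smooth sub double complex is a worthwhile point of care that the paper leaves implicit.
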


\begin{corollary}
If the morphism 
$j^*_{eq}:={j^*}^G : A_s^* (M;V)^G \rightarrow \tot A_{sg}^{*,*}(M;V)^G$
induces a monomorphism, epimorphism or isomorphism in cohomology, then the 
inclusion $A_s^* (M;V)^G \hookrightarrow A_{cg,eq}^* (M;V)$ induces a 
monomorphism, epimorphism or isomorphism in cohomology respectively.
\end{corollary}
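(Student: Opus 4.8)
The plan is to deduce this immediately from the two preceding corollaries together with the invertibility of $H(i_{cg,eq})$. First I would recall that, by Proposition \ref{natinclofeqccisisosmooth}, the complex $A_{cg,eq}^* (M;V)$ is naturally isomorphic to $A_{sg}^* (M;V)^G$, so the inclusion appearing in the statement is the same map (up to this identification) as the inclusion $\iota : A_s^* (M;V)^G \hookrightarrow A_{sg}^* (M;V)^G$ treated in the previous corollary. Next I would invoke the observation recorded just before Proposition \ref{smoothiscohtosreq}, namely that the colimit morphism $i_{cg,eq}^* : A_{cg,eq}^* (M;V) \rightarrow \tot A_{sg}^{*,*} (M;V)^G$ induces an isomorphism in cohomology, so that $H(i_{cg,eq})$ is an invertible morphism of graded abelian groups.

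The heart of the argument is then the factorisation $H(j_{eq}) = H(i_{cg,eq}) \circ H(\iota)$. This is exactly the content of the preceding corollary, which identifies $H(i_{cg,eq})^{-1} H(j_{eq})$ with the map induced by $\iota$; that corollary in turn rests on Proposition \ref{smoothiscohtosreq}, which shows that $j^n(f)$ and $i_{cg,eq}^n(f)$ are cohomologous for every smooth equivariant cocycle $f$. Granting this factorisation, and using that $H(i_{cg,eq})$ is an isomorphism, a morphism of the form $H(i_{cg,eq}) \circ H(\iota)$ is a monomorphism (resp.\ epimorphism, resp.\ isomorphism) if and only if $H(\iota)$ is, since composing on the left with an isomorphism preserves and reflects each of these three properties. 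This delivers all three assertions of the corollary simultaneously.

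I do not expect a genuine obstacle: the statement is a formal consequence of the machinery already assembled, in exact analogy with the corresponding corollary in Section \ref{sectss}. The only points needing a moment's care are the bookkeeping identification $A_{cg,eq}^* (M;V) \cong A_{sg}^* (M;V)^G$ furnished by Proposition \ref{natinclofeqccisisosmooth} and the already-noted fact that one may equally well take the colimit over all open coverings or over $G$-invariant open coverings of $M$; once these are in place the proof is pure diagram-chasing at the level of cohomology.
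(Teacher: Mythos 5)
Your argument is correct and is precisely the (implicit) argument the paper intends: the preceding corollary gives the factorisation $H(j_{eq}) = H(i_{cg,eq}) \circ H(\iota)$ via Proposition \ref{smoothiscohtosreq}, the invertibility of $H(i_{cg,eq})$ then transfers the mono/epi/iso property from $H(j_{eq})$ to $H(\iota)$, and the identification $A_{cg,eq}^*(M;V) \cong A_{sg}^*(M;V)^G$ handles the notational discrepancy in the statement. No gaps.
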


\section{Smooth and $\mathfrak{U}$-Smooth Cochains} 
\label{secsmoothanduscc}

In this Section we derive results for smooth transformation groups 
$(G,M)$ and smooth $G$-modules $V$, which are analogous to those concerning
continuous cochains. 
Let $(G,M)$ be a smooth transformation group, $V$ be a smooth $G$-module and 
$\mathfrak{U}$ be an open covering of $M$. 

\begin{proposition} \label{noneqextheneqexsmooth}
 If the augmented column complexes 
$A_s^p (M;V) \hookrightarrow A_{sr}^{p,*}(M,\mathfrak{U};V)$ 
are exact, then the augmented sub column complexes
$A_s^p (M;V)^G \hookrightarrow A_{sr}^{p,*}(M,\mathfrak{U};V)^G$  
of equivariant cochains are exact as well.
\end{proposition}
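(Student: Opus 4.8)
The plan is to follow the proof of Proposition~\ref{noneqextheneqex} line for line, the only substantive change being that ``continuous'' must be upgraded to ``smooth'' at the single point where the structure maps of the transformation group enter. So I would begin by assuming the augmented column complexes $A_s^p (M;V) \hookrightarrow A_{sr}^{p,*}(M,\mathfrak{U};V)$ are exact, and take an equivariant vertical cocycle $f_{eq}^{p,q} \in A_{sr}^{p,q}(M,\mathfrak{U};V)^G$. By the assumed exactness of the (non-equivariant) $p$-th augmented column one has $f_{eq}^{p,q} = d_v f^{p,q-1}$ for some $f^{p,q-1} \in A_{sr}^{p,q-1}(M,\mathfrak{U};V)$, which need not be equivariant (for $q=0$ the term $f^{p,-1}$ is to be read in the augmentation $A_s^p (M;V)$).

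The next step is to twist $f^{p,q-1}$ into an equivariant cochain by the same formula as in the continuous case, namely
\begin{equation*}
  f_{eq}^{p,q-1}(\vec{x},\vec{x}') := x_0 .\, f^{p,q-1}(x_0^{-1}.\vec{x},\, x_0^{-1}.\vec{x}') \, .
\end{equation*}
A direct check using the $G$-action of Eq.~\ref{defgact} gives $g.f_{eq}^{p,q-1} = f_{eq}^{p,q-1}$ for all $g \in G$, so $f_{eq}^{p,q-1}$ is equivariant; and because $d_v$ intertwines the $G$-action and $f_{eq}^{p,q}$ is already equivariant,
\begin{equation*}
  d_v f_{eq}^{p,q-1}(\vec{x},\vec{x}') = x_0 . \bigl[ d_v f^{p,q-1}(x_0^{-1}.\vec{x},\, x_0^{-1}.\vec{x}') \bigr] = x_0 . \bigl[ f_{eq}^{p,q}(x_0^{-1}.\vec{x},\, x_0^{-1}.\vec{x}') \bigr] = f_{eq}^{p,q}(\vec{x},\vec{x}') \, ,
\end{equation*}
so $f_{eq}^{p,q}$ is the vertical coboundary of an equivariant cochain of bidegree $(p,q-1)$, which is exactly what the exactness of the equivariant augmented column asserts.

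The only place where one must do more than repeat the bookkeeping of Proposition~\ref{noneqextheneqex} is verifying that $f_{eq}^{p,q-1}$ again belongs to $A_{sr}^{p,q-1}(M,\mathfrak{U};V)$, i.e.\ that it is smooth on $M^{p+1} \times \mathfrak{U}[q-1]$. Here I would invoke that $(G,M)$ is a smooth transformation group and $V$ a smooth $G$-module, so that the action $G \times M \to M$, the inversion $G \to G$, and the action $G \times V \to V$ are all smooth in the calculus of \cite{BGN04}; since the covering $\mathfrak{U}$ is $G$-invariant, $\mathfrak{U}[q-1]$ is a $G$-invariant open subset of $M^{q+1}$, so $(\vec{x},\vec{x}') \mapsto (x_0^{-1}.\vec{x}, x_0^{-1}.\vec{x}')$ carries $M^{p+1} \times \mathfrak{U}[q-1]$ into itself, where $f^{p,q-1}$ is smooth by hypothesis; composing these smooth maps (the chain rule being available in that calculus) shows $f_{eq}^{p,q-1}$ is smooth on $M^{p+1} \times \mathfrak{U}[q-1]$. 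This smoothness verification is the step I expect to be the ``main obstacle'', but it is entirely routine, so no genuine difficulty is anticipated.
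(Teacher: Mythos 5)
Your proposal is correct and is essentially the paper's own argument: the paper proves this by declaring it analogous to Proposition~\ref{noneqextheneqex}, whose twisting construction $f_{eq}^{p,q-1}(\vec{x},\vec{x}') = x_0.f^{p,q-1}(x_0^{-1}.\vec{x},x_0^{-1}.\vec{x}')$ you reproduce verbatim. Your added verification that smoothness of the $G$-action and $G$-invariance of $\mathfrak{U}$ preserve membership in $A_{sr}^{p,q-1}(M,\mathfrak{U};V)$ is exactly the one point where the analogy needs checking, and you handle it correctly.
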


\begin{proof}
  The proof is analogous to that of Proposition \ref{noneqextheneqex}.
\end{proof}

\begin{corollary} \label{augexthenjeqindisosmooth}
If the augmented column complexes 
$A_s^p (M;V) \hookrightarrow A_{sr}^{p,*} (M,\mathfrak{U};V)$ 
are exact, then the inclusion 
$j_{eq}^* : A_s^* (M;V)^G \hookrightarrow 
\tot A_{sr}^{*,*}(M,\mathfrak{U},V)^G$ induces an isomorphism in cohomology.
\end{corollary}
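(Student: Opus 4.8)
The plan is to obtain this as a formal consequence of Proposition~\ref{noneqextheneqexsmooth}, in precisely the way Corollary~\ref{augexthenjeqindiso} follows from Proposition~\ref{noneqextheneqex}. First I would feed the hypothesis into Proposition~\ref{noneqextheneqexsmooth} to conclude that the augmented sub-column complexes $A_s^p(M;V)^G \hookrightarrow A_{sr}^{p,*}(M,\mathfrak{U};V)^G$ are exact for every $p$. Thus each column of the first-quadrant double complex $A_{sr}^{*,*}(M,\mathfrak{U};V)^G$, augmented at the bottom by the corresponding term $A_s^p(M;V)^G$ of the complex of smooth equivariant cochains via the inclusion $j_{eq}$, is an exact complex.

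Next I would run the standard acyclic-assembly argument. Form the double complex $\widetilde{A}^{*,*}$ obtained by adjoining $A_s^*(M;V)^G$ as an extra row in vertical degree $-1$, with vertical differential $\widetilde{A}^{p,-1}=A_s^p(M;V)^G \to \widetilde{A}^{p,0}=A_{sr}^{p,0}(M,\mathfrak{U};V)^G$ given by $j_{eq}^p$ and horizontal differential along this row given by that of $A_s^*(M;V)^G$. By the previous step every column of $\widetilde{A}^{*,*}$ is exact, and $\widetilde{A}^{*,*}$ is supported in the region $p\ge 0$, $q\ge -1$, so in each total degree only finitely many entries are nonzero; hence the acyclic-assembly lemma (equivalently, the column-filtration spectral sequence, which converges since the filtration is bounded in each total degree) shows that $\tot\widetilde{A}^{*,*}$ is acyclic. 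Since $\tot\widetilde{A}^{*,*}$ is, up to the standard sign conventions, the mapping cone of $j_{eq}^*: A_s^*(M;V)^G \to \tot A_{sr}^{*,*}(M,\mathfrak{U};V)^G$, acyclicity of the cone means exactly that $j_{eq}^*$ induces an isomorphism in cohomology.

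I do not anticipate a genuine obstacle: the statement is pure homological algebra and is formally identical to Corollary~\ref{augexthenjeqindiso}, with all the real content already carried by Proposition~\ref{noneqextheneqexsmooth}. The only points needing (routine) care are the sign bookkeeping in the total differential and checking that adjoining the augmentation row keeps the double complex bounded enough for the spectral sequence to converge --- both handled exactly as in Section~\ref{sectss}. Accordingly, the cleanest write-up is simply to say that the corollary follows from Proposition~\ref{noneqextheneqexsmooth} by the same argument that derives Corollary~\ref{augexthenjeqindiso} from Proposition~\ref{noneqextheneqex}.
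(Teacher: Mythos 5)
Your argument is correct and is exactly the one the paper intends: feed the hypothesis into Proposition~\ref{noneqextheneqexsmooth} to get exactness of the equivariant augmented columns, then conclude by the standard acyclic-assembly/column-filtration argument for the first-quadrant double complex, just as Corollary~\ref{augexthenjeqindiso} follows from Proposition~\ref{noneqextheneqex}. The paper leaves this corollary without an explicit proof precisely because it is this routine deduction, and your mapping-cone phrasing is an equivalent, clean way to package it.
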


\begin{corollary} \label{augextheninclindisosmooth}
If the augmented column complexes 
 $A_s^p (M;V) \hookrightarrow A_{sr}^{p,*} (M,\mathfrak{U};V)$ 
are exact, then the inclusion 
$A_s^* (M;V)^G \hookrightarrow A_{sr}^* (M,\mathfrak{U};V)^G$
induces an isomorphism in cohomology.
\end{corollary}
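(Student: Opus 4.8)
The plan is to obtain this corollary purely by assembling results already in place, exactly as the analogous Corollary~\ref{augextheninclindiso} is obtained in the continuous setting: no new construction is needed, since the actual work has been done in Proposition~\ref{noneqextheneqexsmooth}, Lemma~\ref{columnsexacteqsmooth} and Proposition~\ref{contiscohtocrsmooth}.

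Concretely, I would argue as follows. First, I would feed the hypothesis into Proposition~\ref{noneqextheneqexsmooth}: exactness of the augmented columns $A_s^p(M;V)\hookrightarrow A_{sr}^{p,*}(M,\mathfrak{U};V)$ yields exactness of the augmented equivariant sub-columns $A_s^p(M;V)^G\hookrightarrow A_{sr}^{p,*}(M,\mathfrak{U};V)^G$. Thus, in the double complex $A_{sr}^{*,*}(M,\mathfrak{U};V)^G$ augmented in the column direction by $A_s^*(M;V)^G$, every augmented column is exact, and hence the column augmentation $j_{eq}^*\colon A_s^*(M;V)^G\hookrightarrow\tot A_{sr}^{*,*}(M,\mathfrak{U};V)^G$ induces an isomorphism in cohomology; this is precisely Corollary~\ref{augexthenjeqindisosmooth}, which I would simply cite. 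Second, I would invoke Lemma~\ref{columnsexacteqsmooth}, which gives unconditionally that the row augmentation $i_{eq}^*$ induces an isomorphism in cohomology, together with Proposition~\ref{contiscohtocrsmooth} and its corollaries, which identify the composite $H(i_{eq})^{-1}H(j_{eq})$ with the map induced by the inclusion $A_s^*(M;V)^G\hookrightarrow A_{sr}^*(M,\mathfrak{U};V)^G$. Since $H(i_{eq})$ is an isomorphism and, by the first step, so is $H(j_{eq})$, the inclusion-induced map $H_{s,eq}(M;V)\to H_{sr,eq}(M,\mathfrak{U};V)$ is an isomorphism as well --- which is exactly the ``$j_{eq}^*$ isomorphism implies inclusion isomorphism'' corollary following Proposition~\ref{contiscohtocrsmooth}, now applied once its hypothesis has been verified.

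I do not anticipate any genuine obstacle: the statement is pure bookkeeping sitting on top of the earlier lemmas. The only point deserving (routine) attention is that the two ingredients above transfer verbatim to the smooth category, which they do because the row contraction $h^{p,q}$ of Eq.~\ref{defrowcontr} and the equivariant-averaging operator $f\mapsto\bigl(x_0.f(x_0^{-1}.\vec x,x_0^{-1}.\vec x')\bigr)$ both carry $\mathfrak{U}$-smooth cochains to $\mathfrak{U}$-smooth cochains and commute with the differentials. In contrast with the later results on $A_{sg}$ and Alexander--Spanier cohomology, nothing in this corollary invokes an exponential law or smoothness of maps out of products of manifolds, so no care beyond translating the continuous-case argument is required.
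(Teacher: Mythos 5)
Your proposal is correct and follows exactly the route the paper intends: the corollary is pure bookkeeping obtained by feeding the hypothesis through Proposition~\ref{noneqextheneqexsmooth} to get Corollary~\ref{augexthenjeqindisosmooth}, and then combining the invertibility of $H(i_{eq})$ (Lemma~\ref{columnsexacteqsmooth}) with Proposition~\ref{contiscohtocrsmooth} and its corollaries to transfer the isomorphism from $j_{eq}^*$ to the inclusion $A_s^*(M;V)^G\hookrightarrow A_{sr}^*(M,\mathfrak{U};V)^G$. The paper leaves the proof implicit, and your assembly of the cited results is precisely the intended argument.
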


It remains to show that for smoothly contractible manifolds $M$ the colimit 
augmented columns 
$A_s^p (M;V) \hookrightarrow A_{sg}^{p,*} (M;V)$ are exact. For this purpose
we first consider the cochain complex associated to the cosimplicial abelian 
group 
$A^{p,*} (M;V) := \left\{ f : M^{p+1} \times M^{*+1} \rightarrow V 
\mid \, \forall \vec{x}' \in M^{*+1} : f (-,\vec{m}') \in C^\infty (M^{p+1},V) 
\right\}$ 
of global cochains, its subcomplex $A_{sr}^{p,*} (M,\mathfrak{U};V)$ and 
the cochain complexes associated to the cosimplicial abelian groups 
\begin{eqnarray*}
 A^{p,*} (\mathfrak{U};V) & := & 
\{ f : M^{p+1} \times \mathfrak{U}[*] \rightarrow 
\mid \, \forall \vec{m}' \in \mathfrak{U}[*] : f (-,\vec{m}') \in C^\infty (M^{p+1},V)
\} \quad \text{and} \\
A_s^{p,*} (M,\mathfrak{U};V) & := & C^\infty ( M^{p+1} \times \mathfrak{U}[*] , V) \, .
\end{eqnarray*}

Restriction of global to local cochains induces morphisms of cochain complexes 
$\res^{p,*} : A^{p,*} (M;V) \twoheadrightarrow  A^{p,*} (M,\mathfrak{U};V)$ and 
$\res_{sr}^{p,*} : A_{sr}^* (M,\mathfrak{U};V) \twoheadrightarrow  
A_s^{p,*} (M,\mathfrak{U};V)$ intertwining the inclusions of the subcomplexes 
$A_{sr}^{p,*} (M,\mathfrak{U};V) \hookrightarrow A^{p,*} (M;V)$ and 
$A_s^{p,*} (M,\mathfrak{U};V) \hookrightarrow A^{p,*} (M, \mathfrak{U};V)$, so
one obtains the following commutative diagram
\begin{equation} \label{morphexseqsmooth}
\begin{array}{cccccccc}
 0 \longrightarrow & \ker (\res_{sr}^{p,*} ) & \longrightarrow & 
A_{sr}^{p,*} (M,\mathfrak{U};V) 
& \longrightarrow & A_s^{p,*} (M,\mathfrak{U};V) & \longrightarrow 0 \\ 
& \downarrow & & \downarrow & & \downarrow & \\
0 \longrightarrow & \ker (\res^{p,*} ) & \longrightarrow & A^{p,*} (M;V) 
& \longrightarrow & A^{p,*} (M , \mathfrak{U};V) & \longrightarrow 0
\end{array}
\end{equation}

of cochain complexes whose rows are exact. The kernel $\ker (\res^{p,q} )$ is
the subspace of those $(p,q)$-cochains which are trivial on 
$M^{p+1} \times \mathfrak{U} [q]$. Since these $(p,q)$-cochains are smooth 
on $M^{p+1} \times \mathfrak{U}[q]$ we find that both kernels coincide. 
We abbreviate the complex 
$\ker (\res^{p,*} ) = \ker (\res_{rc}^{p,*} )$ by $K^{p,*}$ and denote the 
cohomology groups of the complex $A_{sr}^{p,*} (M,\mathfrak{U};V)$ by 
$H_{sr}^{p,*} (M,\mathfrak{U};V)$, the cohomology groups of the complex 
$A_s^{p,*} (M,\mathfrak{U};V)$ of continuous cochains by 
$H_s^{p,*} (M,\mathfrak{U};V)$ and the cohomology groups of the complex 
$A^{p,*} (M,\mathfrak{U};V)$ by $H^{p,*} (M,\mathfrak{U};V)$. 

\begin{lemma}
  The cochain complexes $A^{p,*} (M;V)$ are exact.
\end{lemma}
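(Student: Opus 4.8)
The plan is to reuse, essentially verbatim, the argument from the two earlier incarnations of this lemma (the ones for $A^{p,*}(X;V)$ and for $A_k^{p,*}(X;V)$): one builds an explicit chain contraction of the complex by inserting a fixed base point into the cosimplicial block of arguments, the only novelty being that ``continuous'' is replaced throughout by ``smooth'' in the first $p+1$ variables.

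Concretely, I would fix an arbitrary point $* \in M$ and, for each $q$, define a homomorphism
\begin{equation*}
  h^{p,q} : A^{p,q}(M;V) \longrightarrow A^{p,q-1}(M;V), \qquad
  h^{p,q}(f)(\vec{x},\vec{x}') := f(\vec{x},*,\vec{x}'),
\end{equation*}
with the same conventions in the lowest degree as in the continuous case. First I would check that $h^{p,q}$ actually lands in $A^{p,q-1}(M;V)$: for every fixed point in the second block, $h^{p,q}(f)(-,\vec{x}') = f(-,(*,\vec{x}'))$ is a slice of $f$ in the first $p+1$ variables, hence smooth by hypothesis on $f$. This is precisely the step where the passage from $C$ to $C^\infty$ has to be checked, and it causes no difficulty because the base-point insertion touches only the cosimplicial block and leaves the smoothness requirement on $M^{p+1}$ untouched. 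Then I would verify the standard chain-homotopy identity $d_v h^{p,q} + h^{p,q+1} d_v = \id$ on $A^{p,q}(M;V)$: this is the familiar ``extra codegeneracy'' calculation, in which, expanding $h\,d_v$ and $d_v\,h$ as alternating sums over faces of the second block, every term in which the newly inserted base point is subsequently deleted cancels in pairs, leaving only the single copy of $f$; the bookkeeping of signs is identical to the continuous case. In the lowest degree the identity degenerates in the usual way and yields triviality of the cohomology there as well.

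Since the identity map of $A^{p,*}(M;V)$ is thereby null-homotopic, the complex has trivial cohomology in every degree, which is the assertion. I do not anticipate a genuine obstacle here: the combinatorial homotopy identity is exactly the one already invoked twice in the paper, and the only point really needing a remark---the compatibility of the base-point insertion with the smoothness requirement---is the trivial observation recorded above.
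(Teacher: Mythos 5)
Your proposal is exactly the paper's proof: the paper also defines the contraction $h^{p,q}(f)(\vec{x},\vec{x}') := f(\vec{x},*,\vec{x}')$ for a fixed base point $* \in M$, inserting it into the cosimplicial (second) block so that the smoothness requirement on the first $p+1$ variables is unaffected. Your additional remarks on well-definedness and the sign bookkeeping are correct elaborations of what the paper leaves implicit.
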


\begin{proof}
For any point $* \in M$ the homomorphisms 
$h^{p,q} : A^{p,q} (M;V) \rightarrow A^{p,q-1} (M;V)$ given by 
$h^{p,q} (f) (\vec{x},\vec{x}'):=f (\vec{x},*,\vec{x}')$ form a contraction of
the complex $A^{p,*} (M;V)$. 
\end{proof}

The morphism of short exact sequences of cochain complexes in diagram 
\ref{morphexseq}
gives rise to a morphism of long exact cohomology sequences, in which the 
cohomology of the complex $A^{p,*} (M;V)$ is trivial:
\begin{equation} \label{diaglecssmooth}
\xymatrix@R-10pt@C-4pt{ 
\ar[r] & H^q (K^{p,*} )) \ar[r] \ar@{=}[d] 
& H_{sr}^{p,q} (M,\mathfrak{U};V) \ar[r] \ar[d]  
& H_s^{p,q} (M,\mathfrak{U};V) \ar[r] \ar[d]  
& H^{q+1}  (K^{p,*} ) \ar[r] \ar@{=}[d] & {} \\ 
\ar[r]^\cong & H^q (K^{p,*} ) \ar[r]&  0  \ar[r] 
& H^{p,q} (M,\mathfrak{U};V) \ar[r]^\cong &  H^{q+1}  (K^{p,*} )) \ar[r] & {}
}
\end{equation}

\begin{lemma}
  If the inclusion 
$A_s^{p,*} (M,\mathfrak{U};V) \hookrightarrow A^{p,*} (M,\mathfrak{U};V)$ 
induces an isomorphism in cohomology, then the augmented complex 
$A_s^p (M;V) \hookrightarrow A_{sr}^{p,*} (M,\mathfrak{U};V)$ is exact.
\end{lemma}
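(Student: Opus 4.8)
The plan is to run the diagram chase on the morphism of long exact cohomology sequences in Diagram~\ref{diaglecssmooth} --- the same argument that establishes the analogous statement for continuous cochains (the Lemma whose proof is recorded as an ``immediate consequence of Diagram~\ref{diaglecs}''). Concretely, from the hypothesis that the right-hand vertical arrows of Diagram~\ref{diaglecssmooth} are isomorphisms I want to extract the vanishing $H_{sr}^{p,q}(M,\mathfrak{U};V)=0$ for all $q\geq 1$; this, together with a direct inspection in degree~$0$, is precisely the exactness of the augmented column $A_s^p(M;V)\hookrightarrow A_{sr}^{p,*}(M,\mathfrak{U};V)$.

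First I would isolate what the hypothesis provides. The right-hand vertical map of Diagram~\ref{diaglecssmooth} is the map $H^q(A_s^{p,*}(M,\mathfrak{U};V))\to H^{p,q}(M,\mathfrak{U};V)$ induced by the inclusion $A_s^{p,*}(M,\mathfrak{U};V)\hookrightarrow A^{p,*}(M,\mathfrak{U};V)$, and by assumption it is an isomorphism for every $q$. Since the complex $A^{p,*}(M;V)$ is exact, the bottom long exact sequence degenerates: the groups $H^q(A^{p,*}(M;V))$ vanish and the connecting homomorphisms $H^{p,q}(M,\mathfrak{U};V)\to H^{q+1}(K^{p,*})$ are isomorphisms. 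I would then invoke naturality of the connecting homomorphism for the morphism of short exact sequences of complexes in Diagram~\ref{morphexseqsmooth}: the top-row connecting homomorphism $H^q(A_s^{p,*}(M,\mathfrak{U};V))\to H^{q+1}(K^{p,*})$ equals the composite of the hypothesis isomorphism with the bottom-row connecting isomorphism, hence is itself an isomorphism.

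Feeding this back into the top long exact sequence $\cdots\to H^q(K^{p,*})\to H_{sr}^{p,q}(M,\mathfrak{U};V)\to H^q(A_s^{p,*}(M,\mathfrak{U};V))\to H^{q+1}(K^{p,*})\to\cdots$, injectivity of the connecting map leaving $H^q(A_s^{p,*}(M,\mathfrak{U};V))$ forces the map $H_{sr}^{p,q}(M,\mathfrak{U};V)\to H^q(A_s^{p,*}(M,\mathfrak{U};V))$ to vanish, and surjectivity of the preceding connecting map $H^{q-1}(A_s^{p,*}(M,\mathfrak{U};V))\to H^q(K^{p,*})$ forces the map $H^q(K^{p,*})\to H_{sr}^{p,q}(M,\mathfrak{U};V)$ to vanish; exactness then yields $H_{sr}^{p,q}(M,\mathfrak{U};V)=0$ for $q\geq 1$. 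Finally I would dispatch the degree-$0$ statement directly: since $\mathfrak{U}[0]=M$, the group $A_{sr}^{p,0}(M,\mathfrak{U};V)$ equals $C^\infty(M^{p+1}\times M;V)$ and the kernel of $d_v^{p,0}$ is exactly the subgroup of cochains independent of the last variable, i.e.\ the image of the injective augmentation $A_s^p(M;V)\hookrightarrow A_{sr}^{p,0}(M,\mathfrak{U};V)$; combined with the vanishing above this is the asserted exactness of the augmented column.

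I do not expect a genuine obstacle: once the two long exact sequences and the naturality square are in place, the deduction is a purely formal diagram chase, identical in structure to the one behind Diagram~\ref{diaglecs}. The only point the long exact sequences do not see, and that therefore must be checked by hand, is the degree-$0$ exactness of the augmented column --- but, as indicated, this is automatic because the covering~$\mathfrak{U}$ imposes no condition at simplicial level~$0$.
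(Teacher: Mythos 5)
Your proposal is correct and is essentially the paper's own argument: the paper's proof is simply ``an immediate consequence of Diagram \ref{diaglecssmooth}'', and you carry out precisely that diagram chase (using naturality of the connecting homomorphisms and the acyclicity of $A^{p,*}(M;V)$ to kill $H_{sr}^{p,q}$ for $q\geq 1$), supplemented by the direct degree-$0$ check that the long exact sequences do not see. The only cosmetic caveat is that the bottom connecting map need not be injective at $q=0$, but since you only invoke injectivity for $q\geq 1$ and surjectivity for $q\geq 0$, the chase goes through exactly as you describe.
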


\begin{proof}
 This is an immediate consequence of Diagram \ref{diaglecssmooth}
\end{proof}

\begin{proposition}
  If the inclusion 
$A_s^{p,*} (M,\mathfrak{U};V) \hookrightarrow A^{p,*} (M,\mathfrak{U};V)$ 
induces an isomorphism in cohomology, then the inclusions  
$j_{eq}^* : A_s^* (M;V)^G \hookrightarrow 
\tot A_{sr}^{*,*}(M,\mathfrak{U},V)^G$ and 
$A_s^* (M,\mathfrak{U};V)^G \hookrightarrow A_{sr}^* (M,\mathfrak{U};V)^G$
also induces an isomorphism in cohomology.
\end{proposition}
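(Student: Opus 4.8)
The plan is to mirror, step for step in the smooth setting, the proof of the analogous Proposition in Section~\ref{seccontanduccc}, so that the statement reduces entirely to results already in hand. The crucial observation is that, by the preceding Lemma (which simply reads off the long exact cohomology sequence of Diagram~\ref{diaglecssmooth}, in which the complex $A^{p,*}(M;V)$ has vanishing cohomology), the hypothesis --- that $A_s^{p,*}(M,\mathfrak{U};V)\hookrightarrow A^{p,*}(M,\mathfrak{U};V)$ induces an isomorphism in cohomology --- implies the exactness of the augmented column complexes $A_s^p(M;V)\hookrightarrow A_{sr}^{p,*}(M,\mathfrak{U};V)$ for every $p$. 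Thus the real content of the hypothesis is column exactness of the augmented double complex $A_{sr}^{*,*}(M,\mathfrak{U};V)$.

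First I would invoke the preceding Lemma to pass from the hypothesis to this column exactness. The first conclusion, that $j_{eq}^*$ induces an isomorphism in cohomology, is then exactly Corollary~\ref{augexthenjeqindisosmooth}; internally it rests on Proposition~\ref{noneqextheneqexsmooth}, which promotes column exactness of the plain double complex to that of the $G$-fixed sub-double-complex via the conjugation formula $f_{eq}^{p,q-1}(\vec{x},\vec{x}')=x_0.f^{p,q-1}(x_0^{-1}.\vec{x},x_0^{-1}.\vec{x}')$, together with the fact that a first-quadrant double complex with exact augmented columns has its total cohomology computed by the column augmentation. The second conclusion, that $A_s^*(M,\mathfrak{U};V)^G\hookrightarrow A_{sr}^*(M,\mathfrak{U};V)^G$ induces an isomorphism, is Corollary~\ref{augextheninclindisosmooth}; this combines the first conclusion with Lemma~\ref{columnsexacteqsmooth} (row exactness, hence $H(i_{eq})$ invertible) and Proposition~\ref{contiscohtocrsmooth} (which identifies $H(i_{eq})^{-1}H(j_{eq})$ with the map induced by the inclusion in question). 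So the final write-up amounts to: by the preceding Lemma the hypothesis yields exactness of the augmented columns, whence Corollaries~\ref{augexthenjeqindisosmooth} and \ref{augextheninclindisosmooth} give the claim.

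I do not anticipate a genuine obstacle, since each step is the verbatim smooth analogue of something already carried out for continuous cochains; the only points worth a glance are that the smoothness condition in the definition of $A_{sr}^{p,q}(M,\mathfrak{U};V)$ is preserved by the constructions used --- namely that $x_0.f^{p,q-1}(x_0^{-1}.\vec{x},x_0^{-1}.\vec{x}')$ remains smooth on $M^{p+1}\times\mathfrak{U}[q-1]$, which holds because both the $G$-action on $M$ and the $G$-action on $V$ are smooth, and that the row contraction $h^{p,q}$ of Eq.~\ref{defrowcontr}, being a coordinate-forgetting substitution, restricts to the $\mathfrak{U}$-smooth sub-double-complex and commutes with passage to $G$-fixed points. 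Granting these routine checks, the proof is immediate.
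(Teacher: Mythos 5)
Your proof is correct and follows exactly the paper's own route: the paper's proof is precisely "this follows from the preceding Lemma and Corollaries \ref{augexthenjeqindisosmooth} and \ref{augextheninclindisosmooth}," which is the chain of reductions you describe. Your additional unpacking of what those corollaries rest on (Proposition \ref{noneqextheneqexsmooth}, the row contraction, and Proposition \ref{contiscohtocrsmooth}) and the remark on preservation of smoothness are consistent with the paper's earlier arguments and add nothing that conflicts with them.
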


\begin{proof}
  This follows from the preceding Lemma and Corollaries 
\ref{augexthenjeqindisosmooth} and \ref{augextheninclindisosmooth}.
\end{proof}

The passage to the colimit over all open coverings of $M$ yields the
corresponding results for the complexes of cochains with continuous germs: 

\begin{proposition} \label{noneqextheneqexsg}
 If the augmented column complexes 
$A_s^p (M;V) \hookrightarrow A_{sg}^{p,*}(M;V)$ 
are exact, then the augmented sub column complexes
$A_s^p (M;V)^G \hookrightarrow A_{sg}^{p,*}(M;V)^G$  
of equivariant cochains are exact as well.
\end{proposition}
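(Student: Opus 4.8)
The plan is to transcribe the proof of Proposition~\ref{noneqextheneqex} (equivalently, its colimit version used in the proof of Proposition~\ref{noneqextheneqexcg}), replacing ``continuous'' by ``smooth'' throughout. Recall that $A_{sg}^{p,*}(M;V) = \colim_{\mathfrak{U}} A_{sr}^{p,*}(M,\mathfrak{U};V)$ is a subcomplex of the complex of global cochains, namely the union, over all open coverings $\mathfrak{U}$ of $M$, of the cochains that are smooth on $M^{p+1}\times\mathfrak{U}[*]$. Since $d_v$ and the forgetful map to global cochains are chain maps, it suffices to show that every $d_v$-cocycle $f_{eq}^{p,q}\in A_{sg}^{p,q}(M;V)^G$ of bidegree $(p,q)$ is the vertical coboundary of some \emph{equivariant} cochain $f_{eq}^{p,q-1}\in A_{sg}^{p,q-1}(M;V)^G$.

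First I would invoke the hypothesis. Forgetting equivariance, $f_{eq}^{p,q}$ is a $d_v$-cocycle in $A_{sg}^{p,q}(M;V)$, so by exactness of the augmented column $A_s^p(M;V)\hookrightarrow A_{sg}^{p,*}(M;V)$ there is a (not necessarily equivariant) cochain $f^{p,q-1}\in A_{sg}^{p,q-1}(M;V)$ with $d_v f^{p,q-1}=f_{eq}^{p,q}$; unwinding the colimit, $f^{p,q-1}$ is a global cochain smooth on $M^{p+1}\times\mathfrak{U}[q-1]$ for some open covering $\mathfrak{U}$ of $M$. Following the proof of Proposition~\ref{noneqextheneqex}, I would then twist it into $f_{eq}^{p,q-1}(\vec x,\vec x'):= x_0.f^{p,q-1}(x_0^{-1}.\vec x, x_0^{-1}.\vec x')$, which is $G$-equivariant for the action of Equation~\eqref{defgact} by construction.

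The one step that is genuinely more than bookkeeping is showing that $f_{eq}^{p,q-1}$ still lies in the colimit complex $A_{sg}^{p,q-1}(M;V)$. For this I would use the smoothness of the structure maps of the smooth transformation group (the action $G\times M\to M$ and $G\times V\to V$, and inversion), in the calculus of \cite{BGN04}: the assignment $(\vec x,\vec x')\mapsto x_0.f^{p,q-1}(x_0^{-1}.\vec x, x_0^{-1}.\vec x')$ is obtained from $f^{p,q-1}$ by pre- and post-composition with smooth maps, hence is smooth on the open set $M^{p+1}\times (G.\mathfrak{U})[q-1]$, where $G.\mathfrak{U}=\{\,g.U\mid g\in G,\ U\in\mathfrak{U}\,\}$ is again an open covering of $M$. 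Thus $f_{eq}^{p,q-1}$ represents an element of $A_{sg}^{p,q-1}(M;V)^G$. (Equivalently, by cofinality of $G$-invariant open coverings, Propositions~\ref{natinclofeqccisisosmooth} and \ref{natinclofeqdcisisosmooth}, one may assume $\mathfrak{U}$ itself $G$-invariant and work with a fixed cover.)

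Finally, because $d_v$ intertwines the $G$-action, $d_v f_{eq}^{p,q-1}(\vec x,\vec x') = x_0.\bigl[d_v f^{p,q-1}(x_0^{-1}.\vec x, x_0^{-1}.\vec x')\bigr] = x_0.\bigl[f_{eq}^{p,q}(x_0^{-1}.\vec x, x_0^{-1}.\vec x')\bigr]$, and equivariance of $f_{eq}^{p,q}$ makes the last expression equal to $f_{eq}^{p,q}(\vec x,\vec x')$. Hence every equivariant vertical cocycle of bidegree $(p,q)$ in $A_{sg}^{p,*}(M;V)^G$ is an equivariant vertical coboundary, which is the asserted exactness. The only real obstacle is the smoothness verification of the third paragraph; everything else is a verbatim copy of the continuous argument.
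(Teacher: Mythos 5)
Your overall strategy --- transcribe the proof of Proposition \ref{noneqextheneqex}, extract a non-equivariant primitive from the hypothesis and twist it by $x_0$ into an equivariant one --- is exactly what the paper intends (its proof of this proposition is literally ``similar to that of Proposition \ref{noneqextheneqex}''), and you have correctly isolated the one point that is not pure bookkeeping: whether the twisted cochain $f_{eq}^{p,q-1}$ still lies in the colimit complex $A_{sg}^{p,q-1}(M;V)$. Unfortunately your resolution of that point does not work. The precomposition map $(\vec x,\vec x')\mapsto(x_0^{-1}.\vec x,\,x_0^{-1}.\vec x')$ carries a point with $\vec x'\in(g.U)^{q}$, $U\in\mathfrak U$, to a point whose second block lies in $(x_0^{-1}g.U)^{q}$, and there is no reason for $x_0^{-1}g.U$ to be contained in a member of $\mathfrak U$; so smoothness of $f^{p,q-1}$ on $M^{p+1}\times\mathfrak U[q-1]$ only yields smoothness of $f_{eq}^{p,q-1}$ on the open set $\{(\vec x,\vec x')\mid x_0^{-1}.\vec x'\in\mathfrak U[q-1]\}$, not on $M^{p+1}\times(G.\mathfrak U)[q-1]$. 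The saturation argument of Proposition \ref{natinclofeqccisiso} that you are pattern-matching applies to a cochain that is already equivariant \emph{and} already known to be smooth on a product $M^{p+1}\times\mathfrak W[q-1]$; here the problem is precisely to produce such a product neighbourhood, and the set above need not contain one, since the admissible $\vec x'$-neighbourhood of the diagonal can shrink as $x_0$ varies over $M$. Your parenthetical fallback fails for the same reason: Propositions \ref{natinclofeqccisisosmooth} and \ref{natinclofeqdcisisosmooth} let you take the covering $G$-invariant for the \emph{equivariant} cocycle $f_{eq}^{p,q}$, but the primitive $f^{p,q-1}$ supplied by the hypothesis is not equivariant, and the hypothesis is exactness of the colimit over \emph{all} coverings, so you cannot simply declare its covering to be $G$-invariant.

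To be fair, this difficulty is inherited from the paper: the argument of Proposition \ref{noneqextheneqex} is carried out for a fixed $G$-invariant covering, where $x_0^{-1}.\vec x'\in\mathfrak U[q-1]$ does hold and the twist stays inside $A_{sr}^{p,q-1}(M,\mathfrak U;V)$, and the paper silently assumes this transfers to the colimit version. A genuine repair would be either to strengthen the hypothesis to exactness of the augmented columns of the colimit over $G$-invariant coverings (which is what the twisting actually consumes), or to show that any primitive can be replaced by one smooth on a $G$-invariant neighbourhood of the diagonal. As written, your third paragraph asserts a smoothness domain that the twisted cochain does not have, so the step placing $f_{eq}^{p,q-1}$ in $A_{sg}^{p,q-1}(M;V)^G$ is not established.
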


\begin{proof}
  The proof is similar to that of Proposition \ref{noneqextheneqex}.
\end{proof}

\begin{corollary} \label{augexthenjeqindisosg}
If the augmented column complexes 
$A_s^p (M;V) \hookrightarrow A_{sg}^{p,*} (M;V)$ 
are exact, then the inclusion 
$j_{eq}^* : A_s^* (M;V)^G \hookrightarrow 
\tot A_{sg}^{*,*}(M;V)^G$ induces an isomorphism in cohomology.
\end{corollary}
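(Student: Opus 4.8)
The plan is to deduce the statement from Proposition~\ref{noneqextheneqexsg} by the same formal argument that underlies the continuous and germ-continuous cases (cf. Corollary~\ref{augexthenjeqindisocg}, which follows from Proposition~\ref{noneqextheneqexcg}). First I would apply Proposition~\ref{noneqextheneqexsg}: the hypothesised exactness of the augmented columns $A_s^p(M;V)\hookrightarrow A_{sg}^{p,*}(M;V)$ passes to the subcomplexes of equivariant cochains, so the augmented columns $A_s^p(M;V)^G\hookrightarrow A_{sg}^{p,*}(M;V)^G$ of the first-quadrant double complex $A_{sg}^{*,*}(M;V)^G$ are exact as well.

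Next I would consider the spectral sequence of the double complex $A_{sg}^{*,*}(M;V)^G$ in which the vertical cohomology is computed first; since the double complex is first-quadrant this spectral sequence converges to $H(\tot A_{sg}^{*,*}(M;V)^G)$. Exactness of the augmented columns gives $E_1^{p,q}=0$ for $q>0$ and $E_1^{p,0}\cong A_s^p(M;V)^G$, and the differential $d_1$ on the bottom row is precisely the differential of the complex $A_s^*(M;V)^G$; hence $E_2^{p,0}=H^p(A_s^*(M;V)^G)$, the remaining $E_2$-terms vanish, and the spectral sequence degenerates at the $E_2$-page. This identifies $H(\tot A_{sg}^{*,*}(M;V)^G)$ with $H(A_s^*(M;V)^G)$, and inspection of the edge homomorphism shows that this identification is the one induced by the augmentation $j_{eq}^*$, which is therefore an isomorphism in cohomology. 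Equivalently, one may append the column augmentation as an extra row to obtain a double complex which is bounded below and has exact columns; its total complex is then acyclic and is, up to a shift, the mapping cone of $j_{eq}^*$, so $j_{eq}^*$ induces an isomorphism in cohomology.

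The sign bookkeeping in the total differential and the identification of $d_1$ with the differential of $A_s^*(M;V)^G$ are routine and identical to the corresponding verifications for the double complexes in Section~\ref{sectss}. The only point that deserves a moment's care is that the isomorphism furnished by the degenerate spectral sequence coincides with the one induced by $j_{eq}^*$ rather than being merely an abstract isomorphism; this follows from the naturality of the augmentation. Thus no genuine obstacle arises, and the corollary is a formal consequence of Proposition~\ref{noneqextheneqexsg} together with the standard homological algebra of augmented first-quadrant double complexes.
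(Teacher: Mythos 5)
Your proof is correct and follows exactly the route the paper intends: the paper states this corollary without proof, but it is clearly meant to follow from Proposition~\ref{noneqextheneqexsg} together with the standard fact that a first-quadrant double complex with exact augmented columns has its column augmentation inducing an isomorphism onto the cohomology of the total complex. Your spectral-sequence (or mapping-cone) justification of that standard fact, including the check that the degenerate spectral sequence's isomorphism is the one induced by $j_{eq}^*$, is exactly the implicit argument.
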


\begin{corollary} \label{augextheninclindisosg}
If the augmented column complexes 
 $A_s^p (M;V) \hookrightarrow A_{sg}^{p,*} (M;V)$ 
are exact, then the inclusion 
$A_s^* (M;V)^G \hookrightarrow A_{sg}^* (M;V)^G$
induces an isomorphism in cohomology.
\end{corollary}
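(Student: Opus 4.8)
The plan is to mirror, step for step, the argument behind Corollary~\ref{augextheninclindisocg} in the purely topological setting. The point is that the inclusion $A_s^*(M;V)^G \hookrightarrow A_{sg}^*(M;V)^G$ factors, after the relevant identifications, through the total complex of the double complex $A_{sg}^{*,*}(M;V)^G$, whose row- and column-augmentations are already under control; one then exhibits it as a composite of two maps that are isomorphisms on cohomology.

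First I would invoke Corollary~\ref{augexthenjeqindisosg}: the standing hypothesis that the augmented columns $A_s^p(M;V) \hookrightarrow A_{sg}^{p,*}(M;V)$ are exact gives that the column-augmentation $j_{eq}^* : A_s^*(M;V)^G \hookrightarrow \tot A_{sg}^{*,*}(M;V)^G$ induces an isomorphism in cohomology. (Behind this lie Proposition~\ref{noneqextheneqexsg}, which upgrades exactness of the columns to exactness of the equivariant columns, and the standard first-quadrant double-complex comparison.) Next I would recall the facts already established in the smooth section: the row-augmentation $i_{cg,eq}^* : A_{cg,eq}^*(M;V) \to \tot A_{sg}^{*,*}(M;V)^G$ induces an isomorphism in cohomology (the row contraction of Lemma~\ref{columnsexactsmooth}, passed to the colimit over $G$-invariant coverings), and $A_{cg,eq}^*(M;V) \cong A_{sg}^*(M;V)^G$ by Proposition~\ref{natinclofeqccisisosmooth}.

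It then remains only to identify the composite $H(i_{cg,eq})^{-1} \circ H(j_{eq})$ with the map induced by the inclusion $A_s^*(M;V)^G \hookrightarrow A_{sg}^*(M;V)^G$; this is exactly the content of the first Corollary following Proposition~\ref{smoothiscohtosreq}, whose proof produces, for a smooth equivariant cocycle $f$, an explicit element of the total complex witnessing that $j^n(f)$ and $i_{cg,eq}^n(f)$ are cohomologous. Since $H(j_{eq})$ and $H(i_{cg,eq})^{-1}$ are both isomorphisms, so is their composite, and the statement follows. Equivalently, one applies the second Corollary following Proposition~\ref{smoothiscohtosreq} directly in the isomorphism case.

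I expect no genuine obstacle here: all the analytic content --- the smooth row contraction, and the compatibility of $C^\infty(M^{p+1},-)$ with the colimits and with the conjugation trick used in Proposition~\ref{noneqextheneqexsmooth} --- has already been packaged into the cited lemmas, so the remaining argument is formal. The one point demanding a moment's care is the usual colimit bookkeeping (whether one colimits over all open coverings of $M$ or only over $G$-invariant ones), and this is precisely what Proposition~\ref{natinclofeqccisisosmooth} and Lemma~\ref{natinclofeqdcisisosmooth} are there to settle.
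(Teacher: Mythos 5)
Your argument is correct and follows exactly the route the paper intends: Corollary~\ref{augexthenjeqindisosg} gives that $j_{eq}^*$ induces an isomorphism, and the second corollary following Proposition~\ref{smoothiscohtosreq} (together with the identification $A_{cg,eq}^*(M;V)\cong A_{sg}^*(M;V)^G$ from Proposition~\ref{natinclofeqccisisosmooth}) transfers this to the inclusion $A_s^*(M;V)^G\hookrightarrow A_{sg}^*(M;V)^G$. Nothing is missing.
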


\begin{remark} \label{remonlyginvcovsmooth}
Alternatively to taking the colimit over all open coverings $\mathfrak{U}$ of 
$M$ one may consider $G$-invariant open coverings only to obtains the same
results. 
(This was shown in Proposition \ref{natinclofeqccisisosmooth} and Lemmata 
\ref{natinclofeqdcisisosmooth}.)
\end{remark}

\begin{example}
  If $G=M$ is a Lie group which acts on itself by left translation and 
the augmented columns  
$A_s^p (M;V) \hookrightarrow A_{sg}^{p,*} (M;V) := 
\colim A^{p,*} (M,\mathfrak{U}_U;V)$ (where $U$ runs over all open identity
neighbourhoods in $G$) are exact, then the inclusion 
$A_s^* (M;V)^G \hookrightarrow A_{sg}^* (M;V)^G$
induces an isomorphism in cohomology.
\end{example}

The complex $A^{p,*} (M,\mathfrak{U};V)$ is isomorphic to the complex 
$A^* (\mathfrak{U}; C (M^{p+1},V))$. The colimit 
$A_{AS}^* (M; C (M^{p+1} ,V)):=\colim A^* (\mathfrak{U}; C (M^{p+1} ,V))$, 
where $\mathfrak{U}$ runs over all open coverings of
$M$ is the complex of Alexander-Spanier cochains on $M$. Therefore the colimit 
complex $\colim A^p (M; A^* (\mathfrak{U};V))$ is isomorphic to the cochain 
complex $A_{AS}^* (M;C (M^{p+1} ,V))$. 
A similar observation can be made for the cochain complex 
$A_s^{p,*} (M,\mathfrak{U};V)$ if the exponential law 
$C (M^{p+1} \times \mathfrak{U}[q],V) \cong C (M,C (\mathfrak{U}[q],V))$ 
holds for a cofinal set of open coverings $\mathfrak{U}$ of $M$. 
Passing to the colimit in Diagram \ref{morphexseqsmooth} yields the morphism

\begin{equation} \label{morphexseqsg}
\begin{array}{cccccccc}
 0 \longrightarrow & \ker (\res_{sg}^{p,*} ) & \longrightarrow & 
A_{sg}^{p,*} (M;V) & \longrightarrow & \colim A_s^{p,*} (M,\mathfrak{U};V) & 
\longrightarrow 0 \\ 
& \downarrow & & \downarrow & & \downarrow & \\
0 \longrightarrow & \ker (\res^{p,*} ) & \longrightarrow & A^{p,*} (M;V) 
& \longrightarrow & A_{AS}^* (M; C^{p+1} (M,V))
& \longrightarrow 0
\end{array}
\end{equation}
of short exact sequences of cochain complexes. The kernel $\ker (\res^{p,q})$
is the subspace of those $(p,q)$-cochains which are trivial on 
$M^{p+1} \times \mathfrak{U} [q]$ for some open covering $\mathfrak{U}$ of 
$M$. Since these $(p,q)$-cochains are continuous on 
$M^{p+1} \times \mathfrak{U}[q]$ we find that both kernels coincide. We 
abbreviate the complex $\ker (\res^{p,*} ) = \ker (\res_{sg}^{p,*} )$ by 
$K_{sg}^{p,*}$ and denote the cohomology groups of the complex 
$A_{sg}^{p,*} (M;V)$ by $H_{sg}^{p,*} (M;V)$. 
The morphism of short exact sequences of cochain complexes in Diagram 
\ref{morphexseqsg} gives rise to a morphism of long exact cohomology 
sequences:
\begin{equation} \label{diaglecssg}
\xymatrix@C-11pt@R-10pt{ 
\ar[r] & H^q (K_{sg}^{p,*} )) \ar[r] \ar@{=}[d] 
& H_{sg}^{p,q} (M,\mathfrak{U};V) \ar[r] \ar[d]  
& H^q (\colim A_s^{p,*} (M,\mathfrak{U};V) \ar[r] \ar[d]  
& H^{q+1}  (K_{sg}^{p,*} ) \ar[r] \ar@{=}[d] & {} \\ 
\ar[r]^\cong & H^q (K^{p,*} ) \ar[r]&  0  \ar[r] 
& H_{AS}^q (M; C^{p+1} (M,V)) \ar[r]^\cong &  H^{q+1}  (K^{p,*} )) \ar[r] & {}
}
\end{equation}

\begin{lemma}
  If the inclusion 
$\colim A_s^{p,*} (M,\mathfrak{U};V)\hookrightarrow A_{AS}^* (M;C^{p+1}(M,V))$
of cochain complexes induces an isomorphism in cohomology, then the augmented 
complex 
$A_s^p (M;V) \hookrightarrow A_{sg}^{p,*} (M;V)$ is exact.
\end{lemma}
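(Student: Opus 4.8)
The plan is to read the statement directly off the morphism of long exact cohomology sequences displayed in Diagram \ref{diaglecssg}, exactly as the corresponding lemmas for continuous cochains (from Diagram \ref{diaglecscg}) and for cochains on $k$-spaces (from Diagram \ref{diaglecscgk}) were obtained. Recall that this diagram arises by applying the long exact cohomology functor to the morphism of short exact sequences of cochain complexes in Diagram \ref{morphexseqsg}, whose lower row is $0 \to K^{p,*} \to A^{p,*}(M;V) \to A_{AS}^*(M;C^{p+1}(M,V)) \to 0$. The contraction $h^{p,q}(f)(\vec{x},\vec{x}') = f(\vec{x},*,\vec{x}')$ shows that $A^{p,*}(M;V)$ has trivial cohomology in positive degrees (its $H^0$ being the image of the augmentation $A_s^p(M;V) \hookrightarrow A^{p,0}(M;V)$), so the connecting homomorphism of that lower row gives canonical isomorphisms $H^q(A_{AS}^*(M;C^{p+1}(M,V))) \cong H^{q+1}(K^{p,*})$ for all $q \geq 1$. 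Moreover, as noted in the discussion following Diagram \ref{morphexseqsg}, $\ker(\res^{p,*}) = \ker(\res_{sg}^{p,*})$, so $K_{sg}^{p,*} = K^{p,*}$ and the left-hand vertical arrows of Diagram \ref{diaglecssg} are identities.

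Assuming now that the inclusion $\colim A_s^{p,*}(M,\mathfrak{U};V) \hookrightarrow A_{AS}^*(M;C^{p+1}(M,V))$ induces an isomorphism in cohomology, i.e.\ that the right-hand vertical arrows of Diagram \ref{diaglecssg} are isomorphisms, I would chase the commutative ladder (or invoke the five lemma) to conclude that the upper connecting homomorphisms $H^q(\colim A_s^{p,*}(M,\mathfrak{U};V)) \to H^{q+1}(K_{sg}^{p,*})$ are isomorphisms for $q \geq 1$; substituting this into the long exact cohomology sequence of the upper short exact sequence $0 \to K_{sg}^{p,*} \to A_{sg}^{p,*}(M;V) \to \colim A_s^{p,*}(M,\mathfrak{U};V) \to 0$ then forces $H^q(A_{sg}^{p,*}(M;V)) = 0$ for all $q \geq 1$. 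For $q = 0$ one observes directly that $\ker(d_v \colon A_{sg}^{p,0}(M;V) \to A_{sg}^{p,1}(M;V))$ consists precisely of the cochains independent of the second block of variables, i.e.\ coincides with the image of the augmentation $A_s^p(M;V) = C^\infty(M^{p+1};V) \hookrightarrow A_{sg}^{p,0}(M;V)$. Together these say exactly that the augmented complex $A_s^p(M;V) \hookrightarrow A_{sg}^{p,*}(M;V)$ is exact.

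I expect no real obstacle: the argument is purely formal and identical in shape to those in the continuous and $k$-space settings, where it was recorded merely as an immediate consequence of the relevant diagram. The only points that need a little care are the degree-zero bookkeeping described above, and the identifications, already carried out in the text preceding Diagram \ref{morphexseqsg}, that the two kernel complexes agree and that $\colim A_s^{p,*}(M,\mathfrak{U};V)$ may be regarded as the pertinent Alexander-Spanier-type complex (via the exponential law for a cofinal family of coverings). Hence the lemma is immediate from Diagram \ref{diaglecssg}.
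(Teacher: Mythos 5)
Your argument is correct and is exactly the paper's approach: the paper's proof consists of the single sentence that the lemma is an immediate consequence of Diagram \ref{diaglecssg}, and your write-up simply carries out that diagram chase (identification of the kernel complexes, exactness of $A^{p,*}(M;V)$ via the contraction, the ladder/five-lemma step, and the degree-zero bookkeeping) in detail.
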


\begin{proof}
 This is an immediate consequence of Diagram \ref{diaglecssg}
\end{proof}

\begin{proposition} \label{inclcolimacpsindisothenjeqisosmooth}
  If the inclusion 
$\colim A_s^{p,*} (M,\mathfrak{U};V)\hookrightarrow A_{AS}^* (M;C(M^{p+1},V))$ 
induces an isomorphism in cohomology, then  
$j_{eq}^* : A_s^* (M;V)^G \hookrightarrow \tot A_{sg}^{*,*}(M;V)^G$ and 
$A_s^* (M;V)^G \hookrightarrow A_{sg}^* (M;V)^G$
also induce an isomorphism in cohomology.
\end{proposition}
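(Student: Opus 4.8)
The plan is to derive the statement formally from results already in place, exactly as Propositions~\ref{inclcolimacpsindisothenjeqiso} and~\ref{inclcolimacpsindisothenjeqisok} were derived in the continuous and $k$-space settings. The input is the hypothesis that $\colim A_s^{p,*}(M,\mathfrak{U};V) \hookrightarrow A_{AS}^*(M;C(M^{p+1},V))$ is a cohomology isomorphism for every $p$. First I would feed this into the preceding Lemma: a diagram chase in the morphism of long exact cohomology sequences of Diagram~\ref{diaglecssg} — using that the complex $A^{p,*}(M;V)$ is acyclic, so the connecting maps $H^{q}_{AS} \to H^{q+1}(K^{p,*})$ are isomorphisms, and that the outer vertical maps $H^*(K_{sg}^{p,*}) \to H^*(K^{p,*})$ are identities — forces $H^q(A_{sg}^{p,*}(M;V)) = 0$ in positive degrees, i.e.\ the augmented column complex $A_s^p(M;V) \hookrightarrow A_{sg}^{p,*}(M;V)$ is exact for every $p \ge 0$.

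Once the augmented columns are known to be exact, the two assertions are precisely the content of Corollaries~\ref{augexthenjeqindisosg} and~\ref{augextheninclindisosg}: the first gives that $j_{eq}^* : A_s^*(M;V)^G \hookrightarrow \tot A_{sg}^{*,*}(M;V)^G$ induces an isomorphism in cohomology, the second that $A_s^*(M;V)^G \hookrightarrow A_{sg}^*(M;V)^G$ does. So the proof itself is a one-line citation of the preceding Lemma together with these two corollaries.

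There is no real obstacle in this step; it is bookkeeping that collects the machinery of the section. The only substance sitting underneath it is Proposition~\ref{noneqextheneqexsg}, on which both corollaries rest: its proof (modelled on that of Proposition~\ref{noneqextheneqex}) uses the averaging assignment $f_{eq}^{p,q-1}(\vec{x},\vec{x}') := x_0 . f^{p,q-1}(x_0^{-1}.\vec{x},\, x_0^{-1}.\vec{x}')$ to upgrade a possibly non-equivariant bounding cochain to an equivariant one, and the one thing to check is that this cochain is still smooth on $M^{p+1}\times\mathfrak{U}[q-1]$, which holds because precomposition with the diffeomorphisms $x\mapsto x_0^{-1}.x$ and postcomposition with the smooth action of $x_0$ on $V$ preserve smoothness. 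The genuinely hard input of the section — that smooth contractibility of $M$ makes $\colim A_s^{p,*}(M,\mathfrak{U};V)$ acyclic and hence verifies the hypothesis of this Proposition — is deferred to the smooth analogue of Lemma~\ref{xcontrthenacpsistriv} and is not needed here.
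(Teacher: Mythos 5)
Your proof is correct and follows exactly the route the paper takes: cite the preceding Lemma to get exactness of the augmented columns $A_s^p (M;V) \hookrightarrow A_{sg}^{p,*} (M;V)$ from the hypothesis, then invoke Corollaries \ref{augexthenjeqindisosg} and \ref{augextheninclindisosg}. The additional remarks on the diagram chase and the equivariant averaging underlying Proposition \ref{noneqextheneqexsg} are accurate but not needed for this step.
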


\begin{proof}
  This follows from the preceding Lemma and Corollaries 
\ref{augexthenjeqindisosg} and \ref{augextheninclindisosg}.
\end{proof}

As observed before (cf. Remark \ref{remonlyginvcovsmooth}) one may restrict 
oneself to the directed system of $G$-invariant open coverings only to achieve
the same result. Thus we observe:

\begin{corollary}
If $G=M$ is a Lie group which acts on itself by left translation and the 
inclusion 
$\colim A_s^{p,*} (M,\mathfrak{U};V)\hookrightarrow A_{AS}^* (M;C(M^{p+1},V))$ 
(where $U$ runs over all open identity
neighbourhoods in $G$)  induces an isomorphism in cohomology, then the 
inclusion $A_s^* (M;V)^G \hookrightarrow A_{sg}^* (M;V)^G$
induces an isomorphism in cohomology as well.
\end{corollary}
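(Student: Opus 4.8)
The assertion is a corollary of Proposition~\ref{inclcolimacpsindisothenjeqisosmooth}: that proposition already delivers the conclusion as soon as one knows that the inclusion $\colim A_s^{p,*}(M,\mathfrak{U};V)\hookrightarrow A_{AS}^*(M;C(M^{p+1},V))$ induces an isomorphism in cohomology when the colimit runs over \emph{all} open coverings $\mathfrak{U}$ of $M$, whereas the hypothesis at hand provides this only along the subsystem of the $G$-invariant coverings $\mathfrak{U}_U=\{gU\mid g\in G\}$ attached to open identity neighbourhoods $U$. So the plan is first to check that this restriction is harmless, and then to quote Proposition~\ref{inclcolimacpsindisothenjeqisosmooth} --- equivalently Proposition~\ref{noneqextheneqexsg} together with Corollaries~\ref{augexthenjeqindisosg} and~\ref{augextheninclindisosg}.

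The first ingredient is an elementary cofinality. Since $G=M$ acts on itself by left translation, given any $G$-invariant open covering $\mathfrak{U}$ of $G$ the identity element lies in some member $U\in\mathfrak{U}$, and by $G$-invariance the family $\mathfrak{U}_U=\{gU\mid g\in G\}$ is then a subcovering of $\mathfrak{U}$; in particular $\mathfrak{U}_U$ refines $\mathfrak{U}$, so the coverings $\mathfrak{U}_U$ are cofinal among the $G$-invariant open coverings of $G$. On the equivariant side nothing more is needed: by Remark~\ref{remonlyginvcovsmooth}, that is, by Proposition~\ref{natinclofeqccisisosmooth} and Lemma~\ref{natinclofeqdcisisosmooth}, the complex $A_{sg}^*(M;V)^G$ and the sub double complex $A_{sg}^{*,*}(M;V)^G$ are already the colimits of $A_{sr}^*(M,\mathfrak{U};V)^G$ and of $A_{sr}^{*,*}(M,\mathfrak{U};V)^G$ over the $G$-invariant coverings, hence over the cofinal family $\{\mathfrak{U}_U\}$.

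The remaining --- and, I expect, the only substantial --- point is that the passage to $\{\mathfrak{U}_U\}$ is equally harmless on the side of the cochains with continuous germ at the diagonal: one needs that $\colim_U A^*(\mathfrak{U}_U;C(M^{p+1},V))\cong\colim_U A^{p,*}(M,\mathfrak{U}_U;V)$ still computes the Alexander-Spanier cohomology $H_{AS}^*(M;C(M^{p+1},V))$ occurring in the bottom row of Diagram~\ref{diaglecssg}, and likewise that $\colim_U A_s^{p,*}(M,\mathfrak{U}_U;V)$ coincides with the corresponding colimit over all open coverings of $M$. For a topological group this is the content of \cite{vE62b} --- the translation-invariant coverings are cohomologically cofinal --- invoked in exactly the same way in the proof of the analogous statement for continuous cochains in Section~\ref{seccontanduccc}; I would flag this as the main obstacle, precisely because it is a cofinality that holds only at the level of cohomology and not at the level of the directed set of coverings. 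Granting it, the kernel complexes $K_{sg}^{p,*}$ and the long exact cohomology sequence of Diagram~\ref{diaglecssg} are unchanged when formed along $\{\mathfrak{U}_U\}$, so the hypothesis forces each augmented column $A_s^p(M;V)\hookrightarrow A_{sg}^{p,*}(M;V)$ to be exact; Proposition~\ref{noneqextheneqexsg} then yields exactness of the equivariant augmented columns, and Corollary~\ref{augextheninclindisosg} concludes that $A_s^*(M;V)^G\hookrightarrow A_{sg}^*(M;V)^G$ induces an isomorphism in cohomology --- which is the claim, obtained by running the argument of Proposition~\ref{inclcolimacpsindisothenjeqisosmooth} over the restricted directed system.
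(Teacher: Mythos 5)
Your proof is correct and follows essentially the same route as the paper: the paper's own proof consists precisely of the citation of \cite{vE62b} for the fact that the colimit over the translation-invariant coverings $\mathfrak{U}_U$ computes the Alexander--Spanier cohomology, with the reduction to Proposition \ref{inclcolimacpsindisothenjeqisosmooth} via Remark \ref{remonlyginvcovsmooth} left implicit. You have simply made that reduction, and the cofinality of the $\mathfrak{U}_U$ among the $G$-invariant open coverings, explicit.
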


\begin{proof}
 It has been shown in \cite{vE62b} that the cohomology of the colimit cochain 
complex 
$\colim A^* ( \mathfrak{U} ;C(M^{p+1},V))$ is the Alexander-Spanier cohomology
of $M$.  
\end{proof}

\begin{lemma} \label{xsmoothlycontrthenacpsistriv}
If the manifold $M$ is contractible, then the cohomology 
of the complex $\colim A_s^{p,*} (M,\mathfrak{U};V)$ is trivial.
\end{lemma}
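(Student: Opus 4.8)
The plan is to imitate, in the smooth category, the standard proof that the germ-at-the-diagonal Alexander-Spanier complex is acyclic on a contractible space; as for Lemmata \ref{xcontrthenacpsistriv} and \ref{xcontrthenacpsistrivk}, the argument of \cite[Theorem 2.5.2]{F10} should carry over once one keeps track of the smoothness in the first $p+1$ arguments. Write $\tilde{A}^{p,*}(M):=\colim_\mathfrak{U} A_s^{p,*}(M,\mathfrak{U};V)$, so that its term in cosimplicial degree $q$ is the group of germs along the diagonal of smooth maps $M^{p+1}\times(\text{neighbourhood of the diagonal in }M^{q+1})\to V$, with differential the alternating sum of the coface maps on the last block of variables (up to the sign $(-1)^p$). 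First I would observe that a smooth map $\varphi\colon M\to N$ induces a morphism $\varphi^*\colon\tilde{A}^{p,*}(N)\to\tilde{A}^{p,*}(M)$ by precomposing coordinatewise with $\varphi$: this is well defined because $\varphi$ is continuous (so it pulls a germ near the diagonal of $N^{q+1}$ back to one near the diagonal of $M^{q+1}$) and because smoothness is preserved under composition. Thus $M\mapsto\tilde{A}^{p,*}(M)$ is a contravariant functor on manifolds and smooth maps.

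Next I would prove smooth homotopy invariance: a smooth homotopy $H\colon M\times\mathbb{R}\to N$ from $\varphi_0$ to $\varphi_1$ induces a chain homotopy $s^{p,*}$ between $\varphi_0^*$ and $\varphi_1^*$, given by the usual Alexander-Spanier prism operator --- for a germ $f$ one forms the alternating sum, over the ways of splitting the last $q$ arguments into an initial and a final block, of the cochains obtained by applying $H(-,0)$ on the initial block and $H(-,1)$ on the final one (and $H(-,0)$ on the first $p+1$ arguments), with the bidegree sign $(-1)^p$; the identity $d_v s^{p,*}+s^{p,*}d_v=\varphi_1^*-\varphi_0^*$ is then formal. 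Feeding in a smooth contraction $H\colon M\times\mathbb{R}\to M$, so that $\varphi_0=\id_M$ and $\varphi_1$ is the constant map onto some $m_0\in M$, and using that $\varphi_1$ factors through the one-point manifold --- whose associated complex is the constant cosimplicial group $V$, with cohomology $V$ in degree $0$ and $0$ in positive degrees --- shows that $\varphi_1^*$, hence also $\id^*$, vanishes on $H^q(\tilde{A}^{p,*}(M))$ for $q\ge 1$, so this complex is acyclic in positive degrees. In degree $0$ one checks directly that $\ker d_v^{p,0}$, computed in the colimit, consists of the germs that are locally constant in the last variable, i.e. of the functions depending on the first block alone, so $H^0(\tilde{A}^{p,*}(M))\cong C^\infty(M^{p+1};V)$; this agrees with $H^0$ of the corresponding Alexander-Spanier complex of the contractible $M$, so the cohomology is ``trivial'' in the sense used by the subsequent lemmata of this section.

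The combinatorics of the prism are formal and identical to the topological case; the real work --- and the main obstacle --- is to check that $s^{p,*}$ genuinely lands in $\tilde{A}^{p,*}(M)$, i.e. that each of its summands is still smooth in the first $p+1$ variables (handled by the exponential law in the BGN calculus and stability of smoothness under composition with $H$) and still has a continuous, indeed smooth, germ along the diagonal in $M^{q+1}$. The latter is delicate: inserting $H(-,0)=\id_M$ in some of the last slots and $H(-,1)=c_{m_0}$ in the others produces tuples that need not lie near the diagonal, so $f$ cannot be evaluated on them directly. As in the Alexander-Spanier setting this is repaired by subdividing the contraction --- writing $\id_M\simeq c_{m_0}$, locally in the point of $M$ (the paths $t\mapsto H(x,t)$ varying continuously), as a finite concatenation of ``short'' smooth homotopies $H(-,t_{i-1})\simeq H(-,t_i)$ between maps $C^0$-close enough that, with respect to a suitable open covering, nearby arguments are carried into common covering sets, applying the prism to each short homotopy, and telescoping. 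Here the cofinality of the diagonal neighbourhoods $\mathfrak{U}[*]$ exploited throughout this section is precisely what makes the passage to the colimit and the choice of these covers go through. This reproduces the argument of \cite[Theorem 2.5.2]{F10} for smooth in place of continuous cochains.
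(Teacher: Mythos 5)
Your proposal is correct and follows the same route as the paper, which simply asserts that the proof of \cite[Theorem 2.5.2]{F10} for the Alexander--Spanier germ complex carries over verbatim; you have reconstructed exactly that argument (functoriality, the prism operator, and the subdivision of the contraction needed to keep the homotopy inside the germ complex), including the correct reading of ``trivial'' as acyclicity of the augmented column. The only point worth noting is that you use a \emph{smooth} contraction where the lemma says ``contractible'', but this matches the paper's own framing earlier in the section (``smoothly contractible manifolds $M$''), so it is the intended hypothesis rather than a gap.
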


\begin{proof}
The reasoning is analogous to that for the Alexander-Spanier presheaf. 
The proof \cite[Theorem 2.5.2]{F10} carries over almost in verbatim. 
\end{proof}

\begin{proof}
If the manifold $M$ is contractible, then the Alexander-Spanier
cohomology $H_{AS} (M;C^{p+1}(M,V))$ is trivial and the cohomology of the
cochain complex $\colim A_s^{p,*} (M,\mathfrak{U};V)$ is trivial by Lemma 
\ref{xsmoothlycontrthenacpsistriv}. By Proposition
\ref{inclcolimacpsindisothenjeqisosmooth}
the inclusion $A_s^* (M;V)^G \hookrightarrow A_{sg}^* (M;V)^G$ then induces an
isomorphism in cohomology.
\end{proof}

\begin{corollary}
For smoothly contractible Lie groups $G$ the 
continuous group cohomology is isomorphic to the
cohomology of homogeneous group cochains with continuous germ at the diagonal.
\end{corollary}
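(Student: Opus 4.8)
The plan is to deduce this statement directly from its purely topological predecessor, Corollary \ref{gcontriso}, by forgetting the smooth structure. Both cohomologies named in the assertion --- the continuous group cohomology $H_c(G;V)$ and the cohomology of homogeneous group cochains with continuous germ at the diagonal $H_{cg}(G;V)$ --- are computed from the complexes $A_c^*(G;V)^G$ and $A_{cg}^*(G;V)^G$, whose definitions refer only to the underlying topological group structure of $G$ and the underlying topological module structure of $V$; no smoothness enters either side of the asserted isomorphism. Hence it suffices to verify that the underlying topological group of $G$ satisfies the hypothesis of Corollary \ref{gcontriso}, namely that it is contractible as a topological group.

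The one point to check is that a smoothly contractible Lie group is contractible as a topological space. This is immediate: a smooth contraction is a smooth map $H \colon G \times [0,1] \to G$ with $H(-,0)=\id_G$ and $H(-,1)$ constant, and every smooth map is in particular continuous, so $H$ witnesses a topological contraction of the underlying space of $G$. Thus the underlying topological group of $G$ falls into the class of contractible topological groups to which Corollary \ref{gcontriso} applies.

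Applying Corollary \ref{gcontriso} to this underlying topological group and the underlying topological module $V$ then gives that the inclusion $A_c^*(G;V)^G \hookrightarrow A_{cg}^*(G;V)^G$ induces an isomorphism in cohomology, i.e.\ $H_c(G;V) \cong H_{cg}(G;V)$, which is exactly the assertion. I expect no genuine obstacle here: the whole content of the statement is the topological result already established, and the only thing being observed is the inclusion of smoothly contractible Lie groups into the class of contractible topological groups. (By contrast, the \emph{smooth} analogue --- that the smooth cohomology $H_{eq,s}(G;V)$ agrees with the cohomology of cochains with smooth germ at the diagonal --- would instead be obtained from the unnumbered theorem proved just above via Proposition \ref{inclcolimacpsindisothenjeqisosmooth}, but that is a different statement from the one worded here.)
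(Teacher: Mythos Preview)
Your argument is correct for the statement as literally written, and you have correctly identified the underlying tension. The corollary, as worded, concerns the \emph{continuous} group cohomology and cochains with \emph{continuous} germ; both depend only on the underlying topological structure, and since a smoothly contractible Lie group is in particular a contractible topological group, Corollary~\ref{gcontriso} applies directly, exactly as you argue.

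The paper gives no explicit proof for this corollary. From its placement at the end of Section~\ref{secsmoothanduscc}, immediately after the (unstated but proved) theorem that for smoothly contractible $M$ the inclusion $A_s^*(M;V)^G \hookrightarrow A_{sg}^*(M;V)^G$ induces an isomorphism, the intended argument is evidently the specialization of that theorem to $G=M$ acting on itself by left translation. But that specialization yields the \emph{smooth} analogue $H_{s,eq}(G;V) \cong H_{sg,eq}(G;V)$, not the continuous statement actually written. So either the corollary contains a typo (``continuous'' should read ``smooth'' throughout, making it the smooth counterpart of Corollary~\ref{gcontriso}), or, taken at face value, your reduction to the earlier topological result is the appropriate proof and the smooth machinery of the surrounding section is irrelevant. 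Your parenthetical remark already captures this distinction; the paper's placement and the parallel structure with the earlier sections strongly suggest the typo interpretation, but your proof is the right one for what is on the page.
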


\bibliographystyle{amsalpha}
\bibliography{ASpectralSequenceConnectingContinuousWithLocallyContinuousGroupCohomology}

\end{document}